\numberwithin{equation}{section}
\numberwithin{figure}{section}
\theoremstyle{plain}
\newtheorem{thm}{\protect\theoremname}
  \theoremstyle{remark}
  \newtheorem{rem}[thm]{\protect\remarkname}
  \theoremstyle{plain}
  \newtheorem{cor}[thm]{\protect\corollaryname}
  \theoremstyle{remark}
  \newtheorem*{acknowledgement*}{\protect\acknowledgementname}
  \theoremstyle{plain}
  \newtheorem{prop}[thm]{\protect\propositionname}
  \theoremstyle{definition}
  \newtheorem{defn}[thm]{\protect\definitionname}
  \theoremstyle{plain}
  \newtheorem{lem}[thm]{\protect\lemmaname}
\newcommand{\xdownarrow}[1]{%
  {\left\downarrow\vbox to #1{}\right.\kern-\nulldelimiterspace}
}
\providecommand{\acknowledgementname}{Acknowledgement}
  \providecommand{\corollaryname}{Corollary}
  \providecommand{\definitionname}{Definition}
  \providecommand{\lemmaname}{Lemma}
  \providecommand{\propositionname}{Proposition}
  \providecommand{\remarkname}{Remark}
\providecommand{\theoremname}{Theorem}
  \providecommand{\acknowledgementname}{Acknowledgement}
  \providecommand{\corollaryname}{Corollary}
  \providecommand{\definitionname}{Definition}
  \providecommand{\lemmaname}{Lemma}
  \providecommand{\propositionname}{Proposition}
  \providecommand{\remarkname}{Remark}
\providecommand{\theoremname}{Theorem}
\begin{document}

\title{Asymptotic Behavior for $\mathcal{H}-$holomorphic Cylinders of Small
Area}

\author{Alexandru Doicu, Urs Fuchs}
\maketitle
\begin{abstract}
\begin{singlespace}
\noindent $\mathcal{H}-$holomorphic curves are solutions of a modified
pseudoholomorphic curve equation involving a harmonic $1-$form as
perturbation term. Following \cite{key-11} we establish an asymptotic
behavior of a sequence of finite energy $\mathcal{H}-$holomorphic
cylinders with small $d\alpha-$energies. Our results can be seen
as a first step toward establishing the compactness of the moduli
space of $\mathcal{H}-$holomorphic curves, which in turns, due to
the program initiated in \cite{key-4}, can be used for proving the
generalized Weinstein conjecture. 
\end{singlespace}
\end{abstract}
\begin{singlespace}
\noindent \tableofcontents{}
\end{singlespace}

\section{\label{sec:Introduction}Introduction}

\begin{singlespace}
\noindent Let $(M,\alpha)$ be a closed, contact $3-$dimensional
manifold, $\xi=\ker(\alpha)$ the contact structure, and $X_{\alpha}$
the Reeb vector field, defined by $\iota_{X_{\alpha}}\alpha\equiv1$
and $\iota_{X_{\alpha}}d\alpha\equiv0$. Denote by $\pi_{\alpha}:TM\rightarrow\xi$
the projection along the Reeb vector field. By definition, it is obvious
that the Reeb vector field is transverse to the contact structure
so that the tangent bundle splits as $TM=\mathbb{R}X_{\alpha}\oplus\xi$.
Note that $(\xi,d\alpha)$ is a symplectic vector bundle. We choose
a complex structure $J_{\xi}$ on $\xi$ compatible to $d\alpha$, i.e. such that $g_{J_{\xi}}(\cdot,\cdot)=d\alpha(\cdot,J_{\xi}\cdot)$
defines a smooth fiberwise metric on the vector bundle $\xi\rightarrow M$.
The complex structure $J_{\xi}$ on $\xi$ extends to a complex structure
$J$ on $T(\mathbb{R}\times M)$, defined by 
\begin{equation}
J(v,w)=(-\alpha(v),J_{\xi}\circ\pi_{\alpha}w+vX_{\alpha})\label{eq:JJJ}
\end{equation}
for all $(v,w)\in T(\mathbb{R}\times M)\cong T\mathbb{R}\oplus TM$.
This $J$ defines thus an  $\mathbb{R}-$invariant almost complex structure on $\mathbb{R}\times M$. Using the splitting of $TM$, the fibrewise metric $g_{J_\xi}$ on $\xi$
extends to a $\mathbb{R}$-invariant Riemannian metric $g$ on $\mathbb{R}\times M$ given
by $g=dr\otimes dr+\alpha\otimes \alpha+g_{J_\xi}$; here $r$ is the $\mathbb{R}-$coordinate
on $\mathbb{R}\times M$. As in \cite{key-11}, let $\mathcal{P}\subset\mathbb{R}$
be the set 
\[
\mathcal{P}=\{0\}\cup\{T>0\mid\text{there exists a }T-\text{periodic orbit of }X_{\alpha}\}.
\]
In the following we assume that all periodic orbits of the Reeb vector
field are non-degenerate, i.e. the linearization of the flow of $X_{\alpha}$,
restricted to the contact structure along the periodic orbit, does
not contain $1$ in its spectrum. A non-degenerate $T-$periodic orbit
is isolated among the periodic orbits with period close to $T$ \cite{key-6}.
Due to the non-degeneracy condition, we define for each $E_{0}>0$,
\begin{equation}
\hbar_{E_{0}}=\min\{|T_{1}-T_{2}|\mid T_{1},T_{2}\in\mathcal{P},T_{1},T_{2}\leq E_{0},T_{1}\not=T_{2}\}.\label{eq:Def_hbar}
\end{equation}
Note that $\hbar_{E_{0}}$ depends on $E_{0}$ and is positive. If
$E_{1}\geq E_{2}$, then $\hbar_{E_{1}}\leq\hbar_{E_{2}}$.

\noindent A pseudoholomorphic curve (or $J-$holomorphic curve) is
a smooth map $u=(a,f):S\rightarrow\mathbb{R}\times M$, where $(S,j)$
is a Riemann surface such that 
\begin{equation}
J(u)\circ du=du\circ j.\label{eq:pseudohol_eq_compact}
\end{equation}
Projecting the above equation to the contact structure and to the
Reeb direction equation (\ref{eq:pseudohol_eq_compact}) yields 
\begin{equation}
\begin{array}{rl}
\pi_{\alpha}df\circ j & =J_{\xi}(u)\circ\pi_{\alpha}df,\\
f^{*}\alpha\circ j & =da.
\end{array}\label{eq:split_of_pseudoholomorphic_curve}
\end{equation}
Further on, with 
\[
E_{\text{H}}(u;S)=\sup_{\varphi\in\mathcal{A}}\int_{S}u^{*}d(\varphi\alpha)
\]
being the Hofer energy, we assume the finite Hofer energy condition
$E_{\text{H}}(u;S)<+\infty$. Here $\mathcal{A}$ is the set of smooth
functions $\varphi:\mathbb{R}\rightarrow[0,1]$ with $\varphi'(r)\geq0$.
By a local computation it can be easily checked that the integrand
of the Hofer energy is non-negative.

\noindent In \cite{key-11}, Hofer, Wysocki and Zehnder studied pseudoholomorphic
cylinders $u=(a,f):[-R,R]\times S^{1}\rightarrow\mathbb{R}\times M$
with a finite Hofer energy and a small $d\alpha-$energy, defined
by 
\[
E_{d\alpha}(u;[-R,R]\times S^{1})=\int_{[-R,R]\times S^{1}}f^{*}d\alpha.
\]
It has been shown that for sufficiently large $R>0$ and a sufficiently
small $d\alpha-$energy, the curve $u$ is either close to a point
in $\mathbb{R}\times M$, in the case of vanishing center action,
or close to a cylinder over a periodic orbit, in the case of non-zero
center action. In the later case, precise estimates by determining
the shape of the cylinder have been derived. It should be pointed
out that these results have been used in the proof of the SFT compactness
theorem \cite{key-1}, \cite{key-10}.

\noindent In this paper we will derive similar results for $\mathcal{H}-$holomorphic
cylinders and establish a notion of convergence under certain conditions.
A smooth map $u=(a,f):S\rightarrow\mathbb{R}\times M$, defined on
a Riemann surface $(S,j)$, together with a harmonic $1-$form $\gamma$
on $S$ (i.e., $\gamma$ satisfies $d\gamma=d(\gamma\circ j)=0$)
is called a $\mathcal{H}-$holomorphic curve if 
\begin{equation}
\begin{array}{cl}
\pi_{\alpha}df\circ j & =J_{\xi}(u)\circ\pi_{\alpha}df,\\
f^{*}\alpha\circ j & =da+\gamma.
\end{array}\label{eq:H_hol}
\end{equation}
The $\mathcal{H}-$holomorphic curve equation differs from the pseudoholomorphic
curve equation due to the presence of the harmonic $1-$form $\gamma$
in the second equation of (\ref{eq:H_hol}).
Because of this perturbation, the Hofer energy of such a map may have
a somewhere negative integrand. To overcome this drawback, we define the \emph{$\alpha-$energy} resp. the \emph{$d\alpha-$energy}
of a $\mathcal{H}$-holomorphic curve $u$ on $S$ as
\[
E_\alpha(u;S)=\sup_{\varphi\in\mathcal{A}}\int_{S}\varphi'(a)da\circ j\wedge da\quad\text{ and } \quad E_{d\alpha}(u;S)=\int_{S}f^{*}d\alpha,
\]
and finally the \emph{energy of $u$} as $E(u;S)=E_\alpha(u;S)+E_{d\alpha}(u;S)$. If the perturbation $1-$form $\gamma$
vanishes, then the energy and the Hofer energy of $u$ mutually bound each other.
As an additional condition we require $u$ to have finite energy,
i.e. $E(u;S)<+\infty$. This modification of the pseudoholomorphic
curve equation was first suggested by Hofer in \cite{key-3} and used
extensively in the program initiated by Abbas et al. \cite{key-4}
to prove the generalized Weinstein conjecture in dimension three.
However, due to lack of a compactness result of the moduli space
of $\mathcal{H}-$holomorphic curves, the generalized Weinstein conjecture
has been proved only in the planar case, i.e. when the leaves of the
holomorphic open book decomposition \cite{key-18} have genus zero.
In this regard, our results can be seen as a first step towards establishing
compactness of the moduli space of $\mathcal{H}-$holomorphic
curves.

\noindent The $L^{2}-$norm of the harmonic perturbation $1-$form
$\gamma$ is defined as 
\[
\left\Vert \gamma\right\Vert _{L^{2}(S)}^{2}=\int_{S}\gamma\circ j\wedge\gamma.
\]

\noindent In the following, we consider finite energy $\mathcal{H}-$holomorphic
cylinders $u=(a,f):[-R,R]\times S^{1}\rightarrow\mathbb{R}\times M$
with harmonic perturbation $\gamma$. We define the period of $\gamma$
over the cylinder as 
\begin{equation}
P(\gamma)=\int_{\{0\}\times S^{1}}\gamma\label{eq:period_over_the_cylinder}
\end{equation}
and the co-period by 
\[
S(\gamma)=\int_{\{0\}\times S^{1}}\gamma\circ i.
\]
Furthermore, the conformal period is defined as $\tau=P(\gamma)R$
while the conformal co-period is defined by $\sigma=S(\gamma)R$.

\noindent The goal of our analysis is to establish the asymptotic
behaviour of finite energy $\mathcal{H}-$holomorphic cylinders with
a uniformly small $d\alpha-$energy and harmonic perturbation $1-$forms
having uniformly bounded $L^{2}-$norms and uniformly bounded conformal
periods and co-periods. \\
More specifically, for constants $E_{0},C_{0},C_{1},\delta_{1}$
and $C>0$ we consider for a $\mathcal{H}$-holomorphic cylinder
$u=(a,f):[-R,R]\times S^{1}\rightarrow\mathbb{R}\times M$ with harmonic
perturbation $\gamma$ the following conditions:
\end{singlespace}
\begin{description}
\item [{A0}] $E(u,[-R,R]\times S^{1})\leq E_{0}$ and $\left\Vert \gamma\right\Vert _{L^{2}([-R,R]\times S^{1})}^{2}\leq C_{0}$.
\item [{A1}] $\left\Vert df(z)\right\Vert :=\sup_{\left\Vert v\right\Vert _{\text{eucl.}}=1}\left\Vert df(z)v\right\Vert _{g}\leq C_{1}$
for all $z\in([-R,-R+\delta_{1}]\amalg[R-\delta_{1},R])\times S^{1}$.
\item [{A2}] $E_{d\alpha}(u,[-R,R]\times S^{1})\leq\hbar/2$, where $\hbar:=\hbar_{\tilde{E}_{0}}$
in the sense of (\ref{eq:Def_hbar}) for $\tilde{E}_{0}:=2C_{1}+E_{0}$.
\item [{A3}] $|\tau|,|\sigma|\leq C$, where $\tau$ (resp. $\sigma$)
is the conformal (co-)period of $\gamma$ on $[-R,R]\times S^{1}$.
\end{description}
For the rest of paper, we fix $E_{0},C_{0},C_{1},\delta_{1}$
and $C>0$. When we say that \emph{a $\mathcal{H}$-holomorphic cylinder $u$ satisfies $A0$-$A3$}, 
it always means that \emph{the $\mathcal{H}$-holomorphic cylinder $u$ with harmonic perturbation $\gamma$ satisfies $A0$-$A3$ 
for the fixed constants $E_{0},C_{0},C_{1},\delta_{1}$ and $C>0$}.

\begin{singlespace}
\noindent The task is to describe the asymptotic behavior of $\mathcal{H}$-holomorphic cylinders
satisfying $A0$-$A3$, whose domain cylinders $[-R,R]\times S^1$ have arbitrary large conformal modulus (that is, we consider families
where $R>0$ is arbitrary large). 
More precisely, we derive the following results. For a $\mathcal{H}-$holomorphic
cylinder $u=(a,f):[-R,R]\times S^{1}\rightarrow\mathbb{R}\times M$
satisfying $A0$-$A3$ with $R$ sufficiently large, we consider its center
action similarly as in \cite{key-11}. The center action of $u$ is defined as the unique element
$A(u)\in\mathcal{P}$ which is sufficiently close to 
\[
\left|\int_{S^{1}}u(0)^{*}\alpha\right|.
\]
For more details the reader might consult Section \ref{subsec:Center-action}.
It follows from Theorem \ref{thm:For-all-numbers} that either $A(u)=
0$ or $A(u)\geq \hbar$. For a sequence $u_n$ of $\mathcal{H}$-holomorphic cylinders $u_n$ satisfying $A0$-$A3$ we distinguish between two
cases: the first case is when there exists a subsequence of $u_{n}$
with vanishing center action and the second case is when there is
no subsequence of $u_{n}$ with this property. In this regard, Theorem
\ref{thm:With-the-same} deals with the asymptotic behavior in the
case of vanishing center action, while Theorem \ref{thm:With-the-same-1}
deals with the asymptotic behavior in the case of positive center
action.

\noindent Before stating the main result we construct a sequence of
diffeomorphisms $\theta_{n}:[-R_{n},R_{n}]\times S^1\rightarrow[-1,1]\times S^1$ with
certain properties. We first construct diffeomorphisms $\check{\theta}_n:[-R_n,R_n]\rightarrow [-1,1]$, the $\theta_n$ are then
obtained as $\check{\theta}_n\times id_{S^1}$.
The construction is similar to that given in \cite{key-10}
and will enable us to describe the $C^{0}-$convergence. 
\end{singlespace}
\begin{rem}
\begin{singlespace}
\noindent \label{rem:For-every-sequence}For all sequences $R_n, h_{n}\in\mathbb{R}_{>0}$
with $h_{n}<R_{n}$ and $h_{n},R_{n}/h_{n}\rightarrow\infty$
as $n\rightarrow\infty$, consider a sequence of diffeomorphisms $\check{\theta}_{n}:[-R_{n},R_{n}]\rightarrow[-1,1]$
with the following properties: 
\end{singlespace}
\begin{enumerate}
\begin{singlespace}
\item The left and right shifts $\check{\theta}_{n}^{\pm}(s):=\check{\theta}_{n}(s\pm R_{n})$ restrict to maps 
$\check{\theta}_{n}^+:[0,h_{n}]\rightarrow[-1,-1/2]$ resp. $\check{\theta}_{n}^-:[-h_{n},0]\rightarrow[1/2,1]$,
which converge in $C_{\text{loc}}^{\infty}$ to diffeomorphisms
$\check{\theta}^{-}:[0,\infty)\rightarrow[-1,-1/2)$ and $\check{\theta}^{+}:(-\infty,0]\rightarrow(1/2,1]$,
respectively. 
\item $\check{\theta}_{n}$ is a linear
diffeomorphism on $[-R_{n}+h_{n},R_{n}-h_{n}]$. More precisely, we require
\begin{align*}
\check{\theta}_{n}:\text{Op}([-R_{n}+h_{n},R_{n}-h_{n}]) & \rightarrow\text{Op}\left(\left[-\frac{1}{2},\frac{1}{2}\right]\right)\\
s & \mapsto\frac{s}{2(R_{n}-h_{n})},
\end{align*}
where $\text{Op}([-R_{n}+h_{n},R_{n}-h_{n}])$ and $\text{Op}([-1/2,1/2])$
are sufficiently small neighborhoods of the intervals $[-R_{n}+h_{n},R_{n}-h_{n}]$
and $[-1/2,1/2]$, respectively. 
\item These maps $\check{\theta}_n$ give rise to the desired diffeomorphisms $\theta_n:=\check{\theta}_n\times id_{S^1}:[-R_{n},R_{n}]\times S^1\rightarrow[-1,1]\times S^1$.
Similarly, we define $\theta_n^\pm:=\check{\theta}^\pm_n\times id_{S^1}$ and $\theta^\pm:=\check{\theta}^\pm\times id_{S^1}$.
\end{singlespace}
\end{enumerate}
\end{rem}

\begin{thm}
\begin{singlespace}
\noindent \label{thm:With-the-same}Let $u_{n}:[-R_{n},R_{n}]\times S^{1}\rightarrow\mathbb{R}\times M$
be a sequence of $\mathcal{H}-$holomorphic cylinders with harmonic
perturbations $\gamma_{n}$ satisfying $A0$-$A3$. Assume that $R_{n}\rightarrow\infty$
and that each $u_{n}$ has vanishing center action.\\
Then there exists a subsequence of $u_{n}$ (for each
$n$ suitably shifted in the $\mathbb{R}-$coordinate), still denoted
by $u_{n}$, $\mathcal{H}-$holomorphic cylinders $u^{\pm}$ with exact 
harmonic perturbations $d\Gamma^\pm$ defined
on $(-\infty,0]\times S^{1}$ and $[0,\infty)\times S^{1}$ respectively,
$\sigma,\tau\in\mathbb{R}$ and a point $w_{f}\in M$
such that for every sequence $h_{n}\in\mathbb{R}_{>0}$ with $h_n,R_n/h_n\rightarrow \infty$ the following $C_{\text{loc}}^{\infty}-$
and $C^{0}-$convergence results hold: $\sigma_{n}:=S_{n}R_{n}\rightarrow\sigma,\tau_{n}:=P_{n}R_{n}\rightarrow\tau$ and 

\noindent $C_{\text{loc}}^{\infty}-$convergence: 
\end{singlespace}
\begin{enumerate}
\begin{singlespace}
\item 
For any sequence $s_{n}\in[-R_{n}+h_{n},R_{n}-h_{n}]$ for which $s_{n}/R_{n}$
converges to $\kappa\in[-1,1]$, the shifted maps $u_{n}(s+s_{n},t)$,
defined on $[-R_{n}+h_{n}-s_{n},R_{n}-h_{n}-s_{n}]\times S^{1}$,
converge in $C_{\text{loc}}^{\infty}$ on $\mathbb{R}\times S^1$ to the constant map $(-\sigma \kappa,\phi_{-\tau\kappa}^{\alpha}(w_{f}))$.
The shifted harmonic $1-$forms $\gamma_{n}(s+s_{n},t)$
converge in $C_{\text{loc}}^{\infty}$ to $0$. 
\item The left shifts $u_{n}^{-}(s,t):=u_{n}(s-R_{n},t)$,
defined on $[0,h_{n})\times S^{1}$, converge in $C_{\text{loc}}^{\infty}$
to $u^{-}=(a^{-},f^{-})$ defined
on $[0,+\infty)\times S^{1}$ and $\lim_{s\rightarrow \infty}u^{-}(s,\cdot)=(\sigma,\phi_{\tau}^{\alpha}(w_{f}))$ in $C^\infty(S^1,\mathbb{R}\times M)$. 
The left shifted harmonic
$1-$forms $\gamma_{n}^{-}$ converge in $C_{\text{loc}}^{\infty}$
to $d\Gamma^{-}$ defined on $[0,+\infty)\times S^{1}$ with $\|d\Gamma^-\|^2_{L^2([0,\infty)\times S^1)}\leq C_0$. 
\item The right shifts $u_{n}^{+}(s,t):=u_{n}(s+R_{n},t)$
defined on $(-h_{n},0]\times S^{1}$, converge in $C_{\text{loc}}^{\infty}$
to $u^{+}=(a^{+},f^{+})$, defined
on $(-\infty,0]\times S^{1}$ and $\lim_{s\rightarrow -\infty} u^{+}(s,\cdot)=(-\sigma,\phi_{-\tau}^{\alpha}(w_{f}))$ in $C^\infty(S^1,\mathbb{R}\times S^1)$. The right shifted harmonic
$1-$forms $\gamma_{n}^{+}$ converge in $C_{\text{loc}}^{\infty}$
to $d\Gamma^{+}$ defined on $(-\infty,0]\times S^{1}$ with $\|d\Gamma^+\|^2_{L^2((-\infty,0]\times S^1)}\leq C_0$.
\end{singlespace}
\end{enumerate}
\begin{singlespace}
\noindent $C^{0}-$convergence:  For every sequence
of diffeomorphisms $\theta_{n}:[-R_{n},R_{n}]\times S^1\rightarrow[-1,1] \times S^1$
as in Remark \ref{rem:For-every-sequence}
\end{singlespace}
\begin{enumerate}
\begin{singlespace}
\item The maps $v_{n}:[-1/2,1/2]\times S^{1}\rightarrow\mathbb{R}\times M$
defined by $v_{n}=u_{n}\circ \theta_{n}^{-1}$, converge in
$C^{0}$ to the map \\$(s,t)\mapsto (-2\sigma s,\phi_{-2\tau s}^{\alpha}(w_{f}))$. 
\item The maps $v_{n}^{-}:[-1,-1/2]\times S^{1}\rightarrow\mathbb{R}\times M$
defined by $v_{n}^{-}=u_{n}\circ (\theta_{n}^{-})^{-1}$, converge
in $C^{0}$ to a map\\ $v^{-}:[-1,-1/2]\times S^{1}\rightarrow\mathbb{R}\times M$
such that $v^{-}=u^{-}\circ(\theta^{-})^{-1}$ and $v^{-}(-1/2,t)=(\sigma,\phi_{\tau}^{\alpha}(w_{f}))$. 
\item The maps $v_{n}^{+}:[1/2,1]\times S^{1}\rightarrow\mathbb{R}\times M$
defined by $v_{n}^{+}=u_{n}\circ (\theta_{n}^{+})^{-1}$, converge
in $C^{0}$ to a map\\ $v^{+}:[1/2,1]\times S^{1}\rightarrow\mathbb{R}\times M$
such that $v^{+}(s,t)=u^{+}\circ (\theta^{+})^{-1}$ and $v^{+}(1/2,t)=(-\sigma,\phi_{-\tau}^{\alpha}(w_{f}))$. 
\end{singlespace}
\end{enumerate}
\end{thm}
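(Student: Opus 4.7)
The plan is to follow the scheme of \cite{key-11}, adapted to account for the harmonic perturbation $\gamma_n$. By A3 one may pass to a subsequence with $\sigma_n\to\sigma$ and $\tau_n\to\tau$. Splitting $\gamma_n = S_n\,ds + P_n\,dt + \gamma_n^{\perp}$ with respect to the flat metric on $[-R_n,R_n]\times S^1$, where $\gamma_n^{\perp}$ has vanishing period and co-period on $\{0\}\times S^1$, the constants $P_n,S_n=O(R_n^{-1})$ go to zero, while the orthogonal component is controlled in $L^2$ by A0 and, via separation of variables on the cylinder, decays exponentially away from both ends. After a further extraction, $\gamma_n^{\perp}$ converges in $C^\infty_{\text{loc}}$ on the shifted half-cylinders to harmonic $1$-forms on $[0,\infty)\times S^1$ and $(-\infty,0]\times S^1$; finite $L^2$-mass on a half-cylinder forces these limits to be exact, yielding $d\Gamma^{\pm}$ with $\|d\Gamma^\pm\|_{L^2}^2\leq C_0$ by Fatou.

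Next one upgrades this to convergence of the maps $u_n$ themselves. The boundary strips carry a $C^1$-bound by A1, and in the interior I would run a standard bubbling-off argument: if $\|du_n\|$ blows up at an interior point, rescaling produces a non-constant pseudoholomorphic plane or sphere (the pulled-back perturbation $\epsilon_n\gamma_n$ vanishes as $\epsilon_n\to 0$), which carries a definite amount of $d\alpha$-energy; this is ruled out by A2 combined with the quantization encoded through $\hbar$. With uniform $C^1$-bounds in hand, ellipticity of the system (\ref{eq:H_hol}) and elliptic bootstrapping give uniform $C^k_{\text{loc}}$-bounds, and Arzel\`a--Ascoli delivers the claimed $C^\infty_{\text{loc}}$-convergent subsequences on the three shifted regions, after an initial $\mathbb{R}$-coordinate shift of each $u_n$ normalizing the mean of $a_n(0,\cdot)$ to zero.

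The third step identifies the limits. For the middle part, fix $s_n$ with $s_n/R_n\to\kappa$; by step one, $\gamma_n(\cdot+s_n,\cdot)\to 0$ in $C^\infty_{\text{loc}}$, so the limit of $u_n(\cdot+s_n,\cdot)$ is an honest pseudoholomorphic cylinder on $\mathbb{R}\times S^1$. Vanishing center action together with the absence of bubbling forces its $d\alpha$- and $\alpha$-energy to vanish, hence the limit is constant in $\mathbb{R}\times M$. The value of this constant is determined by the slow drift produced by $S_n\,ds$ and $P_n\,dt$: equation (\ref{eq:H_hol}) forces $a_n$ to be essentially linear in $s$ with slope $-S_n$, and $f_n$ to follow the Reeb orbit through $w_f:=\lim f_n(0,\cdot)$ with slow parameter $-P_n s$, giving $(-\sigma\kappa,\phi^{\alpha}_{-\tau\kappa}(w_f))$ in the limit. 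For the end pieces, the left and right shifts converge in $C^\infty_{\text{loc}}$ to finite-energy $\mathcal{H}$-holomorphic maps $u^{\pm}$ on half-cylinders with exact harmonic perturbations $d\Gamma^{\pm}$; an HWZ-style asymptotic analysis on half-cylinders then yields the $C^\infty(S^1)$-limits at $\pm\infty$, and matching them with the middle limits at $\kappa=\mp 1$ forces them to equal $(\pm\sigma,\phi^{\alpha}_{\pm\tau}(w_f))$.

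Finally, the $C^0$ statements are essentially bookkeeping. On $[-1/2,1/2]$ the map $\theta_n^{-1}$ is linear of slope $2(R_n-h_n)$, so $\theta_n^{-1}(x)/R_n\to 2x$; substituting $\kappa=2x$ into the middle-part limit produces $(-2\sigma x,\phi^{\alpha}_{-2\tau x}(w_f))$, and uniform convergence on $[-1/2,1/2]$ follows from the $C^\infty_{\text{loc}}$-convergence holding uniformly in $\kappa$. On $[\pm 1/2,\pm 1]$ the $C^0$-limit is $u^{\pm}\circ(\theta^{\pm})^{-1}$, matching at $x=\pm 1/2$ by the asymptotic values just computed. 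The main difficulty I anticipate is the bubbling-off analysis: although a bubble's perturbation vanishes under rescaling, so the bubble is honestly pseudoholomorphic, one must carefully check that the $\mathcal{H}$-holomorphic $\alpha$-energy (not the Hofer energy, whose integrand can be negative in this setting) controls the relevant integrals during the rescaling, so that the HWZ quantization can still be invoked. The matching of the middle-part asymptotics with the half-cylinder limits at $\pm\infty$ is the other delicate point, and relies essentially on the exactness of $d\Gamma^{\pm}$ together with the small $d\alpha$-energy.
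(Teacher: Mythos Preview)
Your outline is reasonable, but it diverges from the paper's argument at the crucial step, and that divergence leaves a real gap.

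The paper does \emph{not} analyze the $\mathcal{H}$-holomorphic curves $u_n$ directly. Its central device is the Reeb-flow transformation $\overline{f}_n(s,t)=\phi^\alpha_{P_ns}(f_n(s,t))$, $\overline{a}_n=a_n+\Gamma_n$ with $d\Gamma_n=\gamma_n-P_n\,dt$, which converts $u_n$ into an honest pseudoholomorphic curve $\overline{u}_n$ for a domain-dependent almost complex structure $J_{P_ns}$; by A3 the parameter $P_ns$ stays in the compact interval $[-C,C]$. All of the HWZ-type analysis (bubbling, monotonicity, the diameter estimate showing $\overline{u}_n$ collapses to a single point $w=(0,w_f)$) is done for $\overline{u}_n$, while the harmonic functions $\Gamma_n=S_ns+\tilde{\Gamma}_n$ are handled separately via Fourier expansion. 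The statements for $u_n$ then fall out by undoing the transformation: $f_n(s_n,t)=\phi^\alpha_{-P_ns_n}(\overline{f}_n(s_n,t))\to\phi^\alpha_{-\tau\kappa}(w_f)$ and $a_n(s_n,t)=\overline{a}_n(s_n,t)-\Gamma_n(s_n,t)\to 0-\sigma\kappa$.

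Your third step, identifying the constant limit as $(-\sigma\kappa,\phi^\alpha_{-\tau\kappa}(w_f))$, is exactly where this transformation does its work, and your ``slow drift'' sentence is not a proof. Knowing that $u_n(\cdot+s_n,\cdot)$ converges in $C^\infty_{\mathrm{loc}}$ to \emph{some} constant tells you nothing about \emph{which} constant unless you can track $u_n$ across the whole interval $[0,s_n]$ of length $\sim R_n$. For the $M$-component you would need to show that the path $s\mapsto f_n(s,t)$ stays uniformly close to the Reeb trajectory $s\mapsto\phi^\alpha_{-P_ns}(f_n(0,t))$; controlling $\pi_\alpha\partial_s f_n$ via the small $d\alpha$-energy is not enough, since you must also integrate the Reeb component $\alpha(\partial_sf_n)=-\partial_ta_n-\gamma_n(\partial_t)$ over $[0,s_n]$, and the term $\int_0^{s_n}\partial_ta_n\,ds$ cannot be bounded without already knowing the global behaviour of $a_n$. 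The Reeb-flow transformation is precisely the device that decouples this circularity, and once you write it down you have reproduced the paper's proof.

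Your worry about the $\alpha$-energy during bubbling, by contrast, is not an actual obstacle: the integrand $\varphi'(a)\,da\circ j\wedge da=\varphi'(a)\,|da|^2\,ds\wedge dt$ is nonnegative and behaves correctly under rescaling and $\mathbb{R}$-shift, so a bubble off $u_n$ is indeed a finite-Hofer-energy plane. The paper still transforms first, because this also makes the monotonicity lemma and the uniform-in-$\kappa$ diameter estimate straightforward.
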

\begin{singlespace}
\noindent An immediate Corollary is 
\end{singlespace}
\begin{cor}
\begin{singlespace}
\noindent \label{cor:Under-the-same}Under the same hypothesis of
Theorem \ref{thm:With-the-same} the following $C_{\text{loc}}^{\infty}-$convergence
results hold. 
\end{singlespace}
\begin{enumerate}
\begin{singlespace}
\item The maps $v_{n}^{-}$ converge in $C_{\text{loc}}^{\infty}$
to $v^{-}$, where $\lim_{s\rightarrow -1/2}v^{-}(s,\cdot)=(\sigma,\phi_{\tau}^{\alpha}(w_{f}))$ in $C^\infty(S^1,\mathbb{R}\times M)$. The harmonic $1-$forms $[(\theta_{n}^{-})^{-1}]^{*}\gamma_{n}^{-}$
with respect to the complex structure $[(\theta_{n}^{-})^{-1}]^{*}i$
converge in $C_{\text{loc}}^{\infty}$ to a harmonic $1-$form $[(\theta^{-})^{-1}]^{*}d\Gamma^{-}$
with respect to the complex structure $[(\theta^{-})^{-1}]^{*}i$.
\item The maps $v_{n}^{+}$ converge in $C_{\text{loc}}^{\infty}$
to $v^{+}$, where $\lim_{s\rightarrow 1/2}v^{+}(s,\cdot)=(-\sigma,\phi_{-\tau}^{\alpha}(w_{f}))$ in $C^\infty(S^1,\mathbb{R}\times M)$. The harmonic $1-$forms $[(\theta_{n}^{+})^{-1}]^{*}\gamma_{n}^{-}$
with respect to the complex structure $[(\theta_{n}^{+})^{-1}]^{*}i$
converge in $C_{\text{loc}}^{\infty}$ to a harmonic $1-$form $[(\theta^{+})^{-1}]^{*}d\Gamma^{+}$
with respect to the complex structure $[(\theta^{+})^{-1}]^{*}i$.
\end{singlespace}
\end{enumerate}
\end{cor}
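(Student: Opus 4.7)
The plan is to deduce both conclusions of the corollary by combining the $C_{\text{loc}}^{\infty}$-convergences already established in Theorem \ref{thm:With-the-same} with the $C_{\text{loc}}^{\infty}$-convergence of the reparametrizations $\theta_n^\pm\to\theta^\pm$ supplied by Remark \ref{rem:For-every-sequence}. The main point is simply to transport these convergences through compositions and pullbacks by the diffeomorphisms $(\theta_n^\pm)^{-1}$.

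First, I would check that the inverses $(\theta_n^\pm)^{-1}$ converge in $C_{\text{loc}}^{\infty}$ on the half-open intervals $[-1,-1/2)$ resp. $(1/2,1]$ to $(\theta^\pm)^{-1}$. This is standard: on a compact subinterval $K$ of $[-1,-1/2)$, the derivatives $(\check\theta_n^-)'$ are uniformly bounded away from zero for $n$ large by the $C_{\text{loc}}^{\infty}$-convergence to the diffeomorphism $\check\theta^-$, so the inverse function theorem yields convergence in every $C^k$ on $K$, and hence $C_{\text{loc}}^{\infty}$-convergence of the inverses. Moreover, for each compact $K\subset [-1,-1/2)$ the image $(\theta_n^-)^{-1}(K)$ is eventually contained in a compact subset of $[0,h_n)\times S^1$, which is the domain on which the left shifts $u_n^-$ from the theorem are defined and converge.

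By continuity of composition in the $C_{\text{loc}}^{\infty}$-topology, the maps $v_n^\pm=u_n\circ(\theta_n^\pm)^{-1}$ then converge in $C_{\text{loc}}^{\infty}$ on the open halves $[-1,-1/2)\times S^1$ resp. $(1/2,1]\times S^1$ to $v^\pm=u^\pm\circ(\theta^\pm)^{-1}$. The boundary values $\lim_{s\to -1/2}v^-(s,\cdot)=(\sigma,\phi_\tau^\alpha(w_f))$ and $\lim_{s\to 1/2}v^+(s,\cdot)=(-\sigma,\phi_{-\tau}^\alpha(w_f))$ are then read off from the asymptotic limits $\lim_{s\to\infty}u^-(s,\cdot)=(\sigma,\phi_\tau^\alpha(w_f))$ and $\lim_{s\to -\infty}u^+(s,\cdot)=(-\sigma,\phi_{-\tau}^\alpha(w_f))$ in Theorem \ref{thm:With-the-same}, since $(\theta^-)^{-1}(s)\to\infty$ as $s\to -1/2$ and $(\theta^+)^{-1}(s)\to -\infty$ as $s\to 1/2$.

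For the harmonic $1$-forms, the same argument applies with pullback in place of composition: $[(\theta_n^\pm)^{-1}]^*$ is continuous in $C_{\text{loc}}^{\infty}$, $\gamma_n^\pm\to d\Gamma^\pm$ in $C_{\text{loc}}^{\infty}$ by the theorem, and $(\theta_n^\pm)^{-1}\to(\theta^\pm)^{-1}$ in $C_{\text{loc}}^{\infty}$ by the preceding step. Harmonicity of the limit with respect to the pulled-back complex structure $[(\theta^\pm)^{-1}]^*i$ is automatic, since pullback commutes with $d$ and with post-composition by $j$. I do not expect any serious obstacle here; the only subtlety is that all $C_{\text{loc}}^{\infty}$-convergences live on the open half-intervals, as the diffeomorphisms $\check\theta^\pm$ do not extend smoothly to the boundary points $s=\pm 1/2$, and the behavior there is controlled separately by the $C^0$-part of Theorem \ref{thm:With-the-same}.
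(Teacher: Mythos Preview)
Your argument is correct and is essentially the same as the paper's (very terse) proof: the paper simply says that the convergence $v_n^-\to v^-$ follows directly from the $C^\infty_{\text{loc}}$-results in Theorem \ref{thm:With-the-same}, and that the convergence of the pulled-back harmonic perturbations follows from Corollary \ref{cor:After-going-over}. Your write-up just unpacks this by making explicit the $C^\infty_{\text{loc}}$-convergence of the inverses $(\theta_n^\pm)^{-1}\to(\theta^\pm)^{-1}$ and the continuity of composition/pullback; the only slip is notational---you should write $v_n^\pm=u_n^\pm\circ(\theta_n^\pm)^{-1}$ with the shifted maps $u_n^\pm$, not $u_n$.
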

\begin{singlespace}

\begin{defn}
Let $(r_-,r_+)\subset \mathbb{R}$ be an interval and $u,v:(r_-,r_+)\times S^1\rightarrow \mathbb{R}\times M$ be smooth maps. We say $u$ and $v$ have \emph{the same asymptotics as $s\rightarrow r_\pm$},
if for some (or equivalently, any) metric $d_{C^\infty} $ metrizing the Whitney $C^\infty$-topology on $C^\infty(S^1,\mathbb{R}\times M)$, 
which is invariant under the $\mathbb{R}$-action on $\mathbb{R}\times M$, we have
$$\lim_{s\rightarrow r_\pm}
d_{C^\infty} (u(s,\cdot),v(s,\cdot))=0.$$
\end{defn}

\noindent Theorem \ref{thm:With-the-same} concerns $\mathcal{H}$-holomorphic curves with vanishing center action. If instead the maps $u_n$ (after passing to a subsequence) have all positive center action, we have the following. 
\end{singlespace}
\begin{thm}
\begin{singlespace}
\noindent \label{thm:With-the-same-1}Let $u_{n}:[-R_{n},R_{n}]\times S^{1}\rightarrow\mathbb{R}\times M$
be a sequence of $\mathcal{H}-$holomorphic cylinders with harmonic
perturbations $\gamma_{n}$ satisfying $A0$-$A3$. Assume that $R_{n}\rightarrow\infty$
and that all the $u_{n}$ have positive center action.\\
Then there exist a subsequence of $u_{n}$ (for each
$n$ suitably shifted in the $\mathbb{R}-$coordinate), still denoted
by $u_{n}$, $\mathcal{H}-$holomorphic half cylinders $u^{\pm}$ with exact harmonic perturbations $d\Gamma^\pm$
defined on $(-\infty,0]\times S^{1}$ and $[0,\infty)\times S^{1}$
respectively, $T\in \mathbb{R}\setminus\{0\}$ and a $|T|$-periodic Reeb orbit $x$ and
$\sigma,\tau\in\mathbb{R}$
such that for every sequence $h_{n}\in\mathbb{R}_{>0}$ with $h_n,R_n/h_n\rightarrow \infty$, the following convergence
results hold: $\sigma_{n}:=S_{n}R_{n}\rightarrow\sigma,\tau_{n}:=P_{n}R_{n}\rightarrow\tau$ and

\noindent $C_{\text{loc}}^{\infty}-$convergence: 
\end{singlespace}
\begin{enumerate}
\begin{singlespace}
\item 
For any sequence $s_{n}\in[-R_{n}+h_{n},R_{n}-h_{n}]$ for which $s_{n}/R_{n}$
converges to $\kappa\in[-1,1]$, the shifted maps $u_{n}(s+s_{n},t)-Ts_n$,
defined on $[-R_{n}+h_{n}-s_{n},R_{n}-h_{n}-s_{n}]\times S^{1}$,
converge in $C_{\text{loc}}^{\infty}$ on $\mathbb{R}\times S^1$ to the map $(s,t)\rightarrow (Ts-\sigma\kappa,\phi_{-\tau\kappa}^{\alpha}(x(Tt))=x(Tt-\tau\kappa))$ on $\mathbb{R}\times S^1$.
The shifted harmonic $1-$forms $\gamma_{n}(s+s_{n},t)$
converge in $C_{\text{loc}}^{\infty}$ to $0$. 
\item The left shifts $u_{n}^{-}(s,t):=u_n(s-R_n,t)+TR_n$, defined on $[0,h_{n})\times S^{1}$, 
converge in $C_{\text{loc}}^{\infty}$ to $u^{-}=(a^{-},f^{-})$ defined on $[0,+\infty)\times S^{1}$.
The curve $u^{-}$ has the same asymptotics as $(Ts+\sigma,\phi_{\tau}^{\alpha}(x(Tt))=x(Tt+\tau))$ as $s\rightarrow \infty$.
The left shifted harmonic $1-$forms $\gamma_{n}^{-}$
converge in $C_{\text{loc}}^{\infty}$ to
$d\Gamma^{-}$ defined on $[0,+\infty)\times S^{1}$ with $\|d\Gamma^-\|^2_{L^2([0,\infty)\times S^1)}\leq C_0$.
\item The right shifts $u_{n}^{+}(s,t):=u(s+R_n,t)-TR_n$, defined on $(-h_{n},0]\times S^{1}$ converge in $C_{\text{loc}}^{\infty}$ to $u^{+}=(a^{+},f^{+})$, defined on $(-\infty,0]\times S^{1}$.
The curve $u^{+}$ has the same asymptotics as $(Ts-\sigma,\phi_{-\tau}^{\alpha}(x(Tt))=x(Tt-\tau))$ as $s\rightarrow -\infty$.
The right shifted harmonic $1-$forms $\gamma_{n}^{+}$
converge in $C_{\text{loc}}^{\infty}$ to
$d\Gamma^{+}$ defined on $(-\infty,0]\times S^{1}$ with $\|d\Gamma^+\|^2_{L^2((-\infty,0]\times S^1)}\leq C_0$.
\end{singlespace}
\end{enumerate}
\begin{singlespace}
\noindent $C^{0}-$convergence: For every sequence
of diffeomorphisms $\theta_{n}:[-R_{n},R_{n}]\times S^1\rightarrow[-1,1]\times S^1$ as
in Remark \ref{rem:For-every-sequence}
\end{singlespace}
\begin{enumerate}
\begin{singlespace}
\item The maps $f_{n}\circ\theta_{n}^{-1}:[-1/2,1/2]\times S^{1}\rightarrow M$
converge in $C^{0}$ to $\phi_{-2\tau s}^{\alpha}(x(Tt))=x(Tt-2\tau s)$. 
\item The maps $f_{n}^{-}\circ(\theta_{n}^{-})^{-1}:[-1,-1/2]\times S^{1}\rightarrow M$
converge in $C^{0}$ to a map $f^{-}\circ(\theta^{-})^{-1}:[-1,-1/2]\times S^{1}\rightarrow M$
such that $f^{-}((\theta^{-})^{-1}(-1/2),t)=\phi_{\tau}^{\alpha}(x(Tt))=x(Tt+\tau)$. 
\item The maps $f_{n}^{+}\circ(\theta_{n}^{+})^{-1}:[1/2,1]\times S^{1}\rightarrow M$
converge in $C^{0}$ to a map $f^{+}\circ(\theta^{+})^{-1}:[1/2,1]\times S^{1}\rightarrow M$
such that $f^{+}((\theta^{+})^{-1}(1/2),t)=\phi_{-\tau}^{\alpha}(x(Tt))=x(Tt-\tau)$. 
\end{singlespace}
\item Set $r_n^-:=\inf_{t\in S^1} a_n(-\text{sgn}(T)R_n,t)$ and $r_n^+:=\sup_{t\in S^1}a_n(\text{sgn}(T)R_n,t)$ where $\text{sgn}(T):=T/|T|\in \{\pm 1\}$. \\
Then $r_n^+-r_n^-\rightarrow \infty$ and for every $R>0$ there exists $\rho>0$ and $N\in\mathbb{N}$
such that $a_{n}\circ\theta_{n}^{-1}(s,t)\in[r_{n}^{-}+R,r_{n}^{+}-R]$
for all $n\geq N$ and all $(s,t)\in[-\rho,\rho]\times S^{1}$.
\end{enumerate}
\end{thm}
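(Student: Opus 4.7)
The plan is to follow the same strategy as in the proof of Theorem \ref{thm:With-the-same}, but now subtract the linear $\mathbb{R}$-drift produced by the positive center action. First I would pass to a subsequence along which the center actions $A(u_n)\in\mathcal{P}$ converge to some $|T|\geq\hbar$, with the sign of $T$ recorded by the (stable) sign of $\int_{S^1}u_n(0)^*\alpha$. The natural corrected objects are then $(a_n(s,t)-Ts,\,f_n(s,t))$ on the middle and the shifted cylinders $u_n^\pm$ at the two ends. Combining A0 with the boundary gradient bound A1 and the small $d\alpha$-energy condition A2 (which rules out pseudoholomorphic sphere bubbling), I would obtain a uniform gradient bound on $[-R_n+\delta_1,R_n-\delta_1]\times S^1$ via the bubbling analysis developed earlier in the paper; Arzelà--Ascoli together with elliptic bootstrapping for (\ref{eq:H_hol}) then yield $C_{\text{loc}}^{\infty}$-subsequential limits for the corrected middle maps. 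For any shift $s_n$ with $s_n/R_n\to\kappa\in(-1,1)$, the limit is an $\mathcal{H}$-holomorphic cylinder on $\mathbb{R}\times S^1$ with vanishing $d\alpha$-energy, which by the structure result for such cylinders must be a cylinder over a Reeb orbit of period $|T|$, of the form $(Ts-\sigma\kappa,\,x(Tt-\tau\kappa))$ once $\tau_n\to\tau$ and $\sigma_n\to\sigma$ fix the parametrization.

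For the left shifts $u_n^-(s,t):=u_n(s-R_n,t)+TR_n$ the boundary gradient bound A1 near $s=0$ combined with the interior bound propagates to $[0,L]\times S^1$ for every $L<h_n$; extracting a $C_{\text{loc}}^{\infty}$-limit yields an $\mathcal{H}$-holomorphic half-cylinder $u^-=(a^-,f^-)$ whose asymptotics as $s\to\infty$ must match the middle limit at $\kappa=-1$, giving $(Ts+\sigma,\,x(Tt+\tau))$. The harmonic $1$-forms $\gamma_n^-$, uniformly $L^2$-bounded by A0, converge in $C_{\text{loc}}^{\infty}$ to a harmonic $1$-form whose period over $\{s\}\times S^1$ is $\lim_n P_n=\lim_n\tau_n/R_n=0$; hence the limit is exact, of the form $d\Gamma^-$, with $\|d\Gamma^-\|_{L^2}^2\leq C_0$ by Fatou. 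The right shifts $u_n^+$ are treated symmetrically.

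The $C^0$-convergence is obtained by stitching the three $C_{\text{loc}}^{\infty}$-limits through the properties of $\theta_n$ in Remark \ref{rem:For-every-sequence}. On the middle $[-1/2,1/2]\times S^1$, the linearity of $\theta_n$ sends $(s',t)$ to $s_n:=2(R_n-h_n)s'$ with $s_n/R_n\to 2s'$; inserting $\kappa=2s'$ into the middle-limit formula recovers the claimed $C^0$-limit $x(Tt-2\tau s)$ for $f_n\circ\theta_n^{-1}$. On each outer collar, the $C_{\text{loc}}^{\infty}$-convergence $\check{\theta}_n^\pm\to\check{\theta}^\pm$ together with $u_n^\pm\to u^\pm$ gives $C^0$-convergence on every compact subset of the open collar, and continuity at the inner collar boundary follows from the matched asymptotics of $u^\pm$ with the middle cylinder-over-orbit. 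For the last item, the middle-limit expansion $a_n\circ\theta_n^{-1}(s,t)=2TR_ns+O(1)$ on compact subsets of $(-1/2,1/2)\times S^1$ gives, upon evaluation near $s=\pm\tfrac{1}{2}$, the estimate $r_n^+-r_n^-=2|T|R_n+O(1)\to\infty$; for any $R>0$ any choice of $\rho<1/2$ makes $|2TR_n\rho|<|T|R_n-R$ for large $n$, placing $a_n\circ\theta_n^{-1}([-\rho,\rho]\times S^1)$ inside $[r_n^-+R,\,r_n^+-R]$.

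The principal obstacle I anticipate is the upgrade from $C_{\text{loc}}^{\infty}$ to uniform $C^0$-convergence across the inner collar boundaries $s=\pm 1/2$, which requires quantitative exponential decay estimates for finite-energy $\mathcal{H}$-holomorphic half-cylinders approaching a periodic orbit together with corresponding control on the harmonic perturbation. These are the $\mathcal{H}$-holomorphic analogues of the classical HWZ asymptotic decay estimates, and should be available from the earlier sections of the paper.
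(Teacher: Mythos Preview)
Your outline takes a genuinely different route from the paper, and in doing so it skips the mechanism that actually produces the constants $-\sigma\kappa$ and $-\tau\kappa$ in the limit formulas.

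The paper never works directly with the $\mathcal{H}$-holomorphic curves $u_n$. It first applies the Reeb-flow transformation of Section~\ref{subsec:Notion-of-the}: setting $\overline{f}_n(s,t)=\phi^\alpha_{P_n s}(f_n(s,t))$ and $\overline{a}_n=a_n+\Gamma_n$ (with $d\Gamma_n=\gamma_n-P_n\,dt$) converts $u_n$ into a genuine pseudoholomorphic curve $\overline{u}_n$ for the domain-dependent structure $\overline{J}_{P_n}$. All of the analytic work---the gradient bound via bubbling (Lemma~\ref{lem:For-every--1}), the identification of middle limits as trivial cylinders, the exponential decay near the orbit (Theorem~\ref{thm:Let--and1}), and the $C^0$-convergence package (Theorem~\ref{thm:With-the-same-1-1}, Proposition~\ref{prop:For-every-,})---is carried out for $\overline{u}_n$, where these are standard. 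Only at the very end is the transformation undone, and the claimed shifts then drop out: $f_n(s+s_n,t)=\phi^\alpha_{-P_n(s+s_n)}(\overline{f}_n(s+s_n,t))$ together with $P_n s_n\to\tau\kappa$ gives the Reeb phase $-\tau\kappa$, while $a_n=\overline{a}_n-\Gamma_n$ together with $\Gamma_n(\cdot+s_n,\cdot)\to\sigma\kappa$ (Theorem~\ref{thm:The-maps--1}) gives the $\mathbb{R}$-shift $-\sigma\kappa$.

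The gap in your direct approach is the sentence ``of the form $(Ts-\sigma\kappa,\,x(Tt-\tau\kappa))$ once $\tau_n\to\tau$ and $\sigma_n\to\sigma$ fix the parametrization.'' Knowing that each subsequential limit is \emph{some} trivial cylinder $(Ts+c_\kappa,\,x(Tt+d_\kappa))$ over a $|T|$-periodic orbit does not by itself determine $c_\kappa,d_\kappa$; these record the accumulated $\mathbb{R}$- and Reeb-drift of $u_n$ along $[0,s_n]$, and that drift is driven by the perturbation $\gamma_n$ through the second equation in~(\ref{eq:H_hol}). Extracting the linear-in-$\kappa$ dependence without the transformation means reproving Section~\ref{subsec:Notion-of-the} and the harmonic-function convergence of Appendix~\ref{sec:Convergence-of} in disguise. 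Relatedly, the ``bubbling analysis developed earlier in the paper'' you invoke is Lemma~\ref{lem:For-every--1}, and the decay you anticipate for the collar $C^0$-step is Theorem~\ref{thm:Let--and1}; both are formulated and proved for the transformed $\overline{J}_P$-curves, not for $\mathcal{H}$-holomorphic maps. Finally, your argument for item~(4) evaluates the middle expansion ``near $s=\pm\tfrac12$'' to read off $r_n^\pm$, but $r_n^\pm$ are defined at $s=\pm R_n$, i.e.\ at $\theta_n$-coordinate $\pm 1$; the paper handles this instead by combining Proposition~\ref{prop:For-every-,} for $\overline{a}_n$ with the uniform $C^0$-bound on $\tilde{\Gamma}_n$ from Lemma~\ref{lem:kbounds_tilde_Gamma}.
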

\begin{singlespace}
\noindent An immediate corollary is 
\end{singlespace}
\begin{cor}\label{cor:underthesame_1}
\begin{singlespace}
\noindent Under the same hypothesis of Theorem \ref{thm:With-the-same-1}
we have the
following $C_{\text{loc}}^{\infty}-$convergence results. 
\end{singlespace}
\begin{enumerate}
\begin{singlespace}
\item The maps $v_{n}^{-}+TR_{n}$ converge in $C_{\text{loc}}^{\infty}$
to $v^{-}$ where $f^{-}((\theta^{-})^{-1}(-1/2,t))=x(Tt+\tau)$.
The harmonic $1-$forms $[(\theta_{n}^{-})^{-1}]^{*}\gamma_{n}^{-}$
with respect to the complex structure $[(\theta_{n}^{-})^{-1}]^{*}i$
converge in $C_{\text{loc}}^{\infty}$ to a harmonic $1-$form $[(\theta^{-})^{-1}]^{*}d\Gamma^{-}$
with respect to the complex structure $[(\theta^{-})^{-1}]^{*}i$.
\item The maps $v_{n}^{+}-TR_{n}$ converge in $C_{\text{loc}}^{\infty}$
to $v^{+}$ where $f^{+}((\theta^{+})^{-1}(1/2,t))=x(Tt-\tau)$. The
harmonic $1-$forms $[(\theta_{n}^{+})^{-1}]^{*}\gamma_{n}^{-}$ with
respect to the complex structure $[(\theta_{n}^{+})^{-1}]^{*}i$ converge
in $C_{\text{loc}}^{\infty}$ to a harmonic $1-$form $[(\theta^{+})^{-1}]^{*}d\Gamma^{+}$
with respect to the complex structure $[(\theta^{+})^{-1}]^{*}i$. 
\end{singlespace}
\end{enumerate}
\end{cor}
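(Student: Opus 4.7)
My plan is to deduce the corollary directly from Theorem \ref{thm:With-the-same-1}, invoking two basic facts: first, by Remark \ref{rem:For-every-sequence}(1) the diffeomorphisms $\theta_n^\pm$ and their inverses converge in $C^\infty_{\text{loc}}$ to $\theta^\pm$ and $(\theta^\pm)^{-1}$ on the appropriate half-open intervals; second, composition of smooth maps and pullback of forms and almost complex structures are continuous operations with respect to $C^\infty_{\text{loc}}$-convergence of both the maps involved and the objects being pulled back.

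The central algebraic observation I would use is the identity
\[
v_n^- + TR_n = u_n^- \circ (\theta_n^-)^{-1} \quad\text{and}\quad v_n^+ - TR_n = u_n^+ \circ (\theta_n^+)^{-1},
\]
which follows by unwinding the definitions $u_n^\pm(s,t) = u_n(s \mp R_n, t) \mp TR_n$ and $v_n^\pm = u_n \circ (\theta_n^\pm)^{-1}$, noting that $(\theta_n^\pm)^{-1}$ lands in an $R_n$-shifted copy of the domain of $u_n^\pm$. Theorem \ref{thm:With-the-same-1}(2)(3) provides $u_n^\pm \to u^\pm$ in $C^\infty_{\text{loc}}$ on $[0,\infty)\times S^1$ resp.\ $(-\infty,0]\times S^1$; combining with $(\theta_n^\pm)^{-1}\to (\theta^\pm)^{-1}$ in $C^\infty_{\text{loc}}$ and applying the chain rule then yields $v_n^\pm \mp TR_n \to v^\pm := u^\pm \circ (\theta^\pm)^{-1}$ in $C^\infty_{\text{loc}}$ on $[-1,-1/2)\times S^1$ resp.\ $(1/2,1]\times S^1$. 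The boundary identities $f^-((\theta^-)^{-1}(-1/2,t)) = x(Tt+\tau)$ and $f^+((\theta^+)^{-1}(1/2,t)) = x(Tt-\tau)$ then follow because $(\theta^\pm)^{-1}(\mp 1/2,t)$ corresponds to $s\to\pm\infty$ in the domain of $u^\pm$, at which points the asymptotic statements in Theorem \ref{thm:With-the-same-1}(2)(3) identify the limits of $f^\pm$ with $x(Tt\pm\tau)$.

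For the harmonic perturbations the argument is structurally identical: Theorem \ref{thm:With-the-same-1}(2)(3) gives $\gamma_n^\pm \to d\Gamma^\pm$ in $C^\infty_{\text{loc}}$, and since pullback along a $C^\infty_{\text{loc}}$-convergent sequence of diffeomorphisms is $C^\infty_{\text{loc}}$-continuous, $[(\theta_n^\pm)^{-1}]^* \gamma_n^\pm \to [(\theta^\pm)^{-1}]^* d\Gamma^\pm$ and $[(\theta_n^\pm)^{-1}]^* i \to [(\theta^\pm)^{-1}]^* i$ in $C^\infty_{\text{loc}}$, with harmonicity of the limit form relative to the limit complex structure passing to the limit because the conditions $d\omega = d(\omega \circ J) = 0$ are closed under $C^\infty_{\text{loc}}$-convergence. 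Since every step is an instance of continuity of a standard geometric operation combined with a conclusion already established in Theorem \ref{thm:With-the-same-1}, no substantial obstacle is expected; the only technical point requiring care is consistency of the $\mathbb{R}$-factor shifts between $u_n^\pm$ and $v_n^\pm$, which is already encoded in the displayed identity above.
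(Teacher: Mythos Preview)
Your proposal is correct and follows essentially the same approach as the paper: both deduce the convergence of $v_n^\pm\pm TR_n$ by composing the $C^\infty_{\text{loc}}$-convergence $u_n^\pm\to u^\pm$ from Theorem~\ref{thm:With-the-same-1} with the $C^\infty_{\text{loc}}$-convergence $(\theta_n^\pm)^{-1}\to(\theta^\pm)^{-1}$ from Remark~\ref{rem:For-every-sequence}. The only cosmetic difference is that for the harmonic perturbations the paper invokes Corollary~\ref{cor:After-going-over} from the appendix, whereas you cite the convergence $\gamma_n^\pm\to d\Gamma^\pm$ already recorded in Theorem~\ref{thm:With-the-same-1} and then pull back; these are equivalent (note also a harmless sign slip in your displayed definition of $u_n^\pm$: it should read $u_n(s\pm R_n,t)\mp TR_n$).
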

\begin{singlespace}
\noindent Establishing this, we need to make use of a modified version
of the results from \cite{key-11}. 
\end{singlespace}
\begin{rem}
\begin{singlespace}
\noindent For a sequence of $\mathcal{H}-$holomorphic
curves $u_{n}$ together with the harmonic perturbations $\gamma_{n}$
satisfying $A0$-$A3$ and $R_{n}\rightarrow\infty$ we
can conclude that the left and right shifts $u_{n}^{\pm}$ together
with the harmonic perturbations $\gamma_{n}^{\pm}$ defined on $[0,h_{n}]\times S^{1}$
and $[-h_{n},0]\times S^{1}$, respectively, converge after a suitable
shift in the $\mathbb{R}-$coordinate in $C_{\text{loc}}^{\infty}$,
to the $\mathcal{H}-$holomorphic half cylinders $u^{\pm}$ with harmonic
perturbations $d\Gamma^{\pm}$ defined on $[0,\infty)\times S^{1}$
and $(-\infty,0]\times S^{1}$, respectively. The $\mathcal{H}-$holomorphic
curves $u^{\pm}$ are asymptotic to the points $w^{\pm}=(w_{a}^{\pm},w_{f}^{\pm})\in\mathbb{R}\times M$
or trivial cylinders over Reeb orbits $(x^{\pm},T)$. Without the
assumption A3, the asymptotic data of $u^{-}$ and
$u^{+}$ cannot be described as in Theorems \ref{thm:With-the-same}
and \ref{thm:With-the-same-1}. In fact, dropping assumption A3
it is not possible to connect the asymptotic data $w^{-}$ or $x^{-}(T\cdot)$
of the left shifted $\mathcal{H}-$holomorphic curve $u^{-}$ to the
asymptotic data $w^{+}$ of $x^{+}(T\cdot)$ of the right shifted
$\mathcal{H}-$holomorphic curve $u^{+}$ by a compact cylinder as
in Theorems \ref{thm:With-the-same} and \ref{thm:With-the-same-1}.
Examples provided in Appendix \ref{subsec:A-version-of-example}, show that these results do not hold, 
if the assumption A3 is omitted.
\end{singlespace}
\end{rem}

\subsection{Outline of the paper}

\begin{singlespace}
\noindent The paper is organized as follows. We begin by considering
a general $\mathcal{H}-$holomorphic cylinder $u=(a,f):[-R,R]\times S^{1}\rightarrow\mathbb{R}\times M$
with harmonic perturbation $\gamma$ satisfying assumptions $A0-A3$.
\noindent In Section \ref{subsec:Notion-of-the}, this $\mathcal{H}-$holomorphic
curve is transformed, as in \cite{key-20}, by the flow $\phi^{\alpha}:\mathbb{R}\times M\rightarrow M$
of the Reeb vector field $X_{\alpha}$ into a usual pseudoholomorphic
curve with respect to a domain-dependent almost complex structure
that varies in a compact set. Here, the bound on the
conformal period in condition A3 is essential. 
The transformed curve
is a map which is pseudoholomorphic for a domain-dependent almost complex structure $\overline{J}_P$. 
The domain dependence is relatively mild, since it depends on the point $(s,t)\in [-R,R]\times S^1$ in the domain
only via the product $Ps$ and we have $|Ps|\leq C$ for all $s\in[-R,R]$ by A3, c.f. Remark \ref{rem:The-parameter-dependent} and Definition \ref{def:A-triple-}. 
The conditions imposed on the energy are transferred to the $\overline{J}_{P}-$holomorphic
curves. We then derive a notion of center action for the $\overline{J}_{P}-$holomorphic
curves by employing the same arguments as in Theorem 1.1 of \cite{key-11};
here, we distinguish the cases when the center action vanishes and
is greater than $\hbar$.

\noindent In Section \ref{subsec:Zero-Center-Action} we consider
the case of vanishing center action. First, we derive a result for
$\overline{J}_{P}-$holomorphic curves, which is similar to that established
in Theorem 1.2 of \cite{key-11}, and which basically states that
a finite energy $\overline{J}_{P}-$holomorphic curve with uniformly
small $d\alpha-$energy and having vanishing center action, is close
to a point in $\mathbb{R}\times M$. This is done by using a version
of monotonicity Lemma for $\overline{J}_{P}-$holomorphic curves given
in Appendix \ref{subsec:A-version-of}. Then we describe the asymptotic
behavior of $\overline{J}_{P}-$holomorphic curves, and finally, by
using the inverse transformation with the flow of the Reeb vector
field, we translate these results into the language of $\mathcal{H}-$holomorphic
cylinders and prove Theorem \ref{thm:With-the-same}.

\noindent In Section \ref{subsec:Positive-Center-Action} we formulate
the above findings in the case of positive center action. We prove
a result which is similar to that stated by Theorem 1.3 of \cite{key-11}
for $\overline{J}_{P}-$holomorphic curves, and then Theorem \ref{thm:With-the-same-1}.

\noindent In order to prove Theorems \ref{thm:With-the-same} and
\ref{thm:With-the-same-1} we use a compactness result for a sequence
of harmonic functions defined on cylinders and possessing certain
properties; this is established in Appendix \ref{sec:Convergence-of}. 
\end{singlespace}
\begin{acknowledgement*}
\begin{singlespace}
\noindent We thank Peter Albers and Kai Cieliebak who provided insight
and expertise that greatly assisted the research.
We would also like to thank the anonymous referee for carefully reading the manuscript and many constructive comments which helped improving the quality of the paper. \\
U.F. is supported
by the SNF fellowship 155099, a fellowship at Institut Mittag-Leffler
and the GIF Grant 1281. 
\end{singlespace}
\end{acknowledgement*}

\section{\label{subsec:Notion-of-the}$\overline{J}_{P}-$holomorphic curve
and center action}

\begin{singlespace}
\noindent In this section we transform a $\mathcal{H}-$holomorphic
curve into a pseudoholomorphic curve with domain-dependent almost
complex structure on the target space $\mathbb{R}\times M$, and introduce
a notion of center action for this curve. 
\end{singlespace}

\subsection{$\overline{J}_{P}-$holomorphic curve }

\begin{singlespace}
\noindent We consider an $\mathcal{H}-$holomorphic curve $u=(a,f):[-R,R]\times S^{1}\rightarrow\mathbb{R}\times M$
with harmonic perturbation $\gamma$ satisfying Assumptions A0-A3,
and construct a new map $\overline{u}=(\overline{a},\overline{f}):[-R,R]\times S^{1}\rightarrow\mathbb{R}\times M$
as follows. Recall that $P:=P(\gamma)$ is the period of $\gamma$ over $\{0\}\times S^1$ from (\ref{eq:period_over_the_cylinder}) and let $\phi_{t}^{\alpha}:M\rightarrow M$ be the Reeb flow
on $M$. Defining 
\begin{equation}
\overline{f}(s,t):=\phi_{Ps}^{\alpha}(f(s,t))\label{eq:bar=00003D00007Bf=00003D00007D}
\end{equation}
we find by straightforward calculation that 
\[
\pi_{\alpha}d\overline{f}=d\phi_{Ps}^{\alpha}\pi_{\alpha}df\ \text{ and }\ \overline{f}^{*}\alpha=Pds+f^{*}\alpha
\]
giving 
\begin{align}
\overline{f}^{*}\alpha\circ i & =-Pdt+f^{*}\alpha\circ i=-Pdt+da+\gamma.\label{eq:secondeq}
\end{align}

\end{singlespace}
\begin{rem}
\begin{singlespace}
\noindent \label{rem:Obviously,-as-}Obviously, as $\gamma$ is a
harmonic $1-$form, the $1-$form $-Pdt+\gamma$ is harmonic with
vanishing period over $[-R,R]\times S^{1}$. Thus $-Pdt+\gamma$ is
globally exact, i.e. there exists a harmonic function $\Gamma:[-R,R]\times S^{1}\rightarrow\mathbb{R}$
which is unique up to addition by a constant such that $-Pdt+\gamma=d\Gamma$.

Note that $$\|d\Gamma\|^2_{L^2([-R,R]\times S^1)}\leq (\|\gamma\|_{L^2([-R,R]\times S^1)}+\sqrt{2RP^2})^2\leq (\sqrt{C_0}+\sqrt{2CP})^2$$
with $C_0$ and $C$ from assumptions $A0$ resp. $A3$.
For technical reasons, which will become apparent later on, we choose
$\Gamma$ such that it has a vanishing mean value over $[-R,R]\times S^{1}$,
i.e. 
\[
\frac{1}{2R}\int_{[-R,R]\times S^{1}}\Gamma(s,t)ds\wedge dt=0.
\]
\end{singlespace}
\end{rem}
\begin{singlespace}
\noindent Set 
\begin{equation}
\overline{a}:=a+\Gamma\label{eq:bar=00003D00007Ba=00003D00007D}
\end{equation}
where $\Gamma$ was chosen as in Remark \ref{rem:Obviously,-as-}.

\noindent Define the parameter-dependent complex structure $\mathcal{J}:[-C,C]\times M\rightarrow\text{End}(\xi)$
by 
\begin{equation}
\mathcal{J}(\rho,m)=d\phi_{\rho}^{\alpha}(\phi_{-\rho}^{\alpha}(m))\circ J_{\xi}(\phi_{-\rho}^{\alpha}(m))\circ d\phi_{-\rho}^{\alpha}(m)\label{eq:domain_dependent_almost_complex_structure}
\end{equation}
for all $\rho\in[-C,C]$ and all $m\in M$, where $C>0$ is the constant
from assumption A3. We write also $J_\rho:=\mathcal{J}(\rho,\cdot)$ for the complex structure on $\xi$ obtained by fixing $\rho\in [-C,C]$. Each $J_\rho$ extends as usual to a $\mathbb{R}$-invariant 
almost complex structure on $\mathbb{R}\times M$, still denoted by $J_\rho$. Note that $J_0$ is the original almost complex structure $J$ from (\ref{eq:JJJ}).  
\end{singlespace}
\begin{prop}
\begin{singlespace}
\noindent \label{prop:The-curve-}The curve $\overline{u}=(\overline{a},\overline{f}):[-R,R]\times S^{1}\rightarrow\mathbb{R}\times M$,
where $\overline{a}$ and $\overline{f}$ are the maps defined by
(\ref{eq:bar=00003D00007Ba=00003D00007D}) and (\ref{eq:bar=00003D00007Bf=00003D00007D}),
is pseudoholomorphic with respect to a domain-dependent, $\mathbb{R}$-invariant
almost complex structure $\overline{J}_P$ on $\mathbb{R}\times M$ defined by $\overline{J}_P((s,t),(r,m)):=J_{Ps}(m)$ for $(s,t)\in [-R,R]\times S^1$ and $(r,m)\in \mathbb{R}\times M$, i.e.
\begin{align}
\pi_{\alpha}d\overline{f}(s,t)\circ i & =J_{Ps}(\overline{f}(s,t))\circ\pi_{\alpha}d\overline{f}(s,t),\label{eq:first}\\
(\overline{f}^{*}\alpha)\circ i & =d\overline{a}\label{eq:second}
\end{align}
for all $(s,t)\in[-R,R]\times S^{1}$. Moreover, for the $\alpha-$
and $d\alpha-$energies we have 
\begin{align*}
E_{d\alpha}(\overline{u};[-R,R]\times S^{1}) & =E_{d\alpha}(u;[-R,R]\times S^{1}),\\
E_{\alpha}(\overline{u};[-R,R]\times S^{1}) & \leq\int_{\{R\}\times S^{1}}|f^{*}\alpha|+\int_{\{-R\}\times S^{1}}|f^{*}\alpha|+E_{d\alpha}(u;[-R,R]\times S^{1}).
\end{align*}
\end{singlespace}
\end{prop}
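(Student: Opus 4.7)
The proof is a direct verification, splitting naturally into four parts: the two pseudoholomorphic equations (\ref{eq:first})--(\ref{eq:second}), and the two energy identities/estimates. The overall strategy is to push the computations already begun in the paragraph preceding the proposition to their conclusion, using only the Reeb flow identities $(\phi_t^\alpha)^*\alpha=\alpha$, $(\phi_t^\alpha)^*d\alpha=d\alpha$, and $d\phi_t^\alpha(\xi)=\xi$.

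For equation (\ref{eq:first}), I first apply the chain rule to $\overline{f}(s,t)=\phi_{Ps}^{\alpha}(f(s,t))$. The $s$-dependence of $\phi^\alpha_{Ps}$ contributes a term $PX_\alpha(\overline{f})\,ds$ which lies in $\mathbb{R}X_\alpha$, so $\pi_\alpha$ kills it; since $d\phi^\alpha_{Ps}$ preserves $\xi$, one gets $\pi_\alpha d\overline{f}=d\phi^\alpha_{Ps}\circ\pi_\alpha df$. Composing with $i$ and substituting the first equation of (\ref{eq:H_hol}) yields $\pi_\alpha d\overline{f}\circ i=d\phi_{Ps}^{\alpha}\circ J_\xi(f)\circ\pi_\alpha df$. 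Inserting $\text{id}=d\phi_{-Ps}^{\alpha}\circ d\phi_{Ps}^{\alpha}$ in the middle and recognizing the definition (\ref{eq:domain_dependent_almost_complex_structure}) with $m=\overline{f}(s,t)$ gives exactly $J_{Ps}(\overline{f})\circ\pi_\alpha d\overline{f}$. Equation (\ref{eq:second}) is even easier: the identity $\overline{f}^*\alpha=Pds+f^*\alpha$ (already derived) composed with $i$ gives $-Pdt+f^*\alpha\circ i$, and the second line of (\ref{eq:H_hol}) rewrites this as $-Pdt+da+\gamma=da+d\Gamma=d\overline{a}$ by Remark~\ref{rem:Obviously,-as-}.

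For the $d\alpha$-energy equality I simply take $d$ of $\overline{f}^*\alpha=Pds+f^*\alpha$; the $d(Pds)=0$, so $\overline{f}^*d\alpha=f^*d\alpha$ and the integrals agree. For the $\alpha$-energy bound I use Stokes. From (\ref{eq:second}) one has $d\overline{a}\circ i=-\overline{f}^*\alpha$, so the integrand in $E_\alpha(\overline{u})$ can be rewritten as $\varphi'(\overline{a})\,d\overline{a}\wedge\overline{f}^*\alpha$. Since
\[
d\bigl(\varphi(\overline{a})\overline{f}^*\alpha\bigr)=\varphi'(\overline{a})\,d\overline{a}\wedge\overline{f}^*\alpha+\varphi(\overline{a})\,\overline{f}^*d\alpha,
\]
integrating over $[-R,R]\times S^1$ and applying Stokes gives boundary contributions on $\{\pm R\}\times S^1$ minus a bulk term. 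On $\{\pm R\}\times S^1$ the pullback $\overline{f}^*\alpha$ evaluated on $\partial_t$ agrees with $f^*\alpha$ (the $Pds$ part vanishes on tangent vectors to these circles); together with $0\leq\varphi\leq 1$ and $|\overline{f}^*d\alpha|=f^*d\alpha\geq 0$, taking absolute values and then $\sup_{\varphi\in\mathcal{A}}$ produces the stated bound with the two boundary integrals of $|f^*\alpha|$ plus $E_{d\alpha}(u;[-R,R]\times S^1)$.

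The main point requiring care is the bookkeeping of complex structures and signs in the Stokes step (distinguishing $d\overline{a}\circ i$ from $\overline{f}^*\alpha$ and using the anticommutativity of the wedge of $1$-forms), together with verifying that $d\phi^\alpha_{Ps}$ genuinely commutes through $\pi_\alpha$ after accounting for the non-autonomous flow term; once these are organized correctly, no genuine obstacle remains and the proposition follows by direct computation.
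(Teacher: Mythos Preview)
Your proof is correct and follows essentially the same direct-verification route as the paper. The only minor difference is in the $\alpha$-energy step: the paper notes that the bulk term $\int\varphi(\overline{a})\,\overline{f}^*d\alpha$ is non-negative and can simply be dropped (yielding the slightly sharper bound $E_\alpha(\overline{u})\leq\int_{\{R\}\times S^1}|f^*\alpha|+\int_{\{-R\}\times S^1}|f^*\alpha|$), whereas you bound it by $E_{d\alpha}(u)$ to obtain exactly the stated inequality.
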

\begin{proof}
\begin{singlespace}
\noindent 
 Note that for a $\mathcal{H}-$holomorphic curve
$u:[-R,R]\times S^{1}\rightarrow\mathbb{R}\times M$ satisfying assumptions
A0-A3 we have $Ps\in[-C,C]$ for all $s\in[-R,R]$ (here $P=\tau/R$ as in the introduction). Thus $\overline{J}_P$ is well-defined.\\
Next, by (\ref{eq:secondeq}) and (\ref{eq:bar=00003D00007Ba=00003D00007D})
it is obvious that (\ref{eq:second}) holds. Let us consider (\ref{eq:first}).
The left-hand side of this equation goes over in $\pi_{\alpha}d\overline{f}(s,t)\circ i=d\phi_{Ps}^{\alpha}\pi_{\alpha}df$,
while the right-hand side goes over in $J_{Ps}(\overline{f}(s,t))\circ\pi_{\alpha}d\overline{f}(s,t)=d\phi_{Ps}^{\alpha}(f(s,t))\circ J_{\xi}(f(s,t))\circ\pi_{\alpha}df(s,t)$.
Hence, (\ref{eq:first}) is satisfied. Thus $\overline{u}=(\overline{a},\overline{f}):[-R,R]\times S^{1}\rightarrow\mathbb{R}\times M$
is an $i-\overline{J}_P-$holomorphic curve, where $\overline{J}_P$ is
a parameter-dependent almost complex structure. The energies transform
as follows. The $d\alpha-$energy remains unchanged, since $d\alpha$
is invariant under the flow $\phi^{\alpha}$. For the $\alpha-$energy
we have by using (\ref{eq:second}) that 
\begin{align*}
E_{\alpha}(\overline{u};[-R,R]\times S^{1}) & =\sup_{\varphi\in\mathcal{A}}\left[-\int_{[-R,R]\times S^{1}}d(\varphi(\overline{a})d\overline{a}\circ i)+\int_{[-R,R]\times S^{1}}\varphi(\overline{a})d(d\overline{a}\circ i)\right]\\
 & \leq\left[\int_{\{R\}\times S^{1}}|f^{*}\alpha|+\int_{\{-R\}\times S^{1}}|f^{*}\alpha|\right].
\end{align*}
Here we use that the second integrand is non-positive, since $d(d\overline{a}\circ i)=-f^* d\alpha$ is a non-positive two-form on $[-R,R]\times S^1$ and $\varphi$ is a non-negative function.
\end{singlespace}
\end{proof}
\begin{rem}
\begin{singlespace}
\noindent \label{rem:For-our-sequence}The $\alpha-$energy of $\overline{u}$ constructed as above from some $\mathcal{H}$-holomorphic cylinder $u$
satisfying $A0$-$A3$ is uniformly bounded. To show this we argue as follows. Due to assumption
A1, the quantities 
\[
\int_{\{R\}\times S^{1}}|f^{*}\alpha|\ \text{ and }\ \int_{\{-R\}\times S^{1}}|f^{*}\alpha|
\]
are uniformly bounded by the constant $C_{1}>0$. Hence, we see that
\[
E(\overline{u};[-R,R]\times S^{1})=E_{\alpha}(\overline{u};[-R,R]\times S^{1})+E_{d\alpha}(\overline{u};[-R,R]\times S^{1})\leq\tilde{E}_{0}:=2C_1+E_0.
\]
\end{singlespace}
\end{rem}
\begin{singlespace}
\noindent To analyze the properties of the transformed pseudoholomorphic
curve $\overline{u}$, we consider the following additional structure
on $M$: On the contact structure $\xi=\ker(\alpha)$, let $\mathcal{J}:\left[-C,C\right]\times M\rightarrow\text{End}(\xi)$
be the parameter-dependent almost complex structure defined by (\ref{eq:domain_dependent_almost_complex_structure}). On $\mathbb{R}\times M$ we use
the following family of Riemannian metrics: 
\begin{align}
\overline{g}_{\rho,m}(v,w) & =dr\otimes dr(v,w)+\alpha\otimes\alpha(v,w)+d\alpha(v,J_{\rho}(m)w)\label{eq:metric_variational}
\end{align}
for all $\rho\in[-C,C]$ and all $m\in M$, where $r$ is the coordinate
on the $\mathbb{R}-$component of $\mathbb{R}\times M$. Before
going any further we make a remark about the metrics involved. 
\end{singlespace}
\begin{rem}
\begin{singlespace}
\noindent \label{rem:The-parameter-dependent}For
any $\rho$, the norms induced by the metrics $\overline{g}_{\rho}$
on $\mathbb{R}\times M$ that are defined by (\ref{eq:metric_variational})
are equivalent, i.e. there exists a positive constant $\overline{C}_{1}>0$
such that for every $\rho\in[-C,C]$ we have 
\begin{equation}
\frac{1}{\overline{C}_{1}}\left\Vert \cdot\right\Vert _{\overline{g}_{\rho}}\leq\left\Vert \cdot\right\Vert _{\overline{g}_{0}}
\leq\overline{C}_{1}\left\Vert \cdot\right\Vert _{\overline{g}_{\rho}}.\label{eq:equivalence_metrics}
\end{equation}
This follows from the fact that the complex structures $J_{\rho}=\mathcal{J}(\rho,\cdot)$ on $\xi$
defined by (\ref{eq:domain_dependent_almost_complex_structure}) vary
continuously in a compact set of the space of complex structures on $\xi$ tamed by
$d\alpha$ for $\rho\in[-C,C]$.
\end{singlespace}
\end{rem}
\begin{defn}
\begin{singlespace}
\noindent \label{def:A-triple-}A triple $(\overline{u},R,P)$
is called a \emph{$\overline{J}_{P}-$holomorphic
curve} if $P,R\in\mathbb{R}$ with $R>0$, $|PR|\leq C$, and $\overline{u}=(\overline{a},\overline{f}):[-R,R]\times S^{1}\rightarrow\mathbb{R}\times M$
satisfies 
\begin{align*}
\pi_{\alpha}d\overline{f}(s,t)\circ i & =J_{Ps}(\overline{f}(s,t))\circ\pi_{\alpha}d\overline{f}(s,t),\quad\text{ and }\quad\overline{f}^{*}\alpha\circ i=d\overline{a}
\end{align*}
on $[-R,R]\times S^{1}$.
\end{singlespace}
\end{defn}
\begin{rem}
\noindent \label{rem:In-the-following}In the following
we consider for $\tilde{C}_1>0$ the $\overline{J}_{P}-$homolorphic curves $(\overline{u},R,P)$
satisfying the following assumptions: 

\begin{description}
\item[\textbf{$\overline{A}$0}] $E(\overline{u};[-R,R]\times S^{1})\leq \tilde{E}_{0}:=2C_{1}+E_{0}$
(c.f. assumption A2). 
\item[$\overline{A}$1] For the constant $\delta_{1}>0$ from Assumption A1
we have $\left\Vert d\overline{f}(z)\right\Vert \leq\tilde{C}_{1}$
for all $z\in([-R,-R+\delta_{1}]\amalg[R-\delta_{1},R])\times S^{1}$. 
\item[$\overline{A}$2] $E_{d\alpha}(\overline{u};[-R,R]\times S^1)\leq\hbar/2$, where $\hbar$ is as in $A2$.
\item [{$\overline{A}$3}]  For the constant $C>0$ from Assumption A3 we have
$\left|PR\right|\leq C$ and $|SR|\leq C$. 
\end{description}

If $\tilde{C}_1\geq C$ is sufficiently large, then for any $\mathcal{H}$-holomorphic cylinder $u$ satisfying $A0$-$A3$, 
the associated $\overline{J}_{P}-$homolorphic curve $(\overline{u},R,P)$
satisfies $\overline{A}0$-$\overline{A}3$. This follows immediately from Remark \ref{rem:For-our-sequence} and the definition of $\overline{f}$ in (\ref{eq:bar=00003D00007Bf=00003D00007D}).\\
We fix for the rest of the paper such a constant $\tilde{C}_1$. When we say that \emph{a $\overline{J}_{P}-$homolorphic curve $(\overline{u},R,P)$ satisfies $\overline{A}0$-$\overline{A}3$}, 
it always means that the \emph{$\overline{J}_{P}-$homolorphic curve $(\overline{u},R,P)$ satisfies $\overline{A}0$-$\overline{A}3$ 
for this $\tilde{C}_1>0$ (and for the constants $E_{0},C_{0},C_{1}$ and $\delta_{1}$ fixed throughout the paper)}.
\end{rem}

\subsection{\label{subsec:Center-action}Center action}

\begin{singlespace}
\noindent In the following we apply the results established in \cite{key-11}
to this new curve, and introduce the notion of the center action for
the $\overline{J}_{P}-$holomorphic curve $(\overline{u},R,P)$.

\noindent The next result is similar to Theorem 1.1 of \cite{key-11}. 
\end{singlespace}
\begin{thm}
\begin{singlespace}
\noindent \label{thm:For-all-numbers}For all $\psi$ such that $0<\psi<\hbar/2$,
there exists $h_{0}>0$ such that for any $R>h_{0}$ and any $\overline{J}_{P}-$holomorphic
curve $(\overline{u},R,P)$ satisfying $\overline{A}0$- $\overline{A}3$ there exists a unique element $T\in\mathcal{P}$
such that $T\leq\tilde{E}_{0}$ and 
\[
\left|\left|\int_{S^{1}}\overline{u}(0)^{*}\alpha\right|-T\right|<\frac{\psi}{2}.
\]
\end{singlespace}
\end{thm}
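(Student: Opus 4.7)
The plan is to argue by contradiction and reduce to Theorem 1.1 of \cite{key-11} via a compactness argument, exploiting that $|P_n R_n|\leq C$ forces $P_n\to 0$ as $R_n\to\infty$. Suppose the statement fails for some $\psi\in(0,\hbar/2)$: there exist sequences $R_n\to\infty$, real numbers $P_n$ with $|P_nR_n|\leq C$, and $\overline{J}_{P_n}$-holomorphic curves $(\overline{u}_n,R_n,P_n)$ satisfying $\overline{A}0$--$\overline{A}3$ such that
\[
\left|\left|\int_{S^{1}}\overline{u}_n(0)^{*}\alpha\right|-T\right|\geq\frac{\psi}{2}\qquad\text{for every }T\in\mathcal{P}\text{ with }T\leq\tilde{E}_{0}.
\]
Since $|P_n|\leq C/R_n$, after passing to a subsequence $P_n\to 0$, so on any compact subset of $\mathbb{R}\times M$ the complex structures $J_{P_n s}$ converge uniformly to $J_0=J$ by Remark~\ref{rem:The-parameter-dependent}.

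Next I would extract a limit. Using the $\mathbb{R}$-invariance of $\overline{J}_{P_n}$, translate $\overline{u}_n$ in the $\mathbb{R}$-factor so that $\overline{a}_n(0,0)=0$. A standard bubbling analysis, adapted to the domain-dependent almost complex structures $J_{P_n s}$ via the monotonicity lemma of Appendix~\ref{subsec:A-version-of} together with the uniform bound $E_{d\alpha}(\overline{u}_n)\leq\hbar/2$, rules out derivative concentration on compact subsets of $\mathbb{R}\times S^{1}$. Elliptic regularity and Arzela--Ascoli then yield a subsequence converging in $C^{\infty}_{\mathrm{loc}}$ to a smooth map $\overline{u}_\infty:\mathbb{R}\times S^{1}\to\mathbb{R}\times M$. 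Because $P_n\to 0$, the limit $\overline{u}_\infty$ solves the unperturbed $J$-holomorphic curve equation, and by lower semi-continuity it inherits the bounds $E(\overline{u}_\infty)\leq\tilde{E}_0$ and $E_{d\alpha}(\overline{u}_\infty)\leq\hbar/2$; hence $\overline{u}_\infty$ is a finite-energy $J$-holomorphic cylinder on $\mathbb{R}\times S^{1}$.

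Applying Theorem 1.1 of \cite{key-11} to $\overline{u}_\infty$ produces a unique $T_\infty\in\mathcal{P}$ with $T_\infty\leq\tilde{E}_0$ and $\left|\left|\int_{S^{1}}\overline{u}_\infty(0)^{*}\alpha\right|-T_\infty\right|<\psi/2$. By $C^{\infty}_{\mathrm{loc}}$ convergence, $\int_{S^{1}}\overline{u}_n(0)^{*}\alpha\to\int_{S^{1}}\overline{u}_\infty(0)^{*}\alpha$, so the same $T_\infty$ satisfies the strict inequality for $\overline{u}_n$ once $n$ is large, contradicting the standing assumption. Uniqueness of $T$ in the conclusion is automatic from~\eqref{eq:Def_hbar}: two distinct candidates $T_{1},T_{2}\in\mathcal{P}$ with $T_1,T_2\leq\tilde{E}_0$ would satisfy $|T_{1}-T_{2}|<\psi<\hbar_{\tilde{E}_0}$, contradicting the definition of $\hbar_{\tilde{E}_0}$.

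The technical heart of the argument is the compactness step: running the bubbling analysis in the presence of the domain-dependent complex structure $J_{Ps}$. The saving feature is precisely assumption $\overline{A}3$, which keeps $Ps$ in the compact interval $[-C,C]$, so that $J_{Ps}$ varies in a compact family of $d\alpha$-tame complex structures on $\xi$; the monotonicity lemma of Appendix~\ref{subsec:A-version-of} then supplies uniform area lower bounds that exclude bubbling. Once this is established, everything else reduces to the classical HWZ machinery.
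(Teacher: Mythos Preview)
Your argument is correct and follows essentially the paper's route: argue by contradiction, use bubbling-off to obtain uniform gradient bounds (the paper isolates this as Lemma~\ref{lem:For-every--1}), pass to a $C^\infty_{\mathrm{loc}}$ limit which is an honest $J_0$-holomorphic finite-energy cylinder since $P_n\to 0$, and derive a contradiction from the limit. One small remark: the bubbling-off step is driven by energy quantization for finite energy planes (any nonconstant such plane has $d\alpha$-energy at least $\hbar$), not by the monotonicity lemma of Appendix~\ref{subsec:A-version-of}; and rather than invoking Theorem~1.1 of \cite{key-11} as a black box, the paper analyzes the limit directly via its asymptotics, observing that $E_{d\alpha}(\overline{u}_\infty)=T_2-T_1\leq\hbar/2$ forces $T_1=T_2$, so $\overline{u}_\infty$ is constant or a trivial cylinder and $\left|\int_{S^1}\overline{u}_\infty(0)^*\alpha\right|\in\mathcal{P}$ exactly.
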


\begin{singlespace}
\noindent As in \cite{key-11}, the unique element $T\in\mathcal{P}$
associated with the $\overline{J}_{P}-$holomorphic
curve $(\overline{u},R,P)$ satisfying the assumptions of Remark \ref{rem:In-the-following}
is called \emph{the center action of $\overline{u}$} and is denoted by 
\begin{equation}
T=A(\overline{u}).\label{eq:defcenteraction}
\end{equation}
If $u$ is a $\mathcal{H}$-holomorphic cylinder satisfying $A0$-$A3$, then we define \emph{the center action of $u$} as $A(u):=A(\overline{u})$, where
$\overline{u}$ is the $\overline{J}_{P}-$holomorphic
curve $(\overline{u},R,P)$ associated to $u$, which according to Remark \ref{rem:In-the-following} satisfies $\overline{A}0$-$\overline{A}3$.\\
Note that while $\int_{S^{1}}\overline{u}(0)^{*}\alpha$ may in general have an arbitrary sign, it may be assumed to be non-negative by possibly replacing $\overline{u}(s,t)$ by $\overline{u}(-s,-t)$.
\end{singlespace}
\begin{rem}
\begin{singlespace}
\noindent \label{rem:From-the-definition}From the definition of the
constant $\hbar$, the center action $A(\overline{u})$ of a curve
$\overline{u}$ fulfilling the assumptions of Theorem \ref{thm:For-all-numbers}
satisfies $A(\overline{u})=0$ or $A(\overline{u})\geq\hbar$. 
\end{singlespace}
\end{rem}

\begin{singlespace}
\noindent To prove the theorem \ref{thm:For-all-numbers} we need the following 
\end{singlespace}
\begin{lem}
\begin{singlespace}
\noindent \label{lem:For-every--1}For any $\delta>0$ there exists
a constant $C'_{1}>0$ such that the gradients of all $\overline{J}_{P}-$holomorphic
curves $(\overline{u},R,P)$ satisfying $\overline{A}0$-$\overline{A}3$ and $R>\delta$, are uniformly bounded
on $[-R+\delta,R-\delta]\times S^{1}$ by the constant $C'_{1}$,
i.e.

\noindent 
\[
\sup_{(s,t)\in[-R+\delta,R-\delta]\times S^{1}}\left\Vert d\overline{u}(s,t)\right\Vert _{g_{\text{eucl.}},\overline{g}_{Ps}}\leq C'_{1}.
\]
\end{singlespace}
\end{lem}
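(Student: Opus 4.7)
The plan is a standard bubbling-off argument by contradiction, adapted to the parameter-dependent almost complex structure $\overline{J}_P$.

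Suppose the conclusion fails. Then there exist $\delta > 0$, a sequence of $\overline{J}_{P_n}$-holomorphic curves $(\overline{u}_n, R_n, P_n)$ satisfying $\overline{A}0$-$\overline{A}3$ with $R_n > \delta$, and points $z_n \in [-R_n + \delta, R_n - \delta] \times S^1$ such that $r_n := \|d\overline{u}_n(z_n)\|_{g_{\mathrm{eucl.}}, \overline{g}_0} \to \infty$. The equivalence of the metrics $\overline{g}_\rho$ for $\rho \in [-C,C]$ from Remark \ref{rem:The-parameter-dependent} allows me to measure all gradients with respect to $\overline{g}_0$ throughout. Applying Hofer's lemma to $\|d\overline{u}_n\|$ on the disk of radius $\delta/2$ about $z_n$ yields new points $z_n' = (s_n', t_n')$ with $|z_n' - z_n| < \delta/2$ and radii $\epsilon_n \to 0$ such that $r_n' := \|d\overline{u}_n(z_n')\| \geq r_n$, $\epsilon_n r_n' \to \infty$, and $\sup_{B_{\epsilon_n}(z_n')} \|d\overline{u}_n\| \leq 2 r_n'$.

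I rescale by setting
\[
\tilde{u}_n(z) := \bigl(\overline{a}_n(z_n' + z/r_n') - \overline{a}_n(z_n'),\ \overline{f}_n(z_n' + z/r_n')\bigr)
\]
on $B_{\epsilon_n r_n'}(0)$. These satisfy $\|d\tilde{u}_n(0)\| = 1$, $\|d\tilde{u}_n\| \leq 2$ on their domains, and the $\overline{J}_{P_n}$-holomorphic equation transforms into
\[
\pi_\alpha d\tilde{f}_n \circ i = J_{P_n s_n' + P_n \mathrm{Re}(z)/r_n'}(\tilde{f}_n) \circ \pi_\alpha d\tilde{f}_n, \qquad \tilde{f}_n^* \alpha \circ i = d\tilde{a}_n.
\]
By $\overline{A}3$ we have $|P_n s_n'| \leq C$, so after passing to a subsequence $P_n s_n' \to \rho_\infty \in [-C, C]$; moreover $P_n \mathrm{Re}(z)/r_n' \to 0$ locally uniformly since $|P_n| \leq C/R_n \leq C/\delta$ and $r_n' \to \infty$. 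Standard elliptic bootstrapping for pseudoholomorphic curves with a smoothly varying almost complex structure then produces, along a further subsequence, a $C^\infty_{\mathrm{loc}}$-limit $\tilde{u}: \mathbb{C} \to \mathbb{R} \times M$ that is $J_{\rho_\infty}$-holomorphic in the ordinary (non-domain-dependent) sense, non-constant since $\|d\tilde{u}(0)\| = 1$, and with bounded gradient. Fatou's lemma applied to $\overline{A}0$ and $\overline{A}2$, together with the invariance of the $\alpha$- and $d\alpha$-energies under $\mathbb{R}$-translation in the target, yields $E(\tilde{u}; \mathbb{C}) \leq \tilde{E}_0$ and $E_{d\alpha}(\tilde{u}; \mathbb{C}) \leq \hbar/2$.

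Finally I invoke Hofer's classical theorem on finite energy planes: because $J_{\rho_\infty}$ is an $\mathbb{R}$-invariant almost complex structure on $\mathbb{R} \times M$ of the standard SFT type compatible with the same contact form $\alpha$, the Reeb vector field, its periodic orbits, the period set $\mathcal{P}$, and the constant $\hbar = \hbar_{\tilde{E}_0}$ are all unchanged. Consequently, the $d\alpha$-energy of the non-constant finite energy plane $\tilde{u}$ equals the period $T \in \mathcal{P} \setminus \{0\}$ of some closed Reeb orbit, and from $T \leq \tilde{E}_0$ together with the definition of $\hbar$ we get $T \geq \hbar$, contradicting $E_{d\alpha}(\tilde{u}) \leq \hbar/2$. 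The main obstacle is making sure the bubbling analysis survives the parameter-dependence of $\overline{J}_P$; the decisive observation is that in the rescaled coordinates the variation $P_n \mathrm{Re}(z)/r_n'$ vanishes locally uniformly, so the bubble is pseudoholomorphic for a single fixed $\mathbb{R}$-invariant almost complex structure, reducing the problem to Hofer's classical setting.
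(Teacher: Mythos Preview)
Your proof is correct and follows essentially the same bubbling-off argument as the paper: both argue by contradiction, apply Hofer's lemma, rescale, use $|P_n s_n'|\leq C$ to extract a limit almost complex structure $J_{\rho_\infty}$, and obtain a non-constant finite energy $J_{\rho_\infty}$-holomorphic plane whose $d\alpha$-energy is at most $\hbar/2$, contradicting the fact that any such plane has $d\alpha$-energy at least $\hbar$. Your choice to work with $\overline{g}_0$ throughout (justified by Remark~\ref{rem:The-parameter-dependent}) and your explicit explanation of why the domain-dependence of $\overline{J}_P$ disappears under rescaling are minor cosmetic differences, not substantive ones.
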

\begin{proof}
\begin{singlespace}
\noindent We prove this lemma by using bubbling-off analysis. Let
us assume that the assertion is not true. Then we find $\delta_{0}>0$
such that for any $C_{1,n}=n$ there exist $\overline{J}_{P_{n}}-$holomorphic
curves $(\overline{u}_{n},R_{n},P_{n})$ with $R_{n}>\delta_{0}$
such that 
\[
\sup_{(s,t)\in[-R_{n}+\delta_{0},R_{n}-\delta_{0}]\times S^{1}}\left\Vert d\overline{u}_{n}(s,t)\right\Vert _{g_{\text{eucl.}},\overline{g}_{P_{n}s}}\geq C_{1,n}=n.
\]
Therefore there exist points $(s_{n},t_{n})\in[-R_{n}+\delta_{0},R_{n}-\delta_{0}]\times S^{1}$
for which 
\[
\left\Vert d\overline{u}_{n}(s_{n},t_{n})\right\Vert _{g_{\text{eucl.}},\overline{g}_{P_{n}s_{n}}}=\sup_{(s,t)\in[-R_{n}+\delta_{0},R_{n}-\delta_{0}]\times S^{1}}\left\Vert d\overline{u}_{n}(s,t)\right\Vert _{g_{\text{eucl.}},\overline{g}_{P_{n}s}}\geq n.
\]
Set $\mathcal{R}_{n}:=\left\Vert d\overline{u}_{n}(s_{n},t_{n})\right\Vert _{g_{\text{eucl.}},\overline{g}_{P_{n}s_{n}}}$and
note that $\mathcal{R}_{n}\rightarrow\infty$. Choose a sequence $\epsilon_{n}$
such that $\epsilon_{n}>0$, $\epsilon_{n}\rightarrow0$ and $\epsilon_{n}\mathcal{R}_{n}\rightarrow+\infty$.
Now, apply Hofer's metric lemma \cite{key-18} to the continuous
sequence of functions $\left\Vert d\overline{u}_{n}(s,t)\right\Vert _{g_{\text{eucl.}},\overline{g}_{P_{n}s}}$
defined on $[-R_{n},R_{n}]\times S^{1}$. For each $(s_{n},t_{n})$
and $\epsilon_{n}$, there exist $(s_{n}',t_{n}')\in[-R_{n}+\delta_{0},R_{n}-\delta_{0}]\times S^{1}$
and $\epsilon_{n}'\in(0,\epsilon_{n}]$ with the properties:
\end{singlespace}
\begin{enumerate}
\begin{singlespace}
\item $\epsilon_{n}'\left\Vert d\overline{u}_{n}(s_{n}',t_{n}')\right\Vert _{g_{\text{eucl.}},\overline{g}_{P_{n}s_{n}'}}\geq\epsilon_{n}\left\Vert d\overline{u}_{n}(s_{n},t_{n})\right\Vert _{g_{\text{eucl.}},\overline{g}_{P_{n}s_{n}}}$; 
\item $|(s_{n},t_{n})-(s_{n}',t_{n}')|_{g_{\text{eucl.}}}\leq2\epsilon_{n}$; 
\item $\left\Vert d\overline{u}_{n}(s,t)\right\Vert _{g_{\text{eucl.}},\overline{g}_{P_{n}s}}\leq2\left\Vert d\overline{u}_{n}(s_{n}',t_{n}')\right\Vert _{g_{\text{eucl.}},\overline{g}_{P_{n}s_{n}'}}$
for all $(s,t)$ such that $|(s,t)-(s_{n}',t_{n}')|\leq\epsilon_{n}'$. 
\end{singlespace}
\end{enumerate}
\begin{singlespace}
\noindent Thus we have found the points $(s_{n}',t_{n}')$ and a sequence
$\epsilon_{n}'$ such that:
\end{singlespace}
\begin{enumerate}
\begin{singlespace}
\item $\epsilon_{n}'>0$, $\epsilon_{n}'\rightarrow0$, $\mathcal{R}_{n}':=\left\Vert d\overline{u}_{n}(s_{n}',t_{n}')\right\Vert _{g_{\text{eucl.}},\overline{g}_{P_{n}s_{n}'}}\rightarrow\infty$
and $\epsilon_{n}'\mathcal{R}_{n}'\rightarrow\infty$; 
\item $\left\Vert d\overline{u}_{n}(s,t)\right\Vert _{g_{\text{eucl.}},\overline{g}_{P_{n}s}}\leq2\mathcal{R}_{n}'$
for all $(s,t)$ such that $|(s,t)-(s_{n}',t_{n}')|\leq\epsilon_{n}'$. 
\end{singlespace}
\end{enumerate}
\begin{singlespace}
\noindent Now we do rescaling. Setting $z_{n}'=(s_{n}',t_{n}')$ and
defining the maps 
\begin{align*}
\tilde{u}_{n}(s,t) & :=\left(\overline{a}_{n}\left(z_{n}'+\frac{z}{\mathcal{R}_{n}'}\right)-\overline{a}_{n}(z_{n}'),\overline{f}_{n}\left(z_{n}'+\frac{z}{\mathcal{R}_{n}'}\right)\right)=(\tilde{a}(z),\tilde{f}(z))
\end{align*}
for $z=(s,t)\in B_{\epsilon_{n}'\mathcal{R}_{n}'}(0)$, we obtain
\[
d\tilde{u}_{n}(z)=\frac{1}{\mathcal{R}_{n}'}d\overline{u}_{n}\left(z_{n}'+\frac{z}{\mathcal{R}_{n}'}\right)
\]
and 
\[
\left\Vert d\tilde{u}_{n}(z)\right\Vert _{g_{\text{eucl.}},\overline{g}_{P_{n}\left(s_{n}'+\frac{s}{\mathcal{R}_{n}'}\right)}}=\frac{1}{\mathcal{R}_{n}'}\left\Vert d\overline{u}_{n}\left(z_{n}'+\frac{z}{\mathcal{R}_{n}'}\right)\right\Vert _{g_{\text{eucl.}},\overline{g}_{P_{n}\left(s_{n}'+\frac{s}{\mathcal{R}_{n}'}\right)}}.
\]
Thus, for all $z=(s,t)\in B_{\epsilon_{n}'\mathcal{R}_{n}'}(0)$ we
have that 
\begin{equation}
\left\Vert d\tilde{u}_{n}(z)\right\Vert _{g_{\text{eucl.}},\overline{g}_{P_{n}\left(s_{n}'+\frac{s}{\mathcal{R}_{n}'}\right)}}\leq2\label{eq:gradbdd}
\end{equation}
and $\left\Vert d\tilde{u}_{n}(0)\right\Vert _{g_{\text{eucl.}},\overline{g}_{P_{n}s_{n}'}}=1$,
and moreover, that $\tilde{u}=(\tilde{a},\tilde{f})$ solves 
\begin{align*}
\pi_{\alpha}d\tilde{f}_{n}(z)\circ i & =J_{P_{n}\left(s_{n}'+\frac{s}{\mathcal{R}_{n}'}\right)}(\tilde{f}_{n}(z))\circ\pi_{\alpha}d\tilde{f}_{n}(z),\\
\tilde{f}_{n}^{*}\alpha\circ i & =d\tilde{a}_{n}.
\end{align*}
As $P_{n}s_{n}'$ is bounded by $C$, we go over to some convergent
subsequence, i.e., $P_{n}s_{n}'\rightarrow\rho$ as $n\rightarrow\infty$.
From the uniform gradient bound (\ref{eq:gradbdd}) it follows that
there exists a subsequence converging in $C_{\text{loc}}^{\infty}$
to some curve $\tilde{u}=(\tilde{a},\tilde{f}):\mathbb{C}\rightarrow\mathbb{R}\times M$
such that:
\end{singlespace}
\begin{enumerate}
\begin{singlespace}
\item $\tilde{u}$ solves 
\[
\pi_{\alpha}d\tilde{f}(z)\circ i=J_{\rho}(\tilde{f}(z))\circ\pi_{\alpha}d\tilde{f}(z)\text{ and }\tilde{f}^{*}\alpha\circ i=d\tilde{a};
\]
\item the gradient bounds go over in $\left\Vert d\tilde{u}(z)\right\Vert _{g_{\text{eucl.}},\overline{g}_{Ps'}}\leq2$
and $\left\Vert d\tilde{u}(0)\right\Vert _{g_{\text{eucl.}},\overline{g}_{Ps'}}=1$. 
\end{singlespace}
\end{enumerate}
 From the last two results, $\tilde{u}$ is a usual non-constant
pseudoholomorphic plane with energy by $\tilde{E}_{0}$
(finite energy plane). As the $d\alpha-$energy is smaller than $\hbar$
we arrive at a contradiction (see \cite{key-12}). 
\end{proof}
\begin{proof} \emph{(of Theorem \ref{thm:For-all-numbers})} We prove
Theorem \ref{thm:For-all-numbers} by contradiction. Assume that we
find $0<\tilde{\psi}<\hbar/2$ such that for any constant $h_{0,n}=n$,
there exist $R_{n}>h_{0,n}=n$ and a $\overline{J}_{P_n}-$holomorphic
curves $(\overline{u}_{n},R_{n},P_{n})$ satisfying $\overline{A}0-\overline{A}3$ and
\[
\left|\left|\int_{S^{1}}\overline{u}_{n}(0)^{*}\alpha\right|-T\right|\geq\frac{\tilde{\psi}}{2}
\]
for any $T\in\mathcal{P}$ with $T\leq\tilde{E}_{0}$. By Lemma \ref{lem:For-every--1},
we have for $\delta=1$, 
\begin{equation}
\sup_{(s,t)\in[-R_{n}+1,R_{n}-1]\times S^{1}}\left\Vert d\overline{u}_{n}(s,t)\right\Vert _{g_{\text{eucl.}},\overline{g}_{P_{n}s}}\leq\tilde{C}_{1}.\label{eq:gradbdd-1}
\end{equation}

Since the Riemannian metrics $ \overline{g}_{P_{n}s}$ are all equivalent by Remark \ref{rem:The-parameter-dependent}
we obtain 
\[
\sup_{(s,t)\in[-R_{n}+1,R_{n}-1]\times S^{1}}\left\Vert d\overline{u}_{n}(s,t)\right\Vert _{g_{\text{eucl.}},\overline{g}_{0}}\leq\tilde{C}_{1}\overline{C}_{1}.
\]
After suitable shifts in the $\mathbb{R}$-direction in $\mathbb{R}\times M$ and passing to a subsequence, the maps $\overline{u}_{n}$ converge in $C_{\text{loc}}^{\infty}$
to some $C^\infty$-map $\overline{u}=(\overline{a},\overline{f}):\mathbb{R}\times S^{1}\rightarrow\mathbb{R}\times M$. This convergence together with $|P_n s|\leq C|s|/R_n$ from $\overline{A}3$
imply that for each fixed $(s,t)$ the endomorphism $J_{P_n s}(\overline{f}_n(s,t))\in \text{End}(T(\mathbb{R}\times M))_{\overline{u}_n(s,t)}$ converges to $J_0(\overline{f}(s,t))$.
Thus 

\begin{enumerate}
\begin{singlespace}
\item $\overline{u}$ solves 
\[
\pi_{\alpha}d\overline{f}(z)\circ i=J_{0}(\overline{f}(z))\circ\pi_{\alpha}d\overline{f}(z)\text{ and }\overline{f}^{*}\alpha\circ i=d\overline{a};
\]
\item $E(\overline{u};\mathbb{R}\times S^{1})\leq\tilde{E}_{0}$, $E_{d\alpha}(\overline{u};\mathbb{R}\times S^{1})\leq\hbar/2$ by $\overline{A}0$ and $\overline{A}2$ for $\overline{u}_n$ and Fatou's lemma,
and 
\[
\left|\left|\int_{S^{1}}\overline{u}(0)^{*}\alpha\right|-T\right|\geq\frac{\tilde{\psi}}{2}
\]
for all $T\in\mathcal{P}$ with $T\leq\tilde{E}_{0}$ by the $C^\infty$-convergence $\overline{u}_n\rightarrow \overline{u}$ on $\{0\}\times S^1$. 
\end{singlespace}
\end{enumerate}
\begin{singlespace}
\noindent The rest of the proof proceeds as in the proof of Theorem
1.1 from \cite{key-11}. For the sake of completeness
we present this proof in detail. The map $\overline{u}$ can be regarded
as a finite energy map defined on a $2-$punctured Riemann sphere.
A puncture is removable or has a periodic orbit on the Reeb vector
field as asymptotic limit. In both cases, the limits 
\[
\lim_{s\rightarrow\pm\infty}\int_{S^{1}}\overline{u}(s)^{*}\alpha\in\mathbb{R}
\]
exist. The limit is equal to $0$ if the puncture is removable, and
equal to the period of the asymptotic limit if this is not the case.
As a result and by means of Stokes' theorem, the $d\alpha-$energy
of $\overline{u}$ can be written as 
\[
\int_{\mathbb{R}\times S^{1}}\overline{u}^{*}d\alpha=T_{2}-T_{1},
\]
with $T_{2}\geq T_{1}$, where $T_{1},T_{2}\in\mathcal{P}$ and $T_{1},T_{2}\leq\tilde{E}_{0}$.
Since $\overline{u}_n$ satisfy $\overline{A}2$ we have $E_{d\alpha}(\overline{u};\mathbb{R}\times S^1)\leq\hbar/2$,
and from the definition of the constant $\hbar$ we conclude that
that $T_{1}=T_{2}$. Set $T:=T_{1}=T_{2}$. If $T=0$, both punctures
are removable, $\overline{u}$ has an extension to a $J_{0}-$holomorphic
finite energy sphere $S^{2}\rightarrow\mathbb{R}\times M$, and so,
the map $\overline{u}$ must be constant; hence 
\[
\left|\left|\int_{S^{1}}\overline{u}(0)^{*}\alpha\right|-T\right|=0<\frac{\tilde{\psi}}{2},
\]
a contradiction. If $T>0$,
the finite energy cylinder $\overline{u}$ is non-constant, has a
vanishing $d\alpha-$energy, and so, $\overline{u}$ must be a cylinder
over a periodic orbit $x(t)$ of the form $\overline{u}(\pm(s,t))=(Ts+c,x(Tt+d))$
for some constants $c$ and $d$, and with a period $T\leq\tilde{E}_{0}$;
hence
\[
\left|\left|\int_{S^{1}}\overline{u}(0)^{*}\alpha\right|-T\right|=0<\frac{\tilde{\psi}}{2},
\]
a contradiction. Thus,
there exists a constant $h_{0}>0$ such that for any $\overline{J}_{P}-$holomorphic
curve $(\overline{u},R,P)$ with $R>h_{0}$ satisfying the energy
estimates, the center loop $\overline{u}(0,\cdot)$ has an action
close to an element $T\in\mathcal{P}$ with $T\leq\tilde{E}_{0}$,
i.e. 
\begin{equation}
\left|\left|\int_{S^{1}}\overline{u}(0)^{*}\alpha\right|-T\right|<\frac{\psi}{2}.\label{eq:estimate}
\end{equation}
To deal with the uniqueness issue, we consider two elements $T_{1},T_{2}\in\mathcal{P}$
with $T_{1},T_{2}\leq\tilde{E}_{0}$ satisfying the above estimate.
Then we have 
\[
|T_{1}-T_{2}|<\frac{\psi}{2}+\frac{\psi}{2}=\psi.
\]
By assumption, $\psi<\hbar/2$, and from the definition of $\hbar$
it follows that $T_{1}=T_{2}$. Therefore there is a unique $T\in\mathcal{P}$
satisfying $T\leq\tilde{E}_{0}$ and the estimate (\ref{eq:estimate}).
\end{singlespace}
\end{proof}

\section{\label{subsec:Zero-Center-Action}Vanishing center action}

\begin{singlespace}
\noindent Let $\overline{u}_n$ be a sequence as in Theorem \ref{thm:With-the-same} with vanishing center action (as defined between Theorem
\ref{thm:For-all-numbers} and Remark \ref{rem:From-the-definition}).
We use a version of the monotonicity lemma (Corollary \ref{cor:There-exists-constants-2})
to characterize the behavior of a $\overline{J}_{P}-$holomorphic
curve $(\overline{u},P,R)$ (Theorem \ref{thm:Let--and}). Using these
results we describe the convergence of a sequence of $\overline{J}_{P}-$holomorphic
cylinders (Theorem \ref{thm:With-the-same-3}) and then prove Theorem
\ref{thm:With-the-same}. 
\end{singlespace}

\subsection{Behaviour of $\overline{J}_{P}-$holomorphic curves with vanishing
center action}

\begin{singlespace}
\noindent The following result is an adapted version of Lemma 2.1
from \cite{key-11}: 
\end{singlespace}
\begin{lem}
\begin{singlespace}
\noindent \label{lem:Recall-the-constants}For all $\delta>0$ there
exists $\overline{h}_{0}>0$ such that for any $R>\overline{h}_{0}$
and any $\overline{J}_{P}-$holomorphic curve $(\overline{u},R,P)$ satisfying $\overline{A}0$-$\overline{A}3$
and having vanishing center action, the loops $\overline{u}(s)$
satisfy 
\begin{equation}
\text{diam}_{\overline{g}_{0}}(\overline{u}(s))\leq\delta\ \text{ and }\ |\alpha(\partial_{t}\overline{u}(s))|\leq\delta\label{eq:equation1}
\end{equation}
for all $s\in[-R+\overline{h}_{0},R-\overline{h}_{0}]$. 
\end{singlespace}
\end{lem}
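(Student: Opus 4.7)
My plan is a contradiction argument, essentially a repetition of the bubbling-off and compactness scheme behind Theorem \ref{thm:For-all-numbers}, with the vanishing center action used at the final step to rule out a non-constant limit. Suppose the conclusion fails: then there exist $\delta_0>0$, $R_n\to\infty$, points $s_n\in[-R_n+n,R_n-n]$, and $\overline{J}_{P_n}$-holomorphic curves $(\overline{u}_n,R_n,P_n)$ satisfying $\overline{A}0$-$\overline{A}3$ with vanishing center action such that either $\text{diam}_{\overline{g}_0}(\overline{u}_n(s_n,\cdot))>\delta_0$ or $\sup_{t\in S^1}|\alpha(\partial_t \overline{u}_n(s_n,t))|>\delta_0$ for every $n$.

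First I would apply Lemma \ref{lem:For-every--1} with $\delta=1$ together with the metric equivalence (\ref{eq:equivalence_metrics}) to bound $\|d\overline{u}_n\|$ uniformly in the fixed metric $\overline{g}_0$ on $[-R_n+1,R_n-1]\times S^1$. Define the shifted maps $\overline{v}_n(s,t):=\overline{u}_n(s+s_n,t)-(\overline{a}_n(s_n,0),0)$; they are defined on $[-R_n-s_n,R_n-s_n]\times S^1\supset[-n,n]\times S^1$ and satisfy a $\overline{J}$-holomorphic equation with almost complex structure $J_{P_n s+P_n s_n}$ at $(s,t)$. Elliptic bootstrapping upgrades the $C^1$ bound to uniform $C^\infty_{\text{loc}}$ bounds, so after passing to a subsequence $\overline{v}_n$ converges in $C^\infty_{\text{loc}}$ on $\mathbb{R}\times S^1$ to a smooth map $\overline{v}$. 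Since $|P_n|\leq C/R_n\to 0$ and $|P_n s_n|\leq C$ by $\overline{A}3$, I may also assume $P_n s_n\to\rho\in[-C,C]$; then $P_n(s+s_n)\to\rho$ uniformly on compacts, so the limit $\overline{v}=(\overline{a},\overline{f}):\mathbb{R}\times S^1\to\mathbb{R}\times M$ satisfies
\[
\pi_\alpha d\overline{f}\circ i=J_\rho(\overline{f})\circ\pi_\alpha d\overline{f},\qquad \overline{f}^*\alpha\circ i=d\overline{a},
\]
i.e., $\overline{v}$ is an ordinary $\mathbb{R}$-invariantly pseudoholomorphic finite-energy cylinder for the (no longer domain-dependent) almost complex structure $J_\rho$ on $\mathbb{R}\times M$.

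Next I would force $\overline{v}$ to be constant. By Fatou's lemma, $E(\overline{v})\leq\tilde{E}_0$ and $E_{d\alpha}(\overline{v})\leq\hbar/2$. Exactly as in the proof of Theorem \ref{thm:For-all-numbers}, each puncture of $\overline{v}$ is either removable or asymptotic to a Reeb orbit, Stokes' theorem gives $E_{d\alpha}(\overline{v})=T_+-T_-$ with $T_\pm\in\mathcal{P}\cap[0,\tilde{E}_0]$, and the bound $<\hbar$ forces $T_+=T_-=:T$. To pin down $T=0$, fix some $\psi<\hbar/2$; the assumption $A(\overline{u}_n)=0$ and Theorem \ref{thm:For-all-numbers} yield $|\int_{S^1}\overline{u}_n(0)^*\alpha|<\psi/2$, and then Stokes combined with $\overline{A}2$ gives
\[
\Big|\int_{S^1}\overline{u}_n(s_n)^*\alpha\Big|\leq\Big|\int_{S^1}\overline{u}_n(0)^*\alpha\Big|+E_{d\alpha}(\overline{u}_n)<\frac{\psi}{2}+\frac{\hbar}{2}<\hbar.
\]
Passing to the $C^\infty$-limit on $\{0\}\times S^1$ yields $|\int_{S^1}\overline{v}(0)^*\alpha|<\hbar$. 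If $T>0$, then $\overline{v}$ is a trivial cylinder of the form $(Ts+c,x(Tt+d))$ over a Reeb orbit of period $T\in\mathcal{P}\setminus\{0\}$, so $T\geq\hbar$; but the center-loop action of such a cylinder equals $T$, contradicting the previous inequality. Hence $T=0$, and $\overline{v}$ extends across both punctures to a $J_\rho$-holomorphic sphere of vanishing $d\alpha$-energy, which is necessarily constant.

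Finally, the constancy of $\overline{v}$ combined with $C^\infty_{\text{loc}}$-convergence forces $\text{diam}_{\overline{g}_0}(\overline{u}_n(s_n,\cdot))\to 0$ and $\sup_{t\in S^1}|\alpha(\partial_t \overline{u}_n(s_n,t))|\to 0$, contradicting the failure hypothesis. The main obstacle I anticipate is the careful passage from the $s$-dependent structures $J_{P_n s}$ to a single $\mathbb{R}$-invariant $J_\rho$ in the limit, so that the classical theory of finite-energy pseudoholomorphic cylinders in symplectizations (removable singularities, asymptotic convergence to Reeb orbits) applies verbatim to $\overline{v}$; once that is in place, the remainder is a specialization of the argument already given for Theorem \ref{thm:For-all-numbers}.
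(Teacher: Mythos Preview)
Your proposal is correct and follows essentially the same approach as the paper: a contradiction argument in which one shifts by $s_n$, uses the uniform gradient bounds (Lemma \ref{lem:For-every--1}) and the compactness of $\{J_\rho\}_{\rho\in[-C,C]}$ to extract a $J_\rho$-holomorphic limit cylinder, then combines the vanishing-center-action estimate $\bigl|\int_{S^1}\overline{u}_n(0)^*\alpha\bigr|<\psi/2$ with Stokes and $\overline{A}2$ to force the limit to be constant, yielding the contradiction. The only cosmetic differences are that the paper fixes $\psi=\hbar/4$ (getting the bound $3\hbar/4$ rather than your $\psi/2+\hbar/2$) and treats the diameter and $|\alpha(\partial_t\overline{u})|$ statements separately rather than simultaneously; also note that strict inequalities become non-strict in the limit, so you should record $\bigl|\int_{S^1}\overline{v}(0)^*\alpha\bigr|\le\psi/2+\hbar/2<\hbar$ rather than $<\hbar$ directly.
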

\begin{proof}
\begin{singlespace}
\noindent The proof is similar to that given in \cite{key-11}. Nevertheless,
for the sake of completeness we sketch it here.
By setting $\psi:=\hbar/4$ and using Theorem \ref{thm:For-all-numbers} we obtain some $h_0>0$ such that 
\[
\left|\left|\int_{S^{1}}\overline{u}(0)^{*}\alpha\right|-0\right|<\frac{\psi}{2},
\] for any $\overline{J}_{P}-$holomorphic curve $(\overline{u},R,P)$ satisfying $\overline{A}0$-$\overline{A}3$ with vanishing center action and $R>h_0$. To show that 
we have $\text{diam}_{\overline{g}_{0}}(\overline{u}(s))\leq\delta $ for $s\in [-R+\overline{h}_0,R-\overline{h}_0]$ for sufficiently large
$\overline{h}_0$, we argue by contradiction. Thus assume there is a constant $\delta_{0}>0$, a sequence
$R_{n}\geq h_{n}:=n+h_{0}$, and a sequence of $\overline{J}_{P}-$holomorphic
curves $(\overline{u}_{n},R_{n},P_{n})$ such that 
\begin{align*}
E(\overline{u}_{n};[-R_{n},R_{n}]\times S^{1}) & \leq\tilde{E}_{0},\\
E_{d\alpha}(\overline{u}_{n};[-R_{n},R_{n}]\times S^{1}) & \leq\frac{\hbar}{2},\\
\left|\int_{S^{1}}\overline{u}_{n}(0)^{*}\alpha\right| & \leq\frac{\psi}{2},\\
\text{diam}_{\overline{g}_{0}}(\overline{u}_{n}(s_{n})) & \geq\delta_{0}
\end{align*}
for a sequence $s_{n}\in[-R_{n}+n+h_{0},R_{n}-n-h_{0}]$. By
Stokes' theorem, we have 
\begin{align*}
\left|\int_{S^{1}}\overline{u}_{n}(s_{n})^{*}\alpha\right| & \leq\left|\int_{[s_{n},0]\times S^{1}}\overline{f}_{n}^{*}d\alpha\right|+\left|\int_{S^{1}}\overline{u}_{n}(0)^{*}\alpha\right|\\
 & \leq\frac{\hbar}{2}+\frac{\psi}{2}\\
 & \leq\frac{3\hbar}{4}.
\end{align*}
Now define the maps $\tilde{u}_{n}=(\tilde{a}_{n},\tilde{f}_{n}):[-R_{n}-s_{n},R_{n}+s_{n}]\times S^{1}\rightarrow\mathbb{R}\times M$
by 
\[
\tilde{u}_{n}(s,t):=(\overline{a}_{n}(s+s_{n},t),\overline{f}_{n}(s+s_{n},t)),
\]
for which, the above assumptions go over in 
\begin{align*}
E(\tilde{u}_{n};[-R_{n},R_{n}]\times S^{1}) & \leq\tilde{E}_{0},\\
E_{d\alpha}(\tilde{u}_{n};[-R_{n},R_{n}]\times S^{1}) & \leq\frac{\hbar}{2},\\
\left|\int_{S^{1}}\tilde{u}_{n}(0)^{*}\alpha\right| & \leq\frac{3\hbar}{4},\\
\text{diam}_{\overline{g}_{0}}(\tilde{u}_{n}(0)) & \geq\delta_{0}.
\end{align*}
As $s_{n}\in[-R_{n}+n+h_{0},R_{n}-n-h_{0}]$, we see that $|R_{n}+s_{n}|\rightarrow\infty$
and $|R_{n}-s_{n}|\rightarrow\infty$ as $n\rightarrow\infty$. Moreover,
$\tilde{u}_{n}$ satisfies the pseudoholomorphic curve equation
\begin{alignat*}{1}
\pi_{\alpha}d\tilde{f}_{n}(s,t)\circ i & =J_{P_{n}(s+s_{n})}(\tilde{f}_{n}(s,t))\circ\pi_{\alpha}d\tilde{f}_{n}(s,t),\\
\tilde{f}_{n}^{*}\alpha\circ i & =d\tilde{a}_{n}.
\end{alignat*}
For the new sequence 
\[
\tilde{v}_{n}(s,t)=(\tilde{b}_{n}(s,t),\tilde{f}_{n}(s,t))=(\tilde{a}_{n}(s,t)-\tilde{a}_{n}(0,0),\tilde{f}_{n}(s,t)),
\]
the $\mathbb{R}-$invariance of $J_{\tau}$ and of $\overline{g}_{0}$,
yields 
\begin{align*}
E(\tilde{v}_{n};[-R_{n}-s_{n},R_{n}-s_{n}]\times S^{1}) & \leq\tilde{E}_{0},\\
E_{d\alpha}(\tilde{v}_{n};[-R_{n}-s_{n},R_{n}-s_{n}]\times S^{1}) & \leq\frac{\hbar}{2},\\
\left|\int_{S^{1}}\tilde{v}_{n}(0)^{*}\alpha\right| & \leq\frac{3\hbar}{4},\\
\text{diam}_{\overline{g}_{0}}(\tilde{v}_{n}(0)) & \geq\delta_{0}
\end{align*}
and 
\begin{align*}
\pi_{\alpha}d\tilde{v}_{n}(s,t)\circ i & =J_{P_{n}(s+s_{n})}(\tilde{f}_{n}(s,t))\circ\pi_{\alpha}d\tilde{v}_{n}(s,t),\\
\tilde{v}_{n}^{*}\alpha\circ i & =d\tilde{b}_{n}.
\end{align*}
Since  $|s_n|\leq R_n$, we have $|P_ns_n|\leq C$ according to $\overline{A}3$; thus a subsequence of $P_n s_n$ converges to some $\tau\in [-C,C]$.
By the same bubbling-off argument as in the proof of Theorem \ref{thm:For-all-numbers},
a subsequence of $\tilde{v}_{n}$ converges (up to shifts in the $\mathbb{R}$-direction) in $C_{\text{loc}}^{\infty}$
to a map $\tilde{v}=(b,v):\mathbb{R}\times S^{1}\rightarrow\mathbb{R}\times M$, which is a usual $J_{\tau}-$holomorphic cylinder,
since $J_{P_{n}(s+s_{n})}(\tilde{f}_{n}(s,t))\rightarrow J_\tau(v(s,t))$ for each fixed $(s,t)\in \mathbb{R}\times S^1$. Since $\tilde{v}$ is a nonconstant 
$J_\tau$-holomorphic cylinder 
of finite Hofer energy, it is asymptotic to periodic Reeb orbits at the punctures and its $d\alpha$-energy can be expressed as the difference of the asymptotic periods $P_{\pm\infty}$.
Since $|P_\infty-P_{-\infty}|=E_{d\alpha}(\tilde{v};\mathbb{R}\times S^1)\leq \hbar/2$, it follows from the definition of $\hbar$ that $P_\infty=P_{-\infty}$ and thus $E_{d\alpha}(\tilde{v};\mathbb{R}\times S^1)=0$. Summarizing we have:
\begin{align*}
E_{\alpha}(\tilde{v};\mathbb{R}\times S^{1})+E_{d\alpha}(\tilde{v};\mathbb{R}\times S^{1}) & \leq\tilde{E}_{0},\\
E_{d\alpha}(\tilde{v};\mathbb{R}\times S^{1}) & =0,\\
\left|\int_{S^{1}}\tilde{v}(0)^{*}\alpha\right| & \leq\frac{3\hbar}{4},\\
\text{diam}_{\overline{g}_{0}}(\tilde{v}(0)) & \geq\delta_{0}.
\end{align*}
In particular, $\tilde{v}$ is a non-constant finite energy cylinder
having a vanishing $d\alpha-$energy. Hence $\tilde{v}$ is a cylinder
over a periodic orbit of period $0<T\leq\tilde{E}_{0}$. Consequently,
we obtain 
\[
\left|\int_{S^{1}}\tilde{v}(0)^{*}\alpha\right|=T\geq\hbar,
\]
contradicting the previous estimate $\left|\int_{S^{1}}\tilde{v}(0)^{*}\alpha\right|  \leq\frac{3\hbar}{4}$. Thus $\text{diam}_{\overline{g}_{0}}(\overline{u}(s))\leq\delta$
for all $s\in[-R+h,R-h]$. For $|\alpha(\partial_{t}\overline{u}(s))|\leq\delta$
we proceed analogously, and the proof is finished.
\end{singlespace}
\end{proof}
\begin{singlespace}
\noindent The next theorem characterizes the behavior of a $\overline{J}_{P}-$holomorphic
curve $(\overline{u},R,P)$ with vanishing center action. 
\end{singlespace}
\begin{thm}
\begin{singlespace}
\noindent \label{thm:Let--and}For any $\epsilon>0$ there exists
$h_{1}>0$ such that for any $R>h_{1}$ and any $\overline{J}_{P}-$holomorphic
curve $(\overline{u},R,P)$ satisfying $\overline{A}0$-$\overline{A}3$ and $A(\overline{u})=0$ we have
$\overline{u}([-R+h_{1},R-h_{1}]\times S^{1})\subset B_{\epsilon}^{\overline{g}_{0}}(\overline{u}(0,0))$. 
\end{singlespace}
\end{thm}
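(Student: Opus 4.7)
My plan is to argue by contradiction following the bubbling-off scheme used in the proof of Theorem~\ref{thm:For-all-numbers}, with Lemma~\ref{lem:Recall-the-constants} providing the essential control on loops and (as the decisive ingredient in the last step) the monotonicity result of Corollary~\ref{cor:There-exists-constants-2}. Suppose the conclusion fails: then there exist $\epsilon_{0}>0$, sequences $h_{1,n},R_{n}\to\infty$ with $R_{n}>h_{1,n}$, $\overline{J}_{P_{n}}$-holomorphic curves $(\overline{u}_{n},R_{n},P_{n})$ satisfying $\overline{A}0$--$\overline{A}3$ with $A(\overline{u}_{n})=0$, and points $z_{n}=(s_{n},t_{n})\in[-R_{n}+h_{1,n},R_{n}-h_{1,n}]\times S^{1}$ with $d_{\overline{g}_{0}}(\overline{u}_{n}(z_{n}),\overline{u}_{n}(0,0))\geq\epsilon_{0}$. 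Fixing a small $\delta\in(0,\min\{\epsilon_{0}/8,\hbar/4\})$ and applying Lemma~\ref{lem:Recall-the-constants}, for $n$ large enough that $h_{1,n}\geq \overline{h}_{0}(\delta)$ every interior loop $\overline{u}_{n}(s,\cdot)$ has $\overline{g}_{0}$-diameter $\leq\delta$ and $\alpha$-action $\leq\delta$.

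Next I extract two $C^{\infty}_{\mathrm{loc}}$-limits. Using the uniform interior gradient bound of Lemma~\ref{lem:For-every--1}, the estimate $|P_{n}|\leq C/R_{n}\to 0$, and a subsequence plus an $\mathbb{R}$-shift, the maps $\overline{u}_{n}$ converge on $\mathbb{R}\times S^{1}$ to a $J_{0}$-holomorphic finite-energy cylinder $\overline{u}_{\infty}$ whose center loop has $\alpha$-action $\leq\delta<\hbar$; the classification argument in the proof of Theorem~\ref{thm:For-all-numbers} then forces $\overline{u}_{\infty}$ to be constant. If $|s_{n}|$ stays bounded, a subsequence yields $\overline{u}_{n}(z_{n})\to\overline{u}_{\infty}(s_{\infty},t_{\infty})=\overline{u}_{\infty}(0,0)=\lim_{n}\overline{u}_{n}(0,0)$, directly contradicting the separation, so I may assume $|s_{n}|\to\infty$. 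Translating by $\overline{a}_{n}(s_{n},0)$ in the $\mathbb{R}$-direction and shifting the cylinder by $s_{n}$, the maps
\[
\tilde{u}_{n}(s,t)=\bigl(\overline{a}_{n}(s+s_{n},t)-\overline{a}_{n}(s_{n},0),\,\overline{f}_{n}(s+s_{n},t)\bigr)
\]
have domains exhausting $\mathbb{R}\times S^{1}$, and (using $|P_{n}s_{n}|\leq C$ from $\overline{A}3$ to extract $P_{n}s_{n}\to\rho_{\infty}\in[-C,C]$) converge to a constant $J_{\rho_{\infty}}$-holomorphic cylinder $\tilde{u}_{\infty}\equiv(0,q_{0})$ by the same classification.

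The main obstacle, and the least routine step, is to derive the contradiction from these two constant limits: I must rule out that the $\mathbb{R}\times M$-drift $\bigl(\overline{a}_{n}(s_{n},0)-\overline{a}_{n}(0,0),\,\overline{f}_{n}(s_{n},0)-\overline{f}_{n}(0,0)\bigr)$ can have $\overline{g}_{0}$-norm at least $\epsilon_{0}-\delta$. For this I plan to invoke the monotonicity lemma (Corollary~\ref{cor:There-exists-constants-2}) along the image path $s\mapsto\overline{u}_{n}(s,0)$: by the intermediate-value theorem applied to $s\mapsto d_{\overline{g}_{0}}(\overline{u}_{n}(s,0),\overline{u}_{n}(0,0))$, I can select points $s_{n}^{(1)}<\cdots<s_{n}^{(N)}$ in $[0,s_{n}]$ whose images are pairwise $\overline{g}_{0}$-separated at a scale comparable to $\delta$; the disjoint $\overline{g}_{0}$-balls around these points then each collect, via monotonicity, a uniform lower bound on the local $d\alpha$-energy whose sum, after shrinking $\delta$ sufficiently in terms of $\epsilon_{0}$ and $\hbar$, strictly exceeds $\hbar/2$ and so contradicts $\overline{A}2$. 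This yields the contradiction and completes the proof.
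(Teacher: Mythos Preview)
Your final step---the one you flag as ``least routine''---does not work as stated, for two reasons. First, Corollary~\ref{cor:There-exists-constants-2} gives a lower bound on the \emph{full} energy $E=E_{\alpha}+E_{d\alpha}$ in a ball, not on $E_{d\alpha}$ alone; summing monotonicity contributions can therefore only be compared against the a priori bound $\tilde{E}_{0}$ from $\overline{A}0$, not against $\hbar/2$ from $\overline{A}2$. Second, and more seriously, the scaling is against you: if the images $\overline{u}_{n}(s_{n}^{(j)},0)$ are pairwise $\overline{g}_{0}$-separated ``at a scale comparable to $\delta$'' then the disjoint balls must have radius $r\lesssim\delta$, while the number $N$ of such points along a path of length $\gtrsim\epsilon_{0}$ is at most $\sim\epsilon_{0}/\delta$. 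The summed lower bound is then of order $N\cdot C_{8}r^{2}\sim\epsilon_{0}\delta$, which tends to $0$ as $\delta\to 0$. Shrinking $\delta$ makes the contradiction disappear, not appear.

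The missing idea---and this is exactly what the paper does---is to first produce a \emph{small upper bound on the full energy} of the interior cylinder. You cite both conclusions of Lemma~\ref{lem:Recall-the-constants} but use only the diameter bound; the second bound $|\alpha(\partial_{t}\overline{u}(s))|\leq\delta$ is precisely what is needed. With it, Stokes' theorem gives
\[
E\bigl(\overline{u}\big|_{[-R+h,R-h]\times S^{1}}\bigr)\leq 2\Bigl(\int_{\{R-h\}\times S^{1}}|\overline{f}^{*}\alpha|+\int_{\{-R+h\}\times S^{1}}|\overline{f}^{*}\alpha|\Bigr)\leq 4\delta.
\]
Now fix $r$ with $r\leq\epsilon_{0}/2-\delta$ and $8\delta<C_{8}r^{2}$; find a single intermediate $s_{1}$ with $\overline{u}(s_{1},\cdot)$ at distance $\geq r$ from both boundary loops, and apply monotonicity once to $B_{r}^{\overline{g}_{0}}(\overline{u}(s_{1},t_{1}))$. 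This yields $C_{8}r^{2}\leq 4\delta$, the desired contradiction. No sequence extraction or $C^{\infty}_{\mathrm{loc}}$-limits are needed beyond what is already packaged in Lemma~\ref{lem:Recall-the-constants}.
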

\begin{proof}
\begin{singlespace}
\noindent In the first part of the proof we employ exactly the same
arguments as in the proof of Theorem 1.2 from \cite{key-11}. With
$\epsilon>0$ as in the statement of the theorem, we choose
$0<r<\frac{\epsilon}{2}$ and $\delta>0$ sufficiently small such that 
\begin{equation}
8\delta<C_{8}r^{2}\text{ and }\delta+r\leq\frac{\epsilon}{2}.\label{eq:choice}
\end{equation}
where $C_{8}>0$ is the constant from the monotonicity
Lemma (Corollary \ref{cor:There-exists-constants-2}). For the $\overline{J}_{P}-$holomorphic
curve $(\overline{u},R,P)$ with $R>\overline{h}_{0}$ as in the Lemma
\ref{lem:Recall-the-constants} we have $\text{diam}_{\overline{g}_{0}}(\overline{u}(s))\leq\delta$
and $|\alpha(\partial_{t}\overline{f}(s))|\leq\delta$ for all $s\in[-R+\overline{h}_{0},R-\overline{h}_{0}]$.
In order to simplify notation denote $\overline{h}_{0}$
by $h$. The definition of the energy and Stokes' theorem give
\begin{align*}
E(\overline{u}|_{[-R+h,R-h]\times S^{1}};[-R+h,R-h]\times S^{1}) & =E_{\alpha}(\overline{u}|_{[-R+h,R-h]\times S^{1}};[-R+h,R-h]\times S^{1})\\
 & \qquad+E_{d\alpha}(\overline{u}|_{[-R+h,R-h]\times S^{1}};[-R+h,R-h]\times S^{1})\\
 & =\sup_{\varphi\in\mathcal{A}}\int_{[-R+h,R-h]\times S^{1}}\varphi'(\overline{a})d\overline{a}\circ j\wedge d\overline{a}\\
 & \qquad+\int_{[-R+h,R-h]\times S^{1}}\overline{f}^{*}d\alpha\\
 & =\sup_{\varphi\in\mathcal{A}}\int_{[-R+h,R-h]\times S^{1}}-\left[d(\varphi(\overline{a})d\overline{a}\circ j)-\varphi(\overline{a})d(d\overline{a}\circ j)\right]\\
 & \qquad+\int_{[-R+h,R-h]\times S^{1}}\overline{f}^{*}d\alpha\\
 & =\sup_{\varphi\in\mathcal{A}}\left[\int_{[-R+h,R-h]\times S^{1}}-d(\varphi(\overline{a})d\overline{a}\circ j)-\int_{[-R+h,R-h]\times S^{1}}\varphi(\overline{a})\overline{f}^{*}d\alpha\right]\\
 & \qquad+\int_{[-R+h,R-h]\times S^{1}}\overline{f}^{*}d\alpha\\
 & \leq\sup_{\varphi\in\mathcal{A}}\left|\int_{\{R-h\}\times S^{1}}\varphi(\overline{a})d\overline{a}\circ j-\int_{\{-R+h\}\times S^{1}}\varphi(\overline{a})d\overline{a}\circ j\right|\\
 & \qquad+\left[\int_{\{R-h\}\times S^{1}}\overline{f}^{*}\alpha-\int_{\{-R+h\}\times S^{1}}\overline{f}^{*}\alpha\right]\\
 & =\sup_{\varphi\in\mathcal{A}}\left|-\int_{\{R-h\}\times S^{1}}\varphi(\overline{a})\overline{f}^{*}\alpha+\int_{\{-R+h\}\times S^{1}}\varphi(\overline{a})\overline{f}^{*}\alpha\right|\\
 & \leq 2\left[\int_{\{R-h\}\times S^{1}}|\overline{f}^{*}\alpha|+\int_{\{-R+h\}\times S^{1}}|\overline{f}^{*}\alpha|\right]
\end{align*}
hence
\begin{equation}
E(\overline{u}|_{[-R+h,R-h]\times S^{1}};[-R+h,R-h]\times S^{1})\leq4\delta.\label{eq:monotonicity-ineq}
\end{equation}
If the conclusion of Theorem \ref{thm:Let--and} is not true for this
$h$, we find a point $(s_{0},t_{0})\in[-R+h,R-h]\times S^{1}$ for
which 
\[
\text{dist}_{\overline{g}_{0}}(\overline{u}(s_{0},t_{0}),\overline{u}(0,0))\geq\epsilon.
\]
From $\text{diam}_{\overline{g}_{0}}(\overline{u}(s))\leq\delta$
we obtain 
\[
\text{dist}_{\overline{g}_{0}}(\overline{u}(s_{0},t),\overline{u}(0,t'))\geq\epsilon-2\delta
\]
for all $t,t'\in S^{1}$. Choosing a point $s_{1}$ between $0$ and
$s_{0}$ such that 
\begin{align*}
\text{dist}_{\overline{g}_{0}}(\overline{u}(s_{1},t),\overline{u}(s_{0},t'))\geq\frac{\epsilon}{2}-\delta\ 
\text{ and }\ \text{dist}_{\overline{g}_{0}}(\overline{u}(s_{1},t),\overline{u}(0,t'))\geq\frac{\epsilon}{2}-\delta
\end{align*}
for all $t,t'\in S^{1}$, using $r\leq\epsilon/2-\delta$, and applying
the monotonicity Lemma \ref{cor:There-exists-constants-2} to the
open ball $B_{r}^{\overline{g}_{0}}(\overline{u}(s_{1},t_{1}))$,
we conclude that $$E(\overline{u};S\cap \overline{u}^{-1}(B^{\overline{g}_0}_r(\overline{u}(s_1,t_1)))\geq C_{8}r^{2}.$$
In view of (\ref{eq:monotonicity-ineq}), this implies that $C_{8}r^{2}\leq 4\delta$,
which contradicts the choice in (\ref{eq:choice}). Hence
$\overline{u}(s,t)\in B_{\epsilon}^{\overline{g}_{0}}(\overline{u}(0,0))$
for all $(s,t)\in[-R+h,R-h]\times S^{1}$ as claimed by Theorem \ref{thm:Let--and}. 
\end{singlespace}
\end{proof}

\subsection{\label{subsec:Convergence-of-holomorphic}Proof of Theorem \ref{thm:With-the-same}}

\begin{singlespace}
\noindent We are now nearly ready to describe the convergence and
the limit object of a sequence of $\mathcal{H}-$holomorphic cylinders $u_{n}$
satisfying the assumptions of Theorem \ref{thm:With-the-same}. However, befor coming to that, we prove
first an analogous result for a sequence of $J_{P_n}$-holomorphic cylinders $\overline{u}_n$. 

Thus consider a sequence $(\overline{u}_n=(\overline{a}_n,\overline{f}_n),R_n,P_n)$ of $\overline{J}_{P_{n}}-$holomorphic curves 
satisfying assumptions $\overline{A}0$-$\overline{A}3$ with $\mathcal{A}(\overline{u}_n)=0\, \forall n\in \mathbb{N}$ and $R_n\rightarrow \infty$.
For normalization purposes, we assume that $\overline{u}_n(0,0)\rightarrow w:=(0,w_f)$ for some $w_f\in M$
and that $R_n\rightarrow \infty$ (this can always be achieved by shifting in the $\mathbb{R}$-direction of $\mathbb{R}\times M$ and by passing to a subsequence).
\noindent By Theorem \ref{thm:Let--and} applied to the sequence of
$\overline{J}_{P_{n}}-$holomorphic curves $(\overline{u}_{n},R_{n},P_{n})$
we have the following 
\end{singlespace}
\begin{cor}
\begin{singlespace}
\noindent \label{cor:For-every-sequence}For every sequence $h_{n}\in\mathbb{R}_{>0}$
satisfying $h_{n}<R_{n}$ and $h_{n},R_{n}/h_{n}\rightarrow\infty$
and every $\epsilon>0$ there exists $N\in\mathbb{N}$ such that 
\[
\overline{u}_{n}([-R_{n}+h_{n},R_{n}-h_{n}]\times S^{1})\subset B_{\epsilon}^{\overline{g}_{0}}((0,w_f))
\]
for all $n\geq N$.
\end{singlespace}
\end{cor}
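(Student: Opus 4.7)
The corollary is a routine combination of Theorem \ref{thm:Let--and} with the two convergence/divergence assumptions in the hypotheses. My plan is to fix $\epsilon>0$ and first apply Theorem \ref{thm:Let--and} to the value $\epsilon/2$, obtaining a constant $h_{1}=h_{1}(\epsilon/2)>0$ such that for every $\overline{J}_{P}$-holomorphic curve $(\overline{u},R,P)$ satisfying $\overline{A}0$-$\overline{A}3$, having vanishing center action, and with $R>h_{1}$, one has
\[
\overline{u}([-R+h_{1},R-h_{1}]\times S^{1})\subset B_{\epsilon/2}^{\overline{g}_{0}}(\overline{u}(0,0)).
\]
Since by hypothesis each $\overline{u}_{n}$ satisfies $\overline{A}0$-$\overline{A}3$ and has vanishing center action, this applies to the whole sequence once $R_{n}>h_{1}$.

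Next I would use the growth assumption $h_{n}\to\infty$ to pick $N_{1}\in\mathbb{N}$ with $h_{n}\geq h_{1}$ for all $n\geq N_{1}$; in particular $R_{n}>h_{n}\geq h_{1}$ for such $n$, so that the inclusion above gives
\[
\overline{u}_{n}([-R_{n}+h_{n},R_{n}-h_{n}]\times S^{1})\subset\overline{u}_{n}([-R_{n}+h_{1},R_{n}-h_{1}]\times S^{1})\subset B_{\epsilon/2}^{\overline{g}_{0}}(\overline{u}_{n}(0,0)).
\]
The normalization $\overline{u}_{n}(0,0)\to (0,w_{f})$ then yields an $N_{2}\in\mathbb{N}$ with $\mathrm{dist}_{\overline{g}_{0}}(\overline{u}_{n}(0,0),(0,w_{f}))<\epsilon/2$ for all $n\geq N_{2}$. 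Setting $N:=\max(N_{1},N_{2})$ and applying the triangle inequality for $\overline{g}_{0}$ gives the desired inclusion into $B_{\epsilon}^{\overline{g}_{0}}((0,w_{f}))$ for every $n\geq N$.

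There is essentially no obstacle here; the only point to keep in mind is that Theorem \ref{thm:Let--and} produces a \emph{uniform} $h_{1}$ depending only on $\epsilon$ (not on the individual curve), which is precisely what allows us to feed in the whole sequence at once, and that the growth $R_{n}/h_{n}\to\infty$ ensures $R_{n}-h_{n}>0$ (so the intervals $[-R_{n}+h_{n},R_{n}-h_{n}]$ are nonempty for large $n$), while $h_{n}\to\infty$ provides $h_{n}\geq h_{1}$ eventually. The role of the hypothesis $\overline{u}_{n}(0,0)\to (0,w_{f})$ is simply to replace the $n$-dependent center $\overline{u}_{n}(0,0)$ of the ball in Theorem \ref{thm:Let--and} by the fixed center $(0,w_{f})$, at the cost of halving the radius.
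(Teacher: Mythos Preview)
Your proof is correct and follows essentially the same approach as the paper's own argument: apply Theorem~\ref{thm:Let--and} to obtain a uniform $h_{1}$, then use $h_{n}\to\infty$ to eventually have $h_{n}\geq h_{1}$, so that the smaller cylinder is contained in the one controlled by the theorem. Your version is in fact slightly more careful than the paper's, since you make the $\epsilon/2$ splitting and the triangle-inequality step (replacing the moving center $\overline{u}_{n}(0,0)$ by the fixed point $(0,w_{f})$) explicit, whereas the paper absorbs this into a single sentence.
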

\begin{proof}
\begin{singlespace}
\noindent Consider a sequence $h_{n}\in\mathbb{R}_{>0}$ such that
$h_{n}<R_{n}$ and $h_{n},R_{n}/h_{n}\rightarrow\infty$ as $n\rightarrow\infty$
and let $\epsilon>0$ be given. From Theorem \ref{thm:Let--and} there
exists $h_{\epsilon}>0$ and $N_{\epsilon}\in\mathbb{N}$ such that
for all $n\geq N_{\epsilon}$, we have $R_{n}>h_{\epsilon}$ and $\overline{u}_{n}([-R_{n}+h_{\epsilon},R_{n}-h_{\epsilon}]\times S^{1})\subset B_{\epsilon}^{\overline{g}_{0}}(w)$.
By making $N_{\epsilon}$ sufficiently large and accounting of $h_{n}\rightarrow\infty$,
we may assume that for all $n\geq N_{\epsilon}$, we have that $R_{n}>h_{n}>h_{\epsilon}$,
which in turn gives $\overline{u}_{n}([-R_{n}+h_{n},R_{n}-h_{n}]\times S^{1})\subset B_{\epsilon}^{\overline{g}_{0}}(w)$. 
\end{singlespace}
\end{proof}
\begin{singlespace}
\noindent To describe the $C^{0}-$convergence of the maps $\overline{u}_{n}$
we define a sequence of diffeomorphisms, which is similar to that
constructed in Section 4.4 of \cite{key-10}. For a sequence $h_{n}\in\mathbb{R}_{>0}$
with $h_{n}<R_{n}$ and $h_{n},R_{n}/h_{n}\rightarrow\infty$ as $n\rightarrow\infty$,
let $\theta_{n}, \theta^\pm$ and $\theta_n^\pm$ be sequences of
diffeomorphisms with properties as in Remark \ref{rem:For-every-sequence}.
Given maps $\overline{u},\overline{u}_n^\pm,\overline{u}^\pm$ resp. $u,u_n^\pm,u^\pm$, we define maps 
\begin{align}
\begin{array}{rl}
\overline{v}_{n}(s,t) & =\overline{u}_{n}\circ\theta_{n}^{-1}(s,t),\ s\in[-1,1],\\
\overline{v}_{n}^{-}(s,t) & =\overline{u}_{n}^{-}\circ(\theta_{n}^{-})^{-1}(s,t),\ s\in[-1,-{\normalcolor 1/2]},\\
\overline{v}_{n}^{+}(s,t) & =\overline{u}_{n}^{+}\circ (\theta_{n}^{+})^{-1}(s,t),\ s\in[1/2,1],\\
\overline{v}^{-}(s,t) & =\overline{u}^{-}\circ(\theta^{-})^{-1}(s,t),\ s\in[-1,-1/2),\\
\overline{v}^{+}(s,t) & =\overline{u}^{+}\circ (\theta^{+})^{-1}(s,t),\ s\in(1/2,1],
\end{array}\label{eq:VkBar}
\end{align}
and 
\begin{align}
\begin{array}{rl}
v_{n}(s,t) & =u_{n}\circ \theta_{n}^{-1}(s,t),\ s\in[-1,1],\\
v_{n}^{-}(s,t) & =u_{n}^{-}\circ(\theta_{n}^{-})^{-1}(s,t),\ s\in[-1,-1/2],\\
v_{n}^{+}(s,t) & =u_{n}^{+}\circ(\theta_{n}^{+})^{-1}(s),t),\ s\in[1/2,1],\\
v^{-}(s,t) & =u^{-}\circ(\theta^{-})^{-1}(s,t),\ s\in[-1,-1/2),\\
v^{+}(s,t) & =u^{+}\circ(\theta^{+})^{-1}(s,t),\ s\in(1/2,1],
\end{array}\label{eq:Vk}
\end{align}
where, $\overline{u}_{n}^{\pm}(s,t)=\overline{u}_{n}(s\pm R_{n},t)$
and $u_{n}^{\pm}(s,t)=u_{n}(s\pm R_{n},t)$ are the left and right
shifts of the maps $\overline{u}_{n}$ and $u_{n}$, respectively.

\noindent The next theorem states a $C_{\text{loc}}^{\infty}-$ and
a $C^{0}-$convergence result for the maps $\overline{u}_{n}$.
\end{singlespace}
\begin{thm}
\begin{singlespace}
\noindent \label{thm:With-the-same-3}Let $(\overline{u}_{n},R_{n},P_{n})$ be a sequence of $J_{P_n}$-holomorphic curves satisfying $\overline{A}0$-$\overline{A}3$ with vanishing center action
and such that $\overline{u}_n(0,0)\rightarrow w:=(0,w_f)$, $R_n\rightarrow \infty$ and $R_nP_n\rightarrow \tau$. Then there exists a subsequence,
denoted again by $(\overline{u}_{n},R_{n},P_{n}),$ and pseudoholomorphic
half cylinders $\overline{u}^{\pm}$ defined on $(-\infty,0]\times S^{1}$
and $[0,\infty)\times S^{1}$ respectively, such that for every sequence
$h_{n}\in\mathbb{R}_{>0}$ and every sequence of diffeomorphisms $\theta_{n}:[-R_{n},R_{n}]\times S^{1}\rightarrow[-1,1]\times S^{1}$
satisfying the assumptions of Remark \ref{rem:For-every-sequence},
the following convergence results hold:

\noindent $C_{\text{loc}}^{\infty}-$convergence: 
\end{singlespace}
\begin{enumerate}
\begin{singlespace}
\item For any sequence $s_{n}\in[-R_{n}+h_{n},R_{n}-h_{n}]$ the shifted
maps $\overline{u}_{n}(s+s_{n},t)$, defined on $[-R_{n}+h_{n}-s_{n},R_{n}-h_{n}-s_{n}]\times S^{1}$,
converge in $C_{\text{loc}}^{\infty}$ on $\mathbb{R}\times S^1$ to the constant map $(s,t)\mapsto w$. 
\item The left shifts $\overline{u}_{n}^{-}(s,t):=\overline{u}_{n}(s-R_{n},t)$,
defined on $[0,h_{n})\times S^{1}$, converge
in $C_{\text{loc}}^{\infty}$ to a pseudoholomorphic half cylinder
$\overline{u}^{-}=(\overline{a}^{-},\overline{f}^{-})$ defined on
$[0,+\infty)\times S^{1}$. We have $\lim_{s\rightarrow \infty}\overline{u}^{-}(s,\cdot)=w$ in $C^\infty(S^1,\mathbb{R}\times M)$. 
The maps $\overline{v}_{n}^{-}:[-1,-1/2)\times S^{1}\rightarrow\mathbb{R}\times M$
converge in $C_{\text{loc}}^{\infty}$ to $\overline{v}^{-}:[-1,-1/2)\times S^{1}\rightarrow\mathbb{R}\times M$
such that $\lim_{s\rightarrow -1/2}\overline{v}^{-}(s,\cdot)=w$ in $C^\infty(S^1,\mathbb{R}\times M)$. 
\item The right shifts $\overline{u}_{n}^{+}(s,t):=\overline{u}_{n}(s+R_{n},t)$,
defined on $(-h_{n},0]\times S^{1}$, converge
in $C_{\text{loc}}^{\infty}$ to a pseudoholomorphic half cylinder
$\overline{u}^{+}=(\overline{a}^{+},\overline{f}^{+})$ defined
on $(-\infty,0]\times S^{1}$. We have $\lim_{s\rightarrow -\infty}\overline{u}^{+}(s,\cdot)=w$ in $C^\infty(S^1,\mathbb{R}\times M)$. 
The maps $\overline{v}_{n}^{+}:(1/2,1]\times S^{1}\rightarrow\mathbb{R}\times M$
converge in $C_{\text{loc}}^{\infty}$ to $\overline{v}:(1/2,1]\times S^{1}\rightarrow\mathbb{R}\times M$
such that $\lim_{s\rightarrow 1/2}\overline{v}^{+}(s,\cdot)=w$ in $C^\infty(S^1,\mathbb{R}\times M)$. 
\end{singlespace}
\end{enumerate}
\begin{singlespace}
\noindent $C^{0}-$convergence: 
\end{singlespace}
\begin{enumerate}
\begin{singlespace}
\item The maps $\overline{v}_{n}:[-1/2,1/2]\times S^{1}\rightarrow\mathbb{R}\times M$
converge in $C^{0}$ to the constant map $\overline{v}(s,t)=w$. 
\item The maps $\overline{v}_{n}^{-}:[-1,-1/2]\times S^{1}\rightarrow\mathbb{R}\times M$
converge in $C^{0}$ to $\overline{v}^{-}:[-1,-1/2]\times S^{1}\rightarrow\mathbb{R}\times M$
such that $\overline{v}^{-}(-1/2,t)=w$. 
\item The maps $\overline{v}_{n}^{+}:[1/2,1]\times S^{1}\rightarrow\mathbb{R}\times M$
converge in $C^{0}$ to $\overline{v}^{+}:[1/2,1]\times S^{1}\rightarrow\mathbb{R}\times M$
such that $\overline{v}(1/2,t)=w$. 
\end{singlespace}
\end{enumerate}
\end{thm}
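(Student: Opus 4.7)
The plan is to extract $C^\infty_{\text{loc}}$ limits on three regions (the middle piece and the two ends) using a uniform gradient bound together with elliptic bootstrapping, identify the limit equations using the behavior of $P_n s$ on each region, and finally translate $C^\infty_{\text{loc}}$ information into $C^0$ information through the sequence $\theta_n$ from Remark \ref{rem:For-every-sequence}.

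First I would establish uniform gradient bounds. By Lemma \ref{lem:For-every--1} applied with $\delta=1$, we have $\|d\overline{u}_n\|\leq C_1'$ on $[-R_n+1,R_n-1]\times S^1$; assumption $\overline{A}1$ handles the collars of width $\delta_1$ at the two ends, so altogether $\|d\overline{u}_n\|$ is uniformly bounded on $[-R_n,R_n]\times S^1$. Next, combined with the pointwise equicontinuity of $\mathcal{J}$ on $[-C,C]\times M$ (Remark \ref{rem:The-parameter-dependent}) and elliptic regularity for the Cauchy--Riemann-type system, standard bootstrapping yields $C^\infty_{\text{loc}}$ bounds on translates, so Arzel\`a--Ascoli produces convergent subsequences in the following situations. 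For any sequence $s_n\in[-R_n+h_n,R_n-h_n]$, the shifts $\overline{u}_n(\,\cdot\,+s_n,\,\cdot\,)$ subconverge in $C^\infty_{\text{loc}}(\mathbb{R}\times S^1)$; the left shifts $\overline{u}_n^-$ subconverge in $C^\infty_{\text{loc}}([0,\infty)\times S^1)$ to some $\overline{u}^-$, and symmetrically for $\overline{u}_n^+$ on $(-\infty,0]\times S^1$.

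Identification of the limits relies on the fact that $P_n=\tau_n/R_n\to 0$ (since $\tau_n\to\tau$ and $R_n\to\infty$). For a shift sequence $s_n$ with $|s_n|\leq R_n$, on any fixed compact $K\subset\mathbb{R}$ one has $P_n(s+s_n)-P_ns_n=P_ns\to 0$ for $s\in K$, and $P_ns_n$ itself is bounded in $[-C,C]$ by $\overline{A}3$; so up to passing to a subsequence $P_n s_n\to\rho$ for some $\rho$ and the limit satisfies a $J_\rho$-holomorphic curve equation. Combined with Corollary \ref{cor:For-every-sequence}, however, the image of every such $\overline{u}_n(\,\cdot\,+s_n,\,\cdot\,)$ eventually lies in $B_\epsilon^{\overline{g}_0}(w)$ for any $\epsilon>0$, so the $C^\infty_{\text{loc}}$ limit must be the constant map $w$; this gives item (1) of the $C^\infty_{\text{loc}}$-convergence. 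For the left shift $\overline{u}_n^-$ we have $P_n(s-R_n)\to -\tau$ on compacts, so $\overline{u}^-$ is $J_{-\tau}$-holomorphic; analogously $\overline{u}^+$ is $J_{+\tau}$-holomorphic. Applying Corollary \ref{cor:For-every-sequence} to any diverging sequence $s_n\to\infty$ in the domain of $\overline{u}^-$ (matched by a sequence in $[-R_n+h_n,R_n-h_n]$) forces $\overline{u}^-(s_n,\cdot)\to w$, and the uniform gradient bounds upgrade this to $C^\infty(S^1,\mathbb{R}\times M)$ convergence as $s\to\infty$; the analogous statement holds for $\overline{u}^+$ as $s\to-\infty$.

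For the $C^0$-statements I would use the two regimes of $\theta_n$ from Remark \ref{rem:For-every-sequence}. On $[-1/2,1/2]\times S^1$, $\theta_n^{-1}$ is the linear map $s\mapsto 2(R_n-h_n)s$, so $\theta_n^{-1}([-1/2,1/2]\times S^1)\subset[-R_n+h_n,R_n-h_n]\times S^1$; Corollary \ref{cor:For-every-sequence} then gives directly $\overline{v}_n\to w$ uniformly, which is the $C^0$ item (1). On $[-1,-1/2]\times S^1$, $\theta_n^{-}$ maps $[0,h_n]$ to $[-1,-1/2]$ and converges in $C^\infty_{\text{loc}}$ to the diffeomorphism $\theta^-:[0,\infty)\to[-1,-1/2)$; combining this with the $C^\infty_{\text{loc}}$ convergence $\overline{u}_n^-\to\overline{u}^-$ gives $\overline{v}_n^-\to\overline{v}^-=\overline{u}^-\circ(\theta^-)^{-1}$ in $C^\infty_{\text{loc}}$ on $[-1,-1/2)\times S^1$. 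Extending $\overline{v}^-$ continuously to the slice $\{-1/2\}\times S^1$ by the value $w$ (well-defined by the asymptotic statement $\lim_{s\to\infty}\overline{u}^-(s,\cdot)=w$) and using once more the smallness provided by Corollary \ref{cor:For-every-sequence} in a neighborhood of this slice, an $\epsilon/3$-argument patches the $C^\infty_{\text{loc}}$ convergence on the interior with the uniform smallness near $s=-1/2$ to yield $C^0$-convergence on all of $[-1,-1/2]\times S^1$. The right-end case is symmetric. The main obstacle is precisely this patching at $s=\pm 1/2$: the limit cylinders $\overline{u}^\pm$ are defined on semi-infinite strips that get compressed to the boundary slice under $\theta^\pm$, so uniform $C^0$ convergence requires using Corollary \ref{cor:For-every-sequence} (not just $C^\infty_{\text{loc}}$ convergence) to control what happens at the endpoint where $\theta^\pm$ becomes singular.
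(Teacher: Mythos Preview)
Your proposal is correct and follows essentially the same approach as the paper: uniform gradient bounds from Lemma \ref{lem:For-every--1} (supplemented by $\overline{A}1$ near the boundary), Arzel\`a--Ascoli to extract $C^\infty_{\text{loc}}$ limits, Corollary \ref{cor:For-every-sequence} to pin the middle limit at $w$ and to get the asymptotics of $\overline{u}^\pm$, identification of the limiting almost complex structure via $P_n(s\mp R_n)\to\mp\tau$, and the $\epsilon/3$-patching at $s=\pm 1/2$ for the $C^0$ statements (which is exactly the content of the reference to Lemma 4.16 of \cite{key-10} in the paper). One small point you leave implicit and that the paper makes explicit: when extracting the $C^\infty_{\text{loc}}$ limit of $\overline{u}_n^-$ you need to know a priori that no shift in the $\mathbb{R}$-coordinate is required; this is guaranteed because Corollary \ref{cor:For-every-sequence} already forces $\overline{u}_n^-(h_n,\cdot)\to w$, anchoring the $\mathbb{R}$-component before you pass to a subsequence.
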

\begin{proof}
\begin{singlespace}
\noindent We prove only the first and second statements of the $C_{\text{loc}}^{\infty}$-
and $C^{0}$- convergences because the proofs of the third statements
are exactly the same as those of the second statements. For the
sequence $h_{n}\in\mathbb{R}_{>0}$ with the property $h_{n},R_{n}/h_{n}\rightarrow\infty$
as $n\rightarrow\infty$. Let $s_{n}\in[-R_{n}+h_{n},R_{n}-h_{n}]$ be an arbitrary sequence.
Then shifted maps $\overline{u}_{n}(\cdot+s_{n},\cdot)$, defined on
$[-R_{n}+h_{n}-s_{n},R_{n}-h_{n}-s_{n}]\times S^{1}$, converge
in $C_{\text{loc}}^{\infty}$ to $w$. Indeed, due to Corollary \ref{cor:For-every-sequence}
and Lemma \ref{lem:For-every--1}, any subsequence has a further subsequence
converging to $w.$ To prove the second statement of the $C_{\text{loc}}^{\infty}-$convergence
we consider the shifted maps $\overline{u}_{n}^{-}:[0,h_{n}]\times S^{1}\rightarrow\mathbb{R}\times M$,
defined by $\overline{u}_{n}^{-}(s,t)=\overline{u}_{n}(s-R_{n},t)$.
By Lemma \ref{lem:For-every--1}, these maps have bounded gradients,
and hence, after going over to some subsequence, (at least after suitable shifts in the $a$-coordinate) they converge in
$C_{\text{loc}}^{\infty}$ on $[0,\infty)\times S^{1}$ to a pseudoholomorphic
curve $\overline{u}^{-}:[0,+\infty)\times S^{1}\rightarrow\mathbb{R}\times M$
with respect to the standard complex structure $i$ on $[0,+\infty)\times S^{1}$
and the parameter-independent almost complex structure $J_{-\tau}$ on the target $\mathbb{R}\times M$. Here
we use that $|P_nh_n|\leq h_n/R_n\rightarrow 0$ and that
$\tau:=\lim_{n\rightarrow \infty} P_{n}R_{n}$. 
However, since we know that $u_n^-(h_n)\rightarrow w$ from the first part, there are actually no shifts in the $a$-coordinate needed.
Let
us show that $\overline{u}^{-}$ is asymptotic to $w\in\mathbb{R}\times M$,
i.e. let us show that $\lim_{r\rightarrow\infty}\overline{u}^{-}(r,\cdot)=w$ in $C^\infty(S^1,\mathbb{R}\times M)$. Since we have uniform gradient bounds, it is sufficient
to show that this limit holds pointwise (any subsequence has a further subsequence converging in $C^\infty$ to the pointwise limit).
We prove pointwise convergence by contradiction: Assume that there exists a sequence $(s_{k},t_{k})\in[0,\infty)\times S^{1}$
with $s_{k}\rightarrow\infty$ as $k\rightarrow\infty$ such that
$\lim_{k\rightarrow\infty}\overline{u}^{-}(s_{k},t_{k})=w'\in\mathbb{R}\times M$
with $w'\not=w$. Let $\epsilon:=\text{dist}_{\overline{g}_{0}}(w,w')>0$.
For any $k\in\mathbb{N}$ there exists $N_{k}\in\mathbb{N}$ such
that for any $n\geq N_{k}$, $(s_{k},t_{k})\in[0,h_{n}]$. Thus for
arbitrary $k$ and $n$ such that $n\geq N_{k}$ we have 
\begin{align*}
\text{dist}_{\overline{g}_{0}}(w,w') & \leq\text{dist}_{\overline{g}_{0}}(w,\overline{u}_{n}^{-}(s_{k},t_{k}))+\text{dist}_{\overline{g}_{0}}(\overline{u}_{n}^{-}(s_{k},t_{k}),\overline{u}^{-}(s_{k},t_{k}))\\
 & +\text{dist}_{\overline{g}_{0}}(\overline{u}^{-}(s_{k},t_{k}),w').
\end{align*}
By Theorem \ref{thm:Let--and}, there exists $h>0$ such that $\text{dist}_{\overline{g}_{0}}(\overline{u}_{n}^{-}(s,t),w)<\epsilon/10$
for all $(s,t)\in[h,h_{n}]\times S^{1}$. Now choose $k$ and $n\geq N_{k}$
sufficiently large such that $(s_{k},t_{k})\in[h,h_{n}]\times S^{1}$.
Hence, $\text{dist}_{\overline{g}_{0}}(\overline{u}_{n}^{-}(s_{k},t_{k}),w)<\epsilon/10$.
Making $k$ and $n\geq N_{k}$ larger we may also assume that $\text{dist}_{\overline{g}_{0}}(\overline{u}^{-}(s_{k},t_{k}),w')<\epsilon/10$.
After fixing $k$ and making $n\geq N_{k}$ sufficiently large we
get $\text{dist}_{\overline{g}_{0}}(\overline{u}_{n}^{-}(s_{k},t_{k}),\overline{u}^{-}(s_{k},t_{k}))<\epsilon/10$.
As a result, we find $\text{dist}_{\overline{g}_{0}}(w,w')\leq3\epsilon/10$,
which is a contradiction to $\text{dist}_{\overline{g}_{0}}(w,w')=\epsilon$.\\
For any sequence of diffeomorphisms
$\theta_{n}:[-R_{n},R_{n}]\times S^1\rightarrow[-1,1]\times S^1$ fulfilling the assumptions
of Remark \ref{rem:For-every-sequence},
the maps $\overline{v}_{n}^{-}(s,t)=\overline{u}_{n}^{-}\circ(\theta_{n}^{-})^{-1}(s,t)$
also converge now in $C_{\text{loc}}^{\infty}$ to the map $\overline{v}^{-}(s,t)=\overline{u}^{-}\circ(\theta^{-})^{-1}(s,t)$.
This follows from the fact that $(\theta_{n}^{-})^{-1}:[-1,-1/2]\times S^1\rightarrow[0,h_{n}]\times S^1$
converge in $C_{\text{loc}}^{\infty}$ to the diffeomorphism $(\theta^{-})^{-1}:[-1,-1/2)\times S^1\rightarrow[0,+\infty)\times S^1$.
By the asymptotics of $\overline{u}^{-}$, $\overline{v}^{-}$ can
be continuously extended to the compact cylinder $[-1,-1/2]\times S^1$ by setting
$\overline{v}^{-}(-1/2,t)=w$. This finishes the proof of the second
statement, and hence of the $C_{\text{loc}}^{\infty}-$convergence.

\noindent Now we consider the first statement of the $C^{0}-$convergence.
From Corollary \ref{cor:For-every-sequence} it follows that $\text{dist}_{\overline{g}_{0}}(\overline{v}_{n}(s,t),w)\rightarrow0$
as $n\rightarrow\infty$ for all $(s,t)\in[-1/2,1/2]\times S^{1}$,
and the proof of the first statement is complete. The proof of the
second statement of the $C^{0}-$convergence is exactly the same as
the proof of Lemma 4.16 in \cite{key-10}.\textbf{ } 
\end{singlespace}
\end{proof}
\begin{singlespace}
\noindent Now we are in the position to prove Theorem \ref{thm:With-the-same}. 
\end{singlespace}
\begin{proof}
\begin{singlespace}
\noindent \emph{(of Theorem \ref{thm:With-the-same})}

Consider a sequence of $\mathcal{H}-$holomorphic cylinders $u_{n}=(a_{n},f_{n}):[-R_{n},R_{n}]\times S^{1}\rightarrow\mathbb{R}\times M$
satisfying the assumptions of Theorem \ref{thm:With-the-same}.
That is, they satisfy $A0$-$A3$ with harmonic
perturbation $1-$forms $\gamma_{n}$, $R_{n}\rightarrow\infty$ and they have vanishing center action.
As in Section \ref{subsec:Notion-of-the} we transform the map $u_{n}$
into a $\overline{J}_{P_n}-$holomorphic curve $\overline{u}_{n}$ with
respect to the domain-dependent almost complex structure $\overline{J}_{P_n}$.
We consider the new sequence of maps $\overline{f}_{n}$ defined by
$\overline{f}_{n}(s,t):=\phi_{P_{n}s}^{\alpha}(f_{n}(s,t))$ for all
$n\in\mathbb{N}$. Thus $\overline{u}_{n}=(\overline{a}_{n},\overline{f}_{n}):[-R_{n},R_{n}]\times S^{1}\rightarrow\mathbb{R}\times M$
is a $\overline{J}_{P_{n}}-$holomorphic curve. 
Due to Remark \ref{rem:In-the-following} the triple $(\overline{u}_{n},R_{n},P_{n})$
is a $\overline{J}_{P_{n}}-$holomorphic curve as in Definition \ref{def:A-triple-} satisfying $\overline{A}0$-$\overline{A}3$ with vanishing center action.
After shifting $u_{n}$ by $-a_{n}(0,0)$ we obtain that $a_{n}(0,0)=0$.\\
Now by (\ref{eq:bar=00003D00007Ba=00003D00007D})
$a_{n}$ can be written as $a_{n}=\overline{a}_{n}-\Gamma_{n}$, where
$\Gamma_{n}:[-R_{n},R_{n}]\times S^{1}\rightarrow\mathbb{R}$ is a
normalized harmonic function with $\|d\Gamma_n\|^2_{L^2([-R_n,R_n]\times S^1)}\leq (\sqrt{C_0}+\sqrt{2|P_n|C})^2$ (c.f. Remark \ref{rem:Obviously,-as-}) and $R_n\rightarrow \infty$. 
Thus the functions $\Gamma_n$ satisfy (if $R_n\geq 1$) the assumptions $C1$-$C5$ from
Appendix \ref{sec:Convergence-of} with $B=(\sqrt{C_0}+\sqrt{2C^2})^2$, since $|P_n|\leq C$ by $A3$ if $R_n\geq 1$. 
We can thus by Lemma \ref{lem:decompose_harmonic} further write $\Gamma_n=S_ns+\tilde{\Gamma}_n$, where the harmonic functions $\tilde{\Gamma}_n$ 
have uniform $C^k$-bounds on $[-R_n+1,R_n-1]\times S^1$ 
by Lemma \ref{lem:kbounds_tilde_Gamma}.\\
Actually Proposition \ref{prop:For-every-} shows that $\tilde{\Gamma}_n(0,0)\rightarrow 0$ and thus $\overline{a}_{n}(0,0)\rightarrow 0$.
Therefore we can, after passing
to a subsequence and by using that $M$ is compact, assume that $\overline{u}_{n}(0,0)\rightarrow w=(0,w_{f})\in\mathbb{R}\times M$
as $n\rightarrow\infty$.\\
We pick a subsequence, such that the convergence results from Theorem \ref{thm:With-the-same-3} hold for this sequence $\overline{u}_n$ and such that
that $P_{n}R_{n}\rightarrow\tau$ and $S_{n}R_{n}\rightarrow\sigma$
for $\tau,\sigma\in\mathbb{R}$ (the latter statements are possible by assumption A3).

As before,
we focus only on the proofs of the first and second statements of
the $C_{\text{loc}}^{\infty}-$ and $C^{0}-$convergences, because
the proofs of the third statements are similar to those of the second
statements. For the sequence $h_{n}\in\mathbb{R}_{>0}$ with the property
$h_{n},R_{n}/h_{n}\rightarrow\infty$ as $n\rightarrow\infty$, consider
the sequence of diffeomorphisms $\theta_{n}:[-R_{n},R_{n}]\rightarrow[-1,1]$
fulfilling the assumptions of Remark \ref{rem:For-every-sequence}.
By the construction described in Section \ref{subsec:Notion-of-the},
we have 
\[
\overline{f}_{n}(s,t)=\phi_{P_{n}s}^{\alpha}(f_{n}(s,t))\text{ and }d\overline{a}_{n}=d\Gamma_{n}+da_{n},
\]
where $(s,t)\in[-R_{n}+h_{n},R_{n}-h_{n}]\times S^{1}$ and $\Gamma_{n}:[-R_{n},R_{n}]\times S^{1}\rightarrow\mathbb{R}$
is a sequence of harmonic functions such that $d\Gamma_{n}$ has a
uniformly bounded $L^{2}-$norm. Then we obtain 
\begin{equation}
f_{n}(s,t)=\phi_{-P_{n}s}^{\alpha}(\overline{f}_{n}(s,t))\text{ and }a_{n}(s,t)=\overline{a}_{n}(s,t)-\Gamma_{n}(s,t).\label{eq:f-part-1}
\end{equation}
For the sequence of harmonic functions $\Gamma_{n}(s,t)$, the $L^{2}-$norms
of $d\Gamma_{n}$ are uniformly bounded, while by Remark \ref{rem:Obviously,-as-},
the functions $\Gamma_{n}$ can be chosen to have vanishing average.\\
For any sequence $s_{n}\in[-R_{n}+h_{n},R_{n}-h_{n}]$
for which $s_{n}/R_{n}\rightarrow\kappa\in[-1,1]$, the shifted map
$u_{n}(\cdot+s_{n},\cdot)$ defined on $[-R_{n}+h_{n}-s_{n},R_{n}-h_{n}-s_{n}]\times S^{1}$,
coincides with $(\overline{a}_n(\cdot+s_n,\cdot)-\Gamma_n(\cdot+s_n,\cdot),\phi_{-P_{n}(\cdot+s_n)}^{\alpha}(\overline{f}(\cdot+s_n,\cdot))$, as recalled above.
By the first $C^\infty_{\text{loc}}$-convergence result 
in Theorem \ref{thm:The-maps--1}, the functions $\Gamma_n(\cdot+s_n,\cdot)$ converge to the constant function $\sigma \kappa$.
Combining this with the first $C^\infty_{\text{loc}}$-convergence result of Theorem \ref{thm:With-the-same-3} it follows that
the shifted maps converge in $C_{\text{loc}}^{\infty}$ to the constant map $(s,t)\mapsto (-\sigma\kappa,\phi_{-\tau\kappa}^{\alpha}(w_{f}))$ on $\mathbb{R}\times S^1$.\\
The shifted harmonic $1-$form defined on $[-R_{n}+h_{n}-s_{n},R_{n}-h_{n}-s_{n}]\times S^{1}$
takes the form $\gamma_{n}(s+s_{n},t)=d\Gamma_{n}(s+s_{n},t)+P_{n}dt$ by Remark \ref{rem:Obviously,-as-}.
Since $|P_n|\leq C/R_n\rightarrow 0$ and, as before $\Gamma_n(s+s_n,\cdot)$ converges in $C^\infty_{loc}$ to the constant $\sigma\kappa$ by the first $C^\infty_{\text{loc}}$-convergence result 
in Theorem \ref{thm:The-maps--1}, 
we see $\gamma_{n}(s+s_{n},t)\rightarrow0$
in $C_{\text{loc}}^{\infty}$ as $n\rightarrow\infty$. This completes
the proof of the first statement. \\
To prove the second statement of
the $C_{\text{loc}}^{\infty}-$convergence we transfer the convergence
results for the shifted maps $\overline{u}_{n}^{-}:[0,h_{n}]\times S^{1}\rightarrow\mathbb{R}\times M$,
$\overline{u}_{n}^{-}(s,t)=\overline{u}_{n}(s-R_{n},t)$ of Theorem
\ref{thm:With-the-same-3} to the maps $u_{n}^{-}$, and use the convergence
results of the harmonic functions established in Theorem \ref{thm:The-maps--1}
of Appendix \ref{sec:Convergence-of}. The maps $\overline{u}_n^-$ are obtained from the shifted maps $u_{n}^{-}=(a_{n}^{-},f_{n}^{-}):[0,h_{n}]\times S^{1}\rightarrow\mathbb{R}\times M$ 
given by $u_n^-(s,t)=u_n(s-R_n,t)$, by applying the definitions (\ref{eq:bar=00003D00007Bf=00003D00007D}) and (\ref{eq:bar=00003D00007Ba=00003D00007D});
thus the components $\overline{u}_n^-$ satisfy
\begin{equation}
\overline{f}_{n}^{-}(s,t)=\phi_{P_{n}(s-R_{n})}^{\alpha}(f_{n}^{-}(s,t))\text{ and }\overline{a}_{n}^{-}(s,t)=a_{n}^{-}(s,t)+\Gamma_{n}^{-}(s,t),\label{eq:fnminusbar}
\end{equation}
where $\Gamma_{n}^{-}:[0,h_{n}]\times S^{1}\rightarrow\mathbb{R}$
is the left shifted harmonic function, defined by $\Gamma_{n}^{-}(s,t)=\Gamma_{n}(s-R_{n},t)$.
Hence we obtain 
\begin{equation}
f_{n}^{-}(s,t)=\phi_{-P_{n}(s-R_{n})}^{\alpha}(\overline{f}_{n}^{-}(s,t))\text{ and }a_{n}^{-}(s,t)=\overline{a}_{n}^{-}(s,t)-\Gamma_{n}^{-}(s,t).\label{eq:fnminus}
\end{equation}
Thus, by the second $C^\infty_{loc}$-convergence results in Theorem \ref{thm:With-the-same-3} and Theorem \ref{thm:The-maps--1} 
(the latter implying that the harmonic functions $\Gamma_n^-$ converge to a harmonic function $\Gamma^-$ with $\lim_{s\rightarrow \infty}\Gamma^-(s,\cdot)=-\sigma$),
the maps $u_{n}^{-}$ converge in $C_{\text{loc}}^{\infty}$ to
a curve $u^{-}(s,t)=(a^{-}(s,t),f^{-}(s,t))=(\overline{a}^{-}(s,t)-\Gamma^{-}(s,t),\phi_{\tau}^{\alpha}(\overline{f}^{-}(s,t)))$,
defined on $[0,\infty)\times S^{1}$. The map $u^{-}$ is asymptotic
to $(\sigma,\phi_{\tau}^{\alpha}(w_{f}))$, and it is (as the limit of the maps $u_n^-$)
a $\mathcal{H}-$holomorphic map with harmonic perturbation $d\Gamma^{-}$.
This finishes the proof of the second statement. The third statement
is proven analogously.

\noindent To prove the first statement of the $C^{0}-$convergence,
we consider the maps $v_{n}$ and recall that 
\[
\overline{f}_{n}(s,t)=\phi_{P_{n}s}^{\alpha}(f_{n}(s,t)),\,\overline{a}_{n}(s,t)=a_{n}(s,t)+\Gamma_{n}(s,t),
\]
and 
\begin{align*}
v_{n}(s,t)=(\overline{a}_{n}\circ\theta_{n}^{-1}(s,t)-\Gamma_{n}\circ\theta_{n}^{-1}(s,t),\phi_{-P_{n}(\check{\theta}_{n}^{-1})(s)}^{\alpha}(\overline{f}_{n}\circ\theta_{n}^{-1}(s,t)))
\end{align*}
for $s\in[-1/2,1/2]$. Since $S_{n}R_{n}\rightarrow\sigma$
as $n\rightarrow\infty$ we have, using the first $C^0$-convergence result of Theorem \ref{thm:With-the-same-3} and the first result of Theorem \ref{thm:The-maps--2},
that 
\[
|\overline{a}_{n}\circ\theta_{n}^{-1}(s,t)-\Gamma_{n}\circ\theta_{n}^{-1}(s,t)+2\sigma s|\rightarrow0
\]
for all $s\in[-1/2,1/2]$ as $n\rightarrow\infty$. Moreover, since $\phi_{\rho}^\alpha$ for $\rho\in [-C,C]$ is a compact family of diffeomorphisms of $M$, they
are uniformly Lipschitz: there exists a constant $c>0$ such that for all $(s,t)\in[-1/2,1/2]$ 
\begin{align}\label{eq:unif_Lip}
\text{dist}_{\overline{g}_{0}}(\overline{f}_{n}\circ\theta_{n}^{-1}(s,t),w_{f})\geq c\text{dist}_{\overline{g}_{0}}(f_{n}\circ\theta_{n}^{-1}(s,t),
\phi_{-P_{n}(\check{\theta}_{n})^{-1}(s)}^{\alpha}(w_{f})).
\end{align}
Noting that 
\begin{equation}
P_{n}\check{\theta}_{n}^{-1}(s)=2(P_{n}R_{n}-P_{n}h_{n})s\label{eq:PnmM}
\end{equation}
for $s\in[-1/2,1/2]$, and that $P_{n}R_{n}\rightarrow\tau$ and $P_{n}h_{n}\rightarrow0$
as $n\rightarrow\infty$, it follows that $P_{n}\check{\theta}_{n}^{-1}(s)\rightarrow2\tau s$
in $C^{0}([-1/2,1/2])$.
By the triangle inequality we have for all $(s,t)\in[-1/2,1/2]\times S^{1}$
\[
\text{dist}_{\overline{g}_{0}}(f_{n}\circ\theta_{n}^{-1}(s,t),\phi_{-2\tau s}^{\alpha}(w_{f}))\leq \text{dist}_{\overline{g}_{0}}(f_{n}\circ\theta_{n}^{-1}(s,t),
\phi_{-P_{n}\check{\theta}_{n}^{-1}(s)}^{\alpha}(w_{f}))+
\text{dist}_{\overline{g}_{0}}(\phi_{-P_{n}\check{\theta}_{n}^{-1}(s)}^{\alpha}(w_{f}),
\phi_{-2\tau s}^{\alpha}(w_{f}))
\]
and the estimate (\ref{eq:unif_Lip}) together with the first $C^0$-convergence result in Theorem \ref{thm:With-the-same-3} 
resp. (\ref{eq:PnmM}) together with the subsequent discussion show that the terms
on the right hand side tend to $0$, as $n\rightarrow\infty$. Thus $v_{n}$ converge in $C^{0}$ on $[-1/2,1/2]\times S^1$
to the map $(s,t)\mapsto (-2\sigma s,\phi_{-2\tau s}^{\alpha}(w_{f}))$ whose projection to $M$ is a
segment of a Reeb trajectory; this completes the proof of the first statement.\\
To prove the second statement, we consider the maps $v_{n}^{-}$ which is given by 
\begin{align*}
v_{n}^{-}(s,t)=(\overline{a}_{n}^{-}\circ(\theta_{n}^{-})^{-1}(s,t)-\Gamma_{n}^{-}\circ(\theta_{n}^{-})^{-1}(s,t),\phi_{-P_{n}(\check{\theta}_{n}^{-})^{-1}(s)+P_{n}R_{n}}^{\alpha}
(\overline{f}_{n}^{-}\circ(\theta_{n}^{-})^{-1}(s,t))).
\end{align*}
The second $C^0$-convergence result in Theorem
\ref{thm:With-the-same-3} shows that the maps $(\overline{a}_{n}^{-}\circ(\theta_{n}^{-})^{-1}(s,t), f_n^-\circ (\theta_n^-)^{-1}(s,t))$  converge
to a continuous map $\overline{\nu}^-$ on $[-1,-1/2]\times S^1$ with value $w$ at $\{-1/2\}\times S^1$.
Moreover the maps $\Gamma_{n}^{-}\circ(\theta_{n}^{-})^{-1}(s,t)$
converge by Theorem \ref{thm:The-maps--1} in $C^{0}$ to a function $\Gamma^{-}\circ (\theta^{-})^{-1}(s,t)$
on $[-1,-1/2]\times S^1$, which equals to the constant $-\sigma$ on $\{-1/2\}\times S^1$. Since $|P_n(\check{\theta}_{n}^{-})^{-1}(s)|\leq Ch_n/R_n$
(as $|P_n|\leq C/R_n$), it follows that $v_{n}^{-}(s,t)$
converge in $C^{0}([-1,-1/2]\times S^1)$ to the map 
\[
(\overline{a}^{-}\circ(\theta^{-})^{-1}(s,t)-\Gamma^{-}\circ(\theta^{-})^{-1}(s,t),\phi_{\tau}^{\alpha}(\overline{f}^{-}\circ(\theta^{-})^{-1}(s,t))).
\]
By the above discussion this has the desired value $(\sigma,\phi^\alpha_\tau(w_f))$ on $\{-1/2\}\times S^1$ and hence we proved the second statement of the $C^{0}-$convergence result,
and thus Theorem \ref{thm:With-the-same}. 
\end{singlespace}
\end{proof}

\section{\label{subsec:Positive-Center-Action}Positive center action}

\begin{singlespace}
\noindent In this section we consider the case when there is no subsequence
of $\overline{u}_{n}$ with vanishing center action. Note that in
this case, due to Remark \ref{rem:From-the-definition}, the center
action of $\overline{u}_{n}$ is bounded from below by the constant
$\hbar>0$ defined in assumption A2. As in the previous
section we first characterize the asymptotic behavior of the $\overline{J}_{P}-$holomorphic
curves with positive center action (Theorem \ref{thm:Let--and1}).
We then prove a convergence result for the transformed psudoholomorphic
curves $\overline{u}_{n}$ and induce a convergence result on the
$\mathcal{H}-$holomorphic curves $u_{n}$ with harmonic perturbations
$\gamma_{n}$ by undoing the transformation. Theorem \ref{thm:With-the-same-1-1}
establishes the convergence of the transformed pseudoholomorphic curves
$\overline{u}_{n}$. 
\end{singlespace}

\subsection{Behavior of $\overline{J}_{P}-$holomorphic curves with positive
center action}

\begin{singlespace}

Since the contact form is assumed to be non-degenerate, there are only
finitely many ($S^1$-families of) periodic orbits $x(t)$ of the Reeb flow generated by $X_{\alpha}$ period $0<T\leq \tilde{E}_{0}$. 
We can therefore as in \cite{key-11}
choose a (arbitrary small) neighborhood $\mathcal{W}$ of the (normalized) loops $t\mapsto x(Tt), \,t\in S^1=\mathbb{R}/\mathbb{Z}$ 
in the loop space $C^\infty(S^1,M)$,
such that
the periodic orbits lie in different components of $\mathcal{W}$.
We can moreover assume that $\mathcal{W}$ is $S^{1}-$invariant 
for the natural $S^1$-action on $C^{\infty}(S^{1},M)$,
given by $(\vartheta\star y)(t):=y(t+\vartheta)$ for
$\vartheta\in S^{1}$.
The following result, which
is similar to Lemma 3.1 of \cite{key-11}, ensures that ``long''
$\overline{J}_{P}-$holomorphic curves $(\overline{u},R,P)$ with
small $d\alpha-$energies and positive center action are close to
some periodic orbit of the Reeb vector field $X_{\alpha}$. 
\end{singlespace}
\begin{lem}
\begin{singlespace}
\noindent \label{lem:Let--and-1}Given any $S^{1}-$invariant neighborhood
$\mathcal{W}\subset C^{\infty}(S^{1},M)$ of the
loops defined by the periodic solutions of $X_{\alpha}$ with periods
$T\leq\tilde{E}_{0}$ as above, there exists $h>0$ such that the
following hold: For any $R>h$ and any $\overline{J}_{P}-$holomorphic
curve $(\overline{u},R,P)$ satisfying $\overline{A}0$-$\overline{A}3$ such that $A(\overline{u})>0$ (up to possibly replacing $\overline{u}(s,t)$ by $\overline{u}(-s,-t)$)) the loops
$\overline{f}(s,\cdot):t\mapsto\overline{f}(s,t)$ satisfy $\overline{f}(s,\cdot)\in\mathcal{W}$
for all \textup{$s\in[-R+h,R-h]$}. Moreover, with $T=A(\overline{u})$
being the center action, the loops $\overline{f}(s,\cdot), s\in [-R+h,R-h],$ will all lie in the same component of $\mathcal{W}$
determined by a loop $t\mapsto x(Tt)$ corresponding
to a $T-$periodic orbit $x(t)$ of the Reeb vector field. 
\end{singlespace}
\end{lem}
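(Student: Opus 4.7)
The plan is to argue by contradiction in direct analogy with the proof of Lemma \ref{lem:Recall-the-constants}, but exploiting the positivity of the center action at the crucial step. Suppose the statement fails. Then there is a neighborhood $\mathcal{W}$ as above, a sequence $R_n\to \infty$, a sequence of $\overline{J}_{P_n}$-holomorphic curves $(\overline{u}_n,R_n,P_n)$ satisfying $\overline{A}0$-$\overline{A}3$ with $A(\overline{u}_n)\geq \hbar$, and points $s_n\in [-R_n+n,R_n-n]$ such that $\overline{f}_n(s_n,\cdot)\notin \mathcal{W}$. I define the shifted curves $\tilde{u}_n(s,t):=(\overline{a}_n(s+s_n,t)-\overline{a}_n(s_n,0),\overline{f}_n(s+s_n,t))$ on $[-R_n-s_n,R_n-s_n]\times S^1$, so that by $\overline{A}3$ and the $\mathbb{R}$-invariance of $J_\rho$ and $\overline{g}_\rho$ they solve
\[
\pi_{\alpha}d\tilde{f}_n\circ i=J_{P_n(s+s_n)}(\tilde{f}_n)\circ \pi_\alpha d\tilde{f}_n,\qquad \tilde{f}_n^{*}\alpha\circ i=d\tilde{a}_n,
\]
with the same energy bounds as $\overline{u}_n$. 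Since $|P_ns_n|\leq C$, a subsequence yields $P_ns_n\to \tau\in [-C,C]$.

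Next I apply Lemma \ref{lem:For-every--1} to obtain uniform gradient bounds on arbitrary compact subsets of $\mathbb{R}\times S^1$ (valid for large $n$ since $s_n$ lies at distance at least $n$ from the boundary). Elliptic regularity then gives a subsequence $\tilde{u}_n\to \tilde{u}=(\tilde{a},\tilde{f})$ in $C^{\infty}_{\text{loc}}(\mathbb{R}\times S^{1},\mathbb{R}\times M)$, where $\tilde{u}$ is a finite energy $J_{\tau}$-holomorphic cylinder satisfying $E(\tilde{u})\leq \tilde{E}_0$ and (by Fatou) $E_{d\alpha}(\tilde{u})\leq \hbar/2$. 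The central observation is that $\tilde{u}$ is nonconstant: applying Stokes' theorem one finds
\[
\Bigl|\int_{S^{1}}\overline{u}_n(s_n)^{*}\alpha\Bigr|\geq \Bigl|\int_{S^{1}}\overline{u}_n(0)^{*}\alpha\Bigr|-E_{d\alpha}(\overline{u}_n)\geq A(\overline{u}_n)-\tfrac{\psi}{2}-\tfrac{\hbar}{2}\geq \tfrac{\hbar}{2}-\tfrac{\psi}{2}>0,
\]
so the $C^\infty_{\text{loc}}$-convergence gives $|\int_{S^{1}}\tilde{u}(0)^{*}\alpha|>0$. A standard argument (as in the proof of Theorem \ref{thm:For-all-numbers}) writes $E_{d\alpha}(\tilde{u})=T_{+}-T_{-}$ with $T_{\pm}\in \mathcal{P}\cap [0,\tilde{E}_0]$; since $E_{d\alpha}(\tilde{u})\leq \hbar/2$, the definition of $\hbar$ forces $T_{+}=T_{-}=:T'>0$, making $\tilde{u}$ a trivial cylinder over some $T'$-periodic Reeb orbit $x$.

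Thus $\tilde{u}(0,\cdot)$ is, up to an $S^1$-shift, the normalized loop $t\mapsto x(T't)$, which lies in $\mathcal{W}$ by definition. Since $\mathcal{W}$ is open in $C^\infty(S^1,M)$ and $S^1$-invariant, $C^\infty_{\text{loc}}$-convergence implies $\overline{f}_n(s_n,\cdot)=\tilde{f}_n(0,\cdot)\in \mathcal{W}$ for large $n$, contradicting the choice of $s_n$. This proves the first assertion. For the second, the curve $s\mapsto \overline{f}(s,\cdot)$ is continuous into $C^\infty(S^1,M)$ and stays inside $\mathcal{W}$ on $[-R+h,R-h]$, so it remains in a single connected component. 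To identify that component with the one of $T=A(\overline{u})$, I combine Stokes and the characterization of $A(\overline{u})$ to get $|\,|\int_{S^{1}}\overline{u}(s)^{*}\alpha|-T\,|\leq \tfrac{\psi}{2}+\tfrac{\hbar}{2}<\hbar$, so by the definition of $\hbar$ the nearby periodic orbit in $\mathcal{W}$ must have period exactly $T$.

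The main obstacle I expect is verifying the nonconstancy and trivial-cylinder structure of the limit $\tilde{u}$: one must juggle three distinct bounds (the center action lower bound $\hbar$, the $d\alpha$-energy upper bound $\hbar/2$, and the $\psi$-tolerance from Theorem \ref{thm:For-all-numbers}) to rule out both the constant case and any nontrivial spread of asymptotic periods, and this relies crucially on the arithmetic gap encoded in the definition of $\hbar$.
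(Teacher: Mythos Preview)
Your argument is correct and follows essentially the same route as the paper's proof: both proceed by contradiction, shift the curves so that the ``bad'' loop sits at $s=0$, use the uniform gradient bounds from Lemma~\ref{lem:For-every--1} to extract a $C^\infty_{\text{loc}}$-limit, and identify this limit as a trivial cylinder over a periodic orbit (using that $E_{d\alpha}\leq\hbar/2$ forces the asymptotic periods to coincide), which then contradicts $\overline{f}_n(s_n,\cdot)\notin\mathcal{W}$. The paper fixes $\psi=\hbar/4$ at the outset (yielding the concrete lower bound $\epsilon_0=\hbar/4$ in place of your $\hbar/2-\psi/2$) and is slightly more explicit about ensuring $R_n>h_0$ so that the center action is well-defined along the sequence, but these are cosmetic differences rather than substantive ones.
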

\begin{singlespace}
For the loops associated to periodic orbits $x(t)$ of $X_\alpha$, we find from $\alpha(X_{\alpha})=1$
\[
T=\int_{S^{1}}x(T\cdot)^{*}\alpha,
\]
and so, given $\epsilon>0$ we can choose $\mathcal{W}$ so small such 
that for any $\gamma\in \mathcal{W}$ 
\begin{equation}
\left|\int_{S^{1}}\gamma^{*}\alpha-T\right|\leq\epsilon.\label{eq:estimate_center_action_arbitrary_loop}
\end{equation}
This holds then in particular for all $\gamma:=\overline{f}(s,\cdot)$ for $s\in[-R+h,R-h]$, where $\overline{u}=(\overline{a},\overline{f})$ is as above. 
\end{singlespace}
\begin{proof}
\begin{singlespace}
\noindent \emph{(of Lemma \ref{lem:Let--and-1})} The proof is almost
the same as that of Lemma 3.1 from \cite{key-11}. For completeness we outline the parts which are different. 
By applying Theorem \ref{thm:For-all-numbers} for $\psi=\hbar/4$ we find an $h_0>0$ such that $\left|\left|\int \overline{f}(s,\cdot)^*\alpha\right|-T\right|\leq \psi$
for all $s\in [-R+h_0,R-h_0]$ and all $\overline{u}$ as above.
Arguing now indirectly, we find a sequence $R_{n}$ with $R_{n}\geq n+h_{0}$ and a sequence of $\overline{J}_{P_{n}}-$holomorphic
curves $(\overline{u}_{n},R_{n},P_{n})$ with positive center actions
and satisfying $\overline{f}_{n}(\pm(s_{n},\cdot))\not\in\mathcal{W}$
for some sequence $s_{n}\in[-R_{n}+n,R_{n}-n]$. By assumption, the
center actions are positive. Hence $A(\overline{u}_{n})=T_{n}\geq\hbar$ by Remark \ref{rem:From-the-definition},
and we find by using Stokes' theorem that 
\begin{equation}
\left|\int_{S^{1}}\overline{f}_{n}(s_n,\cdot)^{*}\alpha\right|\geq 
\left|\int_{S^{1}}\overline{f}_{n}(0,\cdot)^{*}\alpha\right|-\left|\int_{[s_{n},0]\times S^{1}}\overline{f}_{n}^{*}d\alpha\right|
\geq
\hbar-\psi-\frac{\hbar}{2}=\frac{\hbar}{4}=:\epsilon_{0}>0\label{eq:Thm11}
\end{equation}
for all $n$ and all $s\in[-R_{n},R_{n}]$.

\noindent We define new curves $\overline{v}_{n}=(\overline{b}_{n},\overline{g}_{n}):[-R_{n}-s_{n},R_{n}-s_{n}]\times S^{1}\rightarrow\mathbb{R}\times M$
by 
\begin{align*}
\overline{v}_{n}(s,t) & =(\overline{b}_{n}(s,t),\overline{g}_{n}(s,t))=(\overline{a}_{n}(s+s_{n},t),\overline{f}_{n}(s+s_{n},t)).
\end{align*}
These curves have bounded total energies, small $d\alpha-$energies,
and satisfy 
\begin{align*}
\pi_{\alpha}d\overline{g}_{n}(s,t)\circ i & =J_{P_{n}(s_{n}+s)}(\overline{g}_{n}(s,t))\circ\pi_{\alpha}d\overline{g}_{n}(s,t),\\
(\overline{g}_{n}^{*}\alpha)\circ i & =d\overline{b}_{n}
\end{align*}
and $\overline{g}_{n}(\pm(0,\cdot))\not\in\mathcal{W}$ for all $n$. The
left and right ends of the interval $[-R_{n}-s_{n},R_{n}-s_{n}]$
converge to $-\infty$ and $+\infty$, respectively. Now define the
sequence of maps $\tilde{v}_{n}=(b_{n},v_{n}):[-R_{n}-s_{n},R_{n}-s_{n}]\times S^{1}\rightarrow\mathbb{R}\times M$
by setting $\tilde{v}_{n}(s,t)=(\overline{b}_{n}(s,t)-\overline{b}_{n}(0,0),\overline{g}_{n}(s,t))$.
The maps $\tilde{v}_{n}$ solve 
\begin{align*}
\pi_{\alpha}dv_{n}(s,t)\circ i & =J_{P_{n}(s_{n}+s)}(v_{n}(s,t))\circ\pi_{\alpha}dv_{n}(s,t),\\
(v_{n}^{*}\alpha)\circ i & =db_{n}.
\end{align*}
As in the proof of Theorem \ref{thm:For-all-numbers}, the gradients
of $\tilde{v}_{n}$ are uniformly bounded. Hence, by Arzelà\textendash Ascoli's
theorem, a subsequence of $\tilde{v}_{n}$ converges in $C_{\text{loc}}^{\infty}$,
i.e. 
\[
\tilde{v}_{n}\rightarrow\tilde{v}\text{ in }C_{\text{loc}}^{\infty}(\mathbb{R}\times S^{1},\mathbb{R}\times M),
\]
where $\tilde{v}=(b,v):\mathbb{R}\times S^{1}\rightarrow\mathbb{R}\times M$
is a usual $J_{\tau}$\textendash holomorphic curve for
some $\tau\in[-C,C]$ satisfying 
\begin{align*}
E_{\alpha}(\tilde{v};\mathbb{R}\times S^{1})+E_{d\alpha}(\tilde{v};\mathbb{R}\times S^{1}) & \leq\tilde{E}_{0},\\
E_{d\alpha}(\tilde{v};\mathbb{R}\times S^{1}) & \leq\frac{\hbar}{2},\\
\left|\int_{S^{1}}\tilde{v}(s,\cdot)^{*}\alpha\right| & \geq\epsilon_{0},\text{ for all }s\in\mathbb{R}.
\end{align*}
As in the proof of Lemma \ref{lem:Recall-the-constants}, one can now conclude that $E_{d\alpha}(\tilde{v};\mathbb{R}\times S^{1})$
vanishes and that $\tilde{v}$ is a trivial cylinder over a Reeb orbit $\gamma$. But then the convergence
$\overline{g}_n(\pm(0,\cdot))\rightarrow \gamma(\pm \cdot)$ contradicts the assumption that $\overline{g}_n(\pm(0,\cdot))=\overline{f}_n(\pm(0,\cdot))\notin \mathcal{W}$.
Some more details on this proof can be found in Lemma 3.1 of \cite{key-11}. 
\end{singlespace}
\end{proof}
\begin{singlespace}
\noindent In view of Lemma \ref{lem:Let--and-1} we fix a non-degenerate
periodic solution $x(t)$ of period $T\leq\tilde{E}_{0}$ and analyze
the curves $(\overline{u}=(\overline{a},\overline{f}),R,P)$ with
$\overline{f}([-R,R]\times S^{1})\subset\mathcal{U}$, where $\mathcal{U}$
is a small tubular neighborhood of $x(\mathbb{R})$.

\noindent To study long cylinders with positive center action we use
some special coordinates centered at the periodic orbit. Denote by $\alpha_{0}$ the standard contact
form $\alpha_{0}=d\vartheta+xdy$ on $S^{1}\times\mathbb{R}^{2}$
with coordinates $(\vartheta,x,y)$. The next lemma introduces the
``standard coordinates'' near a periodic orbit of the Reeb vector
field. For a proof we refer to \cite{key-12}. 
\end{singlespace}
\begin{lem}
\begin{singlespace}
\noindent \label{lem:Let--be}Let $(M,\alpha)$ be a $3-$dimensional
manifold equipped with a contact form, and let $x(t)$ be the $T-$periodic
solution of the corresponding Reeb vector field $\dot{x}=X_{\alpha}(x)$
on $M$. Let $\tau_{0}$ be the minimal period such that $T=k\tau_{0}$
for some positive integer $k$. Then there exists an open neighborhood
$U\subset S^{1}\times\mathbb{R}^{2}$ of $S^{1}\times\{0\}$, an open
neighborhood $V\subset M$ of $P=x(\mathbb{R})$, and a diffeomorphism
$\varphi:U\rightarrow V$ mapping $S^{1}\times\{0\}$ onto $P$ such
that 
\[
\varphi^{*}\alpha=f\cdot\alpha_{0}
\]
for a positive smooth function $f:U\rightarrow\mathbb{R}$ satisfying
\begin{equation}
f\equiv\tau_{0}\text{ and }df\equiv0\label{eq:special_coordinates_contact}
\end{equation}
on $S^{1}\times\{0\}$. 
\end{singlespace}
\end{lem}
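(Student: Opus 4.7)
The plan is to construct $\varphi$ in two stages: a tubular neighborhood $\psi_0$ that already identifies the first jet of $\alpha$ along $P$ with the first jet of $\tau_0\alpha_0$ along $S^1\times\{0\}$, followed by a Gray-stability deformation $\phi_1$ that identifies the two contact structures on a neighborhood of $S^1 \times \{0\}$. Setting $\varphi:=\psi_0\circ\phi_1$ then automatically produces a conformal factor $f$, and the first-jet matching will force $f$ to have the required normal form along $S^1\times\{0\}$.

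For the first step, write $x(t)=x_0(t\bmod\tau_0)$ in terms of the underlying simple orbit $x_0:\mathbb{R}/\tau_0\mathbb{Z}\to M$ with $P=x_0(\mathbb{R})$, and choose a smooth frame $(e_1(\vartheta),e_2(\vartheta))$ of $\xi$ along the loop $\vartheta\mapsto x_0(\tau_0\vartheta)$, $\vartheta\in S^1$, normalized so that $d\alpha(e_1,e_2)\equiv\tau_0$. Using an auxiliary Riemannian exponential on $M$, set
\[
\psi_0(\vartheta,x,y):=\exp_{x_0(\tau_0\vartheta)}\bigl(xe_1(\vartheta)+ye_2(\vartheta)\bigr),
\]
which is a local diffeomorphism from a neighborhood of $S^1\times\{0\}$ onto a tubular neighborhood of $P$. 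Using $\alpha(X_\alpha)=1$, $\alpha(e_i)=0$, $\iota_{X_\alpha}d\alpha=0$ and the normalization of $(e_1,e_2)$, one checks at every $p=(\vartheta,0,0)$ that
\[
\psi_0^*\alpha\big|_p=\tau_0\,d\vartheta,\qquad \psi_0^*d\alpha\big|_p=\tau_0\,dx\wedge dy=d(\tau_0\alpha_0)\big|_p,
\]
so $\psi_0^*\alpha$ and $\tau_0\alpha_0$ agree to first order along $S^1\times\{0\}$.

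In the second step, apply Gray's trick to the family $\beta_s:=(1-s)\tau_0\alpha_0+s\,\psi_0^*\alpha$, $s\in[0,1]$. By the first-order matching, $\beta_s\wedge d\beta_s$ is nonzero along $S^1\times\{0\}$ uniformly in $s$, so $\beta_s$ is contact on a common neighborhood $U$ of $S^1\times\{0\}$. The standard Moser equation $\iota_{X_s}d\beta_s=\mu_s\beta_s-\dot\beta_s$ with $\mu_s=\iota_{R_s}\dot\beta_s$ and $X_s\in\ker\beta_s$ produces a flow $\phi_s$ and a positive function $h_s$ satisfying $\phi_s^*\beta_s=\tau_0 h_s\alpha_0$. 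Since $\dot\beta_s=\psi_0^*\alpha-\tau_0\alpha_0$ vanishes on $S^1\times\{0\}$, so do $\mu_s$ and $X_s$ there; hence $\phi_s$ fixes $S^1\times\{0\}$ pointwise and $h_s\equiv 1$ on it. Differentiating the identity $X_s(\vartheta,0,0)=0$ in $\vartheta$ gives $dX_s|_p(\partial_\vartheta)=0$, and the matrix ODE $\tfrac{d}{ds}(d\phi_s|_p)=dX_s|_p\cdot d\phi_s|_p$ then forces $d\phi_s|_p(\partial_\vartheta)=\partial_\vartheta$ for all $s$.

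Finally, set $\varphi:=\psi_0\circ\phi_1$ and $f:=\tau_0 h_1$; then $\varphi^*\alpha=f\alpha_0$ and $f\equiv\tau_0$ along $S^1\times\{0\}$, in particular $\partial_\vartheta f\equiv 0$ there. Differentiating $\varphi^*\alpha=f\alpha_0$ and evaluating at $p=(\vartheta,0,0)$: using $\psi_0^*d\alpha|_p=\tau_0\,dx\wedge dy$, $d\phi_1|_p(\partial_\vartheta)=\partial_\vartheta$, and $d\phi_1|_p^*(d\vartheta)=d\vartheta$ (the latter from $\phi_1^*\beta_1|_p=\tau_0\,d\vartheta$), the pairings of $\varphi^*d\alpha|_p$ with $(\partial_\vartheta,\partial_x)$ and $(\partial_\vartheta,\partial_y)$ vanish; comparison with $df|_p\wedge d\vartheta+\tau_0\,dx\wedge dy$ forces $\partial_x f(p)=\partial_y f(p)=0$, yielding $df\equiv 0$ along $S^1\times\{0\}$. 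The main obstacle is the second step: one must verify that the Gray-stability flow $\phi_s$ not only fixes $S^1\times\{0\}$ pointwise but also preserves $\partial_\vartheta$ at every point of the orbit. This refinement is what makes the final computation close, and it relies crucially on the first-order matching established in Step~1 together with the tangency of $\partial_\vartheta$ to $S^1\times\{0\}$.
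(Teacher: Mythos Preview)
The paper does not supply its own proof of this lemma; it simply refers to \cite{key-12} (Hofer--Wysocki--Zehnder, \emph{Properties of pseudoholomorphic curves in symplectisations I}). Your argument is a correct, self-contained rendition of the standard proof: a tubular neighborhood adapted to $(X_\alpha,\xi)$ to match $\alpha$ and $d\alpha$ along the orbit, followed by a Gray--Moser isotopy to straighten the contact structure, and finally reading off $df|_{S^1\times\{0\}}=0$ from $\varphi^*d\alpha=df\wedge\alpha_0+f\,d\alpha_0$. This is essentially the route taken in \cite{key-12}.

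Two minor remarks. First, the existence of the global frame $(e_1,e_2)$ of $\xi$ along the simple orbit is justified because $(\xi,d\alpha)$ is symplectic, hence orientable, and every orientable plane bundle over $S^1$ is trivial; you might state this explicitly. Second, in your final computation you list $d\phi_1|_p^*(d\vartheta)=d\vartheta$ as an ingredient, but only $d\phi_1|_p(\partial_\vartheta)=\partial_\vartheta$ is actually used: once the first argument of $(d\phi_1|_p)^*(\tau_0\,dx\wedge dy)$ is $\partial_\vartheta$, the pairing vanishes regardless of where $\partial_x,\partial_y$ are sent. This is harmless redundancy, not a gap.
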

\begin{singlespace}
\noindent The following description is borrowed from \cite{key-11}.
As $S^{1}=\mathbb{R}/\mathbb{Z}$ we work in the covering space and
denote by $(\vartheta,x,y)$ the coordinates, where $\vartheta$ is
mod 1. In these coordinates, the contact form $\alpha$ is $\alpha=f\cdot\alpha_{0}$
for a smooth function $f:\mathbb{R}^{3}\rightarrow(0,\infty)$ defined
near $S^{1}\times\{0\}$, being periodic in $\vartheta$, i.e. $f(\vartheta+1,x,y)=f(\vartheta,x,y)$,
and satisfying (\ref{eq:special_coordinates_contact}). The Reeb vector field
$X_{\alpha}=(X_{0},X_{1},X_{2})$ has the components 
\[
X_{0}=\frac{1}{f^{2}}(f+x\partial_{x}f),\,\,\,X_{1}=\frac{1}{f^{2}}(\partial_{y}f-x\partial_{\vartheta}f),\,\,\,X_{2}=-\frac{1}{f^{2}}\partial_{x}f.
\]
The vector field $X_{\alpha}$ is periodic in $\vartheta$ of period
1 and constant along the periodic orbit $x(\mathbb{R})$, i.e. $X_{\alpha}(\vartheta,0,0)=(\tau_{0}^{-1},0,0)$.
The periodic solution is represented as $x(Tt)=(kt,0,0)$, where $T=k\tau_{0}$
is the period, $\tau_{0}$ the minimal period, and $k$ the covering
number of the periodic solution. The subsequent lemma is rather technical
and describes the behavior of a long $\overline{J}_{P}-$holomorphic
curve $(\overline{u},R,P)$ in the coordinates introduced by Lemma
\ref{lem:Let--be}. 
\end{singlespace}
\begin{lem}
\begin{singlespace}
\noindent \label{lem:Let--and-2}For any $N\in\mathbb{N}$, $\delta>0$,
there exists $h>0$ such that for any $R>h$ and any $\overline{J}_{P}-$holomorphic
curve $(\overline{u},R,P)$ satisfying $\overline{A}0$-$\overline{A}3$ and $\mathcal{A}(u)>0$, the
representation 
\[
\overline{u}(s,t)=(\overline{a}(s,t),\vartheta(s,t),z(s,t)=(x(s,t),y(s,t)))
\]
of the cylinder in the above local coordinates satisfies (up to possibly replacing $\overline{u}(s,t)$ by $\overline{u}(-s,-t)$) the following:
For all $(s,t)\in[-R+h,R-h]\times S^{1}$ we have 
\[
|\partial^{\alpha}(\overline{a}(s,t)-Ts)|\leq\delta\text{ and }|\partial^{\alpha}(\vartheta(s,t)-kt)|\leq\delta
\]
for $1\leq|\alpha|\leq N$, and 
\[
|\partial^{\alpha}z(s,t)|\leq\delta
\]
for all $0\leq|\alpha|\leq N$. Here, $T$ is the period and $k$
the covering number of the distinguished periodic solution lying in
the center of the tubular neighborhood. 
\end{singlespace}
\end{lem}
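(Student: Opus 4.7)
The plan is to argue by contradiction, extracting a bubbling limit and identifying it with a trivial cylinder over the distinguished periodic orbit, in the spirit of the proofs of Lemmas \ref{lem:Recall-the-constants} and \ref{lem:Let--and-1}. Suppose the claim fails for fixed $N$ and $\delta>0$. Then we find $\delta_0>0$, a multi-index $\alpha_0$ with $|\alpha_0|\leq N$, sequences $R_n\to\infty$ and $\overline{J}_{P_n}$-holomorphic curves $(\overline{u}_n,R_n,P_n)$ satisfying $\overline{A}0$--$\overline{A}3$ with positive center actions $T_n$, and points $(s_n,t_n)\in[-R_n+h_n,R_n-h_n]\times S^1$ (with $h_n\to\infty$) at which one of the three claimed estimates is violated by $\delta_0$. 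Since $\mathcal{P}\cap(0,\tilde{E}_0]$ and the associated covering numbers form a finite set (by non-degeneracy), we may pass to a subsequence so that $T_n\equiv T$ and $k_n\equiv k$ are constant. By Lemma \ref{lem:Let--and-1}, for $n$ large all loops $\overline{f}_n(s,\cdot)$ with $s\in[-R_n+h_n,R_n-h_n]$ lie in a fixed component of $\mathcal{W}$, so that the images lie in a fixed tubular neighborhood $\mathcal{U}$ of a single $T$-periodic orbit $x(\mathbb{R})$; this lets us work in the standard coordinates of Lemma \ref{lem:Let--be} throughout.

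Next I would shift. Define $\tilde{u}_n(s,t):=(\overline{a}_n(s+s_n,t)-Ts_n,\overline{f}_n(s+s_n,t))$ on $[-R_n-s_n,R_n-s_n]\times S^1$. These solve the pseudoholomorphic equation with almost complex structure $J_{P_n(s+s_n)}$ on the target. By $\overline{A}3$, $P_n s_n\in[-C,C]$, so up to a subsequence $P_n s_n\to\rho\in[-C,C]$; since $|P_n|\leq C/R_n\to 0$, on each compact $K\subset\mathbb{R}\times S^1$ the parameter $P_n(s+s_n)$ converges uniformly to the constant $\rho$. Lemma \ref{lem:For-every--1} gives uniform $C^1$-bounds on compact subsets and standard elliptic bootstrapping upgrades this to $C^\infty_{\text{loc}}$-bounds; so a subsequence converges in $C^\infty_{\text{loc}}(\mathbb{R}\times S^1)$ to a genuine $J_\rho$-holomorphic cylinder $\tilde{u}=(\tilde{a},\tilde{f}):\mathbb{R}\times S^1\to\mathbb{R}\times M$, still with image in $\mathcal{U}$ and with slice loops in the fixed component of $\mathcal{W}$.

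The decisive step is to identify $\tilde{u}$ with the trivial cylinder over $x$. By Fatou and $\overline{A}0$, $\overline{A}2$, the limit satisfies $E(\tilde{u};\mathbb{R}\times S^1)\leq\tilde{E}_0$ and $E_{d\alpha}(\tilde{u};\mathbb{R}\times S^1)\leq\hbar/2$. Being a finite-energy cylinder, $\tilde{u}$ is asymptotic at $\pm\infty$ to periodic orbits (or removable punctures) with actions $T_\pm\leq\tilde{E}_0$, and by Stokes $|T_+-T_-|=E_{d\alpha}(\tilde{u};\mathbb{R}\times S^1)\leq\hbar/2$; the definition of $\hbar$ then forces $T_+=T_-$, so the $d\alpha$-energy vanishes. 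Combined with the fact that every loop $\tilde{f}(s,\cdot)$ lies in the fixed component of $\mathcal{W}$ and thus by \eqref{eq:estimate_center_action_arbitrary_loop} has action close to $T>0$, this rules out the constant (removable) case and forces $\tilde{u}$ to be a trivial cylinder over the distinguished orbit $x$. In the coordinates of Lemma \ref{lem:Let--be} it reads $\tilde{u}(s,t)=(Ts+c,kt+d,0,0)$ for constants $c,d$.

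Finally, after passing $t_n$ to a subsequential limit $t_\infty\in S^1$, the $C^\infty_{\text{loc}}$ convergence evaluated at $(0,t_\infty)$ gives $\partial^{\alpha_0}(\overline{a}_n-Ts)|_{(s_n,t_n)}\to\partial^{\alpha_0}(c)|_{(0,t_\infty)}=0$ for $|\alpha_0|\geq 1$, and analogously $\partial^{\alpha_0}(\vartheta_n-kt)\to 0$ and $\partial^{\alpha_0}z_n\to 0$, contradicting the size $\delta_0$. The main obstacle is the identification of the limit as the trivial cylinder over precisely the correct orbit $x$: this combines Lemma \ref{lem:Let--and-1} (to confine the loops to a single component of $\mathcal{W}$ and exclude the constant case) with the rigidity from the $d\alpha$-energy bound relative to $\hbar$; the degeneration $P_n\to 0$ must also be used carefully so that the limit is genuinely pseudoholomorphic rather than parameter-dependent.
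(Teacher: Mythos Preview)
Your argument is correct and follows essentially the same contradiction-and-bubbling strategy as the paper: shift to center the bad point, use the uniform gradient bounds from Lemma~\ref{lem:For-every--1} together with $|P_n s_n|\leq C$ to extract a $J_\rho$-holomorphic limit, and then identify the limit as the trivial cylinder over $x$ via the $d\alpha$-energy bound relative to $\hbar$ and the confinement of the slice loops to a single component of $\mathcal{W}$ (Lemma~\ref{lem:Let--and-1}). The only point to tighten is the normalization of the $\mathbb{R}$-component: shifting by $Ts_n$ does not a priori keep $\tilde{a}_n(0,\cdot)$ bounded, so full $C^\infty_{\mathrm{loc}}$-convergence of $\tilde{u}_n$ is not immediate; the paper instead subtracts $\overline{a}_n(s_n,t_n)$, and you can either do the same or simply observe that the contradiction only uses derivatives of $\overline{a}_n-Ts$ of order $\geq 1$ (determined by $d\overline{a}_n=(\overline{f}_n^*\alpha)\circ i$, hence by the convergent $M$-component) together with the $M$-coordinates, so an additional constant $\mathbb{R}$-shift is harmless.
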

\begin{proof}
\begin{singlespace}
\noindent The proof is more or less the same as that of Lemma 3.3
in \cite{key-11}. We argue by contradiction. Assume there exist $N\in\mathbb{N}$,
$\delta_{0}>0$ such that for any $h_{n}=2n$ we find $R_{n}>2n$
and $\overline{J}_{P_{n}}-$holomorphic curves $(\overline{u}_{n},R_{n},P_{n})$
satisfying the following. Representing the maps $\overline{u}_{n}$
in local coordinates by 
\[
\overline{u}_{n}(s,t)=(\overline{a}_{n}(s,t),\vartheta_{n}(s,t).z_{n}(s,t)),
\]
we assume the existence of a sequence $(s_{n},t_{n})\in[-R_{n}-n,R_{n}-n]\times S^{1}$
and a multiindex $\alpha$ with $1\leq|\alpha|\leq N$ such that we have (even after potentially replacing $\overline{u}_n(s,t)$ by $\overline{u}_n(-s,-t)$)
\begin{equation}
\left|\partial^{\alpha}\left[(\overline{a}_{n}-Ts,\vartheta_{n}-kt)\right](s_{n},t_{n})\right|\geq\delta_{0}.\label{eq:non-ct.}
\end{equation}
We define the translated sequence $\tilde{v}_{n}:[-n,n]\times S^{1}\rightarrow\mathbb{R}\times M$
by 
\[
\tilde{v}_{n}(s,t)=(b_{n}(s,t),v_{n}(s,t))=(\overline{a}_{n}(s+s_{n},t)-\overline{a}_{n}(s_{n},t_{n}),\overline{f}_{n}(s+s_{n},t)).
\]
These maps satisfy have the same $\alpha$-energy (resp. $d\alpha$-energy) as
the maps $\overline{u}_{n}$, and they solve
\begin{align*}
\pi_{\alpha}dv_{n}(s,t)\circ i & =J_{P_{n}(s+s_{n})}\circ\pi_{\alpha}dv_{n}(s,t),\\
(v_{n}^{*}\alpha)\circ i & =db_{n}.
\end{align*}
Thus the maps have uniform gradient bounds and converge after passing to a subsequence to some usual $J_\sigma$-holomorphic cylinder $u:\mathbb{R}\times S^1\rightarrow \mathbb{R}\times M$, 
which has small $d\alpha$-energy, $\alpha$ energy bounded by $E_0$ but is nonconstant. Thus it is a trivial cylinder over (an $S^1$-shift) of the periodic Reeb orbit $x$.
However the convergence of $v_n$ to a trivial cylinder contradicts the assumption (\ref{eq:non-ct.}). Similarly one proves the second statement of the Lemma.
More details for an analogous proof can be found in the proof of Lemma 3.3 in \cite{key-11}. 
\end{singlespace}
\end{proof}
\begin{singlespace}
\noindent We compute the Cauchy-Riemann equations for the representation
\begin{align*}
\overline{u}(s,t) & =(\overline{a}(s,t),\overline{f}(s,t))=(\overline{a}(s,t),\vartheta(s,t),z(s,t))\\
 & =(\overline{a}(s,t),\vartheta(s,t),x(s,t),y(s,t))
\end{align*}
of a $\overline{J}_{P}-$holomorphic curve $(\overline{u},R,P)$ in
the local coordinates $\mathbb{R}\times\mathbb{R}^{3}$ of the tubular
neighborhood given in Lemma \ref{lem:Let--be}. In the following,
we adopt the same constructions as in \cite{key-11}. On $\mathbb{R}^{3}$
we have the contact form $\alpha=f\alpha_{0}$. At point $m=(t,x,y)\in\mathbb{R}^{3}$,
the contact structure $\xi_{m}=\ker(\alpha_{m})$ is spanned by the
vectors 
\[
E_{1}=\left(\begin{array}{c}
0\\
1\\
0
\end{array}\right)\textrm{ and }E_{2}=\left(\begin{array}{c}
-x\\
0\\
1
\end{array}\right).
\]
We denote by $\mathbb{J}_{\rho}(m)$ the $2\times2$ matrix representing
the compatible almost complex structure on the plane $\xi_{m}$ in
the basis $\{E_{1},E_{2}\}$ for all $\rho\in[-C,C]$. In the basis
$\{E_{1},E_{2}\}$, the symplectic structure $d\alpha|_{\xi_{m}}$
is given by the skew symmetric matrix function $f(m)\mathbb{J}_0$, where
\[
\mathbb{J}_0=\left(\begin{array}{cc}
0 & -1\\
1 & 0
\end{array}\right).
\]
Therefore, in view of the compatibility requirement, the complex multiplication
$\mathbb{J}_{\rho}(m)$ has the properties $\mathbb{J}_{\rho}(m)^{2}=-\text{id}$,
$\mathbb{J}_{\rho}(m)^{T}\mathbb{J}_0\mathbb{J}_{\rho}(m)=\mathbb{J}_0$ and
$-\mathbb{J}_0\mathbb{J}_{\rho}(m)>0$. In particular, $\mathbb{J}_0\mathbb{J}_{\rho}(m)$
is a symmetric matrix for all $\rho\in[-C,C]$, and it follows that
\[
\left\langle x,y\right\rangle _{\rho}:=\Bigl\langle x,-\mathbb{J}_0\mathbb{J}_{\rho}(m)y\Bigr\rangle
\]
is an inner product on $\mathbb{R}^{2}$ which is left invariant under
$\mathbb{J}_{\rho}(m)$, i.e. $\left\langle \mathbb{J}_{\rho}(m)x,\mathbb{J}_{\rho}(m)y\right\rangle _{\rho}=\left\langle x,y\right\rangle _{\rho}$
for all $\rho\in[-C,C]$. The Reeb vector field $X_{\alpha}$ can
be written as $X_{\alpha}=(X_{0},X_{1},X_{2})\in\mathbb{R}\times\mathbb{R}^{2}$.
Setting $z=(x,y)\in\mathbb{R}^{2}$ we define $Y(t,z)=(X_{1}(t,z),X_{2}(t,z))\in\mathbb{R}^{2}$.
Since $X(t,0)=(1/\tau_{0},0)$ we have $Y(t,z)=D(t,z)z$, where 
\[
D(t,z)=\int_{0}^{1}dY(t,\rho z)d\rho,
\]
and $d$ is the derivative with respect to the $z-$variable. In particular,
if $z=0$ we obtain 
\[
D(t,0)=dY(t,0)=\frac{1}{\tau_{0}^{2}}\left(\begin{array}{cc}
\partial_{xy}f & \partial_{yy}f\\
-\partial_{xx}f & -\partial_{xy}f
\end{array}\right).
\]
We introduce the $2\times2$ matrices depending on $\overline{u}(s,t)$
and $Ps$ by 
\begin{align*}
J(s,t) & =J_{Ps}(\overline{f}(s,t))=J_{Ps}(\vartheta(s,t).z(s,t)),\\
S(s,t) & =\left[\partial_{t}\overline{a}-\partial_{s}\overline{a}\cdot J(s,t)\right]D(\overline{f}(s,t)).
\end{align*}
In the basis $\{E_{1},E_{2}\}$ of the contact plane $\xi_{m}$ at
$m=\overline{f}(s,t)$ and for the representation $\overline{u}(s,t)=(\overline{a}(s,t),\vartheta(s,t),z(s,t))\in\mathbb{R}\times\mathbb{R}\times\mathbb{R}^{2}$,
we have
\[
\pi_{\alpha}\partial_{s}\overline{f}(s,t)+J_{Ps}(\overline{f}(s,t))\pi_{\alpha}\partial_{t}\overline{f}(s,t)=0.
\]
We find 
\[
z_{s}+J(s,t)z_{t}+S(s,t)z=0
\]
and further on, with $z(s,t)=(x(s,t),y(s,t))$, 
\begin{align*}
\overline{a}_{s}=(\vartheta_{t}+xy_{t})f(\overline{f})\ \text{ and }\ \overline{a}_{t}=-(\vartheta_{s}+xy_{s})f(\overline{f}).
\end{align*}
It is convenient to decompose the matrix $S(s,t)$ into its symmetric
and anti-symmetric parts with respect to the inner product $\left\langle \cdot,-\mathbb{J}_0J(s,t)\cdot\right\rangle =\left\langle \cdot,-\mathbb{J}_0J_{Ps}(\overline{f}(s,t))\cdot\right\rangle $
on $\mathbb{R}^{2}$ by introducing 
\begin{align*}
B(s,t)=\frac{1}{2}\left[S(s,t)+S^{*}(s,t)\right]\ \text{ and }\ C(s,t)=\frac{1}{2}\left[S(s,t)-S^{*}(s,t)\right],
\end{align*}
where $S^{*}$ is the transpose of $S$ with respect to the inner
product $\left\langle \cdot,-\mathbb{J}_0J(s,t)\cdot\right\rangle $. Explicitly
we have $S^{*}=J\mathbb{J}_0S^{T}\mathbb{J}_0J$, where $S^{T}$ is the transpose
matrix of $S$ with respect to the Euclidean inner product $\left\langle \cdot,\cdot\right\rangle $
in $\mathbb{R}^{2}$. In terms of $B$ and $C$, the above equation
becomes 
\[
z_{s}+J(s,t)z_{t}+B(s,t)z+C(s,t)z=0.
\]
The operator $A(s):W^{1,2}(S^{1},\mathbb{R}^{2})\subset L^{2}(S^{1},\mathbb{R}^{2})\rightarrow L^{2}(S^{1},\mathbb{R}^{2})$,
given by 
\[
A(s)=-J(s,t)\frac{d}{dt}-B(s,t),
\]
is self-adjoint with respect to the inner product $\left\langle \cdot,\cdot\right\rangle _{s}$
in $L^{2}$, defined for $x,y\in L^{2}(S^{1},\mathbb{R}^{2})$ by
\[
\left\langle x,y\right\rangle _{s}:=\int_{0}^{1}\left\langle x(t),-\mathbb{J}_0J(s,t)y(t)\right\rangle dt.
\]
The norms $\left\Vert x\right\Vert _{s}^{2}:=\left\langle x,x\right\rangle _{s}$
are equivalent to the standard $L^{2}(S^{1},\mathbb{R}^{2})-$norms
(denoted by $\left\Vert \cdot\right\Vert $) in the following sense: 
\end{singlespace}
\begin{lem}
\begin{singlespace}
\noindent \label{lem:There-exist-the}There exist constants $h,c>0$
such that for all $R>h$ and all $\overline{J}_{P}-$holomorphic curves
$(\overline{u},R,P)$ satisfying $\overline{A}0$-$\overline{A}3$ and $\mathcal{A}(u)>0$,
all $x\in L^{2}(S^{1},\mathbb{R}^{2})$, and all $s\in[-R,R]$, we
have 
\[
\frac{1}{c}\left\Vert x\right\Vert _{s}\leq\left\Vert x\right\Vert \leq c\left\Vert x\right\Vert _{s}.
\]
\end{singlespace}
\end{lem}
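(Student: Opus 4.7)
The plan is to reduce the norm equivalence to uniform upper and lower spectral bounds on the $2 \times 2$ symmetric positive definite matrix $-\mathbb{J}_0 J(s,t)$, after which integration over $t \in S^1$ yields the claim immediately. First I would record that $d\alpha$-compatibility of $J_\rho$ implies $-\mathbb{J}_0 \mathbb{J}_\rho(m)$ is symmetric and positive definite at every $(\rho, m)$ (this is stated explicitly after the introduction of $\mathbb{J}_\rho$), so it is diagonalizable with two strictly positive eigenvalues $\lambda_-(\rho,m) \leq \lambda_+(\rho,m)$, which depend continuously on $(\rho, m)$.

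Next I would argue that the arguments $(Ps, \overline{f}(s,t))$ of $J$ take values in a compact subset of $[-C,C] \times M$ that is independent of the particular curve $\overline{u}$. This uses the running assumption in Section \ref{subsec:Positive-Center-Action} that $\overline{f}([-R,R] \times S^1) \subset \mathcal{U}$ for the fixed tubular neighborhood $\mathcal{U}$ of the periodic orbit, so the image lies in the compact set $\overline{\mathcal{U}}$, together with the bound $|Ps| \leq C$ coming from $\overline{A}3$. The role of the threshold $h$ is merely to ensure (via Lemma \ref{lem:Let--and-1}) that such a configuration is actually realized for all admissible $\overline{u}$ with $R > h$. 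Since $(\rho, m) \mapsto \mathbb{J}_\rho(m)$ is continuous by (\ref{eq:domain_dependent_almost_complex_structure}) and Remark \ref{rem:The-parameter-dependent}, the continuous positive functions $\lambda_\pm$ attain positive extrema on the compact set $K := [-C,C] \times \overline{\mathcal{U}}$, yielding constants $0 < \lambda_- \leq \lambda_+ < \infty$ with the property that the eigenvalues of $-\mathbb{J}_0 J(s,t)$ lie in $[\lambda_-, \lambda_+]$ for every admissible $\overline{u}$ and every $(s,t) \in [-R,R] \times S^1$.

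Finally, for each fixed $(s,t)$ the pointwise estimate
\[
\lambda_- |x(t)|^2 \leq \bigl\langle x(t), -\mathbb{J}_0 J(s,t) x(t)\bigr\rangle \leq \lambda_+ |x(t)|^2
\]
holds, and integrating over $t \in S^1$ gives $\lambda_- \|x\|^2 \leq \|x\|_s^2 \leq \lambda_+ \|x\|^2$. Taking $c := \max\bigl(\sqrt{\lambda_+},\, 1/\sqrt{\lambda_-}\bigr)$ then completes the argument. I do not foresee any real obstacle here: the content of the lemma is the familiar principle that compactness of the parameter set forces uniform spectral bounds, and the only care needed is to identify a compact parameter set that does not depend on the individual curve, which the standing assumptions in this section readily supply.
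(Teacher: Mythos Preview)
Your proposal is correct and takes essentially the same approach as the paper: both arguments reduce the claim to uniform positive-definiteness of $-\mathbb{J}_0 J(s,t)$, which follows from compactness of the parameter set $[-C,C]\times\overline{\mathcal{U}}$ together with continuity of $(\rho,m)\mapsto \mathbb{J}_\rho(m)$. You have simply made the compactness argument more explicit than the paper's sketch, correctly invoking the standing assumption $\overline{f}([-R,R]\times S^1)\subset\mathcal{U}$ and the bound $|Ps|\leq C$ from $\overline{A}3$.
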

\begin{proof}
\begin{singlespace}
\noindent The Euclidean inner product on the coordinate neighborhood is equivalent to the metric $\overline{g}_0$.
The inequalities follow,  since the
 almost complex structures $J_{\rho}$,
$\rho\in[-C,C]$ are uniformly bounded w.r.t $\overline{g_0}$ and since $-\mathbb{J}_0J(s,t)$ is uniformly positive definite, 
i.e. since $J(s,t)$ are compatible on $\xi$ by $d\alpha$ and thus uniformly tamed rel. to $\overline{g}_0$
(compare also appendix \ref{subsec:A-version-of}). Both these facts rely on the compactness of the family of almost complex structures
$J_\rho, \rho\in [-C,C]$ and therefore on the assumption $A3$.
\end{singlespace}
\end{proof}
\begin{lem}
\begin{singlespace}
\noindent \label{lem:There-exists-a}There exists a constant $h>0$
such that for every $R>h$ and every $\overline{J}_{P}-$holomorphic
curve $(\overline{u},R,P)$ satisfying $\overline{A}0$-$\overline{A}3$ and $\mathcal{A}(u)>0$,
the following holds true. If $\overline{u}=(\overline{a},\overline{f})$
is the reparametrization in local coordinates and $A(s)$ the associated
operator, then there exists a constant $\eta>0$ such that 
\[
\left\Vert A(s)\xi\right\Vert _{s}\geq\eta\left\Vert \xi\right\Vert _{s}
\]
for all $s\in[-R+h,R-h]$ and all $\xi\in W^{1,2}(S^{1},\mathbb{R}^{2})$. 
\end{singlespace}
\end{lem}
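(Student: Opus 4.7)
The plan is to prove this by a contradiction-compactness argument, reducing the desired uniform bound on $A(s)$ to the invertibility of the asymptotic operator at the non-degenerate periodic orbit $x(t)$.

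First I would apply Lemma \ref{lem:Let--and-2} with some $N\geq 2$ and $\delta$ arbitrarily small: after possibly replacing $\overline{u}(s,t)$ by $\overline{u}(-s,-t)$, on $[-R+h,R-h]\times S^1$ the cylinder $\overline{u}$ is $C^N$-close to the trivial cylinder $(Ts,kt,0)$. Substituting $\overline{a}_s\approx T$, $\overline{a}_t\approx 0$, $\vartheta\approx kt$ and $z\approx 0$ into the definitions of $J(s,t)$, $S(s,t)$, $B(s,t)$ shows that $J(s,t)$ and $B(s,t)$ are $C^0$-close, uniformly in the cylinder, to the matrices
\[
J_\rho^\infty(t):=J_\rho((kt,0,0)),\qquad B_\rho^\infty(t):=\tfrac{1}{2}\bigl(S_\rho^\infty(t)+S_\rho^\infty(t)^{\ast}\bigr),
\]
where $S_\rho^\infty(t):=-T\,J_\rho^\infty(t)D(kt,0)$, the adjoint is taken with respect to the limiting inner product, and $\rho:=Ps\in[-C,C]$.

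Next I argue by contradiction. Assume there exist sequences $R_n\to\infty$, $\overline{J}_{P_n}$-holomorphic curves $(\overline{u}_n,R_n,P_n)$ satisfying $\overline{A}0$-$\overline{A}3$ with $\mathcal{A}(\overline{u}_n)>0$, parameters $s_n\in[-R_n+n,R_n-n]$ and elements $\xi_n\in W^{1,2}(S^1,\mathbb{R}^2)$ with $\|\xi_n\|_{s_n}=1$ such that $\|A_n(s_n)\xi_n\|_{s_n}\to 0$. By $\overline{A}3$ we have $P_ns_n\in[-C,C]$, so after passing to a subsequence $P_ns_n\to\rho_\infty\in[-C,C]$. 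Combining this with the first step, the coefficient matrices of $A_n(s_n)$ converge in $C^0(S^1)$ to those of the limit operator
\[
A_\infty:=-J_{\rho_\infty}^\infty(t)\frac{d}{dt}-B_{\rho_\infty}^\infty(t).
\]
From $\|A_n(s_n)\xi_n\|_{s_n}\to 0$, $\|\xi_n\|_{s_n}=1$, Lemma \ref{lem:There-exist-the}, and the formula $J(s_n,t)\partial_t\xi_n=-A_n(s_n)\xi_n-B(s_n,t)\xi_n$ I obtain a uniform $W^{1,2}$-bound on $\xi_n$. Rellich's theorem then yields a subsequence converging weakly in $W^{1,2}$ and strongly in $L^2$ to some $\xi_\infty$ with $\|\xi_\infty\|=1$. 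Passing to the limit in the equation gives $A_\infty\xi_\infty=0$.

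The central remaining point is that $A_\infty$ has trivial kernel. By construction $A_\infty$ is the standard asymptotic operator at the $T$-periodic orbit $x$ for the tame complex structure $J_{\rho_\infty}$, and a well-known calculation identifies $\ker A_\infty$ with the fixed subspace of the linearized Reeb return map of $x$; by the standing non-degeneracy assumption on $X_\alpha$ this subspace is trivial. Hence $\xi_\infty=0$, contradicting $\|\xi_\infty\|=1$, and the lemma follows. The main obstacle worth flagging is verifying that the $C^0$-convergence of the coefficient matrices furnished by Lemma \ref{lem:Let--and-2} is strong enough to pass to the limit in the eigenvalue-type equation uniformly in $\rho_\infty\in[-C,C]$; once that is done, the non-degeneracy of $x$ supplies the needed injectivity and hence the desired uniform $\eta>0$.
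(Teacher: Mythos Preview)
Your proposal is correct and follows essentially the same route as the paper's proof: a contradiction argument in which one extracts, via compactness, a nonzero element in the kernel of the asymptotic operator at the periodic orbit and then invokes non-degeneracy. The only cosmetic differences are that the paper obtains the $C^0$-convergence of $J_n(s_n,\cdot)$ and $B_n(s_n,\cdot)$ by redoing a translation/bubbling argument (as in Lemma~\ref{lem:Let--and-1}) rather than quoting Lemma~\ref{lem:Let--and-2}, and that the paper upgrades the $L^2$-subsequential limit of $\xi_n$ to strong $W^{1,2}$-convergence before passing to the limit in the equation, whereas you pass to the limit using only weak $W^{1,2}$-convergence; both variants work.
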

\begin{proof}
\begin{singlespace}
\noindent We prove the inequality by contradiction. Assume that this
is not true. Then for any $h_{n}=2n$ we assume the existence of $R_{n}\in\mathbb{R}_{>0}$
such that $R_{n}>2n$ and a sequence of $\overline{J}_{P_{n}}-$holomorphic
curves $(\overline{u}_{n},R_{n},P_{n})$ satisfying $\overline{A}0$-$\overline{A}3$ and $\mathcal{A}(u)>0$ and thus
\[
\left|\int_{S^{1}}\overline{f}_{n}(s)^{*}\alpha\right|\geq\epsilon_{0}
\]
for all $s\in[-R_{n},R_{n}]$. Here $\epsilon_{0}>0$ is the constant
defined by (\ref{eq:Thm11}). Representing $\overline{u}_{n}$ in
local coordinates as $\overline{u}_{n}(s,t)=(\overline{a}_{n}(s,t),\vartheta_{n}(s,t),z_{n}(s,t))$
we have the associated operator 
\[
A_{n}(s)=-J_{n}(s,t)\frac{d}{dt}-B_{n}(s,t)
\]
where $J_{n}(s,t)=J_{P_{n}s}(\overline{f}_{n}(s,t))$ and
where $S_{n}(s,t)$ and $B_{n}(s,t)$ are defined as above. We assume
that there exists a sequence $s_{n}\in[-R_{n}-n,R_{n}+n]$ and $\xi_{n}\in W^{1,2}(S^{1},\mathbb{R}^{2})$
such that 
\begin{equation}
\left\Vert \xi_{n}\right\Vert _{s_{n}}=1\text{ and }\left\Vert A_{n}(s_{n})\xi_{n}\right\Vert _{s_{n}}\rightarrow0.\label{eq:}
\end{equation}
Now we consider the translated maps 
\[
\tilde{v}_{n}(s,t)=(b_{n}(s,t),v_{n}(s,t))=(\overline{a}_{n}(s+s_{n},t)-\overline{a}_{n}(s_{n},0),\overline{f}_{n}(s+s_{n},t))
\]
for all $n$ and $(s,t)\in[-n,n]\times S^{1}$. Arguing as in the proof of Lemma \ref{lem:Let--and-1}
we find that $\tilde{v}_{n}\rightarrow\tilde{v}$ in $C_{\text{loc}}^{\infty}(\mathbb{R}\times S^{1},\mathbb{R}\times M)$,
where $\tilde{v}$ is a cylinder over a (shift of) the distinguished periodic orbit
$x(t)$ lying in the center of the tubular neighborhood. Hence in
local coordinates we can write $\tilde{v}(s,t)=(Ts+a_{0},kt+\vartheta_{0},0)$
with two constants $a_{0}$ and $\vartheta_{0}$. Setting $s=0$ we
obtain 
\begin{align*}
\frac{\partial}{\partial t}\overline{a}_{n}(s_{n},t) & \rightarrow0,\\
\frac{\partial}{\partial s}\overline{a}_{n}(s_{n},t) & \rightarrow T,\\
\vartheta_{n}(s_{n},t) & \rightarrow kt+\vartheta_{0},\\
z_{n}(s_{n},t) & \rightarrow0
\end{align*}
as $n\rightarrow\infty$, uniformly in $t$. Consequently, it follows
that 
\begin{align}
B_{n}(s_{n},t) & \rightarrow TJ_{\tau_{\{s_{n}\}}}(kt+\vartheta_{0},0)\cdot dY(kt+\vartheta_{0},0),\label{eq:B}\\
J_{n}(s_{n},t) & \rightarrow J_{\tau_{\{s_{n}\}}}(kt+\vartheta_{0},0)\label{eq:J}
\end{align}
as $n\rightarrow\infty$, uniformly in $t$ and for some $\tau_{\{s_{n}\}}$
given by $P_{n}s_{n}\rightarrow\tau_{\{s_{n}\}}$. Since $\left\Vert J_{n}(s,\cdot)\xi\right\Vert _{s}=\left\Vert \xi\right\Vert _{s}$
for every $\xi\in L^{2}(S^{1},\mathbb{R}^{2})$ and from Lemma \ref{lem:There-exist-the},
there exists a constant $c>0$ such that for all $n\in\mathbb{N}$
and $\xi\in W^{1,2}(S^{1},\mathbb{R}^{2})$ we have 
\begin{equation}
\left\Vert \dot{\xi}\right\Vert \leq c\left(\left\Vert A_{n}(s_{n})\xi\right\Vert +\left\Vert B_{n}(s_{n},\cdot)\xi\right\Vert \right).\label{eq:Lem}
\end{equation}
Consequently, the sequence $\xi_{n}$ as constructed in (\ref{eq:})
is bounded in $W^{1,2}$. Since $W^{1,2}$ is compactly embedded in
$L^{2}$, a subsequence of $\xi_{n}$ converges in $L^{2}$. Therefore,
by assumption (\ref{eq:}), the limits (\ref{eq:B}) and (\ref{eq:J}),
and the estimate (\ref{eq:Lem}) we have that after going over to
a subsequence, $\xi_{n}$ is a Cauchy sequence in $W^{1,2}(S^{1},\mathbb{R}^{2})$
so that 
\[
\xi_{n}\rightarrow\xi\text{ in }W^{1,2}(S^{1},\mathbb{R}^{2}).
\]
From 
\[
A_{n}(s_{n})\xi_{n}=-J_{n}(s_{n},t)\dot{\xi}_{n}-B_{n}(s_{n},t)\xi_{n}\rightarrow0\text{ in }L^{2}(S^{1},\mathbb{R}^{2})
\]
and (\ref{eq:B}) and (\ref{eq:J}) one concludes that $\xi$ solves
the equation 
\[
\frac{d}{dt}\xi(t)=TdY(kt+\vartheta_{0},0)\xi(t)
\]
which (since $\|\xi\|\geq 1/c$) results in a contradiction to the fact that the periodic orbits $x(t)=(kt+\vartheta_{0},0)$
was assumed to be non-degenerate. 
\end{singlespace}
\end{proof}
\begin{singlespace}
\noindent The next theorem is similar to Theorem 1.3 of \cite{key-11};
the only difference is that it is formulated for pseudoholomorphic
curves with respect to a domain-dependent almost complex structure
on the target space $\mathbb{R}\times M$. 
\end{singlespace}
\begin{thm}
\begin{singlespace}
\noindent \label{thm:Let--and1} For  $0<\psi<\hbar/2$, let $h_{0}>0$ be the corresponding constant appearing
in Theorem \ref{thm:For-all-numbers}.
Then there exist positive constants $\delta_{0}$, $\mu$, and
$\nu<\min\{4\pi,2\mu\}$ such that the following holds: Given $0<\delta\leq\delta_{0}$,
there exists $h>h_0$ such that for any $R>h$ and any $\overline{J}_{P}-$holomorphic
curve $(\overline{u},R,P)$ such that $A(\overline{u})>0$, there
exists a unique (up to a phase shift) periodic orbit $x(t)$ of the
Reeb vector field $X_{\alpha}$ with period $T=A(\overline{u})\leq\tilde{E}_{0}$
satisfying 
\[
\left|\int_{S^{1}}\overline{f}(0)^{*}\alpha-T\right|<\frac{\psi}{2}\text{ and }\left|\int_{S^{1}}\overline{f}(s)^{*}\alpha-T\right|<\hbar,\text{ for all }s\in[-R,R].
\]
In addition, there exists a tubular neighborhood $U\cong S^{1}\times\mathbb{R}^{2}$
around the periodic orbit $x(\mathbb{R})\cong S^{1}\times\{0\}$ such
that $\overline{f}(s,t)\in U$ for all $(s,t)\in[-R+h,R-h]\times S^{1}$.
Using the covering $\mathbb{R}$ of $S^{1}=\mathbb{R}/\mathbb{Z}$,
the map $\overline{u}$ is represented in local coordinates $\mathbb{R}\times U$
as 
\begin{align*}
\overline{u}(s,t) & =(\overline{a}(s,t),\vartheta(s,t),z(s,t))\\
 & =(Ts+a_{0}+\tilde{a}(s,t),kt+\vartheta_{0}+\tilde{\vartheta}(s,t),z(s,t)),
\end{align*}
where $(a_{0},\vartheta_{0})\in\mathbb{R}^{2}$ are constants. The
functions $\tilde{a}$, $\tilde{\vartheta}$, and $z$ are $1-$periodic
in $t$, and the positive integer $k$ is the covering number of the
$T-$periodic orbit represented by $x(Tt)=(kt,0,0)$. For all multiindices
$\alpha$ there exists a constant $C_{\alpha}$ such that for all
$(s,t)\in[-R+h,R-h]\times S^{1}$ the following estimates hold: 
\[
|\partial^{\alpha}z(s,t)|^{2}\leq C_{\alpha}\delta^{2}\frac{\cosh(\mu s)}{\cosh(\mu(R-h))}
\]
and 
\[
|\partial^{\alpha}\tilde{a}(s,t)|^{2},|\partial^{\alpha}\tilde{\vartheta}(s,t)|^{2}\leq C_{\alpha}\delta^{2}\frac{\cosh(\nu s)}{\cosh(\nu(R-h))}.
\]
\end{singlespace}
\end{thm}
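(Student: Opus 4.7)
The proof will follow the strategy of Theorem 1.3 in \cite{key-11}, adapted to the domain-dependent almost complex structure $\overline{J}_P$. The first assertions are essentially already in hand: Theorem \ref{thm:For-all-numbers} produces the unique $T=A(\overline{u})\in\mathcal{P}$ with $T\leq\tilde{E}_0$ and $|\,|\int_{S^1}\overline{f}(0)^*\alpha|-T|<\psi/2$, and Lemma \ref{lem:Let--and-1} (combined with Stokes' theorem and $E_{d\alpha}\leq\hbar/2$) shows that for all $s\in[-R,R]$ one has $|\int_{S^1}\overline{f}(s)^*\alpha-T|<\hbar$ and that $\overline{f}(s,\cdot)$ lies in a prescribed tubular neighborhood $U\cong S^1\times\mathbb{R}^2$ of the distinguished $T$-periodic orbit once $R>h$. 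In these coordinates Lemma \ref{lem:Let--and-2} gives the decomposition $\overline{u}=(Ts+a_0+\tilde{a},kt+\vartheta_0+\tilde{\vartheta},z)$ with all derivatives of $\tilde{a},\tilde{\vartheta},z$ arbitrarily small on $[-R+h,R-h]\times S^1$ by choosing $h$ large.

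The heart of the proof is the exponential decay of $z(s,\cdot)$. Writing the Cauchy--Riemann equation in the basis $\{E_1,E_2\}$ as
\[
z_s+J(s,t)z_t+B(s,t)z+C(s,t)z=0,
\]
and using that the operator $A(s)=-J(s,t)\partial_t-B(s,t)$ is self-adjoint in $\langle\cdot,\cdot\rangle_s$ with $\|A(s)\xi\|_s\geq\eta\|\xi\|_s$ by Lemma \ref{lem:There-exists-a}, the plan is to consider $g(s):=\tfrac12\langle z(s,\cdot),z(s,\cdot)\rangle_s$ and to derive a differential inequality
\[
g''(s)\geq\mu^2 g(s)\quad\text{on }[-R+h,R-h],
\]
for some $0<\mu<\eta$ (after absorbing the small error terms coming from $C(s,t)$, $\partial_s J$, $\partial_s B$ and from the $Ps$-dependence of $J_{Ps}$, all of which are $o(1)$ uniformly by Lemma \ref{lem:Let--and-2} and assumption $\overline{A}3$, since $|P|\leq C/R$). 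A maximum-principle (or convexity) argument then forces
\[
g(s)\leq g(R-h)\,\frac{\cosh(\mu s)}{\cosh(\mu(R-h))}+g(-R+h)\,\frac{\cosh(\mu s)}{\cosh(\mu(R-h))},
\]
which, combined with the $L^\infty$-bound $|z|\leq\delta$ at the endpoints, yields the stated $L^2$-decay of $z$. The higher-derivative estimates $|\partial^\alpha z|^2\leq C_\alpha\delta^2\cosh(\mu s)/\cosh(\mu(R-h))$ then follow by standard interior elliptic bootstrapping applied to the (perturbed) Cauchy--Riemann system on overlapping subcylinders, whose coefficients are uniformly bounded in $C^k$ thanks to Lemma \ref{lem:Let--and-2}.

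To obtain the analogous estimates for $\tilde{a}$ and $\tilde{\vartheta}$ with the (smaller) decay rate $\nu<\min\{4\pi,2\mu\}$, I would differentiate the scalar equations
\[
\overline{a}_s=(\vartheta_t+xy_t)f(\overline{f}),\qquad \overline{a}_t=-(\vartheta_s+xy_s)f(\overline{f}),
\]
use $f\equiv\tau_0$ and $df\equiv0$ on $S^1\times\{0\}$ from Lemma \ref{lem:Let--be} to isolate the leading contribution $Tds$ for $d\overline{a}$ and $kdt$ for $d\vartheta$, and write the remainders $(\tilde{a},\tilde{\vartheta})$ as solutions of a Laplace-type system with source quadratic in $z$ (and hence decaying like $\cosh(\mu s)/\cosh(\mu(R-h))$ plus an ``intrinsic'' exponential decay rate $2\pi$ coming from the spectrum $2\pi\mathbb{Z}$ of $-i\partial_t$ on $S^1$-periodic mean-zero functions). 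Splitting $\tilde{a},\tilde{\vartheta}$ into their $t$-Fourier modes, the zero mode satisfies $\partial_s^2=0$ with forcing $\sim|z|^2$, while the nonzero modes decay at rate $2\pi$; choosing any $\nu<\min\{4\pi,2\mu\}$ then gives the claimed bound by a second maximum-principle argument, and $C^k$-bounds follow again by bootstrap.

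The main obstacle will be the error analysis in the first differential inequality: one has to ensure that the non-self-adjoint corrections (the anti-symmetric part $C(s,t)$, the $\partial_s$-variation of $J$ and $B$, and the additional $Ps$-dependence not present in \cite{key-11}) are small enough to be absorbed into the spectral gap $\eta$ from Lemma \ref{lem:There-exists-a}, so that a strictly positive $\mu$ survives. This is precisely where assumption $\overline{A}3$ ($|PR|\leq C$) enters crucially, since it gives $|P|\leq C/R\to 0$ and hence the extra terms introduced by the domain dependence $J_{Ps}$ are indeed $O(1/R)$ on $[-R+h,R-h]\times S^1$.
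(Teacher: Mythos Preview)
Your proposal is correct and follows essentially the same route as the paper: the paper's proof is omitted and simply refers to Theorem~1.3 of \cite{key-11}, pointing out that the two ingredients needed to make that argument go through in the domain-dependent setting are precisely Lemma~\ref{lem:There-exists-a} (the uniform spectral gap for $A(s)$) and Remark~\ref{rem:By-Lemma-,} (the $\mathcal{O}(\delta)$-smallness of $\partial_s J$, $\partial_s B$, with the extra $P\partial_\rho J_{Ps}$ term controlled via $|P|\leq C/R$ from $\overline{A}3$). Your sketch of the $g''\geq\mu^2 g$ argument, the absorption of the error terms into the spectral gap, and the subsequent treatment of $(\tilde a,\tilde\vartheta)$ via a Laplace-type system with quadratic source in $z$ is exactly the \cite{key-11} scheme, and you have correctly isolated the one new point (the $Ps$-dependence) and where it is handled.
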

\begin{singlespace}
\noindent For the proof of the theorem we need the following 
\end{singlespace}
\begin{rem}
\begin{singlespace}
\noindent \label{rem:By-Lemma-,}By Lemma \ref{lem:Let--and-2}, which
is similar to Lemma 3.3 from \cite{key-11}, we have $|\partial_{s}^{\alpha}\overline{f}(s,t)|\leq\delta$
for all $\alpha\geq1$ and all $(s,t)\in[-R+h,R-h]\times S^{1}$.
As a result, the derivatives with respect to the $s$ coordinate of
$J(s,t)$ and $B(s,t)$ contain factors estimated by $\delta$. This
can be seen as follows. Recalling that $J(s,t)=J_{Ps}(\vartheta(s,t),z(s,t))$
we find 
\[
\partial_{s}J(s,t)=P\partial_{\rho}J_{Ps}(\overline{f}(s,t))+\partial_{\vartheta}J_{Ps}(\overline{f}(s,t))\partial_{s}\vartheta+\partial_{z}J_{Ps}(\overline{f}(s,t))\partial_{s}z.
\]
For $R$ sufficiently large, the assumption $A3$ on the universal bound
of the conformal co-period gives $|P|\leq\delta$; consequently, $|\partial_{s}J(s,t)|=\mathcal{O}(\delta)$.
In a similar way it can be shown that $|\partial_{s}^{2}J(s,t)|,|\partial_{s}B(s,t)|=\mathcal{O}(\delta)$. 
\end{singlespace}
\end{rem}
\begin{singlespace}
\noindent Now, the proof of Theorem \ref{thm:Let--and1} proceeds
as in \cite{key-11} by using Lemma \ref{lem:There-exists-a} and
Remark \ref{rem:By-Lemma-,}, and for this reason, it is omitted here. 
\end{singlespace}

\subsection{Proof of Theorem \ref{thm:With-the-same-1}}

The goal of this section is to describe the convergence and
the limit object of the $\mathcal{H}-$holomorphic cylinders $u_{n}$
with harmonic perturbations $\gamma_{n}$ whose center
action, defined as in (\ref{eq:defcenteraction}) is positive. In a first step we prove
an analogous result for a sequence of $J_{P_n}$-holomorphic cylinders.

\begin{singlespace}
\noindent Applying Theorem \ref{thm:Let--and1} to a sequence of
$\overline{J}_{P_{n}}-$holomorphic curves $(\overline{u}_{n},R_{n},P_{n})$ satisfying $\overline{A}0$-$\overline{A}3$
satisfying $\mathcal{A}(u)>0$ and $R_n\rightarrow\infty$, we find the following (after passing to a subsequence, and possibly replacing $\overline{u}_n(s,t)$ by $\overline{u}_n(-s,-t)$):
\end{singlespace}
\begin{cor}
\begin{singlespace}
\noindent There exists a $T$-periodic orbit $x(t)$ of $X_\alpha$ for $T\leq \tilde{E}_0$ such that for any $\epsilon>0$ there exist $h>0$ and $N_{\epsilon,h}\in\mathbb{N}$
such that we have for any $n\geq N_{\epsilon,h}$,
\begin{equation}
d(\overline{f}_{n}(s,t),x(Tt))<\epsilon\text{ and }|\overline{a}_{n}(s,t)-Ts|<\epsilon \, \text{ for all }(s,t)\in[-R_{n}+h,R_{n}-h]\times S^{1}.\label{eq:a-coordinate-diverges}
\end{equation}
\end{singlespace}
\end{cor}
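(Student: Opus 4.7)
The plan is to derive this directly from Theorem \ref{thm:Let--and1} together with a diagonal subsequence argument that extracts a single limiting orbit. I would proceed as follows.

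First, fix any $0<\psi<\hbar/2$ and apply Theorem \ref{thm:Let--and1} to each $\overline{u}_n$ with $R_n$ large enough. For each such $n$, the theorem furnishes a periodic Reeb orbit of period $T_n=A(\overline{u}_n)\in\mathcal{P}$ with $T_n\le\tilde{E}_0$, an associated tubular neighborhood, and a local-coordinate representation
\[
\overline{u}_n(s,t)=\bigl(T_n s+a_{0,n}+\tilde a_n(s,t),\ k_n t+\vartheta_{0,n}+\tilde\vartheta_n(s,t),\ z_n(s,t)\bigr)
\]
on $[-R_n+h,R_n-h]\times S^1$, with the exponential decay bounds
\[
|\partial^\alpha z_n|^2,\ |\partial^\alpha\tilde a_n|^2,\ |\partial^\alpha\tilde\vartheta_n|^2 \ \le\ C_\alpha\,\delta^2\,\frac{\cosh(\mu s)}{\cosh(\mu(R_n-h))}\ \le\ C_\alpha\,\delta^2.
\]

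The next step is to extract a common orbit. Because the Reeb flow is non-degenerate, the set $\mathcal{P}\cap[0,\tilde E_0]$ is finite and (up to $S^1$-reparametrization) there are only finitely many periodic orbits of period bounded by $\tilde{E}_0$; this is exactly the mechanism underlying the definition of $\hbar$ in \eqref{eq:Def_hbar}. Pass to a subsequence so that $T_n=T$ and $k_n=k$ are constant, and all $x_n$ parametrize the same geometric orbit. The phase shifts $\vartheta_{0,n}\in\mathbb{R}/k\mathbb{Z}$ live in a compact set, so after a further subsequence $\vartheta_{0,n}\to\vartheta_0^*\in\mathbb{R}$. Now shift each $\overline{u}_n$ in the $\mathbb{R}$-direction by $-a_{0,n}$ (this is the $\mathbb{R}$-shift permitted in the statement), so that we may assume $a_{0,n}=0$. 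Choose once and for all the parametrization $x(t)$ of the underlying orbit for which $x(Tt)$ corresponds in the local coordinates to $(kt+\vartheta_0^*,0)$.

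Finally, given $\epsilon>0$, pick $\delta>0$ so small that $(C_0\delta)^{1/2}<\epsilon/4$ (here $C_0$ denotes the maximum of the relevant $C_\alpha$ for $|\alpha|=0$), and let $h>0$ be the constant produced by Theorem \ref{thm:Let--and1} for this $\delta$. For $n$ large enough that $R_n>h$ and $|\vartheta_{0,n}-\vartheta_0^*|<\epsilon/4$, the decay bounds above give
\[
|\overline{a}_n(s,t)-Ts|=|\tilde a_n(s,t)|<\epsilon/4,
\]
and, using continuity of the coordinate chart (which identifies $(kt+\vartheta_0^*,0)$ with $x(Tt)$) together with $|\tilde\vartheta_n|,|z_n|<\epsilon/4$ and $|\vartheta_{0,n}-\vartheta_0^*|<\epsilon/4$, that $d(\overline{f}_n(s,t),x(Tt))<\epsilon$ uniformly on $[-R_n+h,R_n-h]\times S^1$.

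The main obstacle is bookkeeping rather than analytic: extracting a \emph{single} parametrized orbit $x(t)$ that simultaneously serves all $n$ in the subsequence, since Theorem \ref{thm:Let--and1} only delivers the orbit \emph{up to phase shift} for each $n$ individually. This is resolved by combining non-degeneracy (which gives finiteness of orbits modulo the $S^1$-action and thus allows $T_n$, $k_n$, and the underlying geometric orbit to be made constant along a subsequence) with compactness of $\mathbb{R}/k\mathbb{Z}$ (which lets the phase shifts converge). Once the orbit is fixed, the exponential estimates from Theorem \ref{thm:Let--and1} translate mechanically into the uniform $\epsilon$-closeness claimed.
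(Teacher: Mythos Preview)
Your proposal is correct and follows the same route as the paper, which simply records the corollary as an immediate consequence of Theorem~\ref{thm:Let--and1} after passing to a subsequence (and possibly replacing $\overline{u}_n(s,t)$ by $\overline{u}_n(-s,-t)$). You are supplying precisely the bookkeeping the paper leaves implicit: fixing a common geometric orbit via non-degeneracy and finiteness of periods below $\tilde E_0$, extracting a convergent sequence of phase shifts $\vartheta_{0,n}$, and normalizing the $\mathbb{R}$-coordinate so that $a_{0,n}=0$.
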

\begin{singlespace}
\noindent We continue to denote the cylinder $[-R_{n}+h,R_{n}-h]\times S^{1}$
by $[-R_{n},R_{n}]\times S^{1}$. In view of (\ref{eq:a-coordinate-diverges}) the quantities 
\[
\overline{r}_{n}^{-}:=\inf_{t\in S^{1}}\overline{a}_{n}(-R_{n},t)\text{ and }\overline{r}_{n}^{+}:=\sup_{t\in S^{1}}\overline{a}_{n}(R_{n},t)
\]
satisfy $\overline{r}_{n}^{+}-\overline{r}_{n}^{-}\rightarrow\infty$
as $n\rightarrow\infty$.

\noindent Recalling that $P_{n},S_{n}$ and $1/R_{n}$ are zero sequences
we reformulate the above findings as in Corollary \ref{cor:For-every-sequence}. 
\end{singlespace}
\begin{cor}
\begin{singlespace}
\noindent \label{cor:There-exists-a-1}For every sequence $h_{n}\in\mathbb{R}_{>0}$
satisfying $h_{n}<R_{n}$ and $h_{n},R_{n}/h_{n}\rightarrow\infty$
and every $\epsilon>0$ there exists $N\in\mathbb{N}$ such that for
all $(s,t)\in[-R_{n}+h_{n},R_{n}-h_{n}]\times S^{1}$ 
\[
\text{dist}_{\overline{g}_{0}}(\overline{f}_{n}(s,t),x(Tt))<\epsilon\text{ and }|\overline{a}_{n}(s,t)-Ts|<\epsilon
\]
for all $n\geq N$.
\end{singlespace}
\end{cor}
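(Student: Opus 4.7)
The plan is to mimic the proof of Corollary \ref{cor:For-every-sequence} almost verbatim, since the statement here is merely the ``upgrade'' of the preceding corollary (with a fixed height $h$) to an allowable height $h_n$ that diverges to infinity, but does so sufficiently slowly that $R_n/h_n\to\infty$ remains valid.

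More concretely, fix $\epsilon>0$ and a sequence $h_n\in\mathbb{R}_{>0}$ with $h_n<R_n$ and $h_n,R_n/h_n\to\infty$. First I would apply the preceding corollary to the given $\epsilon$: this produces some fixed $h_\epsilon>0$ and an index $N_\epsilon\in\mathbb{N}$ such that for all $n\geq N_\epsilon$ one has
\[
\text{dist}_{\overline{g}_{0}}(\overline{f}_{n}(s,t),x(Tt))<\epsilon\quad\text{and}\quad|\overline{a}_{n}(s,t)-Ts|<\epsilon
\]
for all $(s,t)\in[-R_n+h_\epsilon,R_n-h_\epsilon]\times S^1$. Because $h_n\to\infty$ and $R_n\to\infty$, after enlarging $N_\epsilon$ if necessary we may assume $h_n\geq h_\epsilon$ and $R_n>h_\epsilon$ for all $n\geq N_\epsilon$; consequently the (smaller) cylinder $[-R_n+h_n,R_n-h_n]\times S^1$ is contained in $[-R_n+h_\epsilon,R_n-h_\epsilon]\times S^1$, and the estimates transfer directly.

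There is essentially no obstacle here: the content is entirely contained in the preceding corollary, and the role of the sequence $h_n$ is just to provide a uniform (and arbitrarily large) buffer near the ends of the cylinder, which is compatible with the fact that the preceding corollary needs only a fixed $h_\epsilon$ buffer. The conditions $h_n<R_n$ and $R_n/h_n\to\infty$ are not actually used in this statement, but they are kept for compatibility with Remark \ref{rem:For-every-sequence} so that the same sequence $h_n$ can later be used to build the diffeomorphisms $\theta_n$ in the $C^0$-part of Theorem \ref{thm:With-the-same-1}.
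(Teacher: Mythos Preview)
Your proof is correct and follows exactly the approach the paper intends: the paper's own proof simply reads ``The proof follows as in Corollary~\ref{cor:For-every-sequence},'' and you have faithfully reproduced that argument, applying the preceding (fixed-$h$) corollary and then using $h_n\to\infty$ to absorb $h_\epsilon$. Your remark that $R_n/h_n\to\infty$ is not actually needed here is also accurate.
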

\begin{proof}
\begin{singlespace}
\noindent The proof follows as in
Corollary \ref{cor:For-every-sequence}. 
\end{singlespace}
\end{proof}
\begin{singlespace}
\noindent The next theorem which states a $C_{\text{loc}}^{\infty}-$
and a $C^{0}-$convergence result for the maps $\overline{u}_{n}$
with positive center action is the analogue of Theorem \ref{thm:With-the-same-3}. 
\end{singlespace}
\begin{thm}
\begin{singlespace}
\noindent \label{thm:With-the-same-1-1} Let $(\overline{u}_{n},R_{n},P_{n})$ be a sequence of $J_{P_n}$-holomorphic curves satisfying $\overline{A}0$-$\overline{A}3$ with positive center action
and such that $\overline{u}_n(0,0)\rightarrow w:=(0,w_f)$, $R_n\rightarrow \infty$ and $R_nP_n\rightarrow \tau$. Then there exists a subsequence
also denoted by $(\overline{u}_{n},R_{n},P_{n})$, $T\in \mathbb{R}\setminus \{0\}$, a $|T|$-periodic Reeb orbit $x$ and pseudoholomorphic
half cylinders $u^{\pm}$ defined on $(-\infty,0]\times S^{1}$ and
$[0,\infty)\times S^{1}$ respectively, such that for every sequence
$h_{n}\in\mathbb{R}_{>0}$ and every sequence of diffeomorphisms $\theta_{n}:[-R_{n},R_{n}]\times S^{1}\rightarrow[-1,1]\times S^{1}$
satisfying the assumptions of Remark \ref{rem:For-every-sequence},
the following convergence results hold:
\noindent $C_{\text{loc}}^{\infty}-$convergence: 
\end{singlespace}
\begin{enumerate}
\begin{singlespace}
\item For any sequence $s_{n}\in[-R_{n}+h_{n},R_{n}-h_{n}]$ the shifted
maps $\overline{u}_{n}(s+s_{n},t)-Ts_{n}$, defined on $[-R_{n}+h_{n}-s_{n},R_{n}-h_{n}-s_{n}]\times S^{1}$,
converge in $C_{\text{loc}}^{\infty}$ to $(Ts,x(Tt))$. 
\item The left shifts $\overline{u}_{n}^{-}(s,t):=\overline{u}_{n}(s-R_{n},t)+TR_n$
defined on $[0,h_{n})\times S^{1}$, converge
in $C_{\text{loc}}^{\infty}$ to a pseudoholomorphic half cylinder
$\overline{u}^{-}=(\overline{a}^{-},\overline{f}^{-})$ defined on
$[0,+\infty)\times S^{1}$. The curve $\overline{u}^{-}$ has the same asymptotes
as the map $(s,t)\mapsto (Ts,x(Tt))$ for $s\rightarrow \infty$. The maps $\overline{v}_{n}^{-}$ converge
in $C_{\text{loc}}^{\infty}$ on $[-1,-1/2)\times S^{1}$ to $\overline{v}^{-}$.
\item The right shifts $\overline{u}_{n}^{+}(s,t):=\overline{u}_{n}(s+R_{n},t)-TR_n$,
defined on $(-h_{n},0]\times S^{1}$, converge
in $C_{\text{loc}}^{\infty}$ to a pseudoholomorphic half
cylinder $\overline{u}^{+}=(\overline{a}^{+},\overline{f}^{+})$,
defined on $(-\infty,0]\times S^{1}$. The curve $\overline{u}^{+}$
has the same asymptotes as $(s,t)\mapsto (Ts,x(Tt))$ for $s\rightarrow -\infty$. The maps $\overline{v}_{n}^{+}$
converge in $C_{\text{loc}}^{\infty}$ on $(1/2,1]\times S^{1}$ to
$\overline{v}^{+}$.
\end{singlespace}
\end{enumerate}
\begin{singlespace}
\noindent $C^{0}-$convergence: 
\end{singlespace}
\begin{enumerate}
\begin{singlespace}
\item The maps $\overline{f}_{n}\circ\theta_{n}^{-1}:[-1/2,1/2]\times S^{1}\rightarrow M$
converge in $C^{0}$ to the map $(s,t)\rightarrow x(Tt)$. 
\item The maps $\overline{f}_{n}^{-}\circ(\theta_{n}^{-})^{-1}:[-1,-1/2]\times S^{1}\rightarrow M$
converge in $C^{0}$ to a map $\overline{f}^{-}\circ(\theta^{-})^{-1}:[-1,-1/2]\times S^{1}\rightarrow M$
such that $\overline{f}^{-}\circ(\theta^{-})^{-1}(-1/2,t)=x(Tt)$. 
\item The maps $\overline{f}_{n}^{+}\circ(\theta_{n}^{+})^{-1}:[1/2,1]\times S^{1}\rightarrow M$
converge in $C^{0}$ to a map $\overline{f}^{+}\circ(\theta^{+})^{-1}:[1/2,1]\times S^{1}\rightarrow M$
such that $\overline{f}^{+}\circ(\theta^{+})^{-1}(1/2,t)=x(Tt)$. 
\item Let $\overline{r}_n^-:=\inf_{t\in S^1} \overline{a}_n(-sgn(T)R_n,t)$ and $\overline{r}_n^+:=\sup_{t\in S^1} \overline{a}_n(sgn(T)R_n,t)$, where $sgn(T):=T/|T|\in \{\pm 1\}$. 
Then $\overline{r}_n^+-\overline{r}_n^-\rightarrow \infty$ and for any $R>0$, there exist $\rho>0$ and $N\in\mathbb{N}$ such that
$\overline{a}_{n}\circ\theta_{n}^{-1}(s,t)\in[\overline{r}_{n}^{-}+R,\overline{r}_{n}^{+}-R]$
for all $n\geq N$ and all $(s,t)\in[-\rho,\rho]\times S^{1}$. 
\end{singlespace}
\end{enumerate}
\end{thm}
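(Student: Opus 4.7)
The plan is to mimic the proof of Theorem \ref{thm:With-the-same-3}, with the role of the limiting constant map $w$ being played by the trivial cylinder $(Ts,x(Tt))$ over a single periodic Reeb orbit. The key technical input will be Theorem \ref{thm:Let--and1} (or more precisely the reformulation in the unnamed corollary preceding Corollary \ref{cor:There-exists-a-1}), which provides uniform $C^0$ control of $\overline{f}_n$ near a periodic orbit and of $\overline{a}_n$ near $Ts$, and Lemma \ref{lem:For-every--1}, which provides uniform gradient bounds on the interior region $[-R_n+\delta,R_n-\delta]\times S^1$.

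First I would extract a single limiting orbit. Each $\overline{u}_n$ has center action $A(\overline{u}_n)\in\mathcal{P}\cap(0,\tilde E_0]$, which is a finite set by non-degeneracy, so after passing to a subsequence $A(\overline{u}_n)=:|T|$ is constant. Theorem \ref{thm:Let--and1} (after possibly replacing $\overline{u}_n(s,t)$ by $\overline{u}_n(-s,-t)$ uniformly, which fixes the sign of $T$) then associates to each $\overline{u}_n$ a $|T|$-periodic orbit (up to phase shift). Using the normalization $\overline{u}_n(0,0)\to (0,w_f)$ and the fact that there are only finitely many $S^1$-families of periodic orbits of period $|T|$, a further subsequence ensures that all the associated orbits lie in the same $S^1$-family, and the phase is fixed by requiring $w_f$ to lie on the closure of the selected orbit $x$. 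Corollary \ref{cor:There-exists-a-1} then gives: for every $h_n$ with $h_n,R_n/h_n\to\infty$ and every $\epsilon>0$ we have, for $n$ large and $(s,t)\in [-R_n+h_n,R_n-h_n]\times S^1$,
\[
\mathrm{dist}_{\overline{g}_0}(\overline{f}_n(s,t),x(Tt))<\epsilon\quad\text{and}\quad |\overline{a}_n(s,t)-Ts|<\epsilon.
\]

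Next I would establish the three $C^\infty_{\mathrm{loc}}$-convergence statements. Lemma \ref{lem:For-every--1} provides uniform gradient bounds on any compact subset lying inside the interior, so an Arzel\`a--Ascoli / standard elliptic bootstrap argument (exactly as in the proof of Theorem \ref{thm:With-the-same-3}) produces $C^\infty_{\mathrm{loc}}$-limits. For item (1), the shifted maps $\overline{u}_n(s+s_n,t)-Ts_n$ have uniformly bounded gradients on any compact set and are pointwise forced to equal $(Ts,x(Tt))$ in the limit by the $C^0$-estimates above (since $|P_n(s+s_n)|\leq C/R_n\to 0$, the limit is $J_0$-holomorphic). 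For items (2)--(3), the uniform gradient bounds on the left/right shifts $\overline{u}_n^\pm$ yield $C^\infty_{\mathrm{loc}}$-subsequential limits $\overline{u}^\pm$ defined on $[0,\infty)\times S^1$ resp.\ $(-\infty,0]\times S^1$; these limits are $J_0$-holomorphic because $|P_n(s\pm R_n+R_n)|\leq|P_n|\cdot|s|\to 0$ on compact sets. The asymptotic statement, that $\overline{u}^-$ has the same asymptotics as $(Ts,x(Tt))$ for $s\to\infty$ (and symmetrically for $\overline{u}^+$), follows by taking the $n\to\infty$ limit in the uniform estimates: for $s\in [h,h_n)$, $|\overline{a}_n^-(s,t)-Ts|<\epsilon$ and $\mathrm{dist}(\overline{f}_n^-(s,t),x(Tt))<\epsilon$, hence the same inequalities hold for $\overline{u}^-(s,t)$ for all $s\geq h$; since $\epsilon$ is arbitrary and gradients are uniformly controlled, the $C^\infty(S^1)$-distance between $\overline{u}^-(s,\cdot)$ and the trivial-cylinder loop vanishes as $s\to\infty$.

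The $C^0$-convergence statements (1)--(3) then follow the same scheme as in Theorem \ref{thm:With-the-same-3}. For (1), $\overline{f}_n\circ\theta_n^{-1}(s,t)=\overline{f}_n(2(R_n-h_n)s,t)$ on $[-1/2,1/2]\times S^1$, and Corollary \ref{cor:There-exists-a-1} makes this uniformly $\epsilon$-close to $x(Tt)$. For (2) and (3), the composition $\overline{v}_n^\pm=\overline{u}_n^\pm\circ(\theta_n^\pm)^{-1}$ converges in $C^\infty_{\mathrm{loc}}$ on $[-1,-1/2)$ resp.\ $(1/2,1]$ to $\overline{v}^\pm$, and the asymptotics of $\overline{u}^\pm$ allow the extension by continuity to the compact cylinder, taking the required boundary value $x(Tt)$ on $\{\pm 1/2\}\times S^1$; this is the argument from Lemma 4.16 of \cite{key-10}, adapted as in the proof of Theorem \ref{thm:With-the-same-3}. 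For statement (4), combining Corollary \ref{cor:There-exists-a-1} at $s=\pm(R_n-h_n)$ with the uniform boundary gradient bound $\overline{A}1$ on the strips $[\pm R_n,\pm(R_n-\delta_1)]\times S^1$ gives $\overline{r}_n^\pm=\pm|T|R_n+O(1)$, hence $\overline{r}_n^+-\overline{r}_n^-\to\infty$. Moreover for $(s,t)\in[-\rho,\rho]\times S^1$, $\overline{a}_n\circ\theta_n^{-1}(s,t)=2T(R_n-h_n)s+O(\epsilon)$ lies in $[-2|T|R_n\rho-\epsilon,2|T|R_n\rho+\epsilon]$, which is contained in $[\overline{r}_n^-+R,\overline{r}_n^+-R]$ provided $\rho<1/2$ and $n$ is sufficiently large.

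The main obstacle I expect is the uniform extraction of a single orbit $x$ and the consistent choice of sign (and phase) of $T$ across the sequence: Theorem \ref{thm:Let--and1} controls each $\overline{u}_n$ individually, but one must carefully combine the finiteness of $\mathcal{P}\cap(0,\tilde E_0]$, the finiteness of the set of $S^1$-families of orbits of a given period, and the normalization $\overline{u}_n(0,0)\to(0,w_f)$ to arrange, after passing to a subsequence, that all the orbits $x_n$ provided by Theorem \ref{thm:Let--and1} coincide with one and the same orbit $x$ (including its parametrization). Once this is done, the rest of the argument is essentially the same $C^\infty_{\mathrm{loc}}$/$C^0$-bookkeeping as in the vanishing center action case.
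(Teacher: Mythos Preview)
Your approach matches the paper's proof and is correct in structure. Two small slips to fix: for the left and right shifts the almost-complex-structure parameter at the point $(s,t)$ is $P_n(s\mp R_n)\to \mp\tau$, not $P_n s\to 0$, so the limits $\overline{u}^\pm$ are $J_{\mp\tau}$-holomorphic rather than $J_0$-holomorphic (the paper states this explicitly; it is harmless for the conclusions since all $J_\rho$ share the same Reeb dynamics); and in item (4) your bridge from $s=\pm(R_n-h_n)$ to $s=\pm R_n$ does not work as written, because that gap has width $h_n\to\infty$ while $\overline{A}1$ only controls a strip of fixed width $\delta_1$ --- use instead the fixed-$h$ estimate from the unnamed corollary preceding Corollary \ref{cor:There-exists-a-1} together with Lemma \ref{lem:For-every--1} on the remaining fixed-width strip, or simply defer to Proposition \ref{prop:For-every-,} as the paper does.
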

\begin{proof}
\begin{singlespace}
\noindent As in Theorem \ref{thm:With-the-same-3} we prove only the
first and second statements of the $C_{\text{loc}}^{\infty}-$convergence.
Let $h_{n}\in\mathbb{R}_{>0}$ be a sequence satisfying $h_{n}<R_{n}$
and $h_{n},R_{n}/h_{n}\rightarrow\infty$ as $n\rightarrow\infty$. To
prove the first statement we consider the shifted maps $\overline{u}_{n}(\cdot+s_{n},\cdot)$,
defined on $[-R_{n}+h_{n}-s_{n},R_{n}-h_{n}-s_{n}]\times S^{1}$,
for any sequence $s_{n}\in[-R_{n}+h_{n},R_{n}-h_{n}]$. If
$s_{n}/R_{n}$ converges, $\overline{u}_{n}(s+s_{n},t)-Ts_{n}$ converges
in $C_{\text{loc}}^{\infty}$ to the trivial cylinder $(Ts,x(Tt))$
over the Reeb orbit $x(Tt)$. Indeed, any subsequence has by Corollary
\ref{cor:There-exists-a-1} and Lemma \ref{lem:For-every--1} a further subsequence converging to this
trivial cylinder. To prove the second statement, we consider the
shifted maps $\overline{u}_{n}^{-}:[0,h_{n}]\times S^{1}\rightarrow\mathbb{R}\times M$,
defined by $\overline{u}_{n}^{-}(s,t)=\overline{u}_{n}(s-R_{n},t)+R_n T$.
By Lemma \ref{lem:For-every--1}, a further subsequence $\overline{u}_{n}^{-}$ converges
in $C_{\text{loc}}^{\infty}([0,\infty)\times S^{1})$ to a usual pseudoholomorphic
curve $\overline{u}^{-}=(\overline{a}^{-},\overline{f}^{-}):[0,+\infty)\times S^{1}\rightarrow\mathbb{R}\times M$
with respect to the standard complex structure $i$ on $[0,+\infty)\times S^{1}$
and the almost complex structure $J_{-\tau}$ on the domain,\textbf{
}where \textbf{$\tau=\lim_{n\rightarrow\infty}P_{n}R_{n}$}. (Note that the $a$-coordinate of $\overline{u}_n^-$ converges by Corollary \ref{cor:There-exists-a-1}
without any further shifts.) We show
that $\overline{u}^{-}$ has the same asymptotics as the trivial cylinder $(Ts,x(Tt))$ over
the Reeb orbit $x$. Since the occuring maps have uniform gradient bounds, it is sufficient to show pointwise convergence 
\begin{equation}
\lim_{s\rightarrow\infty}\left(\overline{a}^{-}(s,t)-Ts,\overline{f}^{-}(s,t)\right)=(0,x(Tt))\label{eq:asymptotic_behaviour}
\end{equation}
to establish convergence in $C^\infty(S^1,\mathbb{R}\times M)$ (indeed, any subsequence has a further subsequence converging in $C^\infty$ to the pointwise limit).
To show the pointwise convergence, we argue by contradiction. Assume that there exists a sequence $(s_{k},t_{k})\in[0,\infty)\times S^{1}$
with $s_{k}\rightarrow\infty$ as $k\rightarrow\infty$, and since
$S^{1}$ is compact, also assume that $t_{k}\rightarrow t^{*}$ as
$k\rightarrow\infty$ such that $\lim_{k\rightarrow\infty}\overline{f}^{-}(s_{k},t_{k})=x'(T't^{*})\in M$,
where $x'$ is some Reeb orbit with $w':=x'(T't^{*})\not=w:=x(Tt^{*})$.
Letting $\epsilon:=\text{dist}_{\overline{g}_{0}}(w,w')>0$, using
Corollary \ref{cor:There-exists-a-1} and employing the same arguments
as in Theorem \ref{thm:With-the-same-3} we are led to the contradiction
$\text{dist}_{\overline{g}_{0}}(w,w')\leq3\epsilon/10$. Now consider
 the $\mathbb{R}-$coordinate $\overline{a}_{n}$. To prove (\ref{eq:asymptotic_behaviour})
for the $\mathbb{R}-$coordinate it is sufficient to replace $\overline{f}^{-}$
by the function $\overline{a}^{-}(s,t)-Ts$ and to repeat the
above arguments.\\
If now $\theta_{n}:[-R_{n},R_{n}]\times S^{1}\rightarrow[-1,1]\times S^{1}$ is any sequence of diffeomorphisms as in Remark \ref{rem:For-every-sequence}, then the maps 
$(\theta_{n}^{-})^{-1}:[-1,-1/2)\times S^1\rightarrow[0,h_{n}]\times S^1$
converge in $C_{\text{loc}}^{\infty}$ to the diffeomorphism $(\theta^{-})^{-1}:[-1,-1/2)\times S^1\rightarrow[0,+\infty)\times S^1$,
the maps $\overline{u}_{n}^{-}\circ(\theta_{n}^{-})^{-1}(s,t)$ converge
in $C_{\text{loc}}^{\infty}$ to the map $\overline{u}^{-}\circ(\theta^{-})^{-1}(s,t)$
on $[-1,-1/2)\times S^{1}$. This finishes the proof of the $C_{\text{loc}}^{\infty}-$convergence.

\noindent To prove the first statement of the $C^{0}-$convergence,
we use Corollary \ref{cor:There-exists-a-1} which yields for any $\varepsilon>0$ some $N\in \mathbb{N}$ so that for $n\geq N$ 
$\text{dist}_{\overline{g}_{0}}(\overline{f}_{n}\circ \theta_{n}^{-1}(s,t),x(Tt))<\varepsilon$
for all $(s,t)\in[-1/2,1/2]\times S^{1}$, thus
$\overline{f}_{n}$ converge in $C^{0}([-1/2,1/2]\times S^{1},M)$ to
$x(T\cdot)$.\\
For the second statement we take into account
that the maps $\overline{f}_{n}^{-}\circ (\theta_{n}^{-})^{-1}(s,t)$
converge in $C_{\text{loc}}^{\infty}$ to $\overline{f}^{-}\circ(\theta^{-})^{-1}(s,t)$
on $[-1,-1/2)\times S^{1}$, so that by the asymptotics of $\overline{f}^{-}$,
$\overline{f}^{-}$ can be continuously extended to the compact cylinder
$[-1,-1/2]\times S^1$ by setting $\overline{v}^{-}(-1/2,t)=x(Tt)$. Now, the
proof of the convergence of $\overline{f}_{n}^{-}$ in $C^{0}([-1,-1/2])$
to $\overline{f}^{-}$ is exactly the same as the proof of Lemma 4.16
in \cite{key-10}. For the maps $\overline{v}_{n}^{+}$ we proceed
analogously, while for the fourth statement we refer to Proposition \ref{prop:For-every-,} below.
Thus the proof of the $C^{0}-$convergence is complete. 
\end{singlespace}
\end{proof}
\begin{prop}
\begin{singlespace}
\noindent \label{prop:For-every-,}Let $\overline{r}_n^-:=\inf_{t\in S^1} \overline{a}_n(-sgn(T)R_n,t)$ and $\overline{r}_n^+:=\sup_{t\in S^1} \overline{a}_n(sgn(T)R_n,t)$, where \\$sgn(T):=T/|T|\in \{\pm 1\}$. 
Then $\overline{r}_n^+-\overline{r}_n^-\rightarrow \infty$ and for any $R>0$, there exist $\rho>0$
and $N\in\mathbb{N}$ such that $\overline{a}_{n}\circ\theta_{n}^{-1}(s,t)\in[\overline{r}_{n}^{-}+R,\overline{r}_{n}^{+}-R]$
for all $n\geq N$ and all $(s,t)\in[-\rho,\rho]\times S^{1}$. 
\end{singlespace}
\end{prop}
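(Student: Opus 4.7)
The plan is to exploit the linear normal form for $\overline{a}_n$ established in Theorem \ref{thm:Let--and1}. Without loss of generality assume $T>0$ (so $\text{sgn}(T)=1$); the case $T<0$ follows by the reflection $\overline{u}_n(s,t)\mapsto \overline{u}_n(-s,-t)$ that the paper uses throughout. Fix once and for all some $\delta\in(0,\delta_0]$ and let $h>0$ be the corresponding constant from Theorem \ref{thm:Let--and1}. For all sufficiently large $n$ we may write
\[
\overline{a}_n(s,t)=Ts+a_{0,n}+\tilde{a}_n(s,t)\quad \text{on }[-R_n+h,R_n-h]\times S^1,
\]
with $|\tilde{a}_n(s,t)|\leq C$ uniformly in $n$ and $t$, by the $\cosh$ estimate in Theorem \ref{thm:Let--and1} (take the multiindex $\alpha=0$ and use $\cosh(\nu s)/\cosh(\nu(R_n-h))\leq 1$ for $|s|\leq R_n-h$).

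The first step is to show $\overline{r}_n^\pm=\pm TR_n+a_{0,n}+O(1)$, which immediately yields $\overline{r}_n^+-\overline{r}_n^-=2TR_n+O(1)\to\infty$. To bridge from $s=-R_n+h$, where the normal form applies, to $s=-R_n$, where $\overline{r}_n^-$ is defined, I would use two uniform gradient bounds on strips of fixed length: on $[-R_n+\delta_1,-R_n+h]\times S^1$ Lemma \ref{lem:For-every--1} (applied with $\delta=\delta_1$) provides a uniform bound $C'_1$ on $\|d\overline{u}_n\|$, and on $[-R_n,-R_n+\delta_1]\times S^1$ assumption $\overline{A}1$ provides the bound $\tilde C_1$. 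Integrating $|d\overline{a}_n|\leq \|d\overline{u}_n\|$ over these strips gives $|\overline{a}_n(-R_n,t)-\overline{a}_n(-R_n+h,t)|\leq K$ uniformly in $n,t$, and combining with the normal form at $s=-R_n+h$ yields $\overline{r}_n^-=-TR_n+a_{0,n}+O(1)$. A symmetric argument at the right end gives $\overline{r}_n^+=TR_n+a_{0,n}+O(1)$.

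For the second assertion, pick any $\rho\in(0,1/2)$. Since $h/R_n\to 0$ and $h_n/R_n\to 0$, for $n$ large enough $2(R_n-h_n)\rho\leq R_n-h$, so by property (2) of Remark \ref{rem:For-every-sequence} we have $\theta_n^{-1}(s,t)=(2(R_n-h_n)s,t)$ for $(s,t)\in[-\rho,\rho]\times S^1$, with image in the bulk region where the normal form applies. Hence
\[
\overline{a}_n(\theta_n^{-1}(s,t))=2T(R_n-h_n)s+a_{0,n}+O(1),
\]
and combining with Step 1,
\[
\overline{a}_n(\theta_n^{-1}(s,t))-\overline{r}_n^-\geq TR_n(1-2\rho)+2Th_n\rho+O(1),\quad \overline{r}_n^+-\overline{a}_n(\theta_n^{-1}(s,t))\geq TR_n(1-2\rho)+O(1).
\]
Both right-hand sides tend to $+\infty$ as $n\to\infty$ because $\rho<1/2$ and $T>0$, so for any $R>0$ a suitable $N$ makes both bounds $\geq R$ for all $n\geq N$.

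The main obstacle is Step 1: the normal form of Theorem \ref{thm:Let--and1} is valid only on the bulk $[-R_n+h,R_n-h]$, while $\overline{r}_n^\pm$ live at the boundary $s=\pm R_n$. The crucial point is that $h$ depends only on the fixed choice of $\delta$ and not on $n$, so the intermediate strip $[-R_n+\delta_1,-R_n+h]\times S^1$ has bounded length independent of $n$, and the gradient bounds from Lemma \ref{lem:For-every--1} and assumption $\overline{A}1$ produce a uniform $O(1)$ correction that does not swamp the leading-order $TR_n$. The remaining arithmetic is routine.
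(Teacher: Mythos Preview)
Your argument is correct. The paper's own proof is essentially a citation: it notes that $\overline{r}_n^+-\overline{r}_n^-\to\infty$ was already observed from (\ref{eq:a-coordinate-diverges}) (after the paper's silent relabeling $[-R_n+h,R_n-h]\rightsquigarrow[-R_n,R_n]$, so that the normal form holds up to the boundary and no bridging is needed), and then defers the second assertion entirely to Lemmas 4.10, 4.13, 4.17 of \cite{key-10}.

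Your route is genuinely different and more self-contained. Instead of relabeling the cylinder, you keep the original $R_n$ and bridge from the bulk $[-R_n+h,R_n-h]$ to the true boundary $\{\pm R_n\}\times S^1$ via the uniform gradient bounds of Lemma~\ref{lem:For-every--1} (on $[-R_n+\delta_1,-R_n+h]$) and assumption $\overline{A}1$ (on $[-R_n,-R_n+\delta_1]$); since $h$ and $\delta_1$ are fixed, this contributes only $O(1)$. For the second assertion you exploit directly the explicit linear form of $\check{\theta}_n^{-1}$ on $[-1/2,1/2]$ from Remark~\ref{rem:For-every-sequence}, together with the normal form $\overline{a}_n=Ts+a_{0,n}+O(1)$, to obtain the quantitative lower bound $TR_n(1-2\rho)+O(1)$ for the distance to either endpoint. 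What your approach buys is that the argument stays inside the paper and makes the dependence on $\rho<1/2$ completely explicit; what the paper's approach buys is brevity, at the cost of relying on the relabeling trick and on the cited structure lemmas from \cite{key-10}.
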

\begin{proof}
\begin{singlespace}
\noindent The first statement was observed above when discussing (\ref{eq:a-coordinate-diverges}) and the rest of the result is obtained by following exactly the steps from Lemma 4.10, Lemma
4.13, and Lemma 4.17 of \cite{key-10}. 
\end{singlespace}
\end{proof}
\begin{singlespace}
\noindent Now we give a proof of Theorem \ref{thm:With-the-same-1},
which closely follows the proof of Theorem \ref{thm:With-the-same}. 
\end{singlespace}
\begin{proof}
\begin{singlespace}
\noindent \emph{(of Theorem \ref{thm:With-the-same-1})} 

Consider a sequence of $\mathcal{H}-$holomorphic cylinders $u_{n}=(a_{n},f_{n}):[-R_{n},R_{n}]\times S^{1}\rightarrow\mathbb{R}\times M$
satisfying $A0$-$A3$ with harmonic
perturbation $1-$forms $\gamma_{n}$, $R_{n}\rightarrow\infty$ and positive center action.
As in Section \ref{subsec:Notion-of-the} we transform the map $u_{n}$
into a $\overline{J}_{P_n}-$holomorphic curve $\overline{u}_{n}$ with
respect to the domain-dependent almost complex structure $\overline{J}_{P_n}$.
We consider the new sequence of maps $\overline{f}_{n}$ defined by
$\overline{f}_{n}(s,t):=\phi_{P_{n}s}^{\alpha}(f_{n}(s,t))$ for all
$n\in\mathbb{N}$. Thus $\overline{u}_{n}=(\overline{a}_{n},\overline{f}_{n}):[-R_{n},R_{n}]\times S^{1}\rightarrow\mathbb{R}\times M$
is a $\overline{J}_{P_{n}}-$holomorphic curve. 
Due to Remark \ref{rem:In-the-following} the triple $(\overline{u}_{n},R_{n},P_{n})$
is a $\overline{J}_{P_{n}}-$holomorphic curve as in Definition \ref{def:A-triple-} satisfying $\overline{A}0$-$\overline{A}3$ with positive center action.
After shifting $u_{n}$ by $-a_{n}(0,0)$ we obtain that $a_{n}(0,0)=0$.\\
Now by (\ref{eq:bar=00003D00007Ba=00003D00007D})
$a_{n}$ can be written as $a_{n}=\overline{a}_{n}-\Gamma_{n}$, where
$\Gamma_{n}:[-R_{n},R_{n}]\times S^{1}\rightarrow\mathbb{R}$ is a
normalized harmonic function with $\|d\Gamma_n\|^2_{L^2([-R_n,R_n]\times S^1)}\leq (\sqrt{C_0}+\sqrt{2|P_n|C})^2$ (c.f. Remark \ref{rem:Obviously,-as-}) and $R_n\rightarrow \infty$. 
Thus if $R_n\geq 1$ the functions $\Gamma_n$ satisfy the assumptions $C1$-$C5$ from
Appendix \ref{sec:Convergence-of} with $B=(\sqrt{C_0}+\sqrt{2C^2})^2$, since $|P_n|\leq C$ by $A3$ if $R_n\geq 1$. 
We can thus by Lemma \ref{lem:decompose_harmonic} further write $\Gamma_n=S_ns+\tilde{\Gamma}_n$, where the harmonic functions $\tilde{\Gamma}_n$ 
have uniform $C^k$-bounds on $[-R_n+1,R_n-1]\times S^1$ 
by Lemma \ref{lem:kbounds_tilde_Gamma}.\\
Actually Proposition \ref{prop:For-every-} shows that $\tilde{\Gamma}_n(0,0)\rightarrow 0$ and thus $\overline{a}_{n}(0,0)\rightarrow 0$.
Therefore after passing
to a subsequence and by using that $M$ is compact, we can assume that $\overline{u}_{n}(0,0)\rightarrow w=(0,w_{f})\in\mathbb{R}\times M$
as $n\rightarrow\infty$.\\
We pick a subsequence, such that the convergence results from Theorem \ref{thm:With-the-same-1-1} hold for this sequence $\overline{u}_n$ and such that
that $P_{n}R_{n}\rightarrow\tau$ and $S_{n}R_{n}\rightarrow\sigma$
for $\tau,\sigma\in\mathbb{R}$ (the latter statements are possible by assumption A3).

We start
by proving the first statement of the $C_{\text{loc}}^{\infty}-$convergence.
Let $h_{n}\in\mathbb{R}_{>0}$ be a sequence satisfying $h_{n}<R_{n}$
and $h_{n},R_{n}/h_{n}\rightarrow\infty$ as $n\rightarrow\infty$. As
in the proof of Theorem \ref{thm:With-the-same} we consider for $(s,t)\in[-R_{n}+h_{n},R_{n}-h_{n}]\times S^{1}$
the maps (cf. (\ref{eq:f-part-1})) 
\begin{equation}
f_{n}(s,t)=\phi_{-P_{n}s}^{\alpha}(\overline{f}_{n}(s,t))\text{ and }a_{n}(s,t)=\overline{a}_{n}(s,t)-\Gamma_{n}(s,t),\label{eq:f-part-1-1}
\end{equation}
and that by Remark \ref{rem:Obviously,-as-}, the functions $\Gamma_{n}$
can be chosen to have vanishing average.
By the first $C^\infty_{loc}$ convergence results in Theorem \ref{thm:With-the-same-1-1} and Theorem \ref{thm:The-maps--1},
for any sequence $s_{n}\in[-R_{n}+h_{n},R_{n}-h_{n}]$ with $s_{n}/R_{n}\rightarrow\kappa\in[-1,1]$,
the shifted maps $u_{n}(\cdot+s_{n},\cdot)-Ts_{n}+S_{n}s_{n}$, defined
on $[-R_{n}+h_{n}-s_{n},R_{n}-h_{n}-s_{n}]\times S^{1}$, converge
in $C_{\text{loc}}^{\infty}$ to the trivial cylinder $(Ts,\phi_{-\tau\kappa}^{\alpha}(x(Tt))=x(Tt-\tau\kappa))$
over the Reeb orbit $x(Tt-\kappa \tau)$.\\
To prove the second statement of the $C_{\text{loc}}^{\infty}-$convergence,
we consider the shifted maps $u_n^-(s,t):=u_n(s-R_n,t)+R_n T$ and the shifted maps $\overline{u}_{n}^{-}:[0,h_{n}]\times S^{1}\rightarrow\mathbb{R}\times M$
which are defined by $\overline{u}_{n}^{-}(s,t)=\overline{u}_{n}(s-R_{n},t)+R_nT$. They are related by (\ref{eq:fnminusbar})
and thus by (\ref{eq:fnminus}). By Theorems \ref{thm:With-the-same-1-1}
and \ref{thm:The-maps--1}, $u_{n}^{-}$ converge therefore in $C_{\text{loc}}^{\infty}$
to a curve $u^{-}(s,t)=(a^{-}(s,t),f^{-}(s,t))=(\overline{a}^{-}(s,t)-\Gamma^{-}(s,t),\phi_{\tau}^{\alpha}(\overline{f}^{-}(s,t)))$,
defined on $[0,\infty)\times S^{1}$. The map $u^{-}$ has the same asymptotes as the
trivial cylinder $(Ts+\sigma,\phi_{\tau}^{\alpha}(x(Tt))=x(Tt+\tau))$ as $s\rightarrow \infty$,
and it is (as the limit of the maps $u_n$) a $\mathcal{H}-$holomorphic map with harmonic
perturbation $d\Gamma^{-}$. 

\noindent Now fix a sequence
of diffeomorphisms $\theta_{n}=\check{\theta}_n\times id_{S^1}:[-R_{n},R_{n}]\times S^1\rightarrow[-1,1] \times S^1$
as in Remark \ref{rem:For-every-sequence}. To prove the first statement of the $C^{0}-$convergence,
we consider the maps $f_{n}$ satisfying $\overline{f}_{n}(s,t)=\phi_{P_{n}s}^{\alpha}(f_{n}(s,t))$
and 
\[
f_{n}(s,t)=f_{n}\circ \theta_{n}^{-1}(s,t)=\phi_{-P_{n}\check{\theta}_{n}^{-1}(s)}^{\alpha}(\overline{f}_{n}\circ(\theta_{n}^{-1}(s,t))
\]
for $s\in[-1/2,1/2]$. There exists a constant $c>0$ such that for
all $(s,t)\in[-1/2,1/2]$ we have 
\begin{align}
\text{dist}_{\overline{g}_{0}}(\overline{f}_{n}\circ\theta_{n}^{-1}(s,t),x(Tt))\geq c\text{dist}_{\overline{g}_{0}}(f_{n}\circ \theta_{n}^{-1}(s,t),\phi_{-P_{n}\check{\theta}_{n}^{-1}(s)}^{\alpha}(x(Tt)))
\label{eq:estimate_bla}\end{align}
The triangle inequality shows that we have for $(s,t)\in[-1/2,1/2]\times S^{1}$
\begin{align*}
\text{dist}_{\overline{g}_{0}}(f_{n}\circ \theta_{n}^{-1}(s,t),\phi_{-2\tau s}^{\alpha}(x(Tt)))\leq 
\text{dist}_{\overline{g}_{0}}(\overline{f}_{n}\circ \theta_{n}^{-1}(s,t),\phi_{-P_{n}\check{\theta}_{n}^{-1}(s)}^{\alpha}(x(Tt)))+
\text{dist}_{\overline{g}_{0}}(\phi_{-P_{n}\check{\theta}_{n}^{-1}(s)}^{\alpha}(x(Tt)),\phi_{-2\tau s}^{\alpha}(x(Tt))).
\end{align*}
The estimate (\ref{eq:estimate_bla}) together with the first $C^0$-convergence result in Theorem  \ref{thm:With-the-same-1-1} 
resp. (\ref{eq:PnmM}) together with the subsequent discussion show, that the terms
on the right hand side tend to $0$ as $n\rightarrow\infty$. Thus $f_{n}$ converge in $C^{0}$ on $[-1/2,1/2]\times S^1$
to $(s,t)\mapsto \phi_{-2\tau s}^{\alpha}(x(Tt))$, which is a segment of shifts of the closed Reeb $x$. \\
To prove the second statement, we consider the maps $f_n^-\circ \theta_n^-$
satisfying 
\[
f_{n}^{-}\circ(\theta_{n}^{-})^{-1}(s,t)=\phi_{-P_{n}(\check{\theta}_{n}^{-})^{-1}(s)+P_{n}R_{n}}^{\alpha}(\overline{f}_{n}^{-}\circ (\theta_{n}^{-})^{-1}(s,t)),
\]
and use the second $C^0$-convergence result in Theorem \ref{thm:With-the-same-1-1} together with $|P_{n}(\check{\theta}_{n}^{-})^{-1}(s)|\leq |P_nh_n|\leq h_n/R_n\rightarrow 0$ 
and $R_nP_n\rightarrow \tau$ to conclude that  $f_n^-\circ \theta_n^-$ converge
in $C^{0}$ on $[-1,-1/2]\times S^1$ to the map $\phi_{\tau}^{\alpha}(\overline{f}^{-}\circ(\theta^{-})^{-1}(s,t))$. It follows from the second $C^0$-convergence result
from Theorem \ref{thm:With-the-same-1-1}, that this map has the desired value $x(T\cdot)$ on $\{-1/2\}\times S^1$.
The third statement is proved in a similar manner. \\
The last
statement follows from Proposition \ref{prop:For-every-,} and the
fact that the harmonic functions $\tilde{\Gamma}_{n}$ are uniformly bounded
in $C^{0}$ by Lemma \ref{lem:kbounds_tilde_Gamma}. To see this in some more detail, with $\overline{\Gamma}_{n}:=\tilde{\Gamma}_n\circ \theta_n^{-1}$, we can write 
\[
a_{n}\circ \theta_{n}^{-1}(s,t)=\overline{a}_{n}\circ \theta_{n}^{-1}(s,t)-S_{n}\check{\theta}_{n}^{-1}(s)-\overline{\Gamma}_{n}(s,t)
\]
for $(s,t)\in[-1,1]\times S^{1}$. First Lemma \ref{lem:kbounds_tilde_Gamma} implies that the functions $\tilde{\Gamma}_n$ and thus the functions $\overline{\Gamma}_{n}$
are uniformly bounded in $C^{0}([-1,1]\times S^{1})$ by some $K<\infty$.
We get
\[
-|S_{n}|R_{n}-K\leq S_{n}\check{\theta}_{n}^{-1}(s)+\overline{\Gamma}_{n}(s,t)\leq |S_{n}|R_{n}+K
\]
for all $s\in[-1,1]$. On the other hand, from Proposition \ref{prop:For-every-,}
we have that for every $R>0$ there exist $\rho>0$ and $N\in\mathbb{N}$
such that $\overline{a}_{n}\circ \theta_{n}^{-1}(s,t)\in[\overline{r}_{n}^{-}+R,\overline{r}_{n}^{+}-R]$
for all $n\geq N$ and all $(s,t)\in[-\rho,\rho]\times S^{1}$. Thus
we obtain 
\[
a_{n}\circ \theta_{n}^{-1}(s,t)\in[\overline{r}_{n}^{-}-|S_{n}|R_{n}-K+R,\overline{r}_{n}^{+}+|S_{n}|R_{n}+K-R]
\]
for all $n\geq N$ and all $(s,t)\in[-\rho,\rho]\times S^{1}$. Since $S_{n}R_{n}\rightarrow\sigma$ as $n\rightarrow\infty$
we find for $K':=|\sigma|+1+K$
\[
a_{n}\circ \theta_{n}^{-1}(s,t)\in[\overline{r}_{n}^{-}-K'+R,\overline{r}_{n}^{+}+K'-R]
\]
for all $n\geq N$ and all $(s,t)\in[-\rho,\rho]\times S^{1}$. 
Since the constant $K'>0$ is independent of $R>0$ we have that for
every $R>0$ there exists a $\rho>0$ and $N\in\mathbb{N}$ such that
\begin{equation}
a_{n}\circ \theta_{n}^{-1}(s,t)\in[\overline{r}_{n}^{-}+R,\overline{r}_{n}^{+}-R]\label{eq:barr}
\end{equation}
for all $n\geq N$ and every $(s,t)\in[-\rho,\rho]\times S^{1}$.
Let
\begin{align}
r_{n}^{-} & :=\inf_{t\in S^{1}}a_{n}(\check{\theta}_{n}^{-1}(-sgn(T)),t)=\inf_{t\in S^{1}}\left[\overline{a}_{n}(\check{\theta}_{n}^{-1}(-sgn(T)),t)-
(S_{n}\check{\theta}_{n}^{-1}(-sgn(T))+\overline{\Gamma}_{n}(-sgn(T),t))\right],\label{eq:tildeR}\\
r_{n}^{+} & :=\sup_{t\in S^{1}}a_{n}(\check{\theta}_{n}^{-1}(sgn(T)),t)=\sup_{t\in S^{1}}\left[\overline{a}_{n}(\check{\theta}_{n}^{-1}(sgn(T)),t)-(S_{n}
\check{\theta}_{n}^{-1}(sgn(T))+\overline{\Gamma}_{n}(sgn(T),t))\right].\nonumber 
\end{align}
Since for large $n$, $\sup_{t\in S^1}|S_{n}\check{\theta}_{n}^{-1}(\pm 1)+\overline{\Gamma}_{n}(\pm 1,t)|\leq K'$, the last statement of the $C^{0}-$convergence follows.
The proof of Theorem \ref{thm:With-the-same-1} is finished. 
\end{singlespace}
\end{proof}

\begin{proof}
\begin{singlespace}
\noindent \emph{(of Corollary \ref{cor:Under-the-same} and Corollary \ref{cor:underthesame_1})}
The convergence $v_n^-\rightarrow v^-$ follows directly from the corresponding results in Theorem \ref{thm:With-the-same} and Theorem \ref{thm:With-the-same-1}.
The convergence of the harmonic perturbations $\gamma_{n}$ follows
from Corollary \ref{cor:After-going-over}, while the convergence
of $v_{n}^{+}$ is proved in an analogous manner. 
\end{singlespace}
\end{proof}

\appendix

\section{\label{sec:Convergence-of}Compactness of harmonic cylinders}

\begin{singlespace}
\noindent In this section we describe the $C_{\text{loc}}^{\infty}$-
and $C^{0}$- convergence of a sequence of harmonic functions $\Gamma_{n}$
on cylinders $[-R_{n},R_{n}]\times S^{1}$. This result is used in
the proof of Theorems \ref{thm:With-the-same} and \ref{thm:With-the-same-1}.
The analysis is performed in the following setting: 
\end{singlespace}
\begin{description}
\begin{singlespace}
\item [{C1}]\noindent $R_{n}\rightarrow\infty$;
\item [{C2}] \noindent $\Gamma_{n}$ is a harmonic function on $[-R_{n},R_{n}]\times S^{1}$.
(This implies that $d\Gamma_n$ is a harmonic $1-$form with respect to the
standard complex structure $i$ on $\mathbb{R}\times S^{1}$.)
\item [{C3}] \noindent $\Gamma_{n}$ has vanishing average over the cylinder
$[-R_{n},R_{n}]\times S^{1}$, i.e. for all $n\in\mathbb{N}$ we have
\[
\frac{1}{2R_{n}}\int_{[-R_{n},R_{n}]\times S^{1}}\Gamma_{n}(s,t)dsdt=0;
\]
\item [{C4}] \noindent the $L^{2}-$norm of $d\Gamma_{n}$ is uniformly
bounded, i.e. there exists a constant $B>0$ such that 
\[
\left\Vert d\Gamma_{n}\right\Vert _{L^{2}([-R_{n},R_{n}]\times S^{1})}^{2}:=\int_{[-R_{n},R_{n}]\times S^{1}}d\Gamma_{n}\circ i\wedge d\Gamma_{n}\leq B
\]
for all $n\in\mathbb{N}$. 

\item [{C5}] \noindent The co-period $S_n$ of $d\Gamma_n$ satisfy $ |R_nS_n|\leq C$.

\end{singlespace}
\end{description}
\begin{singlespace}
\noindent The subsequent lemma gives a decomposition of $\Gamma_{n}$
in a linear term and a harmonic function satisfying properties C1-C4
and having a uniformly bounded $L^{2}-$norm. 
\end{singlespace}
\begin{lem}\label{lem:decompose_harmonic}
\begin{singlespace}
\noindent \label{lem:For-the-harmonic}There exists a sequence $S_{n}\in\mathbb{R}$
with $|S_{n}|\leq\sqrt{B/2R_{n}}$ such that the harmonic function
$\Gamma_{n}:[-R_{n},R_{n}]\times S^{1}\rightarrow\mathbb{R}$ can
be decomposed as $\Gamma_{n}(s,t)=S_{n}s+\tilde{\Gamma}_{n}(s,t)$,
where $S_n$ is the co-period and \\$\tilde{\Gamma}_{n}:[-R_{n},R_{n}]\times S^{1}\rightarrow\mathbb{R}$
is a harmonic function satisfying properties C1-C4 and additionally
\begin{equation}
\left\Vert \tilde{\Gamma}_{n}\right\Vert _{L^{2}([-R_{n},R_{n}]\times S^{1})}^{2}\leq\left\Vert d\tilde{\Gamma}_{n}\right\Vert _{L^{2}([-R_{n},R_{n}]\times S^{1})}^{2}.\label{eq:L2estimate}
\end{equation}
\end{singlespace}
\end{lem}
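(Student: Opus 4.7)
The plan is to analyze $\Gamma_n$ through its Fourier expansion in the $t$-variable, exploiting that harmonicity decouples the modes. Write
\[
\Gamma_n(s,t)=c_0^n(s)+\sum_{k\neq 0}c_k^n(s)e^{2\pi i k t},
\]
so that $\Delta\Gamma_n=0$ becomes the family of ODEs $(c_k^n)''(s)=(2\pi k)^2 c_k^n(s)$. For $k=0$ this gives $c_0^n(s)=\lambda_n s+\mu_n$, and the vanishing average condition $C3$ forces $\mu_n=0$ because $\int_{-R_n}^{R_n}(\lambda_n s+\mu_n)\,ds=2R_n\mu_n$. Define $S_n:=\lambda_n$ and $\tilde{\Gamma}_n:=\Gamma_n-S_n s$. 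By construction $\tilde{\Gamma}_n$ is harmonic and its zeroth Fourier coefficient in $t$ vanishes identically, i.e.\ $\int_0^1 \tilde{\Gamma}_n(s,t)\,dt=0$ for every $s\in[-R_n,R_n]$. A brief check using the paper's convention $d\Gamma_n\circ i=-\partial_s\Gamma_n\,dt+\partial_t\Gamma_n\,ds$ identifies $S_n$ (up to sign) with the co-period.

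The conditions $C1,C2$ are immediate. For $C3$ on $\tilde{\Gamma}_n$, use $\int_{-R_n}^{R_n}s\,ds=0$. For the bound on $|S_n|$, observe that $S_n=\int_0^1\partial_s\Gamma_n(s,t)\,dt$ is independent of $s$, so Cauchy--Schwarz gives $S_n^2\leq\int_0^1(\partial_s\Gamma_n)^2\,dt$; integrating over $s\in[-R_n,R_n]$ and using
\[
\|d\Gamma_n\|_{L^2}^2=\int_{[-R_n,R_n]\times S^1}\bigl((\partial_s\Gamma_n)^2+(\partial_t\Gamma_n)^2\bigr)\,ds\,dt\leq B
\]
yields $2R_n S_n^2\leq B$, i.e.\ $|S_n|\leq\sqrt{B/(2R_n)}$. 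For $C4$ on $\tilde{\Gamma}_n$, direct expansion of $d\tilde{\Gamma}_n=d\Gamma_n-S_n\,ds$ and the identity $\int_0^1(\Gamma_n(R_n,t)-\Gamma_n(-R_n,t))\,dt=2R_n S_n$ (from the zeroth mode computation) give
\[
\|d\tilde{\Gamma}_n\|_{L^2}^2=\|d\Gamma_n\|_{L^2}^2-2R_n S_n^2\leq B.
\]

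Finally, the Poincar\'e-type estimate (\ref{eq:L2estimate}) follows from Wirtinger's inequality on $S^1$: since $\tilde{\Gamma}_n(s,\cdot)$ has zero mean in $t$ for every fixed $s$, and the smallest nonzero eigenvalue of $-\partial_t^2$ on $L^2(S^1)$ is $(2\pi)^2>1$, one has
\[
\int_0^1\tilde{\Gamma}_n(s,t)^2\,dt\leq\frac{1}{(2\pi)^2}\int_0^1(\partial_t\tilde{\Gamma}_n(s,t))^2\,dt\leq\int_0^1(\partial_t\tilde{\Gamma}_n)^2\,dt
\]
for each $s$; integrating over $s\in[-R_n,R_n]$ and bounding $\|\partial_t\tilde{\Gamma}_n\|_{L^2}^2\leq\|d\tilde{\Gamma}_n\|_{L^2}^2$ gives the estimate. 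No step presents a real obstacle: the entire argument is a routine application of Fourier decomposition on the circle factor, with the conceptual point being the extraction of the linear "drift" term $S_n s$ from the oscillatory remainder $\tilde{\Gamma}_n$, after which the spectral gap of the circle Laplacian immediately supplies the Poincar\'e inequality.
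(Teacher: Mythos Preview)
Your proof is correct and follows essentially the same approach as the paper: Fourier expansion in $t$, extraction of the linear zero-mode $S_n s$ via the vanishing-average condition, the Cauchy--Schwarz bound on $|S_n|$, and the Poincar\'e/Wirtinger estimate from the spectral gap of $-\partial_t^2$ on $S^1$. Your computation for C4 is in fact slightly sharper than the paper's (you obtain $\|d\tilde{\Gamma}_n\|_{L^2}^2=\|d\Gamma_n\|_{L^2}^2-2R_nS_n^2\leq B$ by orthogonality of the Fourier modes, whereas the paper uses a cruder H\"older step to get $\leq 4B$), but the method is the same.
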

\begin{proof}
\begin{singlespace}
\noindent We consider the Fourier series of the harmonic function
$\Gamma_{n}$, i.e. 
\begin{align*}
\Gamma_{n}(s,t) & =\sum_{k\in\mathbb{Z}}c_{k}^{n}(s)e^{2\pi ikt}=c_{0}^{n}(s)+\sum_{k\in\mathbb{Z}\backslash\{0\}}c_{k}^{n}(s)e^{2\pi ikt}.
\end{align*}
Because $\Gamma_{n}$ has vanishing mean value, we have

\noindent 
\begin{equation}
0=\int_{[-R_{n},R_{n}]\times S^{1}}\Gamma_{n}(s,t)dsdt=\int_{-R_{n}}^{R_{n}}\int_{0}^{1}\Gamma_{n}(s,t)dtds=\int_{-R_{n}}^{R_{n}}c_{0}^{n}(s)ds.\label{eq:coefficient_vanishing_average}
\end{equation}
As $\Gamma_{n}$ is a harmonic function, the coefficients $c_{k}^{n}$
can be readily computed; we find 
\[
c_{k}^{n}(s)=\begin{cases}
A_{k}^{n}\sinh(2\pi ks)+B_{k}^{n}\cosh(2\pi ks), & k\in\mathbb{Z}\backslash\{0\}\\
S_{n}s+d_{n}, & k=0
\end{cases},
\]
where $A_{k}^{n},B_{k}^{n},S_{n},d_{n}\in\mathbb{C}$. By (\ref{eq:coefficient_vanishing_average}),
$d_{n}=0$, and the Fourier expansion of $\Gamma_{n}$ takes the form
\begin{align*}
\Gamma_{n}(s,t) & =S_{n}s+\sum_{k\in\mathbb{Z}\backslash\{0\}}c_{k}^{n}(s)e^{2\pi ikt}=S_{n}s+\tilde{\Gamma}_{n}(s,t),
\end{align*}
where 
\begin{equation}
\tilde{\Gamma}_{n}(s,t)=\Gamma_{n}(s,t)-S_{n}s=\sum_{k\in\mathbb{Z}\backslash\{0\}}c_{k}^{n}(s)e^{2\pi ikt}.\label{eq:tilde_gamma}
\end{equation}
For every $s\in[-R_{n},R_{n}]$ we have 
\begin{align*}
S_{n}=\int_{\{s\}\times S^{1}}\partial_{s}\Gamma_{n}(s,t)dt\in\mathbb{R}
\end{align*}
and so, $\tilde{\Gamma}_{n}$ is a real valued harmonic function.
On the other hand by Hölder inequality we find the estimate 
\begin{align*}
|S_{n}|\leq\frac{1}{2R_{n}}\int_{[-R_{n},R_{n}]\times S^{1}}|\partial_{s}\Gamma_{n}(s,t)|dsdt\leq\sqrt{\frac{B}{2R_{n}}}.
\end{align*}
Now we show that $d\tilde{\Gamma}_{n}$ has a uniform $L^{2}-$bound.
By (\ref{eq:tilde_gamma}) and Hölder inequality we get 
\begin{align*}
\left\Vert d\tilde{\Gamma}_{n}\right\Vert _{L^{2}([-R_{n},R_{n}]\times S^{1})}^{2} & =
\left\Vert d\Gamma_{n}\right\Vert _{L^{2}([-R_{n},R_{n}]\times S^{1})}^{2}-2S_{n}\int_{[-R_{n},R_{n}]\times S^{1}}d\Gamma_{n}\circ i\wedge ds\\
 & \ \ +2S_{n}^{2}R_{n}\\
 & \leq4B.
\end{align*}
Thus $\tilde{\Gamma}_{n}$ satisfies the property C4 from above, and
obviously, properties C1-C3. Next we prove estimate (\ref{eq:L2estimate}).
By (\ref{eq:tilde_gamma}), the $L^{2}-$norm of $\tilde{\Gamma}_{n}$
can be computed as follows 
\begin{align*}
\left\Vert \tilde{\Gamma}_{n}\right\Vert _{L^{2}([-R_{n},R_{n}]\times S^{1})}^{2} & =\sum_{k\in\mathbb{Z}\backslash\{0\}}\left\Vert c_{k}^{n}\right\Vert _{L^{2}([-R_{n},R_{n}])}^{2}.
\end{align*}
On the other hand we have 
\[
\partial_{t}\tilde{\Gamma}_{n}(s,t)=\sum_{k\in\mathbb{Z}\backslash\{0\}}2\pi ikc_{k}^{n}(s)e^{2\pi ikt}
\]
and 
\begin{align*}
\left\Vert \partial_{t}\tilde{\Gamma}_{n}\right\Vert _{L^{2}([-R_{n},R_{n}]\times S^{1})}^{2}=\sum_{k\in\mathbb{Z}\backslash\{0\}}4\pi^{2}k^{2}\left\Vert c_{k}^{n}\right\Vert _{L^{2}([-R_{n},R_{n}])}^{2}\geq\left\Vert \tilde{\Gamma}_{n}\right\Vert _{L^{2}([-R_{n},R_{n}]\times S^{1})}^{2},
\end{align*}
while from 
\[
\left\Vert \partial_{t}\tilde{\Gamma}_{n}\right\Vert _{L^{2}([-R_{n},R_{n}]\times S^{1})}^{2}\leq\left\Vert d\tilde{\Gamma}_{n}\right\Vert _{L^{2}([-R_{n},R_{n}]\times S^{1})}^{2}
\]
we end up with 
\[
\left\Vert \tilde{\Gamma}_{n}\right\Vert _{L^{2}([-R_{n},R_{n}]\times S^{1})}^{2}\leq\left\Vert d\tilde{\Gamma}_{n}\right\Vert _{L^{2}([-R_{n},R_{n}]\times S^{1})}^{2}.
\]
\end{singlespace}
\end{proof}

\begin{singlespace}
\noindent In particular, we see that for all $n$ we have 
\begin{equation}
\left\Vert \tilde{\Gamma}_{n}\right\Vert _{L^{2}([-R_{n},R_{n}]\times S^{1})}^{2}\leq4B.\label{eq:L2boundsGamma_n}
\end{equation}
The next lemma establishes uniform bounds on the derivatives of $\tilde{\Gamma}_{n}$. 
\end{singlespace}
\begin{lem}\label{lem:kbounds_tilde_Gamma}
\begin{singlespace}
\noindent \label{lem:For-every-}For any $\delta>0$ and $k\in\mathbb{N}_{0}$
there exists a constant $\tilde{K}=\tilde{K}(\delta,k,B)>0$ such
that 
\[
\left\Vert \tilde{\Gamma}_{n}\right\Vert _{C^{k}([-R_{n}^{\delta},R_{n}^{\delta}]\times S^{1})}\leq\tilde{K}
\]
for all $n\in\mathbb{N}$ and $R_{n}^{\delta}:=R_{n}-\delta$. 
\end{singlespace}
\end{lem}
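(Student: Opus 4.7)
The key input is the uniform $L^2$-bound $\|\tilde{\Gamma}_n\|_{L^2([-R_n,R_n]\times S^1)}^2\leq 4B$ established in (\ref{eq:L2boundsGamma_n}). Combined with the harmonicity of each $\tilde{\Gamma}_n$, standard interior elliptic regularity for harmonic functions on $\mathbb{R}^2$ immediately yields uniform $C^k$-bounds on any subset kept a definite distance $\delta>0$ from the boundary $\{\pm R_n\}\times S^1$ of $[-R_n,R_n]\times S^1$, which is precisely the statement of the lemma.

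More concretely, set $\rho:=\min(\delta,1/4)$ and fix $k\in\mathbb{N}_0$. For any $z_0:=(s_0,t_0)\in[-R_n^\delta,R_n^\delta]\times S^1$, the Euclidean disk $B_\rho(z_0)$ of radius $\rho$ embeds injectively into $[-R_n,R_n]\times S^1$: the condition $\rho\leq 1/4$ prevents it from wrapping around the $S^1$-factor, while $\rho\leq\delta$ keeps it disjoint from $\{\pm R_n\}\times S^1$. Consequently $\|\tilde{\Gamma}_n\|_{L^2(B_\rho(z_0))}\leq 2\sqrt{B}$. Applying the standard interior derivative estimate for harmonic functions,
$$|\partial^\alpha u(z_0)|\leq \bar{C}_{|\alpha|}\,\rho^{-|\alpha|-1}\,\|u\|_{L^2(B_\rho(z_0))},$$
(which follows e.g.\ by combining the mean-value property with Caccioppoli-type bounds on derivatives of harmonic functions, or equivalently by differentiating the Poisson representation), we obtain
$$|\partial^\alpha\tilde{\Gamma}_n(s_0,t_0)|\leq 2\sqrt{B}\,\bar{C}_{|\alpha|}\,\rho^{-|\alpha|-1}$$
uniformly in $n$ and in $(s_0,t_0)\in[-R_n^\delta,R_n^\delta]\times S^1$ for every multi-index $\alpha$. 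Summing over $|\alpha|\leq k$ yields the desired constant $\tilde{K}(\delta,k,B):=\sum_{|\alpha|\leq k}2\sqrt{B}\,\bar{C}_{|\alpha|}\,\rho^{-|\alpha|-1}$.

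No substantive obstacle is expected: the genuine work has already been done in Lemma \ref{lem:decompose_harmonic}, where the potentially $n$-dependent linear mode $S_n s$ was split off so that the remainder $\tilde{\Gamma}_n$ acquires an $n$-uniform $L^2$-bound. After that, the result is a routine application of interior regularity on a fattened subcylinder; the only minor point to watch is to choose the ball radius small enough that the disk does not wrap around the $S^1$-factor, which is why we take $\rho\leq 1/4$.
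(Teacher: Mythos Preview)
Your proposal is correct and follows essentially the same approach as the paper: both arguments use the uniform $L^2$-bound $\|\tilde\Gamma_n\|_{L^2}^2\leq 4B$ from (\ref{eq:L2boundsGamma_n}) together with interior regularity for harmonic functions on disks of radius comparable to $\delta$. The only cosmetic difference is that the paper carries this out more explicitly by introducing the holomorphic gradient $F_n=\partial_s\tilde\Gamma_n+i\partial_t\tilde\Gamma_n$, bounding $|F_n|^2$ via the subharmonic mean-value inequality, and then applying Cauchy's integral formula for the higher derivatives (with the $C^0$-bound on $\tilde\Gamma_n$ itself coming from the harmonic mean-value property), whereas you invoke the interior $L^2$-to-$C^k$ estimate as a black box; the content is the same.
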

\begin{proof}
\begin{singlespace}
\noindent The function $F_{n}:=\partial_{s}\tilde{\Gamma}_{n}+i\partial_{t}\tilde{\Gamma}_{n}:[-R_{n},R_{n}]\times S^{1}\rightarrow\mathbb{C}$ is holomorphic 
since $\Gamma_n $ is harmonic.
Note that the $ L^{2}-$norm of $F_{n}$ is  uniformly bounded, i.e. 
\begin{equation}
\int_{[-R_{n},R_{n}]\times S^{1}}|F_{n}|^{2}dsdt\leq4B\label{eq:L2normOfF}
\end{equation}
for all $n\in\mathbb{N}$. As $\tilde{\Gamma}_{n}$ is harmonic it
is obvious that 
\[
\Delta|F_{n}|^{2}=2|\nabla F_{n}|^{2}\geq0.
\]
Hence $|F_{n}|^{2}$ is subharmonic. By using the mean value property
for subharmonic functions we conclude that for any $\delta>0$ and
any $z=(s,t)\in[-R_{n}^{\delta/2},R_{n}^{\delta/2}]\times S^{1}$,
\begin{align*}
|F_{n}(z)|^{2} & \leq\frac{32}{\pi\delta^{2}}\int_{B_{\frac{\delta}{4}}(z)}|F_{n}(s,t)|^{2}dsdt
\leq\frac{32}{\pi\delta^{2}}\left\Vert F_{n}\right\Vert_{L^{2}([-R_{n}^{\frac{\delta}{2}},R_{n}^{\frac{\delta}{2}}]\times S^{1})}^{2}.
\end{align*}
Since these estimates hold for all $z\in[-R_{n}^{\delta/2},R_{n}^{\delta/2}]\times S^{1}$
we obtain 
\[
\left\Vert F_{n}\right\Vert _{C^{0}([-R_{n}^{\frac{\delta}{2}},R_{n}^{\frac{\delta}{2}}]\times S^{1})}^{2}\leq\frac{32}{\pi\delta^{2}}\left\Vert F_{n}\right
\Vert _{L^{2}([-R_{n}^{\frac{\delta}{2}},R_{n}^{\frac{\delta}{2}}]\times S^{1})}^{2}.
\]
In particular, by using (\ref{eq:L2normOfF}), we find 
\begin{equation}
\left\Vert F_{n}\right\Vert _{C^{0}([-R_{n}^{\frac{\delta}{2}},R_{n}^{\frac{\delta}{2}}]\times S^{1})}\leq\frac{8\sqrt{2B}}{\delta\sqrt{\pi}}\label{eq:C0normOfF}
\end{equation}
for all $n\in\mathbb{N}$. By the Cauchy integral formula for holomorphic
functions and (\ref{eq:C0normOfF}) we deduce that the derivatives
of $F_{n}$ are uniformly bounded on $[-R_{n}^{\delta},R_{n}^{\delta}]\times S^{1}$.
Indeed, for $k\in\mathbb{N}$ we have 
\begin{align*}
|F_{n}^{(k)}(z)| & =\frac{k!}{2\pi}\left|\int_{\partial B_{\frac{\delta}{2}}(z)}\frac{F_{n}(\xi)}{(\xi-z)^{k+1}}d\xi\right|=\frac{k!}{2\pi}
\left|\int_{0}^{2\pi}2^{k}i\frac{F_{n}(z+\delta e^{it})}{\delta^{k}e^{ikt}}dt\right|\leq\frac{2^{k+3}k!\sqrt{2B}}{\delta^{k+1}\sqrt{\pi}}
\end{align*}
for all $z\in[-R_{n}^{\delta},R_{n}^{\delta}]\times S^{1}$ and $n\in\mathbb{N}$.
Since $z\in[-R_{n}^{\delta},R_{n}^{\delta}]\times S^{1}$ was arbitrary,
we obtain 
\[
\left\Vert F_{n}^{(k)}\right\Vert _{C^{0}([-R_{n}^{\delta},R_{n}^{\delta}]\times S^{1})}\leq\frac{2^{k+3}k!\sqrt{2B}}{\delta^{k+1}\sqrt{\pi}}.
\]
Using (\ref{eq:L2boundsGamma_n}) and the mean value property and
Hölder inequality for $\tilde{\Gamma}_{n}$ we find that for all $z\in[-R_{n}^{\delta},R_{n}^{\delta}]\times S^{1}$,
\begin{align*}
|\tilde{\Gamma}_{n}(z)|\leq\frac{4}{\pi\delta^{2}}\int_{B_{\frac{\delta}{2}}(z)}|\tilde{\Gamma}_{n}(s,t)|dsdt\leq 
\frac{4}{\pi\delta^{2}}\sqrt{\text{Area}(B_{\frac{\delta}{2}}(z))}\|\tilde{\Gamma}_n\|_{L^2(B_{\frac{\delta}{2}}(z))}\leq \frac{4\sqrt{B}}{\delta\sqrt{\pi}}.
\end{align*}
Hence we get 
\[
\left\Vert \tilde{\Gamma}_{n}\right\Vert _{C^{0}([-R_{n}^{\delta},R_{n}^{\delta}]\times S^{1})}\leq\frac{4\sqrt{B}}{\delta\sqrt{\pi}} \quad 
\forall n\in\mathbb{N}. 
\]
\end{singlespace}
\end{proof}
\begin{rem}
\begin{singlespace}
\noindent \label{rem:From-the-proof}
\end{singlespace}
\begin{enumerate}
\begin{singlespace}
\item From the proof of Lemma \ref{lem:For-every-} the following result
can be established: By Arzelà-Ascoli theorem, for any sequence $s_{n}\in[-R_{n}^{\delta},R_{n}^{\delta}]$,
the sequence of maps $F_{n}(\cdot+s_{n},\cdot)$ defined on $[-R_{n}^{\delta}-s_{n},R_{n}^{\delta}-s_{n}]\times S^{1}$,
where $F_{n}=\partial_{s}\tilde{\Gamma}_{n}+i\partial_{t}\tilde{\Gamma}_{n}$,
contains a subsequence, also denoted by $F_{n}(\cdot+s_{n},\cdot)$,
that converges in $C_{\text{loc}}^{\infty}$ to some holomorphic map
$F$; $F$ depends on the sequence $\{s_{n}\}$, has bounded $L^{2}$-
and $C^{0}$- norms, and is defined either on a half cylinder or on
$\mathbb{R}\times S^{1}$. In the later case, when $R_{n}^{\delta}-s_{n}$
and $R_{n}^{\delta}+s_{n}$ diverge, $F$ has to be $0$. Indeed,
by Liouville theorem, $F$ has to be constant, while from the boundedness
of the $L^{2}-$norm we conclude that $F$ is $0$. 
\item By Lemma \ref{lem:For-every-}, (\ref{eq:L2boundsGamma_n}) and the
Liouville theorem for harmonic functions, $\tilde{\Gamma}_{n}(0,\cdot)$
converges to $0$. By Lemma \ref{lem:For-every-} and Remark \ref{rem:From-the-proof},
the sequence of harmonic functions $\tilde{\Gamma}_{n}(\cdot+s_{n},\cdot)$
with $s_{n}\in[R_{n}^{\delta},R_{n}^{\delta}]$, contains a subsequence
that converges in $C_{\text{loc}}^{\infty}$ to some harmonic function
defined either on a half cylinder or on $\mathbb{R}\times S^{1}$.
In the later case the limit harmonic function has to be $0$ by the
same arguments as above. 
\end{singlespace}
\end{enumerate}
\end{rem}
\begin{singlespace}
\noindent To simplify notation we drop the index $\delta$. We define
the harmonic functions $\tilde{\Gamma}_{n}^{-}:[0,2R_{n}]\times S^{1}\rightarrow\mathbb{R}$
and $\tilde{\Gamma}_{n}^{+}:[-2R_{n},0]\times S^{1}\rightarrow\mathbb{R}$
by $\tilde{\Gamma}_{n}^{-}(s,t):=\tilde{\Gamma}_{k}(s-R_{n},t)$ and
$\tilde{\Gamma}_{n}^{+}(s,t):=\tilde{\Gamma}_{n}(s+R_{n},t)$, respectively.
By Lemma \ref{lem:For-every-}, there exist harmonic functions
$\tilde{\Gamma}^{-}:[0,+\infty)\times S^{1}\rightarrow\mathbb{R}$
and $\tilde{\Gamma}^{+}:(-\infty,0]\times S^{1}\rightarrow\mathbb{R}$
such that $\tilde{\Gamma}_{n}^{-}\stackrel{C_{\text{loc}}^{\infty}}{\longrightarrow}\tilde{\Gamma}^{-}$
and $\tilde{\Gamma}_{n}^{+}\stackrel{C_{\text{loc}}^{\infty}}{\longrightarrow}\tilde{\Gamma}^{+}$.
The next proposition plays an important role in establishing a $C_{\text{loc}}^{\infty}$\textendash{}
and $C^{0}$\textendash convergence of the harmonic functions $\tilde{\Gamma}_{n}$. 
\end{singlespace}
\begin{prop}
\begin{singlespace}
\noindent \label{prop:For-every-}For any $\epsilon>0$ there exists
$h>0$ such that for any $R_{n}>h$ we have 
\[
\left\Vert \tilde{\Gamma}_{n}\right\Vert _{C^{0}([-R_{n}+h,R_{n}-h]\times S^{1})}<\epsilon.
\]
\end{singlespace}
\end{prop}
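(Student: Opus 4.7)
The plan is to argue by contradiction using the $C^\infty_{\text{loc}}$-compactness recorded in Remark \ref{rem:From-the-proof}(2), together with the fact that $\tilde{\Gamma}_n$ has, by construction, vanishing mean over every circle $\{s\}\times S^1$.

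Suppose the conclusion fails. Then there exist $\epsilon_0>0$, a sequence $h_n\to\infty$ with $R_n>h_n$, and points $(s_n,t_n)\in[-R_n+h_n,R_n-h_n]\times S^1$ with $|\tilde{\Gamma}_n(s_n,t_n)|\geq \epsilon_0$. Consider the shifted harmonic functions $\Phi_n(s,t):=\tilde{\Gamma}_n(s+s_n,t)$, defined on $[-R_n-s_n,R_n-s_n]\times S^1$. The constraint $s_n\in[-R_n+h_n,R_n-h_n]$ forces both $R_n+s_n$ and $R_n-s_n$ to be at least $h_n\to\infty$, so the domains exhaust $\mathbb{R}\times S^1$. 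By the uniform $C^k$-bounds of Lemma \ref{lem:For-every-} and Arzelà--Ascoli (compare Remark \ref{rem:From-the-proof}), a subsequence of $\Phi_n$ converges in $C^\infty_{\text{loc}}(\mathbb{R}\times S^1)$ to a harmonic function $F:\mathbb{R}\times S^1\to\mathbb{R}$. Passing to a further subsequence with $t_n\to t^*\in S^1$ and using $C^0$-convergence on the compact set $\{0\}\times S^1$, we get $|F(0,t^*)|\geq\epsilon_0$.

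The heart of the proof is to show $F\equiv 0$, which gives the desired contradiction. Expand $F$ in Fourier series in the $t$-variable, $F(s,t)=\sum_{k\in\mathbb{Z}}c_k(s)e^{2\pi ikt}$. Harmonicity of $F$ forces $c_k''=(2\pi k)^2c_k$, so $c_k(s)=A_k\sinh(2\pi ks)+B_k\cosh(2\pi ks)$ for $k\neq 0$ and $c_0(s)=c+ds$. By the construction in Lemma \ref{lem:For-the-harmonic}, each $\tilde{\Gamma}_n$ has vanishing zero Fourier mode in $t$ (the linear term $S_n s$ and the zero constant $d_n=0$ were already absorbed), so $\int_{S^1}\Phi_n(s,\cdot)\,dt=0$ for every $s$, and passing to the $C^0_{\text{loc}}$-limit gives $c_0\equiv 0$. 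On the other hand the uniform $C^0$-bound of Lemma \ref{lem:For-every-} is independent of $n$ and of the location of the point, so it survives the shifts and the limit: $F$ is globally bounded on all of $\mathbb{R}\times S^1$. Combined with the elementary inequality $|c_k(s)|\leq\|F\|_{C^0(\mathbb{R}\times S^1)}$, this forces $A_k=B_k=0$ for every $k\neq 0$, since $A_k\sinh(2\pi ks)+B_k\cosh(2\pi ks)$ is unbounded on $\mathbb{R}$ unless it vanishes identically. Hence $F\equiv 0$, contradicting $|F(0,t^*)|\geq\epsilon_0$.

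The only point demanding any care is confirming that the uniform $C^0$-bound from Lemma \ref{lem:For-every-} really transfers to a bound on all of $\mathbb{R}\times S^1$ in the limit; this works because the constant in that lemma depends only on $\delta$ and $B$ and not on the position inside $[-R_n^\delta,R_n^\delta]\times S^1$, so after shifting by $s_n$ and letting $n\to\infty$ the bound survives on every compact subset of $\mathbb{R}\times S^1$ with the same constant. Beyond that the argument is just a Liouville-type rigidity statement for bounded harmonic functions on $\mathbb{R}\times S^1$ with vanishing zeroth Fourier mode; no serious analytic obstacle arises. (As an alternative one could replace the boundedness argument by the uniform $L^2$-bound \eqref{eq:L2boundsGamma_n}, which via Fatou gives $\int_{[-R,R]\times S^1}|F|^2\leq 4B$ for every $R$, ruling out any nonzero linear piece in $c_0$ a fortiori.)
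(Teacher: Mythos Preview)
Your proof is correct and follows essentially the same approach as the paper: argue by contradiction, shift to obtain a harmonic limit $F$ on $\mathbb{R}\times S^1$ with uniform $C^0$ (and $L^2$) bounds, and conclude $F\equiv 0$ by a Liouville-type argument. The only difference is expository: the paper simply invokes the Liouville theorem together with the finite $L^2$-norm to force $F\equiv 0$, whereas you unpack this via the Fourier expansion and additionally use the vanishing zeroth Fourier mode of $\tilde{\Gamma}_n$; both routes yield the same conclusion.
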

\begin{proof}
\begin{singlespace}
\noindent Assume that this is not the case. Then there exist $\epsilon_{0},C_{0}>0$
such that for any $h_{k}:=k$ there exist $R_{n_{k}}>k$ and a sequence
$(s_{k},t_{k})\in[-R_{n_{k}}+k,R_{n_{k}}-k]\times S^{1}$ such that
$|\tilde{\Gamma}_{n_{k}}(s_{k},t_{k})|\geq\epsilon_{0}$. From $s_{k}\in[-R_{n_{k}}+k,R_{n_{k}}-k]$
it follows that $|R_{n_{k}}-s_{k}|\rightarrow\infty$ as $k\rightarrow\infty$.
Consider the harmonic functions $H_{k}:[-R_{n_{k}}-s_{k},R_{n_{k}}-s_{k}]\times S^{1}\rightarrow\mathbb{R}$
defined by $H_{k}(s,t)=\tilde{\Gamma}_{n_{k}}(s+s_{k},t)$. Obviously,
we have $H_{k}(0,t_{k})=\tilde{\Gamma}_{n_{k}}(s_{k},t_{k})$ and
by Remark \ref{rem:From-the-proof} we conclude that the $H_{k}$
converge in $C_{\text{loc}}^{\infty}$ to some harmonic function $H:\mathbb{R}\times S^{1}\rightarrow\mathbb{R}$
with bounded $L^{2}$ and $C^{0}-$norms. By the Liouville theorem
for harmonic function, $H\equiv0$. This gives a contradiction to
$|H_{k}(0,t_{k})|=|\tilde{\Gamma}_{n_{k}}(s_{k},t_{k})|\geq\epsilon_{0}$,
and the proof is finished. 
\end{singlespace}
\end{proof}
\begin{cor}
\begin{singlespace}
\noindent \label{cor:Thus-we-may}For every sequence $h_{n}\in\mathbb{R}_{>0}$
satisfying $h_{n}<R_{n}$ and $h_{n},R_{n}/h_{n}\rightarrow\infty$
and every $\epsilon>0$ there exists $N\in\mathbb{N}$ such that 
\[
\left\Vert \tilde{\Gamma}_{n}\right\Vert _{C^{0}([-R_{n}+h_{n},R_{n}-h_{n}]\times S^{1})}<\epsilon
\]
for all $n\geq N$. Moreover, for the co-period $S_{n}$ we obtain
that $h_{n}S_{n}\rightarrow0$ as $n\rightarrow\infty$. 
\end{singlespace}
\end{cor}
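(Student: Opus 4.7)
The plan is to derive both statements as essentially direct consequences of earlier results: the uniform $C^0$-bound follows from Proposition \ref{prop:For-every-} by a covering argument, while $h_n S_n \to 0$ follows from assumption $C5$ together with the hypothesis $R_n/h_n \to \infty$.

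First I would handle the $C^0$-bound on $\tilde{\Gamma}_n$. Given $\epsilon > 0$, Proposition \ref{prop:For-every-} supplies some $h = h(\epsilon) > 0$ such that for all $n$ with $R_n > h$ we have $\|\tilde{\Gamma}_n\|_{C^0([-R_n+h,R_n-h]\times S^1)} < \epsilon$. Since by hypothesis $h_n \to \infty$, there exists $N_1 \in \mathbb{N}$ such that $h_n \geq h$ for all $n \geq N_1$. For such $n$ the nesting
\begin{equation*}
[-R_n + h_n, R_n - h_n] \times S^1 \;\subset\; [-R_n + h, R_n - h] \times S^1
\end{equation*}
holds, and restricting the $C^0$-norm to the smaller domain yields $\|\tilde{\Gamma}_n\|_{C^0([-R_n+h_n,R_n-h_n]\times S^1)} < \epsilon$, as required.

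For the second statement I would use assumption $C5$, which gives $|R_n S_n| \leq C$, so that $|S_n| \leq C/R_n$. Then
\begin{equation*}
|h_n S_n| \;\leq\; \frac{C\,h_n}{R_n} \;=\; \frac{C}{R_n/h_n} \;\longrightarrow\; 0
\end{equation*}
because $R_n/h_n \to \infty$ by hypothesis. No further estimate is needed here; note that the weaker bound $|S_n| \leq \sqrt{B/(2R_n)}$ from Lemma \ref{lem:For-the-harmonic} alone would not suffice, since it would only give $|h_n S_n| \leq \sqrt{B h_n^2/(2 R_n)}$, and $h_n^2/R_n$ need not tend to zero under the stated hypotheses. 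This is where assumption $C5$ (the uniform conformal co-period bound inherited from $A3$) is essential.

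There is no substantial obstacle in this corollary; the whole point is that the work has already been done in Proposition \ref{prop:For-every-} and in setting up $C5$. The only thing to double-check is that the hypothesis $R_n/h_n \to \infty$ (rather than just $h_n < R_n$) is really being used in the $S_n$-estimate — and indeed it is, for exactly the reason noted above.
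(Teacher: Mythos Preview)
Your proof is correct and follows essentially the same approach as the paper: invoke Proposition \ref{prop:For-every-} to obtain $h=h(\epsilon)$, use $h_n\to\infty$ to get the inclusion $[-R_n+h_n,R_n-h_n]\subset[-R_n+h,R_n-h]$ for large $n$, and then use C5 together with $R_n/h_n\to\infty$ for the co-period statement. Your remark that the bound $|S_n|\leq\sqrt{B/(2R_n)}$ from Lemma \ref{lem:For-the-harmonic} alone would not suffice is a nice clarification not made explicit in the paper.
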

\begin{proof}
\begin{singlespace}
\noindent Consider a sequence $h_{n}\in\mathbb{R}_{>0}$ with $h_{n}<R_{n}$
and $h_{n},R_{n}/h_{n}\rightarrow\infty$ as $n\rightarrow\infty$
and let $\epsilon>0$ be given. By Proposition \ref{prop:For-every-},
there exist $h_{\epsilon}>0$ and $N_{\epsilon}\in\mathbb{N}$ such
that for all $n\geq N_{\epsilon}$ we have $R_{n}>h_{\epsilon}$ and
$\left\Vert \tilde{\Gamma}_{n}\right\Vert _{C^{0}([-R_{n}+h_{\epsilon},R_{n}-h_{\epsilon}]\times S^{1})}<\epsilon$.
By taking $N_{\epsilon}$ sufficiently large and since $h_{n}\rightarrow\infty$
we may assume that for all $n\geq N_{\epsilon}$, we have $R_{n}>h_{n}>h_{\epsilon}$,
giving $\left\Vert \tilde{\Gamma}_{n}\right\Vert _{C^{0}([-R_{n}+h_{n},R_{n}-h_{n}]\times S^{1})}<\epsilon$.
By assumption C5 it follows from $R_{n}S_{n}\rightarrow\sigma$
and $R_{n}/h_{n}\rightarrow\infty$ as $n\rightarrow\infty$ that
$h_{n}S_{n}\rightarrow0$ as $n\rightarrow\infty$. 
\end{singlespace}
\end{proof}
\begin{singlespace}
\noindent  
Now we focus on the $C^{0}-$convergence
of the maps $\tilde{\Gamma}_{n}$ as $n\rightarrow\infty$.
Let $h_{n}\in\mathbb{R}_{>0}$ be a sequence with
$h_{n}<R_{n}$ and $h_{n},R_{n}/h_{n}\rightarrow\infty$ and
fix a sequence of diffeomorphisms $\theta_{n}$ as
in Remark \ref{rem:For-every-sequence}. Further on, let
us introduce the maps 
\begin{align*}
\overline{\Gamma}_{n}(s,t) & =\tilde{\Gamma}_{n}\circ\theta_{n}^{-1}(s,t),\ s\in[-1,1],\\
\overline{\Gamma}_{n}^{-}(s,t) & =\tilde{\Gamma}_{n}^{-}\circ(\theta_{n}^{-})^{-1}(s,t),\ s\in[-1,-1/2],\\
\overline{\Gamma}_{n}^{+}(s,t) & =\tilde{\Gamma}_{n}^{+}\circ (\theta_{n}^{+})^{-1}(s,t),\ s\in[1/2,1],\\
\overline{\Gamma}^{-}(s,t) & =\tilde{\Gamma}^{-}\circ(\theta^{-})^{-1}(s,t),\ s\in[-1,-1/2),\\
\overline{\Gamma}^{+}(s,t) & =\tilde{\Gamma}^{+}\circ(\theta^{+})^{-1}(s,t),\ s\in(1/2,1].
\end{align*}
We prove the following 
\end{singlespace}
\begin{thm}
\begin{singlespace}
\noindent \label{thm:The-maps-}For every sequence $h_{n}\in\mathbb{R}_{>0}$
satisfying $h_{n}<R_{n}$ and $h_{n},R_{n}/h_{n}\rightarrow\infty$
as $n\rightarrow\infty$, the following convergence results hold for
the maps $\tilde{\Gamma}_{n}$ and $\overline{\Gamma}_{n}$ and their
left and right shifts $\tilde{\Gamma}_{n}^{\pm}$ and $\overline{\Gamma}_{n}^{\pm}$,
respectively.

\noindent $C_{\text{loc}}^{\infty}-$convergence: 
\end{singlespace}
\begin{enumerate}
\begin{singlespace}
\item For any sequence $s_{n}\in[-R_{n}+h_{n},R_{n}-h_{n}]$ there exists
a subsequence of the sequence of shifted harmonic functions $\tilde{\Gamma}_{n}(\cdot+s_{n},\cdot)$,
also denoted by $\tilde{\Gamma}_{n}(\cdot+s_{n},\cdot)$, which is
defined on $[-R_{n}+h_{n}-s_{n},R_{n}-h_{n}-s_{n}]\times S^{1}$ and
converges in $C_{\text{loc}}^{\infty}$ to $0$. 
\item The harmonic functions $\tilde{\Gamma}_{n}^{-}:[0,h_{n}]\times S^{1}\rightarrow\mathbb{R}$
converge in $C_{\text{loc}}^{\infty}$ to a harmonic function $\tilde{\Gamma}^{-}:[0,+\infty)\times S^{1}\rightarrow\mathbb{R}$
with $\lim_{s\rightarrow \infty}\tilde{\Gamma}^-(s,\cdot)=0$. Furthermore, $\overline{\Gamma}_{n}^{-}:[-1,-1/2]\times S^{1}\rightarrow\mathbb{R}$
converge in $C_{\text{loc}}^{\infty}([-1,-1/2)\times S^{1})$ to the
map $\overline{\Gamma}^{-}:[-1,-1/2)\times S^{1}\rightarrow\mathbb{R}$ with
$\lim_{s\rightarrow -1/2}\overline{\Gamma}^-(s,\cdot)=0$ at $\{-1/2\}\times S^{1}$. 
\item The harmonic functions $\tilde{\Gamma}_{n}^{+}:[-h_{n},0]\times S^{1}\rightarrow\mathbb{R}$
converge in $C_{\text{loc}}^{\infty}$ to a harmonic function $\tilde{\Gamma}^{+}:(-\infty,0]\times S^{1}\rightarrow\mathbb{R}$
with $\lim_{s\rightarrow -\infty}\tilde{\Gamma}^+(s,\cdot)=0$. Furthermore, $\overline{\Gamma}_{n}^{+}:[1/2,1]\times S^{1}\rightarrow\mathbb{R}$
converge in $C_{\text{loc}}^{\infty}((1/2,1]\times S^{1})$ to the
map $\overline{\Gamma}^{+}:(1/2,1]\times S^{1}\rightarrow\mathbb{R}$ with
$\lim_{s\rightarrow 1/2}\overline{\Gamma}^+(s,\cdot)=0$. 
\end{singlespace}
\end{enumerate}
\begin{singlespace}
\noindent $C^{0}-$convergence: 
\end{singlespace}
\begin{enumerate}
\begin{singlespace}
\item The functions $\overline{\Gamma}_{n}$ converge in $C^{0}([-1/2,1/2]\times S^{1})$
to $0$. 
\item The functions $\overline{\Gamma}_{n}^{-}$ converge in $C^{0}([-1,-1/2]\times S^{1})$
to a function $\overline{\Gamma}^{-}:[-1,-1/2]\times S^{1}\rightarrow\mathbb{R}$
with $\overline{\Gamma}^{-}(-1/2,t)=0$ for all $t\in S^{1}$. 
\item The functions $\overline{\Gamma}_{n}^{+}$ converge in $C^{0}([1/2,1]\times S^{1})$
to a function $\overline{\Gamma}^{+}:[1/2,1]\times S^{1}\rightarrow\mathbb{R}$
with $\overline{\Gamma}^{+}(1/2,t)=0$ for all $t\in S^{1}$. 
\end{singlespace}
\end{enumerate}
\end{thm}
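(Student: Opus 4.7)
The plan is to combine the uniform $C^k$-bounds on compact subsets given by Lemma \ref{lem:For-every-} with the $C^0$-smallness on $[-R_n+h_n,R_n-h_n]\times S^1$ given by Proposition \ref{prop:For-every-} and Corollary \ref{cor:Thus-we-may}. These are precisely the compactness and decay ingredients needed to run standard Arzel\`a--Ascoli plus harmonic-function arguments, with the additional structure of the diffeomorphisms $\theta_n^{\pm}$ from Remark \ref{rem:For-every-sequence}.

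For the $C^\infty_{\text{loc}}$-convergence, statement~(1) follows by a diagonal Arzel\`a--Ascoli extraction: the shifts $\tilde\Gamma_n(\cdot+s_n,\cdot)$ are uniformly $C^k$-bounded on every fixed compact $K\subset\mathbb R\times S^1$ (which for large $n$ is contained in the domain $[-R_n+h_n-s_n,R_n-h_n-s_n]\times S^1$, since $h_n\to\infty$), so a subsequence converges in $C^\infty_{\text{loc}}$ to a harmonic function $H:\mathbb R\times S^1\to\mathbb R$. Corollary \ref{cor:Thus-we-may} forces $H\equiv 0$ pointwise, and uniqueness of the limit upgrades subsequential to full-sequence convergence. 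For statements~(2) and~(3) I would apply the same scheme to $\tilde\Gamma_n^{\pm}$ to produce half-cylinder limits $\tilde\Gamma^{\pm}$; the asymptotic condition $\lim_{s\to\pm\infty}\tilde\Gamma^{\pm}(s,\cdot)=0$ in $C^\infty(S^1,\mathbb R)$ is then obtained by combining Corollary \ref{cor:Thus-we-may} (which gives pointwise decay) with the uniform $C^k$-bounds (which promote pointwise decay of a harmonic function on $S^1$ to $C^\infty(S^1)$-decay, via one more Arzel\`a--Ascoli extraction argued indirectly). The convergence $\overline\Gamma_n^{\pm}\to\overline\Gamma^{\pm}$ in $C^\infty_{\text{loc}}$ on the half-open intervals $[-1,-1/2)\times S^1$ and $(1/2,1]\times S^1$ is then immediate from composing with $(\theta_n^{\pm})^{-1}\to(\theta^{\pm})^{-1}$ in $C^\infty_{\text{loc}}$.

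For the $C^0$-convergence, statement~(1) is an immediate consequence of Corollary \ref{cor:Thus-we-may}, because $\theta_n^{-1}([-1/2,1/2]\times S^1)=[-R_n+h_n,R_n-h_n]\times S^1$ and hence $\|\overline\Gamma_n\|_{C^0}=\|\tilde\Gamma_n\|_{C^0([-R_n+h_n,R_n-h_n]\times S^1)}\to 0$. Statements~(2) and~(3) are the delicate point, and my plan is to mirror the argument of Lemma~4.16 in \cite{key-10}: given $\epsilon>0$, split the closed cylinder $[-1,-1/2]\times S^1$ into an inner part $[-1,-1/2-\delta]\times S^1$, on which the $C^\infty_{\text{loc}}$-convergence above already provides uniform $\epsilon$-control for large $n$, and a neck $[-1/2-\delta,-1/2]\times S^1$, which under $(\check\theta_n^-)^{-1}$ corresponds to a long cylinder of the form $[h_\epsilon,2R_n-h_\epsilon]\times S^1$. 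On this neck, Corollary \ref{cor:Thus-we-may} bounds $\tilde\Gamma_n^-$ uniformly by $\epsilon$ once $\delta$ is chosen small enough (independent of $n$), and passing to the limit yields $|\overline\Gamma^-(s,t)|\leq\epsilon$ near $s=-1/2$ as well. A triangle inequality then finishes the uniform $C^0$-convergence on the closed cylinder, with the boundary value $\overline\Gamma^-(-1/2,t)=0$ recorded as the limit of these tail estimates.

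The main obstacle is this last step: the $C^\infty_{\text{loc}}$-convergence $\tilde\Gamma_n^-\to\tilde\Gamma^-$ on $[0,\infty)\times S^1$ does not by itself control $\overline\Gamma_n^-$ near $s=-1/2$, because the diffeomorphism $(\theta_n^-)^{-1}$ compresses an unbounded tail into a shrinking neighborhood of $\{-1/2\}\times S^1$. The entire point of Corollary \ref{cor:Thus-we-may} (and of assumption~C5 feeding into it) is to furnish a uniform, $n$-independent tail estimate $\|\tilde\Gamma_n\|_{C^0}\leq\epsilon$ on $[-R_n+h_n,R_n-h_n]\times S^1$, and this is exactly what makes the neck analysis go through.
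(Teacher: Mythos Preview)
Your proposal is correct and follows essentially the same approach as the paper: Arzel\`a--Ascoli from the uniform $C^k$-bounds of Lemma~\ref{lem:For-every-}, decay of limits via the tail estimate of Proposition~\ref{prop:For-every-}, and the neck argument \`a la Lemma~4.16 of \cite{key-10} for the $C^0$-convergence on the closed cylinder. One minor point: for the asymptotics of $\tilde\Gamma^{\pm}$ and for the neck estimate in $C^0$-statement~(2), you should invoke Proposition~\ref{prop:For-every-} (which gives a fixed $h$ independent of $n$) rather than Corollary~\ref{cor:Thus-we-may} (whose cutoff $h_n$ grows with $n$ and hence does not directly cover the region $[h,h_n]\times S^1$ you need); the paper does exactly this, and your final paragraph shows you have the right idea in mind.
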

\begin{proof}
\begin{singlespace}
\noindent First we prove the $C_{\text{loc}}^{\infty}-$convergence
of the harmonic functions $\Gamma_{n}$.
\end{singlespace}
\begin{enumerate}
\begin{singlespace}
\item By Remark \ref{rem:From-the-proof}, for any sequence $s_{n}\in[-R_{n}+h_{n},R_{n}-h_{n}]$
the sequence of shifted harmonic maps $\Gamma_{n}(\cdot+s_{n},\cdot)$
contains a subsequence, also denoted by $\Gamma_{n}(\cdot+s_{n},\cdot)$,
which is defined on $[-R_{n}+h_{n}-s_{n},R_{n}-h_{n}-s_{n}]\times S^{1}$
and converges in $C_{\text{loc}}^{\infty}$ to $0$. 
\item Consider the shifted maps $\Gamma_{n}^{-}:[0,h_{n}]\times S^{1}\rightarrow\mathbb{R}\times M$.
By Lemma \ref{lem:For-every-}, these maps have uniformly bounded
derivatives, and hence after passing a subsequence, they
converge in $C_{\text{loc}}^{\infty}([0,\infty)\times S^{1})$ to
a harmonic function $\Gamma^{-}:[0,+\infty)\times S^{1}\rightarrow\mathbb{R}$.
In the following we show that $\Gamma^{-}$ is asymptotic to $0$,
i.e. that $\lim_{s\rightarrow\infty}\Gamma^{-}(s,\cdot)=0$ in $C^\infty(S^1,\mathbb{R})$. Since the derivatives of the harmonic function $\Gamma^-$
are uniformly bounded, it suffices to prove pointwise convergence. For this we argue
by contradiction. Because the $\Gamma_{n}$ are uniformly bounded
in $C^{0}$, we assume that there exists a sequence $(s_{k},t_{k})\in[0,\infty)\times S^{1}$
with $s_{k}\rightarrow\infty$ as $k\rightarrow\infty$ such that
$\lim_{k\rightarrow\infty}\Gamma^{-}(s_{k},t_{k})=w\in\mathbb{R}\backslash\{0\}$.
Putting $\epsilon:=|w|>0$, using Proposition \ref{prop:For-every-},
and arguing as in Theorem \ref{thm:With-the-same} we are led to the
contradiction $\epsilon=|w|\leq3\epsilon/10$. As $(\theta_{n}^{-})^{-1}:[-1,-1/2]\times S^1\rightarrow[0,h_{n}]\times S^1$
converge in $C_{\text{loc}}^{\infty}$ to the diffeomorphism $(\theta^{-})^{-1}:[-1,-1/2)\times S^1\rightarrow[0,+\infty)\times S^1$,
the maps $\Gamma_{n}^{-}\circ(\theta_{n}^{-})^{-1}(s,t)$ converge in
$C_{\text{loc}}^{\infty}$ to the map $\Gamma^{-}\circ(\theta^{-})^{-1}(s,t)$.
By the asymptotics of $\Gamma^{-}$, we have $\lim_{s\rightarrow \infty}\overline{\Gamma}^{-}(s,\cdot)=0$
as $s\rightarrow -1/2$. 
\item The proof for the maps $\Gamma_{n}^{+}$ proceeds as in Case 2. 
\end{singlespace}
\end{enumerate}
\begin{singlespace}
\noindent To prove the $C^{0}-$convergence of the harmonic functions
$\Gamma_{n}$, we prove that the functions $\overline{\Gamma}_{n}$,
$\overline{\Gamma}_{n}^{-}$ and $\overline{\Gamma}_{n}^{+}$ converge
in $C^{0}$.
\end{singlespace}
\begin{enumerate}
\begin{singlespace}
\item From Corollary \ref{cor:Thus-we-may} it follows that $\left\Vert \overline{\Gamma}_{n}\right\Vert _{C^{0}([-1/2,1/2]\times S^{1})}\rightarrow0$
as $n\rightarrow\infty$. 
\item Now consider the maps $\overline{\Gamma}_{n}^{-}(s,t)$. The $\overline{\Gamma}_{n}^{-}$
converge in $C_{\text{loc}}^{\infty}$ to $\overline{\Gamma}^{-}$
on $[-1,-1/2)\times S^{1}$. Since $\lim_{s\rightarrow \infty}\Gamma^{-}(s,\cdot)=0$ by the discussion above, the function
$\overline{\Gamma}^{-}$ can be continuously extended to the whole
cylinder $[-1,-1/2]\times S^{1}$ by setting $\overline{\Gamma}^{-}(-1/2,t)=0$.
As a matter of fact, the maps $\overline{\Gamma}_{n}^{-}$ converge
in $C^{0}([-1,-1/2])$ to $\overline{\Gamma}^{-}$. The proof of this
statement is as in Lemma 4.16 of \cite{key-10}, but for completeness we review it here: Let $\delta>0$ be given. By the $C_{\text{loc}}^{\infty}-$convergence
of the functions $\overline{\Gamma}_{n}^{-}$ to $\overline{\Gamma}^{-}$
on $[-1,-1/2)\times S^{1}$ it suffices to find $\sigma>0$ and $N\in\mathbb{N}$
such that $|\overline{\Gamma}_{n}^{-}(s,t)|\leq\delta$ for all $(s,t)\in[-(1/2)-\sigma,-1/2]\times S^{1}$
and $n\geq N$. From Proposition \ref{prop:For-every-} there exist
$N\in\mathbb{N}$ and $h>0$ such that for all $n\geq N$ and $(s,t)\in[-R_{n}+h,R_{n}-h]\times S^{1}$,
we have $|\Gamma_{n}(s,t)|\leq\delta$. Recall that $(\check{\theta}^{-})^{-1}$
maps $[-1,-1/2)$ diffeomorphically onto $[0,\infty)$. Thus we find
$\sigma>0$ such that $(\check{\theta}^{-})^{-1}(-\sigma)\geq h+1$. By the
$C_{\text{loc}}^{\infty}-$convergence, we obtain $\check{\theta}_{n}^{-1}(-\sigma)+R_{n}=(\check{\theta}_{n}^{-})^{-1}(-\sigma)\geq h$
for $n$ sufficiently large; hence, $\check{\theta}_{n}^{-1}(-\sigma)\geq-R_{n}+h$.
Therefore, by the monotonicity of $\check{\theta}_{n}$, we have $\check{\theta}_{n}^{-}([-(1/2)-\sigma,-1/2])\subset[-R_{n}+h,R_{n}-h]$
and we end up with $|\overline{\Gamma}_{n}^{-}(s,t)|=|\Gamma_{n}^{-}\circ(\theta_{n}^{-})^{-1}(s,t)|\leq\delta$. 
\item For the maps $\overline{\Gamma}_{n}^{+}$ we proceed analogously. 
\end{singlespace}
\end{enumerate}
\end{proof}
\begin{singlespace}
\noindent In the following, we establish a convergence result for
the harmonic functions $\Gamma_{n}$. For this purpose, we define
the harmonic functions $\Gamma_{n}^{-}:[0,2R_{n}]\times S^{1}\rightarrow\mathbb{R}$
and $\Gamma_{n}^{+}:[-2R_{n},0]\times S^{1}\rightarrow\mathbb{R}$
by $\Gamma_{n}^{-}(s,t):=\Gamma_{n}(s-R_{n},t)=S_{n}(s-R_{n})+\tilde{\Gamma}_{n}^{-}(s,t)$
and $\Gamma_{n}^{+}(s,t):=\Gamma_{n}(s+R_{n},t)=S_{n}(s+R_{n})+\tilde{\Gamma}_{n}^{+}(s,t)$,
respectively. Since $\tilde{\Gamma}_{n}^{-}\rightarrow\tilde{\Gamma}^{-}$
and $\tilde{\Gamma}_{n}^{+}\rightarrow\tilde{\Gamma}^{+}$ in $C_{\text{loc}}^{\infty}$,
$\Gamma_{n}^{-}+S_{n}R_{n}\rightarrow\tilde{\Gamma}^{-}$ converge
in $C_{\text{loc}}^{\infty}$ on $[0,+\infty)\times S^{1}$ and $\Gamma_{n}^{+}-S_{n}R_{n}\rightarrow\tilde{\Gamma}^{+}$
converge in $C_{\text{loc}}^{\infty}$ on $(-\infty,0]\times S^{1}$.
Moreover, by means of the homeomorphism $\theta_{n}$, we define the
maps 
\begin{align*}
\underline{\Gamma}_{n}(s,t) & =\Gamma_{n}\circ\theta_{n}^{-1}(s,t)=S_{n}\check{\theta}_{n}^{-1}(s)+\overline{\Gamma}_{n}(s,t),\ s\in[-1,1],\\
\underline{\Gamma}_{n}^{-}(s,t) & =\Gamma_{n}^{-}\circ(\theta_{n}^{-})^{-1}(s,t)=S_{n}((\check{\theta}_{n}^{-})^{-1}(s)-R_{n})+\overline{\Gamma}_{n}^{-}(s,t),\ s\in[-1,-1/2],\\
\underline{\Gamma}_{n}^{+}(s,t) & =\Gamma_{n}^{+}\circ(\theta_{n}^{+})^{-1}(s,t)=S_{n}((\check{\theta}_{n}^{+})^{-1}(s)+R_{n})+\overline{\Gamma}_{n}^{+}(s,t),\ s\in[1/2,1].
\end{align*}
Now we are in the position to derive a convergence result for the
sequence of harmonic functions $\Gamma_{n}$. 
\end{singlespace}
\begin{thm}
\begin{singlespace}
\noindent \label{thm:The-maps--1}For every sequence $h_{n}\in\mathbb{R}_{>0}$
satisfying $h_{n}<R_{n}$ and $h_{n},R_{n}/h_{n}\rightarrow\infty$
as $n\rightarrow\infty$, the following $C_{\text{loc}}^{\infty}-$convergence
results hold for the functions $\Gamma_{n}$ and $\underline{\Gamma}_{n}$
and their left and right shifts $\Gamma_{n}^{\pm}$ and $\underline{\Gamma}_{n}^{\pm}$,
respectively: 
\end{singlespace}
\begin{enumerate}
\begin{singlespace}
\item For any sequence $s_{n}\in[-R_{n}+h_{n},R_{n}-h_{n}]$ there exists
a subsequence of the sequence of shifted harmonic functions $\Gamma_{n}(\cdot+s_{n},\cdot)$,
also denoted by $\Gamma_{n}(\cdot+s_{n},\cdot)$ and defined on $[-R_{n}+h_{n}-s_{n},R_{n}-h_{n}-s_{n}]\times S^{1}$,
such that $\Gamma_{n}(\cdot+s_{n},\cdot)-S_{n}s_{n}$ converges in
$C_{\text{loc}}^{\infty}$ to $0$. 
\item The harmonic functions $\Gamma_{n}^{-}+S_{n}R_{n}:[0,h_{n}]\times S^{1}\rightarrow\mathbb{R}$
converge in $C_{\text{loc}}^{\infty}$ to a harmonic function $\tilde{\Gamma}^{-}:[0,+\infty)\times S^{1}\rightarrow\mathbb{R}$
with $\lim_{s\rightarrow \infty}\tilde{\Gamma}^{-}(s,\cdot) =0$. Furthermore, $\underline{\Gamma}_{n}^{-}+S_{n}R_{n}:[-1,-1/2]\times S^{1}\rightarrow\mathbb{R}$
converge in $C_{\text{loc}}^{\infty}([-1,-1/2)\times S^{1})$ to a
function $\overline{\Gamma}^{-}:[-1,-1/2)\times S^{1}\rightarrow\mathbb{R}$
such that 
\[
\lim_{s\rightarrow-\frac{1}{2}}\overline{\Gamma}^{-}(s,t)=0 \text{ in }C^{\infty}(S^{1}). 
\]

\item The harmonic functions $\Gamma_{n}^{+}-S_{n}R_{n}:[-h_{n},0]\times S^{1}\rightarrow\mathbb{R}$
converge in $C_{\text{loc}}^{\infty}$ to a harmonic function $\tilde{\Gamma}^{+}:(-\infty,0]\times S^{1}\rightarrow\mathbb{R}$
$\lim_{s\rightarrow -\infty}\tilde{\Gamma}^{+}(s,\cdot) =0$. Furthermore, $\underline{\Gamma}_{n}^{+}-S_{n}R_{n}:[1/2,1]\times S^{1}\rightarrow\mathbb{R}$
converge in $C_{\text{loc}}^{\infty}((1/2,1]\times S^{1})$ to a
function $\overline{\Gamma}^{+}:(1/2,1]\times S^{1}\rightarrow\mathbb{R}$
such that 
\[
\lim_{s\rightarrow\frac{1}{2}}\overline{\Gamma}^{+}(s,t)=0 \text{ in } C^{\infty}(S^{1}). 
\]

\end{singlespace}
\end{enumerate}
\end{thm}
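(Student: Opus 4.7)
The plan is to reduce everything to Theorem \ref{thm:The-maps-} via the decomposition $\Gamma_n(s,t) = S_n s + \tilde{\Gamma}_n(s,t)$ from Lemma \ref{lem:decompose_harmonic}, exploiting the key estimate $|S_n| \leq \sqrt{B/(2R_n)}$, which forces $S_n \to 0$ since $R_n \to \infty$. The linear term $S_n s$ will be killed on compact sets, and what remains is controlled by the already-established convergence of $\tilde{\Gamma}_n$.

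For statement (1), I would write
\[
\Gamma_n(s+s_n, t) - S_n s_n = S_n s + \tilde{\Gamma}_n(s+s_n, t).
\]
On any compact subset $K \subset \mathbb{R}\times S^1$ the coordinate $s$ is bounded, so $\|S_n s\|_{C^k(K)} \to 0$ for every $k$. By the first statement of Theorem \ref{thm:The-maps-}, a subsequence of $\tilde{\Gamma}_n(\cdot + s_n, \cdot)$ converges in $C_{\text{loc}}^\infty$ to $0$, and combining these gives the claim.

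For statement (2), the analogous identity on $[0, h_n]\times S^1$ is
\[
\Gamma_n^-(s,t) + S_n R_n = S_n(s - R_n) + \tilde{\Gamma}_n^-(s,t) + S_n R_n = S_n s + \tilde{\Gamma}_n^-(s,t),
\]
so on compact subsets of $[0,\infty)\times S^1$ the linear term tends to zero with all derivatives, and by Theorem \ref{thm:The-maps-} the remaining part converges in $C^\infty_{\text{loc}}$ to $\tilde{\Gamma}^-$ with $\lim_{s\to\infty}\tilde{\Gamma}^-(s,\cdot) = 0$. For the reparametrized version, a direct computation yields
\[
\underline{\Gamma}_n^-(s,t) + S_n R_n = S_n (\check{\theta}_n^-)^{-1}(s) + \overline{\Gamma}_n^-(s,t).
\]
On any compact $K \subset [-1,-1/2)$, the maps $(\check{\theta}_n^-)^{-1}$ converge in $C^\infty_{\text{loc}}$ to $(\check{\theta}^-)^{-1}$ by Remark \ref{rem:For-every-sequence}, hence are uniformly bounded in every $C^k(K)$-norm; multiplying by $S_n \to 0$ yields $C^\infty_{\text{loc}}$-convergence of the first summand to $0$, while the second summand converges to $\overline{\Gamma}^-$ by Theorem \ref{thm:The-maps-}. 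The asymptotic statement $\lim_{s\to -1/2}\overline{\Gamma}^-(s,\cdot) = 0$ in $C^\infty(S^1)$ follows from the corresponding statement for $\tilde{\Gamma}^-$ at $+\infty$ together with $(\check{\theta}^-)^{-1}(s)\to\infty$ as $s\to -1/2^-$. Statement (3) is entirely parallel, with the sign change absorbed by $\Gamma_n^+(s,t) - S_n R_n = S_n s + \tilde{\Gamma}_n^+(s,t)$.

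There is no genuine obstacle here: the only point requiring care is to keep track of the three different parametrizations (the linear shift, the half-cylinder shift, and the compactifying diffeomorphism $\theta_n^\pm$), and to verify that in each case the linear term $S_n \cdot (\text{bounded factor})$ vanishes in $C^\infty_{\text{loc}}$ because $|S_n| = O(R_n^{-1/2})$. All the analytic content---uniform $C^k$-bounds, pointwise and $C^\infty_{\text{loc}}$ limits of the $\tilde{\Gamma}_n$, and the asymptotic vanishing---has already been packaged into Lemma \ref{lem:For-every-}, Proposition \ref{prop:For-every-}, and Theorem \ref{thm:The-maps-}.
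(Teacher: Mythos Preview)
Your proof is correct and follows essentially the same approach as the paper: decompose $\Gamma_n = S_n s + \tilde{\Gamma}_n$ via Lemma \ref{lem:decompose_harmonic}, reduce to Theorem \ref{thm:The-maps-}, and observe that the residual linear term vanishes in $C^\infty_{\text{loc}}$ because $S_n\to 0$. Your justification for killing $S_n(\check{\theta}_n^-)^{-1}(s)$ via the $C^\infty_{\text{loc}}$-convergence of $(\check{\theta}_n^-)^{-1}$ on compact subsets of $[-1,-1/2)$ is slightly cleaner than the paper's, which instead uses the uniform bound $|S_n(\check{\theta}_n^-)^{-1}(s)|\leq |S_n|h_n\leq Ch_n/R_n\to 0$ coming from assumption C5.
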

\begin{proof}
\begin{singlespace}
\noindent For $(s,t)\in[-R_{n}+h_{n}-s_{n},R_{n}-h_{n}-s_{n}]\times S^{1}$,
we have $\Gamma_{n}(s+s_{n},t)-S_{n}s_{n}=S_{n}s+\tilde{\Gamma}_{n}(s+s_{n},t)$.
By Theorem \ref{thm:The-maps-}, the first assertion readily follows.
Putting $\Gamma_{n}^{-}(s,t)-S_{n}R_{n}=S_{n}s+\tilde{\Gamma}_{n}^{-}(s,t)$,
using the fact that $\underline{\Gamma}_{n}^{-}(s,t)+S_{n}R_{n}=S_{n}(\theta_{n}^{-})^{-1}(s)+\overline{\Gamma}_{n}^{-}(s,t)$
converge in $C_{\text{loc}}^{\infty}$ to $\overline{\Gamma}^{-}:[-1,-1/2)\times S^{1}\rightarrow\mathbb{R}$
since $|S_{n}(\theta_{n}^{-})^{-1}(s)|\leq |S_n h_n|\leq h_n/R_n\rightarrow 0$. Now $\lim_{s\rightarrow \infty}\overline{\Gamma}^{-}(s,\cdot)=0$ as $s\rightarrow-1/2$, and applying Theorem
\ref{thm:The-maps-} finishes the proof of the second assertion.\textbf{
}The third assertion is proved in a similar manner. 
\end{singlespace}
\end{proof}
\begin{singlespace}
\noindent To derive a notion of $C^{0}$ convergence we assume that
the sequence $S_{n}R_{n}$ converges, i.e. $S_{n}R_{n}\rightarrow\sigma$
as $n\rightarrow\infty$. Note that this always holds
after passing to a suitable subsequence by assumption C5. 
\end{singlespace}
\begin{thm}
\begin{singlespace}
\noindent \label{thm:The-maps--2}For every sequence $h_{n}\in\mathbb{R}_{>0}$
satisfying $h_{n}<R_{n}$ and $h_{n},R_{n}/h_{n}\rightarrow\infty$
as $n\rightarrow\infty$, the following $C^{0}-$convergence results
hold for the maps $\underline{\Gamma}_{n}$ together with their left
and right shift $\underline{\Gamma}_{n}^{\pm}$: 
\end{singlespace}
\begin{enumerate}
\begin{singlespace}
\item There exists a subsequence of $\underline{\Gamma}_{n}$ that converges
in $C^{0}([-1/2,1/2]\times S^{1})$ to $2\sigma s$. 
\item There exists a subsequence of $\underline{\Gamma}_{n}^{-}$ that converges
in $C^{0}([-1,-1/2]\times S^{1})$ to $\overline{\Gamma}^{-}-\sigma$,
where $\overline{\Gamma}^{-}(-1/2,t)=0$ for all $t\in S^{1}$. 
\item There exists a subsequence of $\underline{\Gamma}_{n}^{+}$ that converges
in $C^{0}([1/2,1]\times S^{1})$ to $\overline{\Gamma}^{+}+\sigma$,
where $\overline{\Gamma}^{+}(+1/2,t)=0$ for all $t\in S^{1}$. 
\end{singlespace}
\end{enumerate}
\end{thm}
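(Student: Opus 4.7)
The plan is to deduce Theorem \ref{thm:The-maps--2} by combining the decomposition
$\Gamma_n(s,t) = S_n s + \tilde{\Gamma}_n(s,t)$ from Lemma \ref{lem:decompose_harmonic} with the $C^0$-convergence results for the harmonic remainders $\tilde{\Gamma}_n,\tilde{\Gamma}_n^\pm$ (i.e.\ for $\overline{\Gamma}_n,\overline{\Gamma}_n^\pm$) already established in Theorem \ref{thm:The-maps-}, the assumption $S_n R_n\to \sigma$, and the fact $h_n S_n\to 0$ from Corollary \ref{cor:Thus-we-may}. In each case the point is that the ``linear part'' $S_n\check{\theta}_n^{-1}(s)$ (or its shifts) converges in $C^0$ to an explicit continuous limit, so the statement follows from the triangle inequality.

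For the first assertion I would use that on the linear interval $[-1/2,1/2]$ one has $\check{\theta}_n^{-1}(s)=2(R_n-h_n)s$ by construction of the diffeomorphisms in Remark \ref{rem:For-every-sequence}. Hence
\[
S_n\check{\theta}_n^{-1}(s)=2(S_nR_n-S_nh_n)\,s,
\]
and since $S_nR_n\to\sigma$ and $S_nh_n\to 0$ uniformly in $s\in[-1/2,1/2]$, this term converges in $C^0$ to $2\sigma s$. Combining with $\|\overline{\Gamma}_n\|_{C^0([-1/2,1/2]\times S^1)}\to 0$ from Theorem \ref{thm:The-maps-} and the defining relation $\underline{\Gamma}_n(s,t)=S_n\check{\theta}_n^{-1}(s)+\overline{\Gamma}_n(s,t)$ yields claim~(1).

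For the second assertion, I would observe that $(\check{\theta}_n^-)^{-1}$ maps $[-1,-1/2]$ into $[0,h_n]$, so $|S_n(\check{\theta}_n^-)^{-1}(s)|\leq |S_n|h_n\to 0$ uniformly in $s$ by Corollary \ref{cor:Thus-we-may}. Together with $S_nR_n\to\sigma$ this shows
\[
S_n\bigl((\check{\theta}_n^-)^{-1}(s)-R_n\bigr)\ \longrightarrow\ -\sigma\quad\text{in }C^0([-1,-1/2]).
\]
The $C^0$-convergence $\overline{\Gamma}_n^-\to\overline{\Gamma}^-$ on $[-1,-1/2]\times S^1$ from Theorem \ref{thm:The-maps-} (where the extension satisfies $\overline{\Gamma}^-(-1/2,t)=0$) then gives $\underline{\Gamma}_n^-\to\overline{\Gamma}^--\sigma$ in $C^0$. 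The third assertion is proved by the symmetric argument, with $(\check{\theta}_n^+)^{-1}$ mapping $[1/2,1]$ into $[-h_n,0]$ so that $S_n((\check{\theta}_n^+)^{-1}(s)+R_n)\to\sigma$ uniformly, and with $\overline{\Gamma}_n^+\to\overline{\Gamma}^+$ from Theorem \ref{thm:The-maps-}.

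There is no real obstacle: all the analytical content has already been packaged into Lemma \ref{lem:decompose_harmonic}, Theorem \ref{thm:The-maps-}, and Corollary \ref{cor:Thus-we-may}. The only thing to be slightly careful about is keeping track of the signs introduced by the $\pm R_n$ shifts in the definitions of $\underline{\Gamma}_n^\pm$, and the fact that the uniform bound $|S_n(\check{\theta}_n^\pm)^{-1}(s)|\leq |S_n|h_n$ relies on the specific domain $[0,h_n]$ (resp.\ $[-h_n,0]$) of $(\check{\theta}_n^\pm)^{-1}$, which is ensured by Remark \ref{rem:For-every-sequence}. After passing to the subsequence along which $S_nR_n\to\sigma$ (available by assumption $C5$), the three claims follow from the triangle inequality as outlined.
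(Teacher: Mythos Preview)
Your proposal is correct and follows essentially the same route as the paper: decompose $\underline{\Gamma}_n$ (and its shifts) into the linear piece $S_n\check{\theta}_n^{-1}(s)$ and the remainder $\overline{\Gamma}_n$, use the explicit form $\check{\theta}_n^{-1}(s)=2(R_n-h_n)s$ on $[-1/2,1/2]$ together with $S_nR_n\to\sigma$, $S_nh_n\to 0$, and the bound $|S_n(\check{\theta}_n^\pm)^{-1}(s)|\le |S_n|h_n$, and then invoke the $C^0$-convergence of $\overline{\Gamma}_n,\overline{\Gamma}_n^\pm$ from Theorem~\ref{thm:The-maps-}. Your write-up is in fact slightly more explicit than the paper's about why the linear terms converge and which earlier result supplies each ingredient.
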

\begin{proof}
\begin{singlespace}
\noindent We consider $\underline{\Gamma}_{n}(s,t)=S_{n}\check{\theta}_{n}^{-1}(s)+\overline{\Gamma}_{n}(s,t)$
for $(s,t)\in[-1/2,1/2]\times S^{1}$ with 
\begin{align*}
S_{n}\check{\theta}_{n}^{-1}(s)=2(S_{n}R_{n}-S_{n}h_{n})s.
\end{align*}
Corollary \ref{cor:Thus-we-may} implies that $S_{n}h_{n}s$ converges
in $C^{0}([-1/2,1/2]\times S^{1})$ to $0$, and similarly, that $S_{n}R_{n}s$
converges in $C^{0}([-1/2,1/2]\times S^{1})$ to $2\sigma s$. By
Theorem \ref{thm:The-maps--1}, $\overline{\Gamma}_{n}$ converges
in $C^{0}([-1/2,1/2]\times S^{1})$ to $0$, and so, the first assertion
is proved. Setting $\underline{\Gamma}_{n}^{-}(s,t)=S_{n}(\check{\theta}_{n}^{-})^{-1}(s)-S_{n}R_{n}+\overline{\Gamma}_{n}^{-}(s,t)$
for $(s,t)\in[-1,-1/2]\times S^{1}$, taking into account that $S_{n}(\check{\theta}_{n}^{-})^{-1}(s)$
converges in $C^{0}([-1,-1/2]\times S^{1})$ to $0$, and applying
Theorem \ref{thm:The-maps--1} proves the second assertion. The third
assertion follows in an analogous manner. 
\end{singlespace}
\end{proof}
\begin{singlespace}
\noindent Finally, we establish a convergence result for the differential
of $\Gamma_{n}$. Due to Lemma \ref{lem:For-the-harmonic}, we have
$d\Gamma_{n}^{-}=S_{n}ds+d\tilde{\Gamma}_{n}^{-}$ on $[0,h_{n}]\times S^{1}$
and $d\Gamma_{n}^{+}=S_{n}ds+d\tilde{\Gamma}_{n}^{+}$ on $[-h_{n},0]\times S^{1}$. For a sequence $h_{n}\in\mathbb{R}_{>0}$ satisfying $h_{n}<R_{n}$
and $h_{n},R_{n}/h_{n}\rightarrow\infty$ as $n\rightarrow\infty$,
consider the sequence of diffeomorphisms $\check{\theta}_{n}:[-R_{n},R_{n}]\rightarrow[-1,1]$
as in Remark \ref{rem:For-every-sequence}. In terms of $\check{\theta}_{n}$
we obtain the equations $d\underline{\Gamma}_{n}^{-}=S_{n}[(\check{\theta}_{n}^{-})^{-1}]'(s)ds+d\overline{\Gamma}_{n}^{-}$
on $[-1,-1/2]\times S^{1}$ and $d\underline{\Gamma}_{n}^{+}=S_{n}[(\check{\theta}_{n}^{+})^{-1}]'(s)ds+d\overline{\Gamma}_{n}^{+}$
on $[1/2,1]\times S^{1}$. Since by construction $[(\check{\theta}_{n}^{-})^{-1}]'(s)=2(R_n-h_n)$ for $s$ near $h_n$, we find
the following 
\end{singlespace}
\begin{cor}
\begin{singlespace}
\noindent \label{cor:After-going-over}After passing to a subsequence,
the following $C_{\text{loc}}^{\infty}-$convergence results for the
1-forms $d\Gamma_{n}^{-}$, $d\Gamma_{n}^{+}$, $d\underline{\Gamma}_{n}^{-}$
and $d\underline{\Gamma}_{n}^{+}$ hold: 
\end{singlespace}
\begin{enumerate}
\begin{singlespace}
\item The harmonic $1-$forms $d\Gamma_{n}^{-}$ converge in $C_{\text{loc}}^{\infty}([0,+\infty)\times S^{1})$
to a harmonic $1-$form $d\tilde{\Gamma}^{-}$ on $[0,+\infty)\times S^{1}$,
with $\lim_{s\rightarrow \infty}d\tilde{\Gamma}^-(s,\cdot)=0$. The $1-$forms $d\underline{\Gamma}_{n}^{-}$
converge in $C_{\text{loc}}^{\infty}([0,1/2)\times S^{1})$ to a $1-$form
$d\overline{\Gamma}^{-}$.
\item The harmonic $1-$forms $d\Gamma_{n}^{+}$ converge in $C_{\text{loc}}^{\infty}((-\infty,0]\times S^{1})$
to a harmonic $1-$form $d\tilde{\Gamma}^{+}$ on $(-\infty,0]\times S^{1}$,
with $\lim_{s\rightarrow -\infty}d\tilde{\Gamma}^+(s,\cdot)=0$. The $1-$forms $d\underline{\Gamma}_{n}^{+}$
converge in $C_{\text{loc}}^{\infty}((-1/2,0]\times S^{1})$ to a
$1-$form $d\overline{\Gamma}^{+}$.
\end{singlespace}
\end{enumerate}
\end{cor}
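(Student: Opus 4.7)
The plan is to reduce the statement directly to the already-established convergence results for the harmonic functions $\tilde{\Gamma}_n^\pm$ and $\overline{\Gamma}_n^\pm$ from Theorem \ref{thm:The-maps-}, combined with the smallness of the co-period $S_n$.

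First, I would use the decomposition $d\Gamma_n^- = S_n\,ds + d\tilde{\Gamma}_n^-$ from Lemma \ref{lem:For-the-harmonic}. From assumption $C5$ together with $R_n\to\infty$ we have $|S_n|\leq C/R_n\to 0$, so the term $S_n\,ds$ tends to $0$ in $C^\infty_{\text{loc}}$. By the second statement of the $C^\infty_{\text{loc}}$-convergence in Theorem \ref{thm:The-maps-}, the harmonic functions $\tilde{\Gamma}_n^-$ converge in $C^\infty_{\text{loc}}([0,\infty)\times S^1)$ to $\tilde{\Gamma}^-$; since differentiation is continuous in $C^\infty_{\text{loc}}$, the forms $d\tilde{\Gamma}_n^-$ converge in $C^\infty_{\text{loc}}$ to $d\tilde{\Gamma}^-$. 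Combining these two facts yields $d\Gamma_n^- \to d\tilde{\Gamma}^-$ in $C^\infty_{\text{loc}}([0,\infty)\times S^1)$. The asymptotic statement $\lim_{s\to\infty}d\tilde{\Gamma}^-(s,\cdot)=0$ follows from the fact that $\tilde{\Gamma}^-$ is a harmonic function on $[0,\infty)\times S^1$ whose Fourier expansion contains no constant-in-$t$ mode (by construction of $\tilde{\Gamma}_n$ in Lemma \ref{lem:For-the-harmonic}) and has finite $L^2$-norm; such a function is a superposition of exponentially decaying modes $e^{-2\pi|k|s}e^{2\pi ikt}$, $k\neq 0$, so every derivative decays exponentially as $s\to\infty$.

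For the convergence of $d\underline{\Gamma}_n^-$, I would start from the identity $d\underline{\Gamma}_n^- = S_n[(\check{\theta}_n^-)^{-1}]'(s)\,ds + d\overline{\Gamma}_n^-$ recorded just before the corollary. On any compact set $K\subset[-1,-1/2)$, the $C^\infty_{\text{loc}}$-convergence $(\check{\theta}_n^-)^{-1}\to(\check{\theta}^-)^{-1}$ (from Remark \ref{rem:For-every-sequence}) gives uniform $C^\infty(K)$-bounds on $[(\check{\theta}_n^-)^{-1}]'$; multiplied by $S_n\to 0$, the first summand converges to $0$ in $C^\infty_{\text{loc}}([-1,-1/2)\times S^1)$. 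The convergence $d\overline{\Gamma}_n^-\to d\overline{\Gamma}^-$ in $C^\infty_{\text{loc}}([-1,-1/2)\times S^1)$ comes again by differentiating the $C^\infty_{\text{loc}}$-convergence of $\overline{\Gamma}_n^-$ from Theorem \ref{thm:The-maps-}. The statements for $d\Gamma_n^+$ and $d\underline{\Gamma}_n^+$ are proved by identical arguments, exchanging left and right shifts.

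I do not expect any serious obstacle: the proof is essentially a bookkeeping exercise combining $S_n\to 0$, uniform control of $[(\check{\theta}_n^\pm)^{-1}]'$ on compacts away from $\mp 1/2$, and the already-established $C^\infty_{\text{loc}}$-convergence of the $\tilde{\Gamma}_n^\pm$ and $\overline{\Gamma}_n^\pm$. The only subtle point worth highlighting explicitly is the exponential decay of harmonic functions without a zero Fourier mode in $t$, which is needed to identify the limit of $d\tilde{\Gamma}^\pm$ as $s\to\pm\infty$; this is a standard separation-of-variables argument and does not constitute a real difficulty.
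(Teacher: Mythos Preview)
Your proposal is correct and follows essentially the same approach as the paper: the paper sets up exactly the decompositions $d\Gamma_{n}^{\pm}=S_{n}\,ds+d\tilde{\Gamma}_{n}^{\pm}$ and $d\underline{\Gamma}_{n}^{\pm}=S_{n}[(\check{\theta}_{n}^{\pm})^{-1}]'(s)\,ds+d\overline{\Gamma}_{n}^{\pm}$ in the paragraph immediately preceding the corollary and then states the result without further proof, relying on Theorem~\ref{thm:The-maps-} and $S_n\to 0$ just as you do. One small remark: your Fourier/separation-of-variables argument for $\lim_{s\to\infty}d\tilde{\Gamma}^{-}(s,\cdot)=0$ is fine, but it can be shortened to an interior-estimate argument---since $\tilde{\Gamma}^{-}$ is harmonic and $\|\tilde{\Gamma}^{-}\|_{C^{0}([s-1,s+1]\times S^{1})}\to 0$ as $s\to\infty$, the mean-value inequality (as in Lemma~\ref{lem:kbounds_tilde_Gamma}) gives the same conclusion for all derivatives without invoking the absence of a zero Fourier mode.
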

\newpage
\section{\label{subsec:A-version-of}A version of the Monotonicity Lemma}

\noindent In this appendix we describe how a monotonicity for
the transformed curves $\overline{u}$ as in Definition \ref{def:A-triple-} can be obtained. Such a $\overline{u}$ is
a holomorphic curves for a domain-dependent almost complex structure $\overline{J}_P$ on $\mathbb{R}\times M$. 
Recall from (\ref{eq:domain_dependent_almost_complex_structure}) the compact family $\mathcal{J}$ of $\mathbb{R}$-invariant almost complex structures
$J_\rho, \rho\in [-C,C]$ on $\mathbb{R}\times M$ which are compatible with $d\alpha$ on $\xi$ and exchange the Reeb and the $\mathbb{R}$-direction.\\
This implies that each $J_\rho$ is compatible to the symplectic form $\omega=d(e^r\alpha)$ on $\mathbb{R}\times S^1$.

\noindent If $u:(S,j)\rightarrow \mathbb{R}\times M$ is a smooth map defined on a Riemann surface $(S,j)$, we say it is (domain-dependent) \emph{$\mathcal{J}$-holomorphic},
if for each $x\in S $ there is $\rho(x)\in [-C,C]$ with $Du(x)\circ j=J_{\rho(x)} \circ Du(x)$.\\
Clearly, for any $\overline{J}_P$-holomorphic curve $(\overline{u},R,P)$, the map $\overline{u}$ is $\mathcal{J}$-holomorphic.

\begin{lem} \label{lem:mono_1}
There exist constants $\varepsilon_0>0$ and $c_1>0$ with the following properties:
Let $u:S\rightarrow \mathbb{R}\times M$ be any nonconstant $\mathcal{J}$-holomorphic curve defined on a compact Riemann surface $(S,\partial S)$.
If $u$ passes through $p\in \mathbb{R}\times M$ but $u(\partial S)$ lies in the complement of $B^{\overline{g}_0}_\varepsilon(p)$ for some $\varepsilon\leq \varepsilon_0$, 
then $\text{area}_{\overline{g}_0}(B^{\overline{g}_0}_\varepsilon(p)\cap u(S))\geq c_1\varepsilon^2$.
\end{lem}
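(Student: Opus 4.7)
The plan is to adapt the standard monotonicity argument for pseudoholomorphic curves to our setting of a \emph{compact} family of $\mathbb{R}$-invariant almost complex structures, exploiting three facts: (i) each $J_\rho$ is compatible with the symplectic form $\omega=d(e^r\alpha)$ on $\mathbb{R}\times M$, so $g_\rho(\cdot,\cdot):=\omega(\cdot,J_\rho\cdot)$ is a Riemannian metric; (ii) by the uniform equivalence of Remark~\ref{rem:The-parameter-dependent} (extended to the symplectization metrics $g_\rho$), all these metrics are mutually uniformly equivalent to $\overline{g}_0$ on compacta; and (iii) by $\mathbb{R}$-invariance together with compactness of $M$, quantities such as injectivity radii and sectional curvature bounds are uniform in the base point and in $\rho\in[-C,C]$.

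First, I would fix $p=(r_0,m_0)\in\mathbb{R}\times M$ and reduce to the case $r_0=0$ by $\mathbb{R}$-translation invariance of $\overline{g}_\rho$ and $\omega$, and then pick a uniform scale $\varepsilon_0>0$ below the injectivity radius of every $\overline{g}_\rho$ on a fixed compact neighborhood, and small enough that the standard symplectic-geometric comparison $\tfrac12 g_{\text{eucl}}\le g_\rho\le 2g_{\text{eucl}}$ holds in Darboux-like coordinates around $p$ uniformly in $\rho\in[-C,C]$ (this uniformity is possible because $\mathcal{J}=\{J_\rho\}$ is compact). On such a ball, for any domain-dependent $\mathcal{J}$-holomorphic curve $u$, at each point $x\in S$ we have $Du(x)\circ j=J_{\rho(x)}\circ Du(x)$, which forces $u^*\omega(x)=|Du(x)|^2_{g_{\rho(x)}}\,\mathrm{vol}_j/2\ge 0$ pointwise and, in fact,
\[
u^*\omega\ \ge\ c_0\,u^*\mathrm{vol}_{\overline{g}_0}
\]
for some $c_0>0$ independent of $\rho$ and the point, thanks to the uniform equivalence of the $g_\rho$.

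Next, I would set $A(r):=\text{area}_{\overline{g}_0}(B_r^{\overline{g}_0}(p)\cap u(S))$ and $L(r):=\text{length}_{\overline{g}_0}(\partial B_r^{\overline{g}_0}(p)\cap u(S))$ and carry out the classical differential-inequality argument: using the coarea formula one gets $A'(r)\ge c_0^{-1}L(r)$ (after absorbing the metric-equivalence constants), and the isoperimetric inequality for $\mathcal{J}$-holomorphic curves in a uniformly symplectic ball — which is the standard Hofer--Lizan--Sikorav-type estimate whose proof only uses the taming inequality $\omega(v,J_\rho v)\ge c\|v\|^2$ and closedness of $\omega$, and therefore goes through verbatim with $\rho=\rho(x)$ varying — gives
\[
\int_{B_r\cap u(S)}u^*\omega\ \le\ r\cdot L(r)
\]
(via a Stokes/primitive $\lambda=e^r\alpha$ argument, trimming the boundary portion lying on $u(\partial S)$ which contributes nothing because of the hypothesis $u(\partial S)\cap B_\varepsilon(p)=\emptyset$). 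Combined with $A(r)\le C\int u^*\omega$, this yields a differential inequality of the form $A(r)\le C r\, A'(r)$, which integrates from any small $r$ up to $\varepsilon$ to give $A(\varepsilon)\ge c_1\varepsilon^2$.

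The main obstacle will be verifying that the standard isoperimetric-type monotonicity estimate is genuinely uniform across the family $\{J_\rho\}_{\rho\in[-C,C]}$ and across all base points in $\mathbb{R}\times M$ — in particular that the constants in the Stokes-primitive estimate and in the tamings $\omega(v,J_\rho v)\ge c\|v\|^2_{\overline{g}_0}$ can be chosen independent of $\rho$ and $p$. This uniformity follows from the two structural inputs \emph{compactness of $[-C,C]$} (which makes $\mathcal{J}$ a compact family of $d\alpha$-tame complex structures on $\xi$, hence $\omega$-compatible on $\mathbb{R}\times M$, with uniform taming constants) and \emph{$\mathbb{R}$-invariance together with compactness of $M$}; once these are invoked, the remaining computation is routine.
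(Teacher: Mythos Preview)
Your proposal is correct and rests on the same key inputs as the paper: the uniform taming $\omega(v,J_\rho v)\ge c\,\overline{g}_0(v,v)$ coming from compactness of $\{J_\rho\}_{\rho\in[-C,C]}$, and $\mathbb{R}$-invariance to reduce to a fixed slab. The implementation differs, however. You run the classical coarea/differential-inequality argument $A(r)\le Cr\,A'(r)$, bounding the symplectic area of $u(S)\cap B_r$ by $r\cdot L(r)$ via Stokes and a primitive. The paper instead follows Hummel \cite{key-5}: it first proves a quadratic isoperimetric inequality by filling $u|_{\partial S}$ with an auxiliary map $v$ built from the exponential chart, using $\int_S u^*\omega=\int_S v^*\omega$ (since $\omega$ is exact on the ball and $u,v$ share boundary values), and then quotes the derivation of monotonicity from isoperimetry exactly as in \cite{key-5}. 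Both routes are standard and yield the same conclusion; yours is slightly more self-contained, while the paper's is shorter because it can quote \cite{key-5} directly. One small correction: the primitive $\lambda=e^r\alpha$ you name does \emph{not} satisfy $|\lambda|\lesssim r$ on $\partial B_r(p)$ since it does not vanish at $p$; you should instead use the radial primitive $\lambda_p=\iota_{X_{\mathrm{rad}}}\omega$ in a Darboux chart (or equivalently $e^r\alpha$ minus a closed $1$-form making it vanish at $p$), which then gives the needed bound $\int_{u(S)\cap B_r}u^*\omega\le C\,r\,L(r)$.
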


\noindent On any bounded strip in $\mathbb{R}\times M$ the $\omega$-area of $\mathcal{J}$-holomorphic curves can be bounded by the energy $E(u;S)$:

\begin{lem}\label{lem:monolo}
Let $u$ be any $\mathcal{J}$-holomorphic curve $u:S\rightarrow (0,1)\times M$. Then $\int_S u^*\omega\leq 6 E(u;S)$.
\end{lem}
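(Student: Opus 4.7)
The plan is as follows. First, I would expand $u^*\omega$ explicitly. Since $\omega = d(e^r\alpha) = e^r\,dr\wedge\alpha + e^r\,d\alpha$, writing $u=(a,f)$ with $a=r\circ u\in(0,1)$, the pullback becomes
\[
u^*\omega = e^a\,da\wedge f^*\alpha + e^a\,f^*d\alpha.
\]
Next, I would invoke the $\mathcal{J}$-holomorphic equations. Each $J_\rho\in\mathcal{J}$ is defined so as to exchange the Reeb direction and the $\mathbb{R}$-direction and to restrict to the $d\alpha$-compatible complex structure on $\xi$; this is exactly the setup of (\ref{eq:JJJ}), so one obtains the split equations $\pi_\alpha df\circ j = J_\xi(f)\circ\pi_\alpha df$ and $f^*\alpha\circ j = da$. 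In particular $f^*\alpha = -da\circ j$, which gives $da\wedge f^*\alpha = (da\circ j)\wedge da$, a nonnegative $2$-form (in local isothermal coordinates equal to $(a_s^2+a_t^2)\,ds\wedge dt$). Combined with $f^*d\alpha\ge 0$ (coming from compatibility of $J_\xi$ on $\xi$), this already confirms $u^*\omega\ge 0$ pointwise, so the left-hand side is genuinely an area.

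Since $a$ takes values in $(0,1)$, we have $e^a\le e<3$, so
\[
\int_S u^*\omega\ \le\ 3\!\int_S (da\circ j)\wedge da\ +\ 3\!\int_S f^*d\alpha\ =\ 3\!\int_S (da\circ j)\wedge da\ +\ 3E_{d\alpha}(u;S).
\]
It remains to bound $\int_S(da\circ j)\wedge da$ by $E_\alpha(u;S)$. For this I would construct, for each $\epsilon>0$, a function $\varphi_\epsilon\in\mathcal{A}$ with $\varphi_\epsilon'\equiv 1$ on $[\epsilon,1-\epsilon]$, $\varphi_\epsilon'\le 1$ everywhere, supported in a neighborhood of $[0,1]$, and satisfying $\varphi_\epsilon(+\infty)\le 1$ (the transitions on $[0,\epsilon]$ and $[1-\epsilon,1]$ only consume an extra mass $\le 2\epsilon$, so this is easy to arrange). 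Since $a(S)\subset(0,1)$ and the integrand $(da\circ j)\wedge da$ is nonnegative, monotone convergence gives
\[
\int_S \varphi_\epsilon'(a)\,(da\circ j)\wedge da\ \xrightarrow{\epsilon\to 0}\ \int_S (da\circ j)\wedge da,
\]
and the left-hand side is, for each $\epsilon$, bounded above by $E_\alpha(u;S)$. Hence $\int_S(da\circ j)\wedge da\le E_\alpha(u;S)$, and combining with the previous display yields $\int_S u^*\omega\le 3\,E(u;S)\le 6\,E(u;S)$, which is even stronger than claimed.

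There is no real obstacle here: the only slightly subtle point is the construction of $\varphi_\epsilon\in\mathcal{A}$ that simultaneously respects $\varphi_\epsilon\in[0,1]$, $\varphi_\epsilon'\ge 0$, and has $\varphi_\epsilon'\equiv 1$ on a set exhausting $(0,1)$, but this is routine once one notes that $\int_{\mathbb{R}}\varphi_\epsilon'=\varphi_\epsilon(+\infty)\le 1$ is compatible with $\varphi_\epsilon'=1$ on an interval of length $1-2\epsilon$ together with two short smooth cut-offs of slope $\le 1$. The bound $6$ in the statement is therefore generous and leaves plenty of slack, which suggests the authors may be combining this estimate with another factor of $2$ coming from an intermediate step not visible in the present statement.
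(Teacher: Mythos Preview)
Your proof is correct and follows essentially the same route as the paper's: split $u^*\omega = e^a\,da\wedge f^*\alpha + e^a f^*d\alpha$, bound $e^a<3$ on $(0,1)$, and control the two pieces by $E_\alpha$ and $E_{d\alpha}$ respectively. The only difference is that the paper picks a single test function $\varphi\in\mathcal{A}$ with $\varphi'\geq \tfrac12$ on $[0,1]$ (which exists and immediately gives the factor $6$), whereas you run a limiting argument with $\varphi_\epsilon'\to 1$ on $(0,1)$ and monotone convergence to squeeze out the sharper constant $3$; your final remark about the slack is exactly right.
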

\begin{proof}
 Notice that $\int_S u^*(e^r d\alpha) 
\leq  6\int_S u^*d\alpha=E_{d\alpha}(u;S)$ and  $\int_Su^*(e^r dr\wedge \alpha)\leq  6\int_Su^*(1/2 dr\wedge \alpha)\leq 6 E_\alpha(u;S)$, the latter
since there is a $\varphi:\mathbb{R}\rightarrow [0,1]$ with $\varphi'(r)\geq\frac{1}{2}$ for all  $r\in [0,1]$, which is admissible in the definition of the $\alpha$-energy.
In both cases the estimates hold pointwise in the integrands, since the implicit
almost complex structures $J_\rho$ are compatible with $d\alpha $ on $\xi$ and since they exchange Reeb and $\mathbb{R}$-direction. 
Combining these two inequalities yields the claimed result.\end{proof}

\noindent Recall $\overline{C}_1$ from (\ref{eq:equivalence_metrics}) satisfying $\overline{C}_1\overline{g}_\rho\geq \overline{g}_0$. 
As a consequence, we have for the induced area-form $d area^P\overline{g}_0$ on any 2-plane $P$ the estimate
$|d area^P_{\overline{g}_0}|\leq \overline{C}_1|d area^P_{\overline{g}_\rho}|$.
If the 2-plane lies in some $T_p((0,1)\times M)$ and it is is $J_\rho$-complex for some $\rho$, then we have moreover $|d area_{\overline{g}_\rho}^P|\leq \omega\vert_P$ (by similar pointwise estimates as in the proof of Lemma \ref{lem:monolo}). \\
It follows that $area_{\overline{g}_0}(u(S))\leq \overline{C}_1\int_S u^*\omega$
for any $\mathcal{J}$-holomorphic curve $u:S\rightarrow (0,1)\times M$.
As a direct consequence of this discussion and of Lemma \ref{lem:mono_1} we have:
\begin{lem} \label{cor:There-exists-constants-2}
Let $\varepsilon_0,c_1 $ be constants as in Lemma \ref{lem:mono_1} and set $C_8:=c_1\overline{C}_1^{-1}/6$.
Let $u:S\rightarrow \mathbb{R}\times M$ be any nonconstant $\mathcal{J}$-holomorphic curve defined on a compact Riemann surface $(S,\partial S)$.
If $u$ passes through $p\in \mathbb{R}\times M$ but $u(\partial S)$ lies in the complement of $B^{\overline{g}_0}_\varepsilon(p)$ for some $\varepsilon\leq \varepsilon_0$, 
then $E(u;S\cap u^{-1}(B^{\overline{g}_0}_\varepsilon(p)))\geq C_8\varepsilon^2$.
\end{lem}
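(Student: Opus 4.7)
The plan is to obtain the claim by chaining together the three tools already established: the image-area monotonicity in Lemma~\ref{lem:mono_1}, the symplectic-area vs.\ energy estimate of Lemma~\ref{lem:monolo}, and the uniform pointwise comparison $\mathrm{d}A^{P}_{\overline{g}_0}\leq \overline{C}_1\,\omega|_P$ on $J_\rho$-complex 2-planes noted just before the corollary. The constant $1/6$ in $C_8$ comes from Lemma~\ref{lem:monolo}, and the factor $\overline{C}_1^{-1}$ from the compactness of the family $\mathcal{J}$, which is in turn a consequence of assumption $A3$.

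First, I would reduce to the bounded strip $(0,1)\times M$. Since $\overline{g}_0$ and every $J_\rho$ for $\rho\in[-C,C]$ are $\mathbb{R}$-invariant, translating $u$ in the $\mathbb{R}$-coordinate preserves the hypotheses (the radius $\varepsilon$, the statement that $u(\partial S)$ avoids $B^{\overline{g}_0}_\varepsilon(p)$, and the $\mathcal{J}$-holomorphicity) as well as the energy $E$ on $S\cap u^{-1}(B^{\overline{g}_0}_\varepsilon(p))$. After possibly shrinking $\varepsilon_0$ to $\leq 1/2$ and applying such a translation, we may assume $p\in\{1/2\}\times M$, so that $B^{\overline{g}_0}_\varepsilon(p)\subset(0,1)\times M$ for every $\varepsilon\leq\varepsilon_0$.

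Then I would set $S':=S\cap u^{-1}(B^{\overline{g}_0}_\varepsilon(p))$ and run the following chain. Lemma~\ref{lem:mono_1} applied directly to $u$ yields
\[
\mathrm{area}_{\overline{g}_0}\!\left(B^{\overline{g}_0}_\varepsilon(p)\cap u(S)\right)\geq c_1\varepsilon^2.
\]
Since $B^{\overline{g}_0}_\varepsilon(p)\cap u(S)=u(S')$, the area formula for smooth maps between manifolds of equal dimension gives
\[
\mathrm{area}_{\overline{g}_0}(u(S'))\leq \int_{S'}u^*\mathrm{d}A_{\overline{g}_0}.
\]
Since $u|_{S'}$ is $\mathcal{J}$-holomorphic, at each $x\in S'$ the tangent plane $\mathrm{d}u(T_xS')$ is $J_{\rho(x)}$-complex, so the pointwise inequality from the paragraph preceding the corollary yields
\[
\int_{S'}u^*\mathrm{d}A_{\overline{g}_0}\leq \overline{C}_1\int_{S'}u^*\omega.
\]
Finally, because $u(S')\subset(0,1)\times M$, Lemma~\ref{lem:monolo} applies to $u|_{S'}$ and gives $\int_{S'}u^*\omega\leq 6\,E(u;S')$. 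Combining these four inequalities produces
\[
c_1\varepsilon^2\leq 6\overline{C}_1\,E(u;S'),
\]
which is exactly $E(u;S')\geq C_8\varepsilon^2$.

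The only mildly delicate point is the passage from image area to parametrized area $\mathrm{area}_{\overline{g}_0}(u(S'))\leq\int_{S'}u^*\mathrm{d}A_{\overline{g}_0}$, since $u|_{S'}$ need neither be an immersion nor injective; this is however the standard area-formula inequality (the left side ignores multiplicities while the right counts them). Every remaining step is a direct citation of the lemmas immediately preceding, so no further analytic work should be needed.
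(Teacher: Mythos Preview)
Your argument is correct and follows essentially the same route as the paper: reduce via $\mathbb{R}$-invariance to a ball in $(0,1)\times M$, then chain Lemma~\ref{lem:mono_1}, the pointwise comparison $\mathrm{d}A_{\overline{g}_0}\leq\overline{C}_1\omega$ on $J_\rho$-complex planes, and Lemma~\ref{lem:monolo} to obtain $c_1\varepsilon^2\leq 6\overline{C}_1\,E(u;S')$. You are in fact slightly more careful than the paper in making explicit the area-formula inequality between image area and parametrized area.
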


\emph{On the proof of Lemma \ref{lem:mono_1}}: Note that on $(0,1)\times M$ we have $\omega(v,J_{\rho}v)\geq \overline{g}_\rho(v,v)\geq \overline{C_1}^{-1} \overline{g}_0(v,v)$. 
Since the almost complex structures $\mathcal{J}_\rho$ are $\mathbb{R}$-invariant, we
can assume without loss of generality that the point $p$ of interest lies in $\{\frac{1}{2}\}\times M$; w.l.og. furthermore $\varepsilon_0\leq \frac{1}{4}$.
We claim also that in our situation, we can establish a quadratic isoperimetric inequality as in Lemma 3.1 in \cite{key-5}.
This Lemma is easily adapted to the present situation, by considering the following:
The complex structures we are given are domain-dependent and thus more general than those in \cite{key-5}, however the almost complex structure considered here are \emph{globally}
uniformly tamed by the global symplectic form $\omega$ on $(0,1)\times M$; thus the symplectic forms do not have to be constructed by hand as in Lemma 3.1 in \cite{key-5}.
Indeed, assume that $u:S\rightarrow B$ is a $\mathcal{J}$-holomorphic map into a coordinate ball $B$ and let $v:S\rightarrow B$ be a smooth map with $u\vert_{\partial S}=v\vert_{\partial S}$
and $4\pi area_{\overline{g}_0}(v(S))\leq c^4length_{\overline{g}_0}(v\vert_{\partial S})^2$ as on p.25 of \cite{key-5} (where the maps are called $f$ resp. $g$). 
This is possible, since the exponential maps $\exp_q^{\overline{g}_0}$ are c-bilipschitz onto balls $B=B_{\varepsilon_1}$ of radius $\varepsilon_1$.
Then a quadratic isoperimetric inequality for $u$ is obtained as follows:
$$area_{\overline{g}_0}(u)\leq \overline{C}_1\int_S u^*\omega=\overline{C}_1\int_S v^*\omega\leq 6\overline{C}_1 area_{\overline{g}_0}(v)\leq 
6\overline{C}_1 c^4length_{\overline{g}_0}(v\vert_{\partial S})^2=6\overline{C}_1 c^4length_{\overline{g}_0}(u\vert_{\partial S})^2.$$
Now Lemma \ref{lem:mono_1} follows from this quadratic isoperimetric inequality
exactly as Lemma 1.3 in \cite{key-5} follows from Lemma 3.1 on p.26-28 in \cite{key-5}.\quad \qed

\section{\label{subsec:A-version-of-example}An example showing the necessity of assumption A3}

Consider $S^{3}=\left\{ (z_{1},z_{2})\subset\mathbb{C}^{2}\mid\left|z_{1}\right|^{2}+\left|z_{2}\right|^{2}=1\right\} \subset\mathbb{C}^{2}$
equipped with the standard contact form. More precisely, on $\mathbb{C}^{2}$
we consider the Liouville $1-$form which is defined in complex coordinates
by $\lambda=-\frac{1}{2}\text{Im}\left(\overline{z}_{1}dz_{1}+\overline{z}_{2}dz_{2}\right)$.
Then $\alpha:=\lambda|_{S^{3}}$ defines a contact form on $S^{3}$,
which we will call the standard contact form. The Reeb flow of the
Reeb vector field of $\alpha$ is given by $\varphi_{t}^{\alpha}(z_{1},z_{2})=\left(e^{it}z_{1},e^{it}z_{2}\right)$
for all $(z_{1},z_{2})\in S^{3}$ and $t\in\mathbb{R}$. It becomes
evident that all Reeb orbits are periodic. Denote by $p=(z_{1},z_{2})\in S^{3}$.
In order to choose a $\mathbb{R}-$invariant almost complex structure
on the symplectization $\mathbb{R}\times S^{3}$ we consider the diffeomorphism
\begin{align*}
\Phi:\mathbb{R}\times S^{3} & \rightarrow\mathbb{C}^{2}\backslash\left\{ 0\right\} \\
(r,p) & \mapsto e^{r}p.
\end{align*}
Let $i$ be the standard complex structure on $\mathbb{C}^{2}\backslash\left\{ 0\right\} $.
Then one easily sees that 
\[
\tilde{J}(r,p):=d\Phi(r,p)^{-1}\circ i\circ d\Phi(r,p)
\]
defines an $\mathbb{R}-$invariant almost complex structure on $\mathbb{R}\times S^{3}$.
We will denote by $J$ the induced almost complex structure on the
contact structure $\xi=\ker(\alpha)$. Notice that $J$ is invariant
under the Reeb flow, i.e. $J(\varphi_{t}^{\alpha}(p))\circ d\varphi_{t}^{\alpha}(p)=d\varphi_{t}^{\alpha}(p)\circ J(p)$
for all $p\in S^{3}$ and all $t\in\mathbb{R}$.

Let $u=(a,f):D\rightarrow\mathbb{R}\times S^{3}$ be a pseudoholomorphic
disk, where $D$ is the unit disk in $\mathbb{C}$, with respect to
the standard complex structure $i$ on $D$ and $\tilde{J}$ on $\mathbb{R}\times S^{3}$
and with energies 
\begin{align*}
E_{d\alpha}(u;D) & =\int_{D}f^{*}d\alpha,\\
E_{\alpha}(u;D) & =\sup_{\varphi\in\mathcal{A}}\int_{D}\varphi'(a)da\circ i\wedge da.
\end{align*}

Let $R_{n}\in\mathbb{R}_{>0}$ be a sequence such that $R_{n}\nearrow\infty$
as $n\rightarrow\infty$ and consider the biholomorphism
\begin{align*}
\phi_{n}:\left(\left[-R_{n},R_{n}\right]\times S^{1},i\right) & \rightarrow\left(D(\epsilon_{n})\backslash\text{Int}\left(D(\epsilon_{n}^{3})\right),i\right)\\
(s,t) & \mapsto e^{-4\pi R_{n}}e^{-2\pi(s+it)}
\end{align*}
where $\epsilon_{n}:=e^{-2\pi R_{n}}$. Now  consider the sequence
of pseudoholomorphic cylinders
\[
u_{n}=(a_{n},f_{n}):=u\circ\phi_{n}:\left[-R_{n},R_{n}\right]\times S^{1}\rightarrow\mathbb{R}\times S^{1}
\]
with energies
\begin{align*}
E_{d\alpha}(u_{n};\left[-R_{n},R_{n}\right]\times S^{1}) & \rightarrow0\text{ as }n\rightarrow\infty\quad\text{ and }\quad E_{\alpha}(u_{n};\left[-R_{n},R_{n}\right]\times S^{1})\leq E_{\alpha}(u;D).
\end{align*}
Notice that $u_{n}$ have vanishing center action. Indeed,
\[
\int_{\left\{ 0\right\} \times S^{1}}f_{n}^{*}\alpha=\int_{\partial D(\epsilon_{n}^{2})}f^{*}\alpha=\int_{D(\epsilon_{n}^{2})}f^{*}d\alpha\rightarrow0
\]
as $n\rightarrow\infty$. In the following we will describe two examples
of $\mathcal{H}-$holomorphic cylinders (one example with vanishing
center action and the second with non-vanishing center action) which
satisfy conditions A0-A2 but violate condition A3.

\noindent\textbf{An example of $\mathcal{H}-$holomorphic cylinders satisfying
condition A0-A2 but not condition A3 and having vanishing center action.}

\noindent Let $P_{n}\in\mathbb{R}$ be a sequence such that $P_{n}\searrow0$
and $P_{n}R_{n}\rightarrow\infty$ as $n\rightarrow\infty$ and there
exists a constant $C>0$ such that $P_{n}^{2}R_{n}\leq C$. For example
one could choose $P_{n}=1/\sqrt{R_{n}}$. Consider the map
\[
\overline{f}_{n}:\left[-R_{n},R_{n}\right]\times S^{1}\rightarrow S^{3}
\]
defined by $\overline{f}_{n}(s,t)=\phi_{P_{n}s}^{\alpha}(f_{n}(s,t))$.
Then $\overline{u}_{n}=(a_{n},\overline{f}_{n})$ is a $\mathcal{H}-$holomorphic
cylinder with harmonic perturbation $\gamma_{n}:=-P_{n}dt$ satisfying
conditions A0-A2 but not A3 and having vanishing center action. Notice
that for any sequence of shifts $s_{n}\in[-R_{n},R_{n}]$ the map
$\overline{u}_{n}(s+s_{n},t)$ converges, up to passing to a subsequence
in $C_{\text{loc}}^{\infty}$ to the constant map $\varphi_{\kappa}^{\alpha}(u(0))$,
where $\kappa$ is defined as the limit of $P_{n}s_{n}$ in $\mathbb{R}/2\pi\mathbb{Z}$.
Furthermore, to show that $\overline{u}_{n}$ is $\mathcal{H}-$holomorphic
we have
\begin{align*}
J\circ\pi_{\alpha}d\overline{f}_{n} & =J\circ d\phi_{P_{n}s}^{\alpha}\circ\pi_{\alpha}df_{n}\\
 & =d\phi_{P_{n}s}^{\alpha}\circ J\circ\pi_{\alpha}df_{n}\\
 & =d\phi_{P_{n}s}^{\alpha}\circ\pi_{\alpha}df_{n}\circ i\\
 & =\pi_{\alpha}d\overline{f}_{n}\circ i,
\end{align*}
and
\begin{align*}
\overline{f}_{n}^{*}\alpha\circ i & =-P_{n}dt+f_{n}^{*}\alpha\circ i\\
 & =\gamma_{n}+da_{n}.
\end{align*}
The energy of $\overline{u}_{n}$ as well as its center action is
the same as the energy and center action of $u_{n}$. Furthermore,
since $P_{n}^{2}R_{n}\leq C$ we have that the $L^{2}-$norm of $\gamma_{n}$
is uniformly bounded. Direct computation shows that $d\phi_{n}$ is
uniformly bounded and therefore condition A1 holds. Thus conditions
A0-A2 are satisfied for $n$ big enough. Condition A3 does not hold
since $P_{n}R_{n}\rightarrow\infty$ as $n\rightarrow\infty$. However,
the conclusions of the $C^{0}-$convergence of Theorem 2 do not hold
for this sequence since the $v_{n}$ defined as in Theorem 2 do not
converge since they have unbounded gradient.\\

\noindent\textbf{An example of $\mathcal{H}-$holomorphic cylinders satisfying
condition A0-A2 but not condition A3 and having non-vanishing center
action.}

\noindent Consider the sequence $P_{n}$ from the first case and let
$S\in2\pi\mathbb{Z}\backslash\{0\}$. Consider the maps
\begin{align*}
\overline{f}_{n} & :[-R_{n},R_{n}]\times S^{1}\rightarrow S^{3}\\
\overline{a}_{n} & :[-R_{n},R_{n}]\times S^{1}\rightarrow\mathbb{R}
\end{align*}
defined by $\overline{f}_{n}(s,t):=\phi_{P_{n}s+St}^{\alpha}(f_{n}(s,t))$
and $\overline{a}_{n}(s,t)=a_{n}(s,t)+Ss$. In the same way as in
the first case it is apparent that $\overline{u}_{n}=(\overline{a}_{n},\overline{f}_{n})$
is a $\mathcal{H}-$holomorphic cylinder with harmonic perturbation
$\gamma_{n}:=-P_{n}dt$. Similar as in the first case we notice that
for any sequence of shifts $s_{n}\in[-R_{n},R_{n}]$ the map $\overline{f}_{n}(s+s_{n},t)$
converges, up to passing to a subsequence in $C_{\text{loc}}^{\infty}$
to $t\mapsto\varphi_{\kappa+St}^{\alpha}(f(0))$, where $\kappa$
is defined as the limit of $P_{n}s_{n}$ in $\mathbb{R}/2\pi\mathbb{Z}$.
The $d\alpha-$energy of $\overline{u}_{n}$ is the same as the $d\alpha-$energy
of $u_{n}$. In the same way as in the first case condition A1 is
satisfied. The $\alpha-$energy can be bounded as follows. For $\varphi\in\mathcal{A}$
we have
\begin{align*}
\int_{[-R_{n},R_{n}]\times S^{1}}\varphi'(\overline{a}_{n})d\overline{a}_{n}\circ i\wedge da_{n} & =-\int_{[-R_{n},R_{n}]\times S^{1}}\varphi'(\overline{a}_{n})da_{n}\wedge d\overline{a}_{n}\circ i\\
 & =-\int_{[-R_{n},R_{n}]\times S^{1}}d\left(\varphi(\overline{a}_{n})d\overline{a}_{n}\circ i\right)+\int_{[-R_{n},R_{n}]\times S^{1}}\varphi(\overline{a}_{n})d\left(d\overline{a}_{n}\circ i\right)\\
 & \leq\int_{\left\{ R_{n}\right\} \times S^{1}}\left|d\overline{a}_{n}\circ i\right|+\int_{\left\{ -R_{n}\right\} \times S^{1}}\left|d\overline{a}_{n}\circ i\right|\\
 & =\int_{\left\{ R_{n}\right\} \times S^{1}}\left|\overline{f}_{n}^{*}\alpha\right|+\int_{\left\{ -R_{n}\right\} \times S^{1}}\left|\overline{f}_{n}^{*}\alpha\right|.
\end{align*}
Let $C_{1}>0$ be the bound from condition A1 then we have that $E_{\alpha}(\overline{u}_{n};\left[-R_{n},R_{n}\right]\times S^{1})\leq2C_{1}$.
Thus conditions A0-A2 hold. Condition A3 does not hold by the choice
of $P_{n}$. However, the conclusions of the $C^{0}-$convergence
of Theorem 4 do not hold for this sequence since the $f_{n}\circ \theta_n^{-1}$ defined
as in Theorem 4 do not converge since they have unbounded gradient.
Finally we point out that the center action is non-vanishing. Indeed,
\[
\int_{\left\{ 0\right\} \times S^{1}}\overline{f}_{n}^{*}\alpha=S+\int_{\left\{ 0\right\} \times S^{1}}f_{n}^{*}\alpha\rightarrow S\text{ as }n\rightarrow\infty.
\]

\end{document}